\definecolor{linkred}{rgb}{0.7,0.2,0.2}
\definecolor{linkblue}{rgb}{0,0.2,0.6}
\numberwithin{figure}{section}
\DeclareFontFamily{OMS}{rsfs}{\skewchar\font'60}
\DeclareFontShape{OMS}{rsfs}{m}{n}{<-5>rsfs5 <5-7>rsfs7 <7->rsfs10 }{}
\DeclareSymbolFont{rsfs}{OMS}{rsfs}{m}{n}
\DeclareSymbolFontAlphabet{\scr}{rsfs}
\DeclareSymbolFontAlphabet{\scr}{rsfs}
\DeclareFontFamily{U}{mathx}{\hyphenchar\font45}
\DeclareFontShape{U}{mathx}{m}{n}{
      <5> <6> <7> <8> <9> <10>
      <10.95> <12> <14.4> <17.28> <20.74> <24.88>
      mathx10
      }{}
\DeclareSymbolFont{mathx}{U}{mathx}{m}{n}
\DeclareMathAccent{\wcheck}{0}{mathx}{"71}
\DeclareMathOperator{\Aut}{Aut}
\DeclareMathOperator{\codim}{codim}
\DeclareMathOperator{\Hom}{Hom}
\DeclareMathOperator{\Id}{Id}
\DeclareMathOperator{\Image}{Image}
\DeclareMathOperator{\img}{img}
\DeclareMathOperator{\Pic}{Pic}
\DeclareMathOperator{\rank}{rank}
\DeclareMathOperator{\red}{red}
\DeclareMathOperator{\reg}{reg}
\DeclareMathOperator{\Sym}{Sym}
\DeclareMathOperator{\supp}{supp}
\DeclareMathOperator{\tor}{tor}
\newcommand{\sA}{\scr{A}}
\newcommand{\sB}{\scr{B}}
\newcommand{\sC}{\scr{C}}
\newcommand{\sD}{\scr{D}}
\newcommand{\sE}{\scr{E}}
\newcommand{\sF}{\scr{F}}
\newcommand{\sG}{\scr{G}}
\newcommand{\sH}{\scr{H}}
\newcommand{\sHom}{\scr{H}\negthinspace om}
\newcommand{\sJ}{\scr{J}}
\newcommand{\sL}{\scr{L}}
\newcommand{\sM}{\scr{M}}
\newcommand{\sO}{\scr{O}}
\newcommand{\sQ}{\scr{Q}}
\newcommand{\sT}{\scr{T}}
\newcommand{\sV}{\scr{V}}
\newcommand{\sW}{\scr{W}}
\newcommand{\cC}{\mathcal C}
\newcommand{\cE}{\mathcal E}
\newcommand{\bB}{\mathbb{B}}
\newcommand{\bC}{\mathbb{C}}
\newcommand{\bD}{\mathbb{D}}
\newcommand{\bE}{\mathbb{E}}
\newcommand{\bF}{\mathbb{F}}
\newcommand{\bN}{\mathbb{N}}
\newcommand{\bP}{\mathbb{P}}
\newcommand{\bQ}{\mathbb{Q}}
\newcommand{\bR}{\mathbb{R}}
\newcommand{\bX}{\mathbb{X}}
\newcommand{\bZ}{\mathbb{Z}}
\newcommand{\aB}{{\sf B}}
\newcommand{\aD}{{\sf D}}
\newcommand{\aE}{{\sf E}}
\newcommand{\aF}{{\sf F}}
\theoremstyle{plain}
\newtheorem{thm}{Theorem}[section]
\newtheorem{cor}[thm]{Corollary}
\newtheorem{defn}[thm]{Definition}
\newtheorem{fact}[thm]{Fact}
\newtheorem{lem}[thm]{Lemma}
\newtheorem{prop}[thm]{Proposition}
\theoremstyle{remark}
\newtheorem{assumption}[thm]{Assumption}
\newtheorem{asswlog}[thm]{Assumption w.l.o.g.}
\newtheorem{claim}[thm]{Claim}
\newtheorem{c-n-d}[thm]{Claim and Definition}
\newtheorem{construction}[thm]{Construction}
\newtheorem{notation}[thm]{Notation}
\newtheorem{obs}[thm]{Observation}
\newtheorem{rem}[thm]{Remark}
\newtheorem*{rem-nonumber}{Remark}
\newtheorem{warning}[thm]{Warning}
\numberwithin{equation}{thm}
\setlist[enumerate]{label=(\thethm.\arabic*), before={\setcounter{enumi}{\value{equation}}}, after={\setcounter{equation}{\value{enumi}}}}
\newcommand{\into}{\hookrightarrow}
\newcommand{\wtilde}{\widetilde}
\newcommand{\what}{\widehat}
\newcommand\CounterStep{\addtocounter{thm}{1}\setcounter{equation}{0}}
\newcommand{\factor}[2]{\left. \raise 2pt\hbox{$#1$} \right/\hskip -2pt\raise -2pt\hbox{$#2$}}
\newcommand{\Preprint}[1]{}
\newcommand{\Publication}[1]{#1}
\newcommand{\subversionInfo}{}
\newcommand{\svnid}[1]{}
\newcommand{\approvals}[2][Approval]{}
\renewcommand{\phi}{\varphi}
\author{Daniel Greb} %
\address{Daniel Greb, Essener Seminar für Algebraische Geometrie und Arithmetik,
  Fakultät für Ma\-the\-matik, Universität Duisburg--Essen, 45117 Essen,
  Germany} %
\email{\href{mailto:daniel.greb@uni-due.de}{daniel.greb@uni-due.de}}
\urladdr{\url{http://www.esaga.uni-due.de/daniel.greb}}
\author{Stefan Kebekus} %
\address{Stefan Kebekus, Mathematisches Institut, Albert-Ludwigs-Universität
  Freiburg, Ernst-Zermelo-Straße 1, 79104 Freiburg im Breisgau, Germany \&
  Freiburg Institute for Advanced Studies (FRIAS), Freiburg im Breisgau,
  Germany} %
\email{\href{mailto:stefan.kebekus@math.uni-freiburg.de}{stefan.kebekus@math.uni-freiburg.de}}
\urladdr{\url{https://cplx.vm.uni-freiburg.de}}
\author{Thomas Peternell} %
\address{Thomas Peternell, Mathematisches Institut, Universität Bayreuth,
  95440~Bayreuth, Germany} %
\email{\href{mailto:thomas.peternell@uni-bayreuth.de}{thomas.peternell@uni-bayreuth.de}}
\urladdr{\url{http://www.komplexe-analysis.uni-bayreuth.de}}
\author{Behrouz Taji} %
\address{Behrouz Taji, University of Notre Dame, Department of Mathematics, 278
  Hurley, Notre Dame, IN 46556 USA.} %
\email{\href{mailto:btaji@nd.edu}{btaji@nd.edu}}
\urladdr{\url{http://sites.nd.edu/b-taji}}
\thanks{DG was partially supported by the DFG-Collaborative Research Center
  SFB/TR 45.  SK gratefully acknowledges support through a joint fellowship of
  the Freiburg Institute of Advanced Studies (FRIAS) and the University of
  Strasbourg Institute for Advanced Study (USIAS).  BT was partially supported
  by the DFG-Research Training Group GK1821.  Research was partially completed
  while SK and TP were visiting the National University of Singapore in 2017.  TP
  was supported by the DFG project ``Zur Positivität in der komplexen
  Geometrie''.}
\keywords{Nonabelian Hodge Theory, KLT Singularities}
\subjclass[2010]{14E30, 53C07}
\title[NAHT for klt spaces and descent]{Nonabelian Hodge Theory for klt spaces and descent theorems for vector bundles}
\date{\today}
\DeclareMathOperator{\Div}{Div}
\DeclareMathOperator{\Exc}{Exc}
\DeclareMathOperator{\Higgs}{\sf Higgs}
\DeclareMathOperator{\HN}{HN}
\DeclareMathOperator{\LSys}{\sf LSys}
\DeclareMathOperator{\refl}{refl}
\theoremstyle{plain}
\theoremstyle{remark}
\newtheorem*{note}{Note}
\newtheorem{choice}[thm]{Choice}
\newtheorem{reminder}[thm]{Reminder}
\begin{document}
\begin{abstract}
We generalise Simpson's nonabelian Hodge correspondence to the context of
projective varieties with klt singularities.  The proof relies on a descent
theorem for numerically flat vector bundles along birational morphisms.  In its
simplest form, this theorem asserts that given any klt variety $X$ and any
resolution of singularities, then any vector bundle on the resolution that
appears to come from $X$ numerically, does indeed come from $X$.  Furthermore
and of independent interest, a new restriction theorem for semistable Higgs
sheaves defined on the smooth locus of a normal, projective variety is
established.
\end{abstract}
\approvals[Abstract]{Behrouz & yes \\ Daniel & yes \\ Stefan & yes \\ Thomas & yes}

\maketitle
\tableofcontents

%
%
\svnid{$Id: 01.tex 1181 2018-10-26 00:45:35Z taji $}

\section{Introduction}
\subversionInfo

\subsection{Nonabelian Hodge theory for singular spaces}
\approvals{Behrouz & yes \\ Daniel & yes \\ Stefan & yes \\ Thomas & yes}

Given a projective manifold $X$, a seminal result of Simpson, \cite{MR1179076},
exhibits a natural equivalence between the category of local systems and the
category of semistable, locally free Higgs sheaves with vanishing Chern classes.
The first main result of this paper extends this correspondence to projective
varieties with Kawamata log terminal (=klt) singularities.

\begin{thm}[Nonabelian Hodge correspondence for klt spaces, Theorem~\ref{thm:free-naht}]\label{thm:main}
  Let $X$ be a projective, klt variety.  Then, there exists an equivalence
  between the category of local systems and the category of semistable, locally
  free Higgs sheaves with vanishing Chern classes.  \qed
\end{thm}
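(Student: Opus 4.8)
The plan is to transport Simpson's correspondence from a resolution of singularities down to $X$ and to control what happens over the singular locus. Fix a log resolution $\pi \colon Y \to X$ that is an isomorphism over the smooth locus $X_{\reg} \subseteq X$, and write $E$ for its reduced exceptional divisor. Since $X$ is klt it has rational singularities, and the fibres of $\pi$ are connected, indeed rationally chain connected; by theorems of Takayama and Kollár this forces the natural map $\pi_1(Y) \to \pi_1(X)$ to be an isomorphism, so that pull-back along $\pi$ already identifies the categories of local systems on $X$ and on $Y$. On the Higgs side, the reflexive extension theorem for differential forms on klt spaces identifies $\pi_* \Omega^1_Y$ with $\Omega^{[1]}_X$, which is precisely what lets one move Higgs fields back and forth across $\pi$. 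It therefore remains to match semistable, locally free Higgs sheaves with vanishing Chern classes on $X$ with the corresponding objects on $Y$, compatibly with Simpson's equivalence on the smooth projective variety $Y$.

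Starting from a semistable locally free Higgs sheaf $(\sE,\theta)$ on $X$ with vanishing Chern classes, the pull-back $(\pi^*\sE,\theta_Y)$ is a locally free Higgs sheaf on $Y$ with vanishing Chern classes, and I claim it is again semistable with respect to a suitably perturbed pull-back of the polarisation from $X$. This is exactly where the new restriction theorem for semistable Higgs sheaves on the smooth locus is needed: slopes of Higgs subsheaves, both on $X$ and on $Y$, can be computed after restriction to sufficiently general complete-intersection curves contained in $X_{\reg} \cong Y \setminus E$, so (semi)stability transfers in both directions. Simpson's correspondence \cite{MR1179076} on $Y$ then attaches a local system to $(\pi^*\sE,\theta_Y)$, which by the $\pi_1$-identification descends to a local system on $X$; this gives one of the two functors.

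Conversely, given a local system $\bL$ on $X$, pull it back to $Y$ and apply Simpson to obtain a semistable locally free Higgs bundle $(\sF,\theta_Y)$ on $Y$ with vanishing Chern classes. The essential point is to descend $(\sF,\theta_Y)$ to $X$ as a \emph{locally free} Higgs sheaf: over $X_{\reg}$ there is nothing to do, and reflexive push-forward produces a reflexive Higgs sheaf $(\pi_*\sF, \pi_*\theta_Y)$ on $X$; the content is that it is actually a vector bundle. Here the descent theorem for numerically flat vector bundles along the birational morphism $\pi$ applies to $\sF$: because $\pi^*\bL$ has trivial monodromy along every fibre of $\pi$, the bundle $\sF$ restricts to a numerically flat (in fact, for a well-chosen harmonic metric, trivial) bundle on those fibres, so $\sF$ — in the terminology of that theorem — ``numerically appears to come from $X$'', and one concludes that $\pi_*\sF$ is locally free and $\sF \cong \pi^*(\pi_*\sF)$. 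The Higgs field descends because $\pi_*\theta_Y$ then takes values in $\sEnd(\pi_*\sF)\otimes\pi_*\Omega^1_Y = \sEnd(\pi_*\sF)\otimes\Omega^{[1]}_X$, and $\theta_Y\wedge\theta_Y = 0$ pushes forward to the integrability condition on $X$; vanishing of the Chern classes and semistability over $X$ again follow by restriction to curves in $X_{\reg}$.

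Finally one checks that these two constructions are mutually quasi-inverse and fully faithful; this is formal, using functoriality of Simpson's correspondence on $Y$, the identities $R^0\pi_*\sO_Y = \sO_X$ and the projection formula (so that $\pi_*\pi^* \simeq \Id$ on locally free sheaves), and the $\pi_1$-identification. I expect the descent theorem to be the main obstacle: ordinary faithfully flat descent is unavailable, because the fibres of a resolution of a klt singularity are in general positive-dimensional and badly singular, so its proof must exploit the structure theory of numerically flat bundles on the fibres and on $Y$, the vanishings $R^i\pi_*\sO_Y = 0$ coming from rationality of the singularities, and a careful analysis along $E$. A secondary but still delicate point is the comparison of (semi)stability between the singular space and its resolution, for which the restriction theorem for semistable Higgs sheaves on $X_{\reg}$ is the crucial new ingredient; it may also be convenient to first pass to a maximally quasi-étale cover of $X$ in the sense of Greb--Kebekus--Peternell, in order to streamline the bookkeeping for local systems and reflexive sheaves.
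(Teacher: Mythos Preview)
Your overall architecture matches the paper's: transport Simpson's correspondence through a resolution $\pi\colon \wtilde X\to X$, using Takayama's $\pi_1$-isomorphism on one side and the descent theorem for numerically flat bundles on the other. The direction from local systems to Higgs bundles is essentially right and coincides with the paper's argument (Proposition~\ref{prop:Higgs_are_num_flat} plus Theorem~\ref{thm:desc2}).

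There is, however, a genuine gap in your ascent step, from $(ℰ,\theta)\in\Higgs_X$ to $\pi^*(ℰ,\theta)\in\Higgs_{\wtilde X}$. Restricting to complete-intersection curves in $X_{\reg}\cong \wtilde X\setminus E$ only tests semistability of $\pi^*(ℰ,\theta)$ with respect to the \emph{nef} class $\pi^*H$; it says nothing about semistability with respect to an \emph{ample} class on $\wtilde X$, which is what membership in $\Higgs_{\wtilde X}$ requires. Your phrase ``suitably perturbed pull-back'' names the issue but the curve-restriction argument cannot justify the perturbation: the curves that would detect $(\pi^*H+\varepsilon A)$-stability for ample $A$ must meet the exceptional divisor, and there the comparison with $X$ breaks down. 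This is precisely the difficulty the paper isolates in Theorem~\ref{thm:bphgs} (``Ascent of semistable Higgs bundles''), and its proof is substantially more involved than you suggest.

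The paper's route is: first settle the surface case (Proposition~\ref{prop:sttc}), where the Bogomolov--Gieseker inequality combined with the Hodge index theorem forces the Chern classes of the Jordan--Hölder constituents to vanish, after which openness of stability allows the perturbation from nef to ample. In higher dimensions one cuts down to a general complete-intersection surface $S\subset X$ (here the restriction theorem is used, but for Higgs sheaves on $X$, not to compare $X$ and $\wtilde X$ directly), applies the surface case on the strict transform $\wtilde S$, uses a $\what\pi_1$-comparison to extend the resulting local system from $\wtilde S$ to $\wtilde X$, and then invokes the descent theorem again together with a boundedness argument to identify the descended Higgs bundle with $(ℰ,\theta)$. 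The passage to a maximally quasi-étale cover, which you mention only as a convenience, is in fact essential: it is what makes the $\what\pi_1$-extension step go through. So your instincts about the ingredients are correct, but the logical flow of the ascent argument is different and more intricate than ``restrict to curves in the common locus''.
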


We refer the reader to \cite[Sect.~5]{GKPT15} or to the survey
\cite[Sect.~6]{GKT16} for the (rather delicate) notions of Higgs sheaves,
morphisms of Higgs sheaves and pull-back.  Semistability is also discussed
there.

In fact, there exists a unique way to choose the correspondences of
Theorem~\ref{thm:main} that makes them functorial in morphisms between klt
spaces, and compatible with Simpson's construction wherever this makes sense; we
refer to Section~\vref{sect:NAHT} for a precise formulation.  In particular,
functoriality applies to resolutions of singularities, as well as morphisms
whose images are entirely contained in the singular loci of the target
varieties.  Our results imply that the pull-back of semistable, locally free
Higgs sheaves with vanishing Chern classes under any of these maps remains
semistable.

\subsection{Descent of vector bundles}\label{ssec:introdesc}
\approvals{Behrouz & yes \\ Daniel & yes \\ Stefan & yes \\ Thomas & yes}

The proof of Theorem~\ref{thm:main} relies in part on a descent theorem for
vector bundles which is of independent interest.  To put the result into
perspective, consider a desingularisation $π: \wtilde{X} → X$ of a normal
variety.  It is well-known that if $X$ has rational singularities, then any line
bundle on $\wtilde{X}$ that comes from $X$ topologically does in fact come from
$X$ holomorphically.  If $X$ is klt, we generalise this result to vector bundles
of arbitrary rank: we show that any vector bundle on $\wtilde{X}$ that appears
to come from $X$ numerically does indeed come from $X$.

\begin{thm}[Descent of vector bundles to klt spaces, Theorem~\ref{thm:desc2}]\label{thm:desc2simplified}
  Let $f : X → Y$ be a birational, projective morphism of normal,
  quasi-projective varieties.  Assume that there exists a Weil $ℚ$-divisor $Δ_Y$
  such that $(Y,Δ_Y)$ is klt.  If $ℱ_X$ is any locally free, $f$-numerically
  flat sheaf on $X$, then there exists a locally free sheaf $ℱ_Y$ on $Y$ such
  that $ℱ_X ≅ f^* ℱ_Y$.  \qed
\end{thm}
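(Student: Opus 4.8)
The plan is to take $ℱ_Y := f_*ℱ_X$ as the descended sheaf; this choice is forced, because any $ℱ_Y$ with $f^*ℱ_Y ≅ ℱ_X$ must agree with $f_*ℱ_X$ over the open locus where $f$ is an isomorphism, and two reflexive sheaves on the normal variety $Y$ that agree in codimension one coincide. I would then establish the theorem in three steps: reduce to the case where $X$ is smooth, prove that $ℱ_X$ is trivial on every fibre of $f$, and deduce the statement from Grauert's base change theorem. The only genuinely new input is the second step, and it is there that the klt hypothesis is used.

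\emph{Reduction to smooth $X$.} For a resolution of singularities $g : \wtilde X → X$, the composite $f ∘ g$ is again birational and projective, and $g^*ℱ_X$ is $(f∘g)$-numerically flat, since each fibre of $f∘g$ surjects onto a fibre of $f$ and pull-backs of numerically flat sheaves remain numerically flat. Granting the theorem for $f∘g$ — so $g^*ℱ_X ≅ (f∘g)^*ℱ_Y$ for some locally free sheaf $ℱ_Y$ on $Y$ — I would push forward along $g$ and use the projection formula together with $g_*𝒪_{\wtilde X} = 𝒪_X$ (valid because $X$ is normal) to obtain $ℱ_X ≅ f^*ℱ_Y$. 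Hence one may assume $X$ smooth; note that $f$ is then a resolution of the klt, hence rational, singularities of $Y$, so $R^if_*𝒪_X = 0$ for all $i>0$.

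\emph{Triviality along the fibres.} Fix $y ∈ Y$ and let $F := f^{-1}(y)$. Because $(Y,Δ_Y)$ is klt and $X$ is smooth, the theorem of Hacon–McKernan on Shokurov's rational connectedness conjecture shows that $F$ is rationally chain connected; in particular $ℱ_X$ pulls back trivially to the normalisation of every rational curve contracted by $f$, by Grothendieck's splitting theorem. Combining this with the Demailly–Peternell–Schneider structure theory of numerically flat bundles — which realises them as iterated extensions of flat unitary bundles — and with the vanishing encoded in $R^\bullet f_*𝒪_X$, I would conclude that $ℱ_X$ restricts to the trivial bundle of rank $r := \rank ℱ_X$ on every fibre of $f$.

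\emph{Conclusion and main obstacle.} Triviality of $ℱ_X|_F$ for all $y$, together with the rationality of the singularities of $Y$, is exactly what is needed to invoke Grauert's base change theorem: $ℱ_Y := f_*ℱ_X$ is locally free of rank $r$, the $R^if_*ℱ_X$ vanish for $i>0$, and the formation of $f_*ℱ_X$ commutes with base change. Restricting the evaluation morphism $f^*f_*ℱ_X → ℱ_X$ to a fibre $F$ then yields the canonical isomorphism between two trivial bundles of rank $r$, so by Nakayama's lemma $f^*f_*ℱ_X → ℱ_X$ is an isomorphism near each fibre, hence globally, and $ℱ_X ≅ f^*ℱ_Y$ as desired. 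I expect the triviality along the fibres to be the main obstacle, and the only place where the full force of the klt assumption (rather than mere rationality of singularities) enters: the fibres $F$ are in general singular and reducible, so neither the Demailly–Peternell–Schneider theorem nor the relevant $H^1$-vanishing applies to $F$ directly, and one must pass to auxiliary resolutions while controlling the chain-connectivity that upgrades fibrewise triviality to global triviality of the restriction. An alternative route would be an induction on $\rank ℱ_X$ — descend $\det ℱ_X$ via the classical line-bundle case over rational singularities, extract a relatively numerically flat sub-line-bundle, descend it, and treat the resulting extension — but producing such a sub-line-bundle globally is itself delicate.
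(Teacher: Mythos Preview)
Your reduction to smooth $X$ is correct and matches the paper's first move. But Step~2 --- triviality of $\mathscr{F}_X$ on every fibre --- is not a proof; you yourself flag it as the main obstacle, and the difficulties you name are genuine. The scheme-theoretic fibres $f^{-1}(y)$ are in general singular, reducible, and non-reduced, so the Demailly--Peternell--Schneider structure theorem does not apply to them directly (it needs a smooth projective variety), and Hacon--McKernan yields only rational \emph{chain} connectedness, which is not inherited by a resolution of the fibre. Even granting that $\mathscr{F}_X$ is trivial on every rational curve contracted by $f$, deducing triviality on the full scheme-theoretic fibre --- which is what Grauert's theorem and your Nakayama argument require --- still demands control over extensions across component intersections and across the non-reduced structure. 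Neither the vanishing $R^{>0}f_*\mathscr{O}_X=0$ nor the fibrewise Euler-characteristic arguments you allude to supply this directly. So as written, the proposal has a real gap at exactly the point where the klt hypothesis must do its work.

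The paper takes a completely different route and never attempts fibrewise triviality. It first proves an auxiliary descent statement (Theorem~\ref{thm:desc1}): if $(X,\Delta_X)$ is klt and $-(K_X+\Delta_X)$ is $\varphi$-nef for a projective birational $\varphi:X\to Y$, then any $\varphi$-numerically flat bundle descends. The proof works on the projectivisation $\mathbb{P}(\mathscr{F}_X)$: the Basepoint-free Theorem, applied to $\mathscr{O}_{\mathbb{P}(\mathscr{F}_X)}(1)$, produces a factorisation $\mathbb{P}(\mathscr{F}_X)\to\mathbb{P}_Y\to Y$; numerical flatness forces $\mathbb{P}_Y\to Y$ to be equidimensional via a Chern-class computation on the fibres, and results of H\"oring--Novelli and Araujo--Druel then identify $\mathbb{P}_Y$ as a linear $\mathbb{P}^r$-bundle, yielding $\mathscr{F}_Y$. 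For the main theorem the paper runs an $f$-relative MMP with scaling on the smooth $X$, factoring $f$ into divisorial contractions and flips followed by a final small crepant morphism; at every step $-(K+\Delta)$ is relatively nef, so the auxiliary theorem descends the bundle step by step. Thus the klt hypothesis is used through the existence of the MMP and the Basepoint-free Theorem, not through rational connectedness of fibres.
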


The notion of ``numerical flatness for vector bundles'' generalises the notion
of ``numerical triviality'' for line bundles and is recalled in
Definition~\ref{def:numflat} below.  The importance of
Theorem~\ref{thm:desc2simplified} in the current investigation stems from the
fact that locally free Higgs sheaves coming from local systems on resolutions of
klt spaces are numerically flat relative to the resolution morphism; see
Proposition~\ref{prop:Higgs_are_num_flat}.

In addition to the descent result for vector bundles spelled out above,
Theorem~\ref{thm:desc2} also discusses descent of locally free Higgs sheaves.
Theorem~\ref{thm:desc1} contains a related result where $X$ (rather than $Y$) is
assumed to be klt.

\subsection{Optimality of results}
\approvals{Behrouz & yes \\ Daniel & yes \\ Stefan & yes \\ Thomas & yes}

We expect that varieties with klt singularities form the largest natural class
where our results can possibly hold in full generality.  The construction of a
pull-back functor for Higgs sheaves is rather delicate and hinges on the
existence of functorial pull-back for reflexive differentials, for morphisms
between klt spaces.  For classes of varieties with singularities that are just
slightly more general than klt, there are elementary examples,
\cite[Ex.~1.9]{MR3084424}, which show that \emph{functorial} pull-back for
reflexive differentials does not exist, even though it is known that reflexive
differentials still lift to resolutions of singularities in these cases,
\cite[Thm.~1.4]{MR3247804}.  In particular, it is \emph{not possible} to define
functorial pull-back of Higgs sheaves for these spaces.

For spaces with arbitrary singularities, we do not expect that a correspondence
between the two categories in Theorem~\ref{thm:main} holds, even at the level of
objects.

\subsection{Applications}
\approvals{Behrouz & yes \\ Daniel & yes \\ Stefan & yes \\ Thomas & yes}

Theorem~\ref{thm:main} has applications to the quasi-étale uniformisation
problem for minimal varieties of general type.  Eventually, we expect that all
uniformisation theorems of nonabelian Hodge theory have analogues for klt
varieties.  In particular we expect to generalise the uniformisation result
\cite[Thm.~1.2]{GKPT15} to arbitrary klt varieties: if $X$ is minimal, klt and
of general type, and if equality holds in the $ℚ$-Miyaoka-Yau inequality,
\cite[Thm.~1.1]{GKPT15}, i.e.,
$$
\bigl( 2(n+1)· \what{c}_2(𝒯_X) - n · \what{c}_1(𝒯_X)² \bigr) · [K_X]^{n-2} =0,
$$
then the canonical model of $X$ is a singular ball quotient.

To keep the current paper reasonably short, these applications will appear in a
separate paper, cf.~\cite{GKPT18}.  Please see the survey paper \cite{GKT16} for
related results in this direction.

One might expect similar uniformisation results in more general contexts, such
as ``pairs'' or ``orbifolds''.  In the setting of pairs, the Miyaoka-Yau
inequality has already been established in~\cite{GT16}.  We refer to
\cite[Sect.~10]{GKPT15} for a more precise formulation.

\subsection{Relation to the work of Mochizuki}
\approvals{Behrouz & yes \\ Daniel & yes \\ Stefan & yes \\ Thomas & yes}

In a large body of work, T.~Mochizuki set up a complete theory of Higgs bundles
on arbitrary smooth quasi-projective varieties; among others, see
\cite{MR2310103, MR2281877, MR2283665}.  Our study differs from Mochizuki’s in
at least two aspects: First, while our main result, Theorem~\ref{thm:main}
above, traces a correspondence between topological and algebro-geometric
properties of $X$, thereby taking the singularities of $X$ into account, the
correspondence established by Mochizuki in the present setup would focus on the
connection between such properties for the smooth locus $X_{\reg}$.  Second,
ultimately our correspondence is induced geometrically from the nonabelian Hodge
correspondence in the projective case, which in turn requires much less
sophisticated analytic results than Mochizuki's approach.  However, Mochizuki's
theory will be used in the sequel paper \cite{GKPT18}.

\subsection{Structure of the paper}
\approvals{Behrouz & yes \\ Daniel & yes \\ Stefan & yes \\ Thomas & yes}

Section~\ref{sect:Notations} gathers notation, known results and global
conventions that will be used throughout the paper.  Section~\ref{sect:NAHT}
formulates the nonabelian Hodge correspondence for klt spaces in detail,
discusses functoriality and its consequence, and states a number of singular
generalisations of Simpson's classical results.

The results are then proven in the remaining sections.  Sections~\ref{sec:03}
and \ref{sec:4} prepare for the proof, establishing the descent theorems for
vector bundles mentioned in \ref{ssec:introdesc} above.
Section~\ref{sect:app-oper} establishes a restriction theorem of
Mehta-Ramanathan type for Higgs sheaves that is slightly more general than the
versions found in the literature.  This restriction theorem is then used in
Section~\ref{ssec:ascend} to prove that Higgs bundles with vanishing Chern
classes on projective, klt varieties that are semistable with respect to an
ample divisor, remain semistable with respect to \emph{ample} divisors when
pulled back to a resolution of singularities.  With these preparations in place,
the nonabelian Hodge correspondences can then be constructed in
Section~\ref{sect:07} without much effort.

\subsection{Acknowledgements}
\approvals{Behrouz & yes \\ Daniel & yes \\ Stefan & yes \\ Thomas & yes}

We would like to thank numerous colleagues for discussions, including Daniel
Barlet, Oliver Bräunling, Andreas Höring, Annette Huber, Shane Kelly, Jong-Hae
Keum, Adrian Langer, and Jörg Schürmann.  We thank the referee for carefully
reading the paper, and for her or his very valuable and interesting remarks.

%
%
\svnid{$Id: 02.tex 1180 2018-10-25 09:04:48Z kebekus $}

\section{Notation and elementary facts}\label{sect:Notations}
\subversionInfo

\subsection{Global conventions}
\approvals{Behrouz & yes \\ Daniel & yes \\ Stefan & yes \\ Thomas & yes}

Throughout the present paper, all varieties and schemes will be defined over the
complex numbers.  We will freely switch between the algebraic and analytic
context if no confusion is likely to arise.  Apart from that, we follow the
notation used in the standard reference books \cite{Ha77, KM98}, with the
exception that for klt pairs $(X,Δ)$, the boundary divisor $Δ$ is always assumed
to be effective.  Varieties are always assumed to be irreducible and reduced.

\subsection{Varieties, sets and morphisms}
\approvals{Behrouz & yes \\ Daniel & yes \\ Stefan & yes \\ Thomas & yes}

Normal varieties are $S_2$, which implies that regular functions can be extended
across sets of codimension two.  The following notation will be used.

\begin{notation}[Big and small subsets]
  Let $X$ be a normal, quasi-projective variety.  A Zariski-closed subset
  $Z ⊂ X$ is called \emph{small} if $\codim_X Z ≥ 2$.  A Zariski-open subset
  $U ⊆ X$ is called \emph{big} if $X∖ U$ is small.  A birational morphism
  $φ : X → Y$ of normal, projective varieties is called a \emph{small morphism}
  if there exists a big open set $X° ⊆ X$ such that $φ|_{X°}$ is an open
  immersion.
\end{notation}

\begin{notation}[Set-theoretic fibre]
  Given a morphism of varieties $φ: X → Y$ and a point $y ∈ Y$, we call the
  reduced scheme $φ^{-1}(y)_{\red}$ the \emph{set-theoretic fibre} of $φ$ over
  $y$.
\end{notation}

\begin{defn}[Covers and covering maps, Galois morphisms]\label{def:cover}
  A \emph{cover} or \emph{covering map} is a finite, surjective morphism
  $γ : Y → X$ of normal, quasi-projective varieties or complex spaces.  The
  covering map $γ$ is called \emph{Galois} if there exists a finite group
  $G ⊆ \Aut(Y)$ such that $X$ is isomorphic to the quotient map $Y → Y/G$.
\end{defn}

\begin{defn}[Quasi-étale morphisms]\label{defn:quasietale}
  A morphism $f : X → Y$ between normal varieties is called \emph{quasi-étale}
  if $f$ is of relative dimension zero and étale in codimension one.  In other
  words, $f$ is quasi-étale if $\dim X = \dim Y$ and if there exists a closed,
  subset $Z ⊆ X$ of codimension $\codim_X Z ≥ 2$ such that
  $f|_{X ∖ Z} : X ∖ Z → Y$ is étale.
\end{defn}

\begin{note}
  Combining Definitions~\ref{def:cover} and \ref{defn:quasietale}, a quasi-étale
  cover is finite, surjective, and étale in codimension one.
\end{note}

\subsection{Line bundles and sheaves}
\approvals{Behrouz & yes \\ Daniel & yes \\ Stefan & yes \\ Thomas & yes}

Reflexive sheaves are in many ways easier to handle than arbitrary coherent
sheaves, and we will therefore frequently take reflexive hulls.  The following
notation will be used.

\begin{notation}[Reflexive hull]
  Given a normal, quasi-projective variety $X$ and a coherent sheaf $ℰ$ on $X$
  of rank $r$, write
  $$
  Ω^{[p]}_X := \bigl(Ω^p_X \bigr)^{**}, \quad ℰ^{[m]} := \bigl(ℰ^{⊗ m}
  \bigr)^{**}, \quad \Sym^{[m]} ℰ := \bigl( \Sym^m ℰ \bigr)^{**}
  $$
  and $\det ℰ := \bigl( Λ^r ℰ \bigr)^{**}$.  Given any morphism $f : Y → X$,
  write $f^{[*]} ℰ := (f^* ℰ)^{**}$.
\end{notation}

\begin{defn}[Relative Picard number]
  Given a projective surjection $f: X → Y$ of normal, quasi-projective
  varieties, let $N_1(X/Y)$ be the $ℝ$-vector space generated by irreducible
  curves $C ⊆ X$ such that $f(C)$ is a point, modulo numerical equivalence.  The
  dimension of $N_1(X/Y)$ is called the relative Picard number of $X/Y$ and is
  denoted by $ρ(X/Y)$.  Let $\overline{NE}(X/Y) ⊆ N_1(X/Y)$ be the closed cone
  generated by classes of effective curves which are contracted by $f$.
\end{defn}

\subsection{Cycles}
\approvals{Behrouz & yes \\ Daniel & yes \\ Stefan & yes \\ Thomas & yes}

The Chow variety represents a functor.  The associated notion of ``families of
cycles'' is however somewhat awkward to formulate.  For the sake of simplicity,
we restrict ourselves to families over normal base varieties where the
definition becomes somewhat simpler.  The book \cite{MR3307241} discusses these
matters in detail.

\begin{notation}[Families of cycles]\label{not:ctp}
  Let $X$ be a quasi-projective, $n$-dimensional variety, not necessarily
  normal.  Given any subscheme $Y ⊆ X$, we denote the associated cycle by $[Y]$.
  Let $f : X → Y$ be an equidimensional morphism of normal, algebraic varieties,
  of relative dimension $d$.  Recall from \cite[I~Thm.~3.17]{K96} or
  \cite[Thm.~3.4.1 on p.~449]{MR3307241} that $f$ is then a well defined family
  of $d$-dimensional proper algebraic cycles over $Y$, in the sense of
  \cite[I~Def.~3.10]{K96}.  In particular, if $y ∈ Y$ is any closed point,
  Kollár defines in \cite[I~Def.~3.10.4]{K96} the associated cycle-theoretic
  fibre, which we denote by $f^{[-1]}(y)$.
\end{notation}

\begin{warning}
  In the setting of Notation~\ref{not:ctp}, it is generally \emph{not} true that
  the cycle-theoretic fibre $f^{[-1]}(y)$ is the cycle associated with the
  scheme-theoretic fibre $f^{-1}(y)$.  Using the notation introduced above, the
  cycles $f^{[-1]}(y)$ and $[f^{-1}(y)]$ do not agree in general.
  \Publication{An example is discussed in the preprint version of this
    paper.}\Preprint{For a simple example, consider the morphism
    $$
    f:ℂ² → \bigl\{ (a,b,c) ∈ ℂ³ \,|\ ac=b² \bigr\}, \quad (x,y) ↦ \bigl(
    x²,xy,y² \bigr),
    $$
    which is the quotient morphism for the action of $ℤ_2 = \{ \pm 1\}$ on $ℂ²$
    by homotheties.  One computes directly from the definition that
    $$
    \bigl[f^{-1} \bigl(\vec 0_{ℂ³} \bigr)\bigr] = 3·\vec 0_{ℂ²}
    \quad\text{and}\quad f^{[-1]}\bigl(\vec 0_{ℂ³}\bigr) = 2·\vec 0_{ℂ²}
    $$}
\end{warning}

\begin{reminder}[Pull-back of Weil divisors]
  For arbitrary morphisms $f: X → Y$ normal, projective varieties, there is
  generally no good notion of pull-back for Weil-divisors.  If $f$ is finite, or
  more generally equidimensional, then a good pull-back map exists.  We refer to
  \cite[Sect.~2.4.1]{CKT16} for a discussion.
\end{reminder}

\subsection{Numerical classes}
\approvals{Behrouz & yes \\ Daniel & yes \\ Stefan & yes \\ Thomas & yes}

We briefly fix our notation for numerical classes and intersection numbers.  The
following definition allows to discuss slope and stability for arbitrary sheaves
on arbitrarily singular spaces.  We refer to \cite[Sect.~4.1]{GKP13} for a more
detailed discussion.

\begin{notation}[Numerical classes and intersection numbers]
  Let $X$ be a projective variety.  If $ℒ ∈ \Pic(X)$ is invertible and
  $D ∈ \Div(X)$ a Cartier divisor, we denote the numerical classes in
  $N¹(X)_{ℝ}$ by $[ℒ]$ and $[D]$; see \cite[Sect.~II.4]{K96} for a brief
  definition and discussion of the space $N¹(X)_{ℝ}$ of numerical Cartier
  divisor classes.  If $A$ is any purely $d$-dimensional cycle on $X$, we denote
  the intersection number with numerical classes of invertible sheaves $ℒ_i$ by
  $[ℒ_1]⋯[ℒ_d]·A ∈ ℤ$.  For brevity of notation we will often write
  $[ℒ_1]⋯[ℒ_n] ∈ ℤ$ instead of the more correct $[ℒ_1]⋯[ℒ_n]·[X]$.  Ditto for
  intersection with Cartier divisors.
\end{notation}

For the reader's convenience, we recall the notion of ``numerical flatness for
vector bundles'', which generalises the notion of ``numerical triviality'' for
line bundles.

\begin{defn}[\protect{Nefness and numerical flatness, \cite[Def.~1.17]{DPS94}}]\label{def:numflat}
  Let $φ : X → Y$ be a projective morphism of quasi-projective varieties.  Given
  a locally free sheaf $ℱ$ on $X$, consider the composed morphism
  $$
  \xymatrix{ %
    ℙ(ℱ) \ar[r] \ar@/^3mm/[rr]^{δ} & X \ar[r]_{φ} & Y.  %
  }
  $$
  The sheaf $ℱ$ is called \emph{$φ$-nef} if $[𝒪_{ℙ(ℱ)}(1)]$ is $δ$-nef, that is
  $[𝒪_{ℙ(ℱ)}(1)]·C ≥ 0$, for every irreducible curve $C ⊂ ℙ(ℱ)$ such that $δ(C)$
  is a point.  The sheaf $ℱ$ is called \emph{$φ$-numerically flat} if $ℱ$ and
  its dual $ℱ^*$ are both $φ$-nef.  If $Y$ is a point, we simply say that $ℱ$ is
  \emph{numerically flat}.
\end{defn}

\begin{rem}[Alternate formulations of numerical flatness]\label{rem:nflat}
  Setting as in Definition~\ref{def:numflat}.  The following conditions are
  equivalent.
  \begin{enumerate}
  \item The bundle $ℱ$ and its dual $ℱ^*$ are both $φ$-nef.
  \item The bundle $ℱ$ is $φ$-nef and the invertible sheaf $(\det ℱ)^*$ is
    $φ$-nef.
  \item The bundle $ℱ$ is $φ$-nef and the invertible sheaf $\det ℱ$ is
    $φ$-numerically trivial.
  \end{enumerate}
\end{rem}

Chern classes of numerically flat bundles vanish.  The proof is based on two
deep facts: the Uhlenbeck-Yau theorem asserting the existence of
Hermite-Einstein metrics on stable vector bundles, and the resulting
Kobayashi-Lübke flatness criterion derived from Lübke’s inequality on Chern
classes of Hermite-Einstein vector bundles.

\begin{thm}[\protect{Chern classes of numerically flat bundles, \cite[Cor.~1.19]{DPS94}}]\label{thm:numflat}
  Let $X$ be a smooth, projective variety and $ℱ$ a numerically flat, locally
  free sheaf on $X$.  Then, all Chern classes
  $c_i(ℱ) ∈ H^{2i}\bigl(X,\, ℝ \bigr)$ vanish.  \qed
\end{thm}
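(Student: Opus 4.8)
The plan is to reduce the statement to the vanishing of the real Chern classes of \emph{Hermitian flat} bundles---vector bundles induced by unitary representations of $\pi_1(X)$, whose Chern--Weil forms vanish identically---and then to assemble $ℱ$ out of such bundles by a filtration, concluding with the Whitney sum formula. The two analytic inputs advertised before the statement enter as the only non-formal ingredients: the Uhlenbeck--Yau theorem supplies Hermite--Einstein metrics on the stable constituents of a Jordan--Hölder filtration, and the Kobayashi--Lübke criterion (the equality case of Lübke's inequality $\int_X \Delta(\sE)\wedge\omega^{n-2}\ge 0$) promotes these metrics to projectively flat ones.

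Fix an ample divisor $H$ on $X$ and set $n=\dim X$. First I would observe that $ℱ$ is $H$-semistable with numerically trivial first Chern class. Indeed, $\det ℱ$ is numerically trivial by Remark~\ref{rem:nflat}, so $c_1(ℱ)=0$ in $N^1(X)_{\bR}$; and for any torsion-free quotient $ℱ\onto \sQ$, restricting to a general complete-intersection curve $C$ (a transverse intersection of $n-1$ general members of $|mH|$, $m\gg 0$) realises $\sQ|_C$ as a quotient bundle of the nef bundle $ℱ|_C$, whence $\deg \sQ|_C\ge 0$, and therefore $\mu_H(\sQ)\ge 0=\mu_H(ℱ)$.

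The numerical heart of the argument is the identity $\Delta(ℱ)\cdot H^{n-2}=0$, where $\Delta(ℱ):=2\rank(ℱ)\,c_2(ℱ)-(\rank(ℱ)-1)\,c_1(ℱ)^2$. Since $c_1(ℱ)\equiv 0$, this reduces to $c_2(ℱ)\cdot H^{n-2}=0$, which I would get from the Fulton--Lazarsfeld numerical positivity of the nef bundle $ℱ$: the Schur classes $c_2(ℱ)$ and $c_1(ℱ)^2-c_2(ℱ)$ are both nonnegative against $H^{n-2}$, and with $c_1(ℱ)\equiv 0$ these two inequalities pinch $c_2(ℱ)\cdot H^{n-2}$ to $0$. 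Now pick a Jordan--Hölder filtration $0=ℱ_0\subset ℱ_1\subset\cdots\subset ℱ_\ell=ℱ$ with respect to $H$, with $H$-stable slope-zero graded pieces $\sG_i:=ℱ_i/ℱ_{i-1}$. The discriminant is additive along a short exact sequence up to a Hodge-index correction in the $c_1$'s; iterating this over the filtration, using that the classes $c_1(\sG_i)$ are $H$-primitive (so that the correction terms have the favourable sign) together with the Bogomolov inequality $\Delta(\sG_i)\cdot H^{n-2}\ge 0$ (itself a consequence of Uhlenbeck--Yau and Lübke's inequality), expresses $0=\Delta(ℱ)\cdot H^{n-2}$ as a sum of nonnegative terms. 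Hence each vanishes: $\Delta(\sG_i)\cdot H^{n-2}=0$, and, since also $\sum_i c_1(\sG_i)=c_1(ℱ)\equiv 0$, each $c_1(\sG_i)$ is numerically trivial.

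It remains to identify the pieces. Each $\sG_i$ is $H$-stable and reflexive with $c_1(\sG_i)\equiv 0$ and $\Delta(\sG_i)\cdot H^{n-2}=0$; by Uhlenbeck--Yau (in the Bando--Siu form for reflexive sheaves) it carries an admissible Hermite--Einstein metric whose Einstein factor vanishes because $\deg_H \sG_i=0$, and the equality case of Lübke's inequality makes this metric projectively flat. Since $\det \sG_i$ is then a numerically trivial line bundle carrying a degree-zero Hermite--Einstein---hence flat---metric, $\sG_i$ is Hermitian flat on its big locally free locus $U_i$; as $\pi_1(U_i)\cong\pi_1(X)$, the associated unitary representation extends to $\pi_1(X)$ and defines a genuine flat bundle on $X$ agreeing with $\sG_i$ in codimension one, so by reflexivity $\sG_i$ is itself locally free and Hermitian flat. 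Consequently the $ℱ_i$ are subbundles of $ℱ$, each $\sG_i$ has vanishing real Chern classes, and the Whitney sum formula applied to the filtration gives $c_i(ℱ)=0$ for all $i\ge 1$. I expect the genuine obstacle to be exactly this last passage through the stable pieces: knowing a priori only that $ℱ$, and not the $\sG_i$, is locally free and nef, recovering local freeness and Hermitian flatness of the graded constituents is precisely where one must invoke Uhlenbeck--Yau and the Kobayashi--Lübke criterion; everything else is bookkeeping with discriminants and the Hodge index theorem.
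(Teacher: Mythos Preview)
Your proposal is correct and follows precisely the strategy the paper indicates (and which is carried out in \cite[Thm.~1.18 and Cor.~1.19]{DPS94}): semistability and numerical triviality of $c_1$ from nefness, the pinching $c_2(ℱ)·[H]^{n-2}=0$ via Fulton--Lazarsfeld positivity of the Schur classes $s_{(1,1)}=c_2$ and $s_{(2)}=c_1^2-c_2$, a Jordan--Hölder decomposition combined with Bogomolov/Hodge-index bookkeeping, and finally Uhlenbeck--Yau plus the equality case of Lübke's inequality to make the stable graded pieces Hermitian flat. The paper itself does not spell this out but simply cites DPS94 after summarising the two analytic inputs you use; your argument is a faithful expansion of that reference.
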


\subsection{KLT spaces, the Basepoint-free Theorem and contractions}
\approvals{Behrouz & yes \\ Daniel & yes \\ Stefan & yes \\ Thomas & yes}

We will mostly work with klt pairs $(X,Δ)$, but the boundary divisor $Δ$ is
usually irrelevant in our discussion.  We will use the following shorthand
notation throughout.  We refer to \cite{KM98} for a standard reference
concerning klt pairs but recall the global convention that the boundary divisor
is always assumed to be effective in this paper.

\begin{defn}[KLT spaces]\label{def:klt}
  A normal, quasi-projective variety is called a \emph{klt space} if there
  exists an effective Weil $ℚ$-divisor $Δ$ such that $(X,Δ)$ is klt.
\end{defn}

\begin{note}
  Recall that a pair $(X,Δ)$ as above is called klt\footnote{This is short for
    ``Kawamata log terminal''.} if the Weil $ℚ$-divisor $K_X+Δ$ is $ℚ$-Cartier,
  $ ⌊ Δ ⌋ = 0$, and if for one (equivalently: for every) resolution of
  singularities, $π : \wtilde{X} → X$, there exists a $ℚ$-linear equivalence of
  the form
  $$
  K_{\wtilde{X}} + π^{-1}_* Δ \sim_ℚ π^*(K_X+Δ)+ \textstyle{\sum_i} a_i·E_i,
  $$
  where the $E_i ⊂ \wtilde{X}$ are $π$-exceptional and the numbers $a_i ∈ ℚ$
  satisfy $a_i > -1$ for all $i$.
\end{note}

The results of Section~\ref{sec:03} rely in part on the Basepoint-free Theorem
and its immediate Corollary~\ref{cor:descprep}, which proves
Theorem~\ref{thm:desc1} for line bundles.  We recall the statement for the
reader's convenience.  Full details are found in the standard references, cf.\
including \cite[Thm.~3-1-1 and Rem.~3-1-2(i)]{KMM87}, \cite[Thm.~3.24]{KM98} and
\cite[Thm.~2.1.27]{Laz04-I}.

\begin{thm}[Basepoint-free Theorem]\label{thm:bpf}
  Let $φ : X → Y$ be a projective surjection of normal, quasi-projective
  varieties.  Assume that there exists an effective Weil $ℚ$-divisor $Δ$ on $X$
  such that $(X,Δ)$ is klt.  If $L$ is any $φ$-nef Cartier divisor on $X$ and
  $m ∈ ℕ^+$ any number such that $m·L - (K_X+Δ)$ is $φ$-nef and $φ$-big, then
  there exists a unique factorisation via a normal variety $Z$,
  \begin{equation}\label{eq:bpf}
    \begin{gathered}
      \xymatrix{ %
        X \ar@{->>}[r]_{α} \ar@{->>}@/^3mm/[rr]^{φ} & Z \ar@{->>}[r]_{β} & Y, %
      }
    \end{gathered}
  \end{equation}
  such that the following holds.
  \begin{enumerate}
  \item\label{il:t1} The morphisms $α$ and $β$ are surjective.  The morphism $α$
    has connected fibres.
  \item\label{il:t2} The Cartier divisor $L$ is the pull-back of a $β$-ample
    Cartier divisor $L_Z$ on $Z$.
  \item\label{il:t3} If $Y° ⊆ Y$ is open with preimages $X°$ and $Z°$, and if
    $L|_{X°}$ is $φ|_{X°}$-ample, then $α|_{X°} : X° → Z°$ is isomorphic.  \qed
  \end{enumerate}
\end{thm}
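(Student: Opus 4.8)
The plan is to deduce the statement from the relative version of the Kawamata--Shokurov Basepoint-free Theorem and then to carry out the construction of $Z$ and the verification of \ref{il:t1}--\ref{il:t3} as essentially formal bookkeeping. First I would note that the numerical hypotheses propagate: since $L$ is $\varphi$-nef and $m\cdot L-(K_X+\Delta)$ is $\varphi$-nef and $\varphi$-big, for every integer $k\ge m$ the divisor $k\cdot L-(K_X+\Delta)=\bigl(m\cdot L-(K_X+\Delta)\bigr)+(k-m)\cdot L$ is again $\varphi$-nef and $\varphi$-big. The relative Basepoint-free Theorem, e.g.\ \cite[Thm.~3-1-1 and Rem.~3-1-2(i)]{KMM87}, then furnishes an integer $k_0\ge m$ such that for all $k\ge k_0$ the evaluation map $\varphi^*\varphi_*\sO_X(k\cdot L)\to\sO_X(k\cdot L)$ is surjective, i.e.\ the divisor $k\cdot L$ is $\varphi$-free.

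Next I would build $Z$. Fixing some $k\ge k_0$, the surjection above defines a $Y$-morphism from $X$ to $\bP\bigl(\varphi_*\sO_X(k\cdot L)\bigr)$ that pulls $\sO(1)$ back to $\sO_X(k\cdot L)$; I would let $X\xrightarrow{\alpha}Z\to\bP\bigl(\varphi_*\sO_X(k\cdot L)\bigr)$ be the Stein factorisation of the induced map onto its image, and $\beta\colon Z\to Y$ the structure morphism. Then $\alpha$ is surjective with connected fibres, $\alpha_*\sO_X=\sO_Z$, and $Z$ is normal because $X$ is; this establishes \eqref{eq:bpf} and \ref{il:t1}, and by construction $k\cdot L=\alpha^*H_k$ for a $\beta$-ample Cartier divisor $H_k$ on $Z$. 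Running the same argument for $k$ and for $k+1$, and using that $\alpha^*$ is injective on Picard groups (projection formula, since $\alpha_*\sO_X=\sO_Z$), I would set $L_Z:=H_{k+1}-H_k$; then $\alpha^*L_Z=L$, so $L_Z$ is the unique such class, and $\alpha^*(k\cdot L_Z)=\alpha^*H_k$ forces $k\cdot L_Z\sim H_k$, whence $L_Z$ is $\beta$-ample. This is \ref{il:t2}.

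For uniqueness of the factorisation and for \ref{il:t3} the decisive point is the characterisation of $\alpha$-contracted curves: for any factorisation as in the theorem and any irreducible curve $C\subset X$ with $\varphi(C)$ a point, one has $L\cdot C=0$ if $\alpha(C)$ is a point and $L\cdot C=\deg(\alpha|_C)\cdot\bigl(L_Z\cdot\alpha_*[C]\bigr)>0$ otherwise, because $L_Z$ is $\beta$-ample and $\alpha(C)$ is then a curve contracted by $\beta$. Hence $\alpha$ contracts exactly the $\varphi$-vertical curves $C$ with $L\cdot C=0$; two factorisations as in the theorem therefore have identical fibres, and, being proper surjective with connected fibres onto normal varieties, are canonically isomorphic over $Y$, with the associated $\beta$-ample divisors matching by the injectivity of $\alpha^*$. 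For \ref{il:t3}, if $L|_{X^\circ}$ is $\varphi|_{X^\circ}$-ample then no $\varphi$-vertical curve in $X^\circ$ annihilates $L$, so $\alpha$ contracts no curve of $X^\circ=\alpha^{-1}(Z^\circ)$; as $\alpha$ is proper, $\alpha|_{X^\circ}\colon X^\circ\to Z^\circ$ is then quasi-finite and proper, hence finite, and having connected fibres it is bijective and of degree one, hence birational, hence an isomorphism onto the normal variety $Z^\circ$ by Zariski's Main Theorem.

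The only genuinely hard ingredient is the relative Basepoint-free Theorem invoked at the start; everything else is the descent and uniqueness bookkeeping, and the step I expect to be most delicate is the passage from ``$L$ is ample over $Y$'' to ``$\alpha$ is an isomorphism'' in the proof of \ref{il:t3}, via the curve characterisation together with Zariski's Main Theorem.
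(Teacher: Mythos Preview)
The paper does not supply its own proof of this theorem; it records the statement as a known result and refers to the standard sources \cite[Thm.~3-1-1 and Rem.~3-1-2(i)]{KMM87}, \cite[Thm.~3.24]{KM98}, and \cite[Thm.~2.1.27]{Laz04-I}, ending with a \qed. Your argument is the standard derivation found in those references---invoke relative basepoint-freeness, take the Stein factorisation of the induced map to $\bP\bigl(\varphi_*\sO_X(kL)\bigr)$, and descend $L$ by comparing two consecutive multiples---and it is correct. One expository point: before forming $L_Z:=H_{k+1}-H_k$ you implicitly use that the Stein factorisations for $k$ and $k+1$ yield the \emph{same} $Z$; this follows from your own curve characterisation (both maps contract exactly the $\varphi$-vertical curves $C$ with $L\cdot C=0$), but you only spell that out afterwards in the uniqueness paragraph, so it would read more cleanly to move that observation earlier.
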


\begin{note}
  Item~\ref{il:t2} implies that the morphism $α $ contracts exactly those
  irreducible curves $C$ with $[L]·C = 0$ and $\dim \varphi(C) = 0$.
\end{note}

\begin{cor}[Descent of invertible sheaves]\label{cor:descprep}
  Let $φ : X → Y$ be a birational, projective morphism of normal,
  quasi-projective varieties, and let $L$ be any $φ$-numerically trivial Cartier
  divisor on $X$.  If there exists a Weil $ℚ$-divisor $Δ$ on $X$ such that
  $(X,Δ)$ is klt and such that $-(K_X+Δ)$ is $φ$-nef, then $L$ is linearly
  equivalent to the pull-back of a Cartier divisor on $Y$.
\end{cor}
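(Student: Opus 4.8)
The plan is to apply the Basepoint-free Theorem, Theorem~\ref{thm:bpf}, directly to the divisor $L$ itself, and then to show that the resulting factorisation is trivial on the $Y$-side.

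First I would verify that the hypotheses of Theorem~\ref{thm:bpf} hold with $m = 1$. Since $L$ is $\varphi$-numerically trivial it is in particular $\varphi$-nef, so $L - (K_X + \Delta) = L + \bigl(-(K_X+\Delta)\bigr)$ is a sum of $\varphi$-nef $\mathbb{Q}$-Cartier $\mathbb{Q}$-divisors and hence $\varphi$-nef. Moreover it is $\varphi$-big for the trivial reason that $\varphi$ is birational: the generic fibre of $\varphi$ is zero-dimensional, so \emph{every} $\mathbb{Q}$-Cartier $\mathbb{Q}$-divisor on $X$ is $\varphi$-big. Theorem~\ref{thm:bpf} therefore produces a factorisation $\varphi = \beta \circ \alpha$ through a normal variety $Z$, where $\alpha \colon X \to Z$ is surjective with connected fibres, $\beta \colon Z \to Y$ is surjective, and $L \sim \alpha^* L_Z$ for some $\beta$-ample Cartier divisor $L_Z$ on $Z$.

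The main point is then to show that $\beta$ is an isomorphism. By the Note following Theorem~\ref{thm:bpf}, the morphism $\alpha$ contracts exactly those irreducible curves $C \subset X$ with $[L]\cdot C = 0$ and $\dim \varphi(C) = 0$; because $L$ is $\varphi$-numerically trivial the first of these conditions is implied by the second, so $\alpha$ contracts precisely the curves that $\varphi$ contracts. Now suppose $\beta$ is not finite. Being projective, $\beta$ would then contract some irreducible curve $C' \subset Z$, and since $\alpha$ is surjective and projective there is an irreducible curve $C \subset X$ with $\alpha(C) = C'$ (choose, for instance, a general complete-intersection curve inside an irreducible component of $\alpha^{-1}(C')$ that dominates $C'$). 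This curve $C$ is not contracted by $\alpha$, yet $\varphi(C) = \beta(C')$ is a point, so $C$ is contracted by $\varphi$ --- contradicting the previous sentence. Hence $\beta$ is finite; as it is also birational and $Y$ is normal, $\beta$ is an isomorphism.

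Finally, $M := (\beta^{-1})^*L_Z$ is a Cartier divisor on $Y$, and $\varphi^* M = \alpha^*\beta^* M = \alpha^* L_Z \sim L$, which is the assertion. Essentially the whole argument is formal once Theorem~\ref{thm:bpf} is invoked; the one step that needs a genuine (if routine) argument is the curve-lifting used to conclude that $\beta$ is finite. An alternative is to deduce the finiteness of $\beta$ from the fact that $\alpha$ and $\varphi$ have the same contracted curves by comparing the cones $\overline{NE}(X/Z)$ and $\overline{NE}(X/Y)$ inside $N_1(X)$, but the direct argument above seems the shortest route.
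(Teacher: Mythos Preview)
Your proof is correct and follows essentially the same route as the paper's: verify the hypotheses of the Basepoint-free Theorem for $L$ itself (using that $\varphi$ is birational to get $\varphi$-bigness for free), obtain the factorisation $\varphi = \beta \circ \alpha$, and then argue that $\beta$ is finite, hence an isomorphism by Zariski's main theorem. The only cosmetic difference is that you first characterise the $\alpha$-contracted curves via the Note and then derive the contradiction, whereas the paper directly uses the $\beta$-ampleness of $L_Z$ to compute $\deg L|_C > 0$; your curve-lifting step is precisely the justification the paper leaves implicit in the phrase ``we would find a curve $C \subseteq X$ which is mapped to a point by $\varphi$, but not by $\alpha$''.
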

\begin{proof}
  We claim that $L$ is $φ$-nef and that $D := L - \bigl( K_{X} + Δ \bigr)$ is
  $φ$-nef and $φ$-big.  Relative nefness of $L$ and $D$ holds by assumption.
  The condition that $D$ is $φ$-big is void since $φ$ is assumed to be
  birational.  Theorem~\ref{thm:bpf} thus gives a factorisation of $φ$ via a
  normal variety $Z$ as in \eqref{eq:bpf}, and $β$-ample Cartier divisor $L_Z$
  on $Z$ such that $L \sim α^*L_Z$.

  As a next step, we claim that $β$ is finite.  If not, we would find a curve
  $C ⊆ X$ which is mapped to a point by $φ$, but not by $α$.  The image curve
  $α(C)$ would them be contained in a $β$-fibre, and would thus have positive
  intersection with the $β$-ample divisor $L_Z$.  In particular,
  $\deg L|_C = \deg α^*(L_Z)|_C > 0$, contradicting the assumption that $L$ is
  $φ$-numerically trivial.

  In summary, see that $β$ is both birational and finite.  Zariski's main
  theorem, \cite[Cor.~12.88]{MR2675155}, applies to show that $β$ is isomorphic.
  Corollary~\ref{cor:descprep} follows.
\end{proof}

We list some basic properties of the contraction morphism associated with an
extremal face and refer to \cite[Def.~3-2-3]{KMM87} for terminology.

\begin{prop}[Relative contractions of extremal faces]\label{prop:rcoef}
  Assume we are given a sequence of projective surjections between normal,
  quasi-projective varieties, $f: X → Y$ and $Y → Z$.  Assume also that there
  exists an effective Weil $ℚ$-divisor $Δ$ on $X$ such that $(X,Δ)$ is klt.
  \begin{enumerate}
  \item\label{il:1w} If $f$ has only connected fibres and if $-(K_X+Δ)$ is
    $f$-ample, then there exists a $(K_X+Δ)$-negative extremal face $F$ of
    $\overline{NE}(X/Z)$ such that $f$ is the contraction morphism of $F$.
  \item\label{il:2w} If there exists a $(K_X+Δ)$-negative extremal face $F$ of
    $\overline{NE}(X/Z)$ such that $f$ is the contraction morphism of $F$, then
    relative Picard numbers are additive,
    $$
    ρ(X/Z) = ρ(X/Y)+ρ(Y/Z).
    $$
    The extremal face $F$ is an extremal ray if and only if $ρ(X/Y)=1$.
  \end{enumerate}
\end{prop}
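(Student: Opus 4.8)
The plan is to reduce both assertions to the basic structure theory of extremal faces and their contractions, together with additivity of relative Picard numbers along compositions. Throughout, write $g : Y \to Z$ for the second morphism, so that $f$ fits into $X \xrightarrow{f} Y \xrightarrow{g} Z$.

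For item~\ref{il:1w}, the key point is that a fibrewise-negative ample class on $X/Z$ forces $f$ to contract exactly an extremal face. Concretely, since $-(K_X+\Delta)$ is $f$-ample, I would pick an $f$-ample Cartier divisor $H$ on $X$ and observe that $-(K_X+\Delta)-\varepsilon \cdot g^* (\text{ample on } Z)$ is still relatively ample over $Z$ for small $\varepsilon$ after twisting; more cleanly, use that the curves contracted by $f$ span a subspace $V \subseteq N_1(X/Z)$ on which $(K_X+\Delta)$ is negative (because $-(K_X+\Delta)$ is $f$-ample). Set $F := \overline{NE}(X/Z) \cap V$. One checks $F$ is a face: if $u+v \in F$ with $u,v \in \overline{NE}(X/Z)$, then since an $f$-ample class $\eta$ pairs to zero on $u+v$ and is nonnegative on $\overline{NE}(X/Z)$, it vanishes on both $u$ and $v$, so $u,v \in V$, hence in $F$. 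Extremality (negativity of $K_X+\Delta$ on $F \setminus \{0\}$) is then immediate from $f$-ampleness of $-(K_X+\Delta)$. Finally, by the relative Cone and Contraction Theorems over $Z$ (valid since $(X,\Delta)$ is klt, see \cite[Thm.~3-2-1]{KMM87}), $F$ admits a contraction morphism $\operatorname{cont}_F : X \to W$ over $Z$; since a curve over $Z$ is contracted by $\operatorname{cont}_F$ iff its class lies in $F$ iff it is contracted by $f$, and both $f$ and $\operatorname{cont}_F$ have connected fibres onto normal varieties over $Z$, the rigidity lemma identifies $W$ with $Y$ and $\operatorname{cont}_F$ with $f$.

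For item~\ref{il:2w}, assume $f = \operatorname{cont}_F$. The exact sequence of relative numerical groups gives a surjection $N_1(X/Z) \twoheadrightarrow N_1(Y/Z)$ whose kernel is precisely the span of curves contracted by $f$; because $f$ contracts exactly the face $F$, this kernel is the linear span $\langle F \rangle$, and moreover $f^* : N^1(Y/Z)_{\mathbb R} \to N^1(X/Z)_{\mathbb R}$ is injective with image the annihilator of $\langle F\rangle$. Hence $\rho(X/Z) = \dim \langle F \rangle + \rho(Y/Z)$. On the other hand, the curves contracted by $f$ over $Z$ are exactly those contracted over $Y$, so $N_1(X/Y)$ is naturally identified with $\langle F \rangle$ — here one uses that contractions of faces have no "wrong-way" curves, i.e.\ every $f$-exceptional curve class lies in $F$, which is part of the definition of a contraction of $F$. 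Thus $\rho(X/Y) = \dim\langle F\rangle$, giving $\rho(X/Z) = \rho(X/Y) + \rho(Y/Z)$. The final clause is then just the tautology $\dim\langle F\rangle = 1 \iff \rho(X/Y) = 1 \iff F$ is a ray.

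The main obstacle, and the only place requiring care, is the bookkeeping with relative numerical equivalence: one must check that the span of $f$-contracted curve classes inside $N_1(X/Z)$ really equals $\langle F\rangle$ (not something larger), and that $f$ restricted to this span is "faithful" in the sense that $N_1(X/Y) \hookrightarrow N_1(X/Z)$ lands exactly there. This is where the hypothesis that $f$ is the contraction of the face $F$ — rather than merely a morphism contracting $F$ — is essential: it guarantees $f_* \sL \equiv 0$ on $Z$ already forces $\sL \cdot C = 0$ for all $f$-contracted $C$, which pins down the kernel. I would cite \cite[Sect.~3]{KMM87} for the relevant properties of relative contractions and the compatibility of $\overline{NE}$ with base change to $Z$, and treat the Picard-number count as an elementary linear-algebra consequence.
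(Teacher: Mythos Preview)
Your overall strategy is correct and essentially reconstructs the arguments that the paper simply quotes from \cite[Lem.~3-2-5]{KMM87}.  The paper's own proof of \ref{il:1w} and of additivity in \ref{il:2w} is a bare citation to that lemma; only the final clause about $\dim F = 1 \Leftrightarrow ρ(X/Y)=1$ receives a short direct argument, invoking \cite[Lem.~3-2-4]{KMM87} for the existence of a contracted curve.  So your proposal is more detailed than the paper's, not different in substance.

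There is, however, a concrete slip in your execution of \ref{il:1w}.  You write that an \emph{$f$-ample} class $η$ ``pairs to zero on $u+v$ and is nonnegative on $\overline{NE}(X/Z)$''.  Neither property holds: an $f$-ample class is \emph{positive} on nonzero $f$-contracted curve classes, and carries no sign constraint whatsoever on curves over $Z$ that are not contracted by $f$.  The supporting divisor you actually want is $η = f^*A$ with $A$ a $g$-ample divisor on $Y$.  Then $η$ is $(g◦f)$-nef, hence nonnegative on all of $\overline{NE}(X/Z)$, and $η·C = A·f_*C$ vanishes precisely when $f(C)$ is a point (since $A$ is ample on every $g$-fibre).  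With this correction your face argument goes through, and the identification of $\operatorname{cont}_F$ with $f$ via connected fibres and normality of $Y$ is fine.  The remainder of your proposal, including the identification $N_1(X/Y) ≅ \langle F \rangle$ and the Picard-number count in \ref{il:2w}, is sound.
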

\begin{proof}
  Item~\ref{il:1w} is \cite[Lem.~3-2-5(1)]{KMM87}.  The additivity of Picard
  numbers in Item~\ref{il:2w} is \cite[Lem.~3-2-5(3)]{KMM87}.  It remains to
  consider the relation between Picard numbers and the dimension of $F$ in
  Item~\ref{il:2w}.
  
  If the dimension of $F$ is one, it follows from the definition that
  $ρ(X/Y) = 1$ once we know that there \emph{is} a curve in $X$ that is mapped
  to a point by $f$.  This is shown in \cite[Lem.~3-2-4]{KMM87}.  As for the
  converse direction, if $ρ(X/Y) = 1$, the definition implies that there must be
  curves that are contracted.  The face $F$ cannot be empty, and is necessarily
  of dimension one.
\end{proof}

\begin{warning}
  Picard numbers are generally not additive for compositions of arbitrary
  morphisms.  See \cite[Sect.~2.2]{KM98} for a discussion and
  \cite[Rem.~3-2-6]{KMM87} for an explicit example.
\end{warning}

\subsection{Sheaves with operators and Higgs sheaves}
\label{ssec:Higgs}
\approvals{Behrouz & yes \\ Daniel & yes \\ Stefan & yes \\ Thomas & yes}

Higgs sheaves and sheaves with operators on singular varieties were defined and
discussed in detail in \cite[Sects.~4 and 5]{GKPT15}.  We briefly recall the
main definitions here and then discuss pullback of Higgs sheaves.

\subsubsection{Fundamental definitions}
\label{subsubsect:funddefs}

---

\begin{defn}[\protect{Sheaf with an operator, invariant subsheaves, \cite[Def.~4.1]{GKPT15}}]\label{def:nshfop1}
  Let $X$ be a normal, quasi-projective variety and $𝒲$ be a coherent sheaf of
  $𝒪_X$-modules.  A \emph{sheaf with a $𝒲$-valued operator} is a pair $(ℰ, θ)$
  where $ℰ$ is a coherent sheaf and $θ: ℰ → ℰ ⊗ 𝒲$ is an $𝒪_X$-linear sheaf
  morphism.
\end{defn}

\begin{defn}[\protect{Invariant subsheaf, \cite[Def.~4.8]{GKPT15}}]\label{def:ninvarSS}
  Setting as in Definition~\ref{def:nshfop1}.  A coherent subsheaf $ℱ ⊆ ℰ$ is
  called \emph{$θ$-invariant} if $θ(ℱ)$ is contained in the image of the natural
  map $ℱ ⊗ 𝒲 → ℰ ⊗ 𝒲$.  Call $ℱ$ \emph{generically invariant} if the restriction
  $ℱ|_U$ is invariant with respect to $θ|_U$, where $U ⊆ X$ is the maximal,
  dense, open subset where $𝒲$ is locally free.
\end{defn}

\begin{defn}[\protect{Stability of sheaves with operator, \cite[Def.~4.13]{GKPT15}}]\label{defn:swostab1}
  Let $X$ be a normal, projective variety and $H$ be any nef, $ℚ$-Cartier
  $ℚ$-divisor on $X$.  Let $(ℰ, θ)$ be a sheaf with an operator, as in
  Definition~\ref{def:nshfop1}, were $ℰ$ is torsion free.  We say that $(ℰ, θ)$
  is \emph{semistable with respect to $H$} if the inequality $μ_H(ℱ) ≤ μ_H(ℰ)$
  holds for all generically $θ$-invariant subsheaves $ℱ ⊆ ℰ$ with
  $0 < \rank ℱ < \rank ℰ$.  The pair $(ℰ, θ)$ is called \emph{stable with
    respect to $H$} if strict inequality holds.  Direct sums of stable sheaves
  with operator are called \emph{polystable}.
\end{defn}

On a singular variety, some attention has to be paid concerning the definition
of ``Higgs sheaf'' at singular points.  Again, we recall the relevant
definitions here.

\begin{defn}[\protect{Higgs sheaf, stability, morphisms, \cite[Defs.~5.1 and 5.2, Sect.~5.6]{GKPT15}}]\label{def:Higgs}
  Let $X$ be a normal variety.  A \emph{Higgs sheaf} is a pair $(ℰ, θ)$ of a
  coherent sheaf $ℰ$ of $𝒪_X$-modules, together with an $Ω^{[1]}_X$-valued
  operator $θ : ℰ → ℰ ⊗ Ω^{[1]}_X$, called \emph{Higgs field}, such that the
  composed morphism
  $$
  \xymatrix{ %
    ℰ \ar[rr]^(.35){θ} && ℰ ⊗ Ω^{[1]}_X \ar[rr]^(.45){θ ⊗ \Id} && ℰ ⊗ Ω^{[1]}_X
    ⊗ Ω^{[1]}_X \ar[rr]^(.55){\Id ⊗ [Λ]} && ℰ ⊗ Ω^{[2]}_X }
  $$
  vanishes.  A Higgs sheaf is called stable if it is stable as a sheaf with an
  $Ω^{[1]}_X$-valued operator, ditto for semistable and polystable.  A
  \emph{morphism of Higgs sheaves}, written $f : (ℰ_1, θ_1) → (ℰ_2, θ_2)$, is a
  morphism $f : ℰ_1 → ℰ_2$ of coherent sheaves that commutes with the Higgs
  fields, $(f ⊗ \Id_{Ω^{[1]}_X}) ◦ θ_1 = θ_2 ◦ f$.
\end{defn}

\subsubsection{Pull-back}
\label{sssec:pb}
\approvals{Behrouz & yes \\ Daniel & yes \\ Stefan & yes \\ Thomas & yes}

If $f : Y → X$ is a morphism of normal varieties, and if $(ℰ,θ)$ is a Higgs
sheaf on $X$, there is generally no way to equip the pull-back sheaf $f^*ℰ$ with
a Higgs field, even if $ℰ$ is locally free.  It is a non-trivial fact that
pull-backs do exist for klt spaces.  We refer to \cite[Sects.~5.3 and
5.4]{GKPT15} for details.  In brief, if $X$ is klt, recall from \cite[Thms.~1.3
and 5.2]{MR3084424} that there exists a natural pull-back functor for reflexive
differentials on klt pairs that is compatible with the usual pull-back of Kähler
differentials and gives rise to a sheaf morphism
$$
d_{\refl} f : f^* Ω^{[1]}_X → Ω^{[1]}_Y.
$$
One can then define a Higgs field on $f^*ℰ$ as the composition of the following
morphisms,
\begin{equation}\label{eq:pbhgs}
  f^*ℰ \xrightarrow{f^* θ} f^* \Bigl( ℰ ⊗ Ω^{[1]}_X \Bigr) = f^*
  ℰ ⊗ f^* Ω^{[1]}_X \xrightarrow{\Id_{f^* ℰ} ⊗
    d_{\refl} f} f^* ℰ ⊗ Ω^{[1]}_Y.
\end{equation}

\subsection{Stability for sheaves on the smooth locus}
\approvals{Behrouz & yes \\ Daniel & yes \\ Stefan & yes \\ Thomas & yes}
\label{ssec:stability}

In the situation discussed in the present paper, it makes sense to generalise
the stability notions of Section~\ref{ssec:Higgs} to the case where the (Higgs-)
sheaves are defined on the smooth locus of a normal variety only.

\begin{defn}[Slope and stability for sheaves on the smooth locus]\label{def:sHsl1}
  Let $X$ be a normal, projective variety and let $ℰ°$ be a torsion free,
  coherent sheaf on $X_{\reg}$ of positive rank.  If $H ∈ \Div(X)$ is nef,
  define the \emph{slope of $ℰ°$ with respect to $H$} as
  $$
  μ_H(ℰ°) := \frac{c_1(ι_* ℰ°)·[H]^{\dim X-1}}{\rank ℰ°},
  $$
  where $ι : X_{\reg} → X$ is the inclusion.
\end{defn}

\begin{rem}[Algebraicity assumption]
  We underline that $ℰ°$ is assumed to be algebraic in
  Definition~\ref{def:sHsl1}.  For coherent \emph{analytic} sheaves on
  $X_{\reg}^{an}$, the push-forward $ι_* ℰ°$ need not be coherent in general.
\end{rem}

\begin{defn}[Slope and stability for sheaves on the smooth locus]\label{def:sHsl2}
  Setting as in Definition~\ref{def:sHsl1}.  If $𝒲°$ is coherent on $X_{\reg}$
  and if $θ° : ℰ° → ℰ° ⊗ 𝒲°$ is a $𝒲°$-valued operator, say that
  \emph{$(ℰ°, θ°)$ is stable with respect to $H$}, if the inequality
  $μ_H(ℱ°) < μ_H(ℰ°)$ holds for all generically $θ°$-invariant subsheaves
  $ℱ° ⊆ ℰ°$ with $0 < \rank ℱ° < \rank ℰ°$.  Analogously, define notions of
  semistable and polystable for sheaves with operators on $X_{\reg}$, ditto for
  Higgs sheaves.
\end{defn}

The following two lemmas summarise properties of the generalised stability
notions that will be used later.  Proofs are elementary and therefore omitted.

\begin{lem}[Restriction of stable sheaves to $X_{\reg}$]\label{lem:owe1}
  Let $X$ be a normal, projective variety, and let $(ℰ, θ)$ be a torsion free
  sheaf with a $𝒲$-valued operator.  If $H ∈ \Div(X)$ is nef, then $(ℰ, θ)$ is
  semistable (resp.\ stable) with respect to $H$ as a sheaf with a $𝒲$-valued
  operator if and only if $(ℰ, θ)|_{X_{\reg}}$ is semistable (resp.\ stable)
  with respect to $H$ as a sheaf with a $𝒲|_{X_{\reg}}$-valued operator.  \qed
\end{lem}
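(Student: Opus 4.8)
Lemma~\ref{lem:owe1} asserts that for a torsion free sheaf with operator $(ℰ,θ)$ on a normal projective variety $X$ and a nef divisor $H$, semistability (resp.\ stability) as a sheaf with a $𝒲$-valued operator is equivalent to semistability (resp.\ stability) of the restriction $(ℰ,θ)|_{X_{\reg}}$ as a sheaf with a $𝒲|_{X_{\reg}}$-valued operator.

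**Proof plan.** The plan is to set up a bijection between the two families of subsheaves that enter the respective stability conditions, and to check that this bijection preserves both slope and the property of being generically $θ$-invariant. Write $ι : X_{\reg} → X$ for the inclusion; since $X$ is normal, $X∖X_{\reg}$ is small, i.e.\ of codimension $≥ 2$.

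First I would describe the correspondence on subsheaves. Given a subsheaf $ℱ ⊆ ℰ$ with $0 < \rank ℱ < \rank ℰ$, restriction yields $ℱ|_{X_{\reg}} ⊆ ℰ|_{X_{\reg}} = ℰ°$ of the same rank. Conversely, given $ℱ° ⊆ ℰ°$, form $ι_*ℱ° ∩ ℰ$ inside $ι_*ℰ°$ (note $ℰ ⊆ ι_*ι^*ℰ = ι_*ℰ°$ because $ℰ$ is torsion free and $X$ is normal, so $ℰ$ is determined by its restriction to any big open set); this is a subsheaf of $ℰ$ restricting to $ℱ°$ on $X_{\reg}$. The key point is that these operations are mutually inverse up to the only ambiguity that matters for slopes, namely that one may always replace a subsheaf by its saturation without changing the rank; concretely, two subsheaves of $ℰ$ that agree on a big open set have equal first Chern classes of their reflexive hulls (equivalently, equal determinants up to a divisor supported in the small set, which contributes nothing to $c_1$), hence equal slope with respect to any $H$. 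So the correspondence preserves rank and slope on the nose at the level of slopes.

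Second I would check compatibility with the operator. Restriction of $θ$ to $X_{\reg}$ is $θ° := θ|_{X_{\reg}}$, and $𝒲|_{X_{\reg}}$ is locally free on the locus $U ⊆ X$ where $𝒲$ is locally free intersected with $X_{\reg}$, which is still big. By Definition~\ref{def:ninvarSS}, generic $θ$-invariance of $ℱ$ is tested on $U$, and generic $θ°$-invariance of $ℱ°$ is tested on $U ∩ X_{\reg}$; since $U ∩ X_{\reg}$ is big in $U$ and invariance is a closed condition on an open dense set, $ℱ$ is generically $θ$-invariant if and only if $ℱ|_{X_{\reg}}$ is generically $θ°$-invariant. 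Thus the correspondence restricts to a correspondence between generically invariant subsheaves of the relevant ranks on the two sides, preserving slopes. Comparing with Definitions~\ref{defn:swostab1} and \ref{def:sHsl2}, the inequality $μ_H(ℱ) ≤ μ_H(ℰ)$ holding for all such $ℱ$ is literally equivalent to $μ_H(ℱ°) ≤ μ_H(ℰ°)$ holding for all such $ℱ°$, and similarly with strict inequality; note also $μ_H(ℰ) = μ_H(ι_*ℰ°) = μ_H(ℰ°)$ since $ι_*ℰ°$ is the reflexive hull of $ℰ$, again because $X∖X_{\reg}$ is small. This gives both the semistable and the stable equivalences.

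**Main obstacle.** The only genuinely delicate point is the bookkeeping with reflexive hulls and saturations: one must make sure that passing from $ℱ°$ to a subsheaf of $ℰ$ and back does not change the slope, i.e.\ that slopes depend only on the restriction to a big open set. This is exactly where normality of $X$ (hence $S_2$, hence extension of sections and of line bundles across the small set $X∖X_{\reg}$) is used, together with the definition of $μ_H$ on $X_{\reg}$ via the push-forward $ι_*$; all of this is standard, which is why the paper calls the proof elementary and omits it.
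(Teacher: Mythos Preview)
The paper omits the proof entirely, calling it elementary; your proposal supplies exactly the kind of argument the authors had in mind, namely the bijection between subsheaves of $ℰ$ and of $ℰ|_{X_{\reg}}$ up to modifications along the small set $X\setminus X_{\reg}$, together with the observation that slopes and generic invariance are insensitive to such modifications. One small point worth making explicit: to pass from invariance on $U\cap X_{\reg}$ back to invariance on all of $U$, it is cleanest to first replace the subsheaf by its saturation (which preserves generic invariance and can only increase slope), so that the quotient $ℰ/ℱ$ is torsion free; then the composed map $ℱ|_U \to (ℰ/ℱ)|_U\otimes 𝒲|_U$ has torsion free target and vanishes on the dense open $U\cap X_{\reg}$, hence vanishes on $U$. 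Your phrase ``invariance is a closed condition'' gestures at this, and with this clarification the argument is complete.
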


\begin{lem}[Extensions of operators from subsheaves]\label{lem:owe2}
  Let $X$ be a normal, projective variety and let $(ℰ°, θ°)$ be a torsion free
  sheaf on $X_{\reg}$ with a $𝒲°$-valued operator.  Let $ι° : 𝒲° ↪ \sV°$ is
  an inclusion of coherent sheaves on $X_{\reg}$ and consider the natural
  $\sV°$-valued operator $τ°$ that is defined as the composition
  $$
  \xymatrix{ %
    ℰ° \ar[rr]_(.4){θ°}\ar@/^0.4cm/[rrrr]^{τ°} && ℰ° ⊗ 𝒲° \ar[rr]_{\Id ⊗ ι°} &&
    ℰ° ⊗ \sV°.  }
  $$
  If $H ∈ \Div(X)$ is nef, then, $(ℰ°, θ°)$ semistable (resp.\ stable) with
  respect to $H$ as a sheaf with a $𝒲°$-valued operator if and only if
  $(ℰ°, τ°)$ semistable (resp.\ stable) with respect to $H$ as a sheaf with a
  $\sV°$-valued operator.  \qed
\end{lem}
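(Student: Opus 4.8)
\emph{Proof sketch.}
The plan is to show that a coherent subsheaf $ℱ° ⊆ ℰ°$ is generically $θ°$-invariant if and only if it is generically $τ°$-invariant. Granting this, the lemma is immediate: the slope $μ_H(ℱ°)$ and the rank $\rank ℱ°$ do not involve the operator at all, so semistability (resp.\ stability) of $(ℰ°, θ°)$ and of $(ℰ°, τ°)$ become literally the same inequalities $μ_H(ℱ°) ≤ μ_H(ℰ°)$ (resp.\ strict) ranging over the very same family of subsheaves. The polystable and Higgs versions transfer as well, being (semi)stability together with extra data (a splitting into stable summands, resp.\ the integrability composition of Definition~\ref{def:Higgs}) that is left untouched when $θ°$ is replaced by $τ° = (\Id_{ℰ°} ⊗ ι°) ◦ θ°$. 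As usual, it suffices to run this comparison over saturated subsheaves $ℱ° ⊆ ℰ°$, that is, those whose quotient $ℰ°/ℱ°$ is torsion free.

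One implication is purely formal. Tensoring the inclusion $ℱ° ↪ ℰ°$ with $ι° : 𝒲° ↪ \sV°$ produces a commutative square whose horizontal arrows are the natural maps $ℱ° ⊗ 𝒲° → ℰ° ⊗ 𝒲°$ and $ℱ° ⊗ \sV° → ℰ° ⊗ \sV°$ and whose vertical arrows are $\Id ⊗ ι°$, and by construction $τ°$ factors as $(\Id_{ℰ°} ⊗ ι°) ◦ θ°$. Hence, if $θ°$ carries $ℱ°$ into $\Image(ℱ° ⊗ 𝒲° → ℰ° ⊗ 𝒲°)$ generically, then $τ°$ carries $ℱ°$ into $\Image(ℱ° ⊗ \sV° → ℰ° ⊗ \sV°)$ generically; so every generically $θ°$-invariant subsheaf is generically $τ°$-invariant.

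For the converse I would pass to a dense open set $W° ⊆ X_{\reg}$ over which $ℰ°$, $ℱ°$, $ℰ°/ℱ°$, $𝒲°$ and $\sV°$ are simultaneously locally free; such a set exists because $ℰ°$ is torsion free and all sheaves in sight are coherent. Over $W°$ the sequence $0 → ℱ° → ℰ° → ℰ°/ℱ° → 0$ is locally split, hence stays split exact after tensoring with $𝒲°$ and with $\sV°$, compatibly with the maps $\Id ⊗ ι°$; moreover $\Id_{ℰ°/ℱ°} ⊗ ι°$ remains injective because $ℰ°/ℱ°$ is locally free over $W°$. A short diagram chase then shows that the square above is cartesian over $W°$: a local section of $ℰ° ⊗ 𝒲°$ whose image under $\Id ⊗ ι°$ lies in $ℱ° ⊗ \sV°$ already lies in $ℱ° ⊗ 𝒲°$. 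Applying this to $θ°(ℱ°) ⊆ ℰ° ⊗ 𝒲°$, whose image under $\Id ⊗ ι°$ equals $τ°(ℱ°)$, we conclude that generic $τ°$-invariance of $ℱ°$ forces $θ°(ℱ°) ⊆ ℱ° ⊗ 𝒲°$ over $W°$. Finally, since $ℱ°$ is saturated, the obstruction morphism $ℱ° → (ℰ°/ℱ°) ⊗ 𝒲°$ measuring the failure of $θ°$-invariance has torsion-free target over the locus where $𝒲°$ is locally free, so its vanishing over the dense open set $W°$ propagates to that whole locus; hence $ℱ°$ is generically $θ°$-invariant.

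I expect the cartesian-square step, together with the bookkeeping identifying the convenient open set $W°$ with the open sets appearing in the definition of generic invariance, to be the only points that need genuine (if routine) attention; everything else is formal, which is presumably why the authors record the proof as elementary.
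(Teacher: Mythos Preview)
The paper omits the proof entirely (``Proofs are elementary and therefore omitted''), so there is nothing to compare against; your argument is the kind of elementary verification the authors had in mind, and it is essentially correct.

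One small point worth tightening: the open sets in Definition~\ref{def:ninvarSS} of generic invariance are the maximal loci where $𝒲°$ (resp.\ $\sV°$) is locally free, and these need not coincide or be nested. Your propagation argument via the torsion-free obstruction $ℱ° → (ℰ°/ℱ°) ⊗ 𝒲°$ handles this for the converse direction, but the same device is also needed in the ``formal'' forward direction to pass from invariance over the $𝒲°$-locus to invariance over the $\sV°$-locus (via the dense intersection). You flag this as bookkeeping, which is fair; just note that both directions use it, not only the converse.
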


%
%
\svnid{$Id: 03.tex 1180 2018-10-25 09:04:48Z kebekus $}

\section{The nonabelian Hodge correspondence for locally free Higgs sheaves}\label{sect:NAHT}
\subversionInfo

\subsection{Main result}
\label{ssect:MainResult}
\approvals{Behrouz & yes \\ Daniel & yes \\ Stefan & yes \\ Thomas & yes}

Generalising work of Simpson, \cite[Cor.~3.10]{MR1179076}, in this section we
formulate a nonabelian Hodge correspondence for locally free Higgs sheaves on
klt spaces, relating such locally free Higgs sheaves to representations of the
fundamental group.  The formulation uses the fact that every algebraic variety
admits a distinguished, \emph{canonical} resolution of singularities, see
\cite[Rem.~1.16ff and Sect.~13]{BM96}.  The following additional notation will
be used.

\begin{notation}[Categories in the nonabelian Hodge correspondence]\label{not:category}
  Given a normal, projective variety $X$, consider the following categories.
  \begin{enumerate}
  \item[$\Higgs_X$]\label{il:x1} Locally free Higgs sheaves $(ℰ,θ)$ on $X$
    having the property that there exists an ample divisor $H ∈ \Div(X)$ such
    that $(ℰ, θ)$ is semistable with respect to $H$ and additionally satisfies
    $ch_1(ℰ)·[H]^{n-1} = ch_2(ℰ)·[H]^{n-2} = 0$.

  \item[$\LSys_x$] Local systems on $X$.
  \end{enumerate}
\end{notation}

\begin{notation}[Higgs sheaves]
  There is no uniform definition of Higgs sheaves on singular spaces in the
  literature.  Throughout the present paper, we use the definition given in
  Section~\ref{subsubsect:funddefs}.  (Semi)stability of Higgs sheaves is
  defined and discussed in \cite[Sect.~5.6]{GKPT15}.  We refer to
  \cite[Sect.~I.1]{Deligne70} for a discussion of the basic properties of local
  systems.
\end{notation}

\begin{notation}[Nonabelian Hodge correspondence for manifolds]
  If $X$ is smooth, Simpson's nonabelian Hodge correspondence gives an
  equivalence between the categories $\Higgs_X$ and $\LSys_X$.  We denote the
  relevant functors by $η_X : \LSys_X → \Higgs_X$ and
  $μ_X : \Higgs_X → \LSys_X$.
\end{notation}

The following is now the main result.  As we will see in
Section~\ref{ssect:MainResultFunc} below, the properties spelled out
in \ref{il:HC2} and \ref{il:HC3} immediately imply that the nonabelian Hodge
correspondence presented here is in fact fully functorial in morphisms of klt
spaces.

\begin{thm}[Nonabelian Hodge correspondence for klt spaces]\label{thm:free-naht}
  For every projective klt space $X$, there exists an equivalence of categories,
  given by functors $η_X : \LSys_X → \Higgs_X$ and $μ_X : \Higgs_X → \LSys_X$
  such that the following additional properties hold.
  \begin{enumerate}
  \item\label{il:HC1} If $X$ is smooth, then $η_X$ and $μ_X$ equal the functors
    from Simpson's nonabelian Hodge correspondence.

  \item\label{il:HC2} If $(ℰ,θ) ∈ \Higgs_X$ and $π: \wtilde{X} → X$ is the
    canonical resolution of singularities, then
    $π^*(ℰ,θ) ∈ \Higgs_{\wtilde{X}}$, and there exists a canonical isomorphism
    of local systems,
    $$
    M_{π,(ℰ,θ)} : π^* μ_X(ℰ,θ) → μ_{\wtilde{X}}\bigl( π^*(ℰ,θ) \bigr).
    $$

  \item\label{il:HC3} If $\aE ∈ \LSys_X$ is any local system and
    $π: \wtilde{X} → X$ is the canonical resolution of singularities, then there
    exists a canonical isomorphism of Higgs sheaves,
    $$
    N_{π,\aE} : π^* η_X(\aE) → η_{\wtilde{X}}\bigl( π^* \aE \bigr).
    $$
  \end{enumerate}
\end{thm}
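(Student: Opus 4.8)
The plan is to build the correspondence by pulling back to the canonical resolution $\pi\colon \wtilde X\to X$, applying Simpson's correspondence there, and descending. First I would address the object level. Given $\aE\in\LSys_X$, restrict it to the smooth locus $X_{\reg}$, pull back along $\pi$ (étale over the big open set where $\pi$ is an isomorphism, so no loss of information), and extend to a local system on all of $\wtilde X$; applying Simpson's $\eta_{\wtilde X}$ produces a polystable locally free Higgs sheaf $(\wtilde\sE,\wtilde\theta)$ on $\wtilde X$ with vanishing Chern classes, hence in particular numerically flat, and moreover $\pi$-numerically flat (this is exactly Proposition~\ref{prop:Higgs_are_num_flat}). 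Now invoke the descent theorem, Theorem~\ref{thm:desc2} (whose vector-bundle shadow is Theorem~\ref{thm:desc2simplified}), in the form that also descends the Higgs structure: since $X$ is klt, there is a locally free Higgs sheaf $(\sE,\theta)$ on $X$ with $\pi^*(\sE,\theta)\cong(\wtilde\sE,\wtilde\theta)$. One then checks $(\sE,\theta)\in\Higgs_X$: vanishing of $ch_1\cdot[H]^{n-1}$ and $ch_2\cdot[H]^{n-2}$ for a suitable ample $H$ follows by projection formula from the vanishing upstairs, and semistability follows because a destabilising generically invariant subsheaf would pull back to one on $\wtilde X$ (using the functorial pull-back for reflexive differentials and Lemma~\ref{lem:owe1}), contradicting semistability of $(\wtilde\sE,\wtilde\theta)$. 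This defines $\eta_X(\aE):=(\sE,\theta)$. In the reverse direction, given $(\sE,\theta)\in\Higgs_X$, the main input is the ascending/restriction result of Section~\ref{ssec:ascend}: $\pi^*(\sE,\theta)$ is again semistable with respect to an ample divisor on $\wtilde X$ and still has vanishing Chern classes, hence lies in $\Higgs_{\wtilde X}$; set $\mu_X(\sE,\theta):=$ the local system on $X_{\reg}$ obtained from $\mu_{\wtilde X}(\pi^*(\sE,\theta))$ by restricting to the big open set where $\pi$ is an isomorphism, then extending across the small complement. This simultaneously gives the canonical comparison isomorphisms $M_{\pi,(\sE,\theta)}$ and $N_{\pi,\aE}$ of items~\ref{il:HC2} and~\ref{il:HC3}, essentially by construction.

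Next I would verify that $\eta_X$ and $\mu_X$ are quasi-inverse equivalences. The round trip $\mu_X\circ\eta_X$ sends $\aE$ to the extension over $X$ of the local system underlying $\mu_{\wtilde X}\eta_{\wtilde X}(\pi^*\aE)\cong\pi^*\aE$, restricted to the big open set; since a local system on a normal variety is determined by its restriction to any big open subset (the complement has codimension $\ge 2$, and local systems on such a $U$ extend uniquely — this is where normality, hence the $S_2$ property, is used), this recovers $\aE$. The other composition $\eta_X\circ\mu_X$ is handled dually: $\pi^*(\eta_X\mu_X(\sE,\theta))\cong\eta_{\wtilde X}\mu_{\wtilde X}(\pi^*(\sE,\theta))\cong\pi^*(\sE,\theta)$ as Higgs sheaves, and the uniqueness clause in the descent theorem (Theorem~\ref{thm:desc2}) upgrades this to an isomorphism $\eta_X\mu_X(\sE,\theta)\cong(\sE,\theta)$ over $X$. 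Naturality in morphisms of $\LSys_X$ and $\Higgs_X$ follows from functoriality of $\pi^*$ and of Simpson's functors, combined with fully faithfulness of $\pi^*$ on the relevant categories (injectivity on Hom's from the big-open-set argument, surjectivity from the Higgs-descent of morphisms contained in Theorem~\ref{thm:desc2}). Property~\ref{il:HC1} is immediate: if $X$ is smooth the canonical resolution is the identity, so the construction returns Simpson's functors verbatim.

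The main obstacle is well-definedness and independence of choices — in particular showing that the descent $(\sE,\theta)$ produced by Theorem~\ref{thm:desc2} really does land in the category $\Higgs_X$, and that the Higgs field obtained downstairs is compatible with the pull-back functor for reflexive differentials from \cite[Thms.~1.3 and 5.2]{MR3084424} via the formula~\eqref{eq:pbhgs}. Concretely, one must know that $\pi^*$ of the descended Higgs field, computed through $d_{\refl}\pi$, agrees with the original $\wtilde\theta$; this requires that $d_{\refl}\pi\colon \pi^*\Omega^{[1]}_X\to\Omega^{[1]}_{\wtilde X}$ be the natural map and that the descent isomorphism be one of Higgs sheaves, not merely of sheaves — precisely the refinement built into Theorem~\ref{thm:desc2} beyond its vector-bundle form. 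A secondary subtlety is the semistability transfer in both directions: the ``descending'' direction needs that pull-back of a generically invariant destabilising subsheaf stays generically invariant and destabilising (straightforward via Lemma~\ref{lem:owe1} and the projection formula), while the ``ascending'' direction is genuinely harder and is exactly what the Mehta–Ramanathan-type restriction theorem of Section~\ref{sect:app-oper} together with Section~\ref{ssec:ascend} is designed to supply; I would simply cite those results. Once these compatibilities are in hand, the verification of the equivalence and of items~\ref{il:HC1}--\ref{il:HC3} is formal.
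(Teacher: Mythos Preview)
Your overall strategy is correct and matches the paper's: pull back to the canonical resolution, apply Simpson there, and transfer back using Theorem~\ref{thm:desc2} together with Proposition~\ref{prop:Higgs_are_num_flat} in one direction and Theorem~\ref{thm:bphgs} in the other. However, there is a genuine error in how you handle the local-system side.

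You assert that ``a local system on a normal variety is determined by its restriction to any big open subset (the complement has codimension $\ge 2$, and local systems on such a $U$ extend uniquely --- this is where normality, hence the $S_2$ property, is used)''. This is false. The $S_2$ property governs extension of \emph{sections of reflexive coherent sheaves}, not of local systems; local systems are controlled by the fundamental group, and $\pi_1(X_{\reg})\to\pi_1(X)$ is surjective (so restriction is faithful) but typically not injective. For instance, $X=\mathbb{A}^2/\{\pm 1\}$ is normal with $\pi_1(X_{\reg})\cong\mathbb{Z}/2$ and $\pi_1(X)=1$, so the sign local system on $X_{\reg}$ does not extend. The same problem affects your description of $\eta_X$: there is no need to ``restrict to $X_{\reg}$, pull back, and extend'' --- the pullback $\pi^*\aE$ is already a local system on all of $\wtilde X$ --- but in the reverse direction you genuinely must produce a local system on $X$ from one on $\wtilde X$, and your mechanism for doing so is broken.

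The correct input, which the paper invokes, is Takayama's theorem \cite{Takayama2003}: for $\pi\colon\wtilde X\to X$ a resolution of a klt space, every fibre of $\pi$ admits a simply connected open Euclidean neighbourhood, so in particular $\pi_*\colon\pi_1(\wtilde X)\to\pi_1(X)$ is an isomorphism and $\pi_*\aE_{\wtilde X}$ is again a local system. This is specific to klt singularities and is not a consequence of normality; it is the topological counterpart to the descent theorem on the Higgs side. Once you replace the $S_2$ argument by Takayama, the remainder of your sketch goes through essentially as in the paper. Your ad hoc check that the descended bundle lies in $\Higgs_X$ is fine for the intersection numbers in Notation~\ref{not:category}, but note that the paper packages the stronger statement (vanishing of \emph{all} Chern classes and semistability with respect to \emph{any} nef class) as Lemma~\ref{lem:xxs}, whose proof again relies on Takayama together with Deligne--Sullivan rather than on the projection formula alone.
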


\begin{note}[Pull-back of Higgs sheaves]
  Item~\ref{il:HC2} discusses the pull-back of the Higgs sheaf $η_X(\aE)$ from
  $X$ to the resolution of singularities, $\wtilde{X}$, as discussed in
  Section~\ref{sssec:pb} above.
\end{note}

Theorem~\ref{thm:free-naht} is shown in Section~\ref{sec:pfnh2}.
Section~\ref{ssec:ascend} prepares for the proof.

\subsection{Functoriality}\label{ssect:MainResultFunc}
\approvals{Behrouz & yes \\ Daniel & yes \\ Stefan & yes \\ Thomas & yes}

Items~\ref{il:HC2} and \ref{il:HC3} of Theorem~\ref{thm:free-naht} allow to
describe the functors $η_X$ and $μ_X$ on any given klt space $X$ in terms of the
classical nonabelian Hodge correspondence that exists on the canonical
resolution of singularities.  In fact, a much more general functoriality holds
true.

\begin{thm}[Functoriality in morphisms]\label{thm:finm}
  The correspondence of Theorem~\ref{thm:free-naht} is functorial in morphisms.
  More precisely, for every morphism $f: Y → X$ of projective klt spaces, every
  $\aE ∈ \LSys_X$ and every $(ℰ,θ) ∈ \Higgs_X$, there exist canonical
  isomorphisms
  $$
  M_{f,(ℰ,θ)} : f^* μ_X(ℰ,θ) → μ_Y\bigl( f^*(ℰ,θ) \bigr)
  \quad\text{and}\quad N_{f,\aE} : f^* η_X(\aE) → η_Y\bigl( f^* \aE \bigr).
  $$
  The collection of these isomorphisms satisfies the following properties.
  \begin{description}
  \item[Functoriality] Given morphisms $g: Z → Y$ and $f: Y → X$ between
    projective klt spaces and $\aE ∈ \LSys_X$, then the following diagram
    commutes,
    $$
    \xymatrix{ %
      g^* f^* η_X(\aE) \ar[rr]_{g^* N_{f,\aE}} \ar@/^.4cm/[rrrr]^{N_{f◦ g, \aE}} && g^* η_Y \bigl(f^* \aE \bigr) \ar[rr]_{N_{g,f^*\aE}} && η_Z \bigl(g^* f^* \aE \bigr).
    }
    $$
    Ditto for the functor $μ_•$ and the isomorphisms $M_{•,•}$.

  \item[Behaviour under canonical resolution] For $π : \wtilde{X} → X$ the
    canonical resolution of a projective klt space, the morphisms $M_{π,•}$ and
    $N_{π,•}$ equal the isomorphisms given in Items~\ref{il:HC2} and
    \ref{il:HC3} of Theorem~\ref{thm:free-naht}.

  \item[Compatibility] If $f : Y → X$ is a morphism between smooth projective
    varieties, then $M_{f,•}$ and $N_{f,•}$ are the standard isomorphisms given
    by functoriality of Simpson's nonabelian Hodge correspondence.
  \end{description}
\end{thm}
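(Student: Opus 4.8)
The plan is to reduce the general functoriality statement of Theorem~\ref{thm:finm} to the special case of the canonical resolution, already encoded in Items~\ref{il:HC2} and \ref{il:HC3} of Theorem~\ref{thm:free-naht}, by a ``resolve and compare'' argument. First I would construct the isomorphisms $N_{f,\aE}$ and $M_{f,(ℰ,θ)}$ for an \emph{arbitrary} morphism $f : Y → X$ of projective klt spaces. The idea is to choose a common resolution: let $π_X : \wtilde X → X$ and $π_Y : \wtilde Y → Y$ be the canonical resolutions, and pick a resolution $\wtilde Y'$ of the (possibly singular) variety obtained from the graph of $f \circ π_Y$, so that we have morphisms $\wtilde Y' → \wtilde Y$ and $\wtilde f : \wtilde Y' → \wtilde X$ lifting $f$. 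On the smooth varieties $\wtilde X$, $\wtilde Y$, $\wtilde Y'$ Simpson's correspondence and its classical functoriality supply canonical isomorphisms $\wtilde f^* η_{\wtilde X}(\wtilde{\aE}) ≅ η_{\wtilde Y'}(\wtilde f^* \wtilde{\aE})$ and the analogue for $μ$, together with the standard cocycle/compatibility relations for composites of morphisms of \emph{smooth} varieties. Pulling the isomorphisms of Items~\ref{il:HC2}–\ref{il:HC3} of Theorem~\ref{thm:free-naht} back along the resolution morphisms $\wtilde Y' → \wtilde Y → Y$ and $\wtilde X → X$, and splicing them with the smooth functoriality isomorphism for $\wtilde f$, one obtains a candidate isomorphism over $\wtilde Y'$; the content is to check that it descends to an isomorphism over $Y$ of the reflexive Higgs sheaves $f^* η_X(\aE)$ and $η_Y(f^*\aE)$. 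Here the descent theorem, Theorem~\ref{thm:desc2simplified}, together with Proposition~\ref{prop:Higgs_are_num_flat} (Higgs bundles from local systems are numerically flat relative to the resolution) is exactly what makes this possible, and independence of the auxiliary choices $\wtilde Y'$ follows because any two such choices are dominated by a third.

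Next I would verify the three listed properties. The \textbf{Behaviour under canonical resolution} property is essentially a definitional check: when $f = π : \wtilde X → X$ is itself the canonical resolution, the common-resolution construction degenerates — one may take $\wtilde Y' = \wtilde X$ and $\wtilde f = \Id$ — so the constructed isomorphism reduces to the one in Items~\ref{il:HC2}/\ref{il:HC3}. The \textbf{Compatibility} property for $f : Y → X$ a morphism of smooth projective varieties is similar: all the klt corrections are trivial, $η_X$, $μ_X$ equal Simpson's functors by Item~\ref{il:HC1}, the resolutions are identities, and the construction returns precisely Simpson's classical functoriality isomorphism.

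The \textbf{Functoriality} (cocycle) property for a composite $Z \xrightarrow{g} Y \xrightarrow{f} X$ of klt morphisms is the main obstacle, and I would prove it by passing to a resolution where everything is smooth and the cocycle condition is already known. Concretely: choose resolutions $\wtilde X$, $\wtilde Y$, $\wtilde Z$ and, using Hironaka, a single smooth variety $\wtilde Z'$ dominating $\wtilde Z$ and admitting morphisms $\wtilde Z' → \wtilde Y'$, $\wtilde Z' → \wtilde X$ that lift $g$ and $f \circ g$ compatibly with the chosen lift $\wtilde f$. On $\wtilde Z'$ the three isomorphisms $g^* N_{f,\aE}$, $N_{g, f^*\aE}$, $N_{f\circ g,\aE}$ pull back — via the descent/uniqueness built into their construction — to the pullbacks along $\wtilde Z' → Z$ of the corresponding maps, and those in turn factor through the classical Simpson cocycle diagram for the smooth morphisms $\wtilde Z' → \wtilde Y' → \wtilde X$, which commutes. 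Since pullback along the resolution $\wtilde Z' → Z$ is faithful on morphisms of reflexive sheaves on the normal variety $Z$ (two morphisms of reflexive sheaves that agree on a big open set agree), commutativity over $\wtilde Z'$ forces commutativity over $Z$. The $μ_\bullet$ side is identical with local systems in place of Higgs sheaves, where ``reflexive descent'' is replaced by the fact that a local system is determined by its restriction to a Zariski-open subset with the same (surjective) $π_1$, i.e.\ the complement of the small exceptional locus. The genuine technical care needed is bookkeeping the compatibility of the various chosen lifts $\wtilde f$, $\wtilde g$, $\widetilde{f\circ g}$ and checking that the descended isomorphisms do not depend on them; I would isolate this into a lemma asserting that for any two towers of resolutions dominating a given one, the induced comparison isomorphisms agree, proved by dominating both towers by a common third and invoking uniqueness of the smooth-case functoriality isomorphisms.
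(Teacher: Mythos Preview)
Your overall strategy---lift $f$ to a morphism between smooth varieties, use Simpson's functoriality there, and descend---is exactly what the paper does. But there is a genuine gap in your lifting step, and the tool you invoke for descent is the wrong one.

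The ``graph of $f\circ \pi_Y$'' lives in $\wtilde Y\times X$, so resolving it only gives you maps to $\wtilde Y$ and $X$, not to $\wtilde X$. Presumably you intend to take a component of the fibre product $\wtilde Y\times_X\wtilde X$ dominating $\wtilde Y$ and resolve that. The problem is that this does not obviously give a morphism $\wtilde Y'\to Y$ with \emph{connected fibres}, and connected fibres is exactly what you need to descend an isomorphism of locally free sheaves from $\wtilde Y'$ to $Y$: both $f^*\eta_X(\aE)$ and $\eta_Y(f^*\aE)$ already live on $Y$, so you are not descending a bundle (which is what Theorem~\ref{thm:desc2simplified} does), you are descending a section of the locally free sheaf $\sHom(f^*\eta_X(\aE),\eta_Y(f^*\aE))$, and for that you need $p_*\mathcal{O}_{\wtilde Y'}=\mathcal{O}_Y$. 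Your ``reflexive descent on a big open'' argument does not help either when $f(Y)\subset X_{\mathrm{sing}}$: there is no big open of $\wtilde Y$ over which your $\wtilde Y'$ is an isomorphism.

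The paper resolves this with a non-obvious input you are missing: by Hacon--McKernan's rational connectedness theorem \cite[Cor.~1.7(1)]{HMcK07}, over $Z:=f(Y)\subset X$ one can choose an irreducible $W\subset \pi^{-1}(Z)\subset\wtilde X$ whose general fibre over $Z$ is rationally connected, hence connected. Resolving the dominant component of $W\times_Z\wtilde Y$ gives a smooth $\wtilde W$ with $\wtilde W\to\wtilde Y$ having connected general fibre (being base-changed from $W\to Z$), and therefore all fibres connected by Stein factorisation and Zariski's Main Theorem. Now the isomorphism constructed on $\wtilde W$ descends to $Y$ for the elementary reason above. Once you have this lift with connected fibres, your outline for the three listed properties (behaviour under canonical resolution, compatibility, cocycle condition) is fine and matches what the paper sketches---indeed the paper also leaves the cocycle verification to the reader.
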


Theorem~\ref{thm:finm} is shown in Section~\ref{ssec:gghfj1} below.

\begin{rem}[Uniqueness]
  One verifies in the blink of an eye that functoriality, behaviour under
  canonical resolution and compatibility determine the isomorphisms $M_{•, •}$
  and $N_{•, •}$ uniquely.
\end{rem}

\begin{rem}[General resolutions]
  Theorem~\ref{thm:finm} implies in particular that the statement of
  Theorem~\ref{thm:free-naht} holds for any resolution of singularities, not
  just the canonical resolution.  Taken together with \cite{Takayama2003}, for
  any klt space this establishes an equivalence of categories of $\Higgs_X$ with
  $\Higgs_{\wtilde{X}}$, where $\wtilde{X}$ is any resolution of singularities
  of $X$.
\end{rem}

As a further consequence of functoriality, we observe that the nonabelian Hodge
correspondence respects group actions and relates $G$-linearised local systems
to Higgs $G$-sheaves in the sense of \cite[Def.~5.1]{GKPT15}.

\begin{cor}[$G$-linearised local systems and Higgs $G$-sheaves]\label{cor:gllshs}
  Let $X$ be a projective klt space, and let $G$ be a group acting on $X$ via a
  group morphism $G → \Aut(X)$.
  \begin{enumerate}
  \item If $(ℰ, θ) ∈ \Higgs_X$ carries the structure of a Higgs $G$-sheaf, given
    by isomorphisms $φ_g : g^* (ℰ, θ) → (ℰ, θ)$, then the following composed
    maps endow the local system $μ_X(ℰ, θ)$ with a $G$-linearisation,
    $$
    \xymatrix{ %
      g^* μ_X(ℰ,θ) \ar[rr]^{M_{g,(ℰ,θ)}} && μ_X\bigl(g^*(ℰ,θ)\bigr) \ar[rr]^{μ_X(φ_g)} && μ_X(ℰ,θ).
    }
    $$

  \item If $\aE ∈ \LSys_X$ carries a $G$-linearisation given by isomorphisms
    $φ_g : g^* \aE → \aE$, then the following composed maps endow $η_X(\aE)$
    with the structure of a Higgs $G$-sheaf,
    $$
    \xymatrix{ %
      g^* η_X(\aE) \ar[rr]^{N_{g,\aE}} && η_X \bigl(g^*\aE\bigr) \ar[rr]^{η_X(φ_g)} && η_X(\aE).
    } \eqno \qed
    $$
  \end{enumerate}
\end{cor}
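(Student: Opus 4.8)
Conceptually, Theorem~\ref{thm:finm} says that $η$ and $μ$ are mutually quasi-inverse equivalences compatible with pull-back, together with all the relevant coherences, and such equivalences automatically carry equivariance data to equivariance data; the plan is simply to make this explicit. Concretely, beyond Theorem~\ref{thm:finm} the only inputs are that $η_X$ and $μ_X$ are functors and that the isomorphisms $N_{f,•}$ and $M_{f,•}$ are natural in the local-system (resp.\ Higgs-sheaf) slot, i.e.\ that $N_{f,•}$ is a natural transformation between the functors $f^* \circ η_X$ and $η_Y \circ f^*$, and likewise for $M_{f,•}$. This naturality is implicit in these isomorphisms being called \emph{canonical} and is apparent from their construction in Section~\ref{sect:07}. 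Since the two items of the corollary are exchanged under the equivalence, it suffices to prove the second; I spell out the argument for $η_X$ and the $N_{g,•}$, the case of $μ_X$ and the $M_{g,•}$ being verbatim the same.

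Let $\aE ∈ \LSys_X$ carry a $G$-linearisation $\{φ_g : g^* \aE → \aE\}_{g ∈ G}$; by definition $φ_{\Id_X} = \Id$ and $φ_{g \circ h} = φ_h \circ h^* φ_g$ for all $g, h ∈ G$, under the usual coherence identification $(g \circ h)^* = h^* \circ g^*$. Set $Ψ_g := η_X(φ_g) \circ N_{g,\aE}$, which is an isomorphism of Higgs sheaves from $g^* η_X(\aE)$ to $η_X(\aE)$, being a composition of two such. It remains to check the two axioms of a Higgs $G$-sheaf for the family $(Ψ_g)_{g ∈ G}$. For the identity axiom, evaluating the functoriality diagram of Theorem~\ref{thm:finm} at $f = g = \Id_X$ shows that $N_{\Id_X,\aE}$ is an idempotent automorphism of $η_X(\aE)$, hence the identity; combined with $φ_{\Id_X} = \Id$ this gives $Ψ_{\Id_X} = \Id$. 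For the cocycle axiom, expand
$$
Ψ_h \circ h^* Ψ_g = η_X(φ_h) \circ \bigl( N_{h,\aE} \circ h^* η_X(φ_g) \bigr) \circ h^* N_{g,\aE},
$$
rewrite the bracketed composite via naturality of $N_{h,•}$ applied to $φ_g : g^* \aE → \aE$ as $η_X(h^* φ_g) \circ N_{h, g^*\aE}$, contract $η_X(φ_h) \circ η_X(h^* φ_g) = η_X(φ_h \circ h^* φ_g) = η_X(φ_{g \circ h})$ using functoriality of $η_X$ and the cocycle relation for $\aE$, and identify $N_{h, g^*\aE} \circ h^* N_{g,\aE} = N_{g \circ h,\aE}$ by the functoriality diagram of Theorem~\ref{thm:finm} applied to the composite $g \circ h$. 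This yields $Ψ_h \circ h^* Ψ_g = η_X(φ_{g \circ h}) \circ N_{g \circ h,\aE} = Ψ_{g \circ h}$, the cocycle identity we wanted.

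No new geometric or analytic input is required: the corollary is a diagram chase sitting on top of Theorem~\ref{thm:finm}. The one point that demands genuine care — the main obstacle, such as there is one — is bookkeeping with conventions: pull-back of reflexive differentials, and hence of Higgs sheaves, along a composite $g \circ h$ agrees with $h^* \circ g^*$ only up to the coherence isomorphisms of the pull-back pseudofunctor, and these must be threaded consistently through the direction of the equivariance maps $φ_g : g^* \aE → \aE$ and through the conventions for Higgs $G$-sheaves of \cite[Def.~5.1]{GKPT15}. Since those coherence data are exactly the ones already incorporated into the functoriality diagram of Theorem~\ref{thm:finm}, once the conventions are pinned down the computation above goes through unchanged.
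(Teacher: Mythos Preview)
Your proposal is correct and follows exactly the approach implicit in the paper, which states the corollary with a \qed and no proof, treating it as an immediate consequence of the functoriality established in Theorem~\ref{thm:finm}. Your explicit verification of the identity and cocycle axioms via naturality of $N_{•,•}$ and the functoriality diagram is precisely the diagram chase the paper leaves to the reader.
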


\subsection{Independence of polarisation}
\approvals{Behrouz & yes \\ Daniel & yes \\ Stefan & yes \\ Thomas & yes}

As in Simpson's original setup, a Higgs bundle on a klt space $X$ is in
$\Higgs_X$ if and only if it satisfies the conditions of Notation~\ref{il:x1}
with respect to any ample class.  The following proposition makes this assertion
precise.

\begin{thm}[Independence of polarisation]\label{thm:iop}
  Let $X$ be a projective klt space of dimension $n$.  Given any locally free
  Higgs sheaf $(ℰ,θ)$ on $X$, the following statements are equivalent.
  \begin{enumerate}
  \item\label{il:q1} There exists an ample divisor $H ∈ \Div(X)$ such that
    $ch_1(ℰ)·[H]^{n-1} = ch_2(ℰ)·[H]^{n-2} = 0$ and such that $(ℰ,θ)$ is
    semistable with respect to $H$.

  \item\label{il:q2} For all ample divisors $H ∈ \Div(X)$, we have
    $ch_1(ℰ)·[H]^{n-1} = ch_2(ℰ)·[H]^{n-2} = 0$ and $(ℰ,θ)$ is semistable with
    respect to $H$.

  \item\label{il:q3} All Chern classes $c_i(ℰ) ∈ H^{2i}\bigl( X,\, ℚ \bigr)$
    vanish and $(ℰ,θ)$ is semistable with respect to any ample divisor on $X$.

  \item\label{il:q4} There exists a resolution of singularities,
    $π: \wtilde{X} → X$, and an ample divisor $\wtilde{H} ∈ \Div(\wtilde{X})$
    such that
    $ch_1(π^*\,ℰ)·[\wtilde{H}]^{n-1} = ch_2(π^*\,ℰ)·[\wtilde{H}]^{n-2} = 0$ and
    such that $π^*\,(ℰ,θ)$ is semistable with respect to $\wtilde{H}$.

  \item\label{il:q5} For any resolution of singularities, $π: \wtilde{X} → X$,
    and any ample divisor $\wtilde{H} ∈ \Div(\wtilde{X})$, we have intersection
    numbers
    $ch_1(π^*\,ℰ)·[\wtilde{H}]^{n-1} = ch_2(π^*\,ℰ)·[\wtilde{H}]^{n-2} = 0$ and
    $π^*\,(ℰ,θ)$ is semistable with respect to $\wtilde{H}$.
  \end{enumerate}
  The analogous equivalences hold when ``semistable'' is replaced by ``stable''
  or ``polystable''.
\end{thm}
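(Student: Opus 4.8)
The plan is to establish the five statements equivalent by running through the cycle \ref{il:q1} ⇒ \ref{il:q5} ⇒ \ref{il:q4} ⇒ \ref{il:q3} ⇒ \ref{il:q2} ⇒ \ref{il:q1}. Among these, \ref{il:q5} ⇒ \ref{il:q4} and \ref{il:q2} ⇒ \ref{il:q1} are trivial, and \ref{il:q3} ⇒ \ref{il:q2} holds because $ch_1$ and $ch_2$ are universal polynomials in the Chern classes, so all intersection numbers in question vanish automatically once $c_1(ℰ)$ and $c_2(ℰ)$ vanish in rational cohomology. Throughout, let $π: \wtilde X → X$ be the canonical resolution. The clauses about ``some'' resp.\ ``any'' resolution in \ref{il:q4} and \ref{il:q5} will be reduced to the statement for $π$ by passing to a common smooth modification and reusing the subsheaf argument of the \ref{il:q4} ⇒ \ref{il:q3} step below (alternatively, one invokes \cite{Takayama2003} together with functoriality of Simpson's correspondence on smooth varieties). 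I shall also use freely that $π^*$ is injective on rational cohomology, that $ch_i(π^*ℰ)·[π^*H]^{n-i} = ch_i(ℰ)·[H]^{n-i}$ for every $i$ and every $H ∈ \Div(X)$ by the projection formula and birationality of $π$, and that (semi/poly)stability may be tested after restriction to $X_{\reg}$ by Lemma~\ref{lem:owe1}.

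The implication \ref{il:q1} ⇒ \ref{il:q5} is the only substantial one. First I would upgrade \ref{il:q1} to the assertion that all rational Chern classes of $ℰ$ vanish: arguing as in \cite{GKPT15} — via the $ℚ$-Bogomolov inequality for semistable Higgs sheaves on klt spaces, the Hodge index theorem, and the analysis of its equality case — the hypotheses $ch_1(ℰ)·[H]^{n-1} = ch_2(ℰ)·[H]^{n-2} = 0$ force $c_1(ℰ)$ to be numerically trivial and $π^*ℰ$ to be numerically flat on $\wtilde X$, whence $c_i(π^*ℰ) = 0$ in $H^{2i}(\wtilde X, ℚ)$ by Theorem~\ref{thm:numflat}, and therefore $c_i(ℰ) = 0$ in $H^{2i}(X, ℚ)$ by injectivity of $π^*$. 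In particular every intersection number required in \ref{il:q5} vanishes, for every resolution and every ample class. That $π^*(ℰ,θ)$ is moreover semistable with respect to every ample divisor on $\wtilde X$ — which completes \ref{il:q5} — is precisely the content of the ascent theorem established in Section~\ref{ssec:ascend} (itself relying on the new Mehta--Ramanathan-type restriction theorem of Section~\ref{sect:app-oper}), applied to the $H$-semistable Higgs bundle $(ℰ,θ)$ whose Chern classes we have just shown to vanish.

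For \ref{il:q4} ⇒ \ref{il:q3} I would transport destabilising subsheaves to the resolution. By Simpson's polarisation-independent correspondence on the smooth variety $\wtilde X$, the hypotheses of \ref{il:q4} imply $π^*(ℰ,θ) ∈ \Higgs_{\wtilde X}$; consequently all rational Chern classes of $π^*ℰ$, and hence of $ℰ$, vanish, and $π^*(ℰ,θ)$ is semistable with respect to \emph{every} ample divisor on $\wtilde X$. Suppose, for contradiction, that some ample $H ∈ \Div(X)$ admitted a generically $θ$-invariant subsheaf $ℱ ⊆ ℰ|_{X_{\reg}}$ with $0 < \rank ℱ < \rank ℰ$ and $μ_H(ℱ) > μ_H(ℰ)$. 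Passing to the saturation $\what{ℱ} ⊆ π^*ℰ$ of the image of the pull-back $π^*ℱ$, one checks that $\what{ℱ}$ is generically invariant under the Higgs field of $π^*(ℰ,θ)$ — this uses that $d_{\refl}π$ intertwines the Higgs fields as in \eqref{eq:pbhgs} — and that $μ_{π^*H}(\what{ℱ}) ≥ μ_H(ℱ) > μ_H(ℰ) = μ_{π^*H}(π^*ℰ)$, the first inequality coming from the projection formula and nefness of $π^*H$. Since $π^*H + δ\,\wtilde H$ is ample for all $δ > 0$ and slopes depend continuously on the polarisation, $\what{ℱ}$ would then destabilise $π^*(ℰ,θ)$ with respect to the ample class $π^*H + δ\,\wtilde H$ for $δ ≪ 1$, contradicting the previous sentence. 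Hence $(ℰ,θ)$ is semistable with respect to every ample divisor on $X$, and \ref{il:q3} follows.

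The main obstacle is the ascent step concealed in \ref{il:q1} ⇒ \ref{il:q5}: pulling back along a resolution turns an ample polarisation into a merely nef one, and semistability of a Higgs sheaf need not survive such a base change, so the heart of the matter is to prove that $π^*(ℰ,θ)$ stays semistable with respect to honest \emph{ample} classes on $\wtilde X$ — this is exactly what forces one through the new restriction theorem, the descent theorems of Sections~\ref{sec:03} and \ref{sec:4}, and a careful analysis of the relative Harder--Narasimhan filtration. A secondary, pervasive technicality is keeping all reflexive-sheaf and generically-invariant-subsheaf manipulations legitimate across the singular locus, which is the purpose of Lemmas~\ref{lem:owe1} and \ref{lem:owe2}. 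Finally, the variants with ``semistable'' replaced by ``stable'' or ``polystable'' go through verbatim, since both notions are preserved by Simpson's correspondence and by the ascent and descent steps above, and the polystable case reduces to the stable one by decomposing a polystable object into its stable summands.
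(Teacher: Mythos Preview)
Your cycle of implications and the semistable case are essentially the paper's argument: \ref{il:q1}~$\Rightarrow$~\ref{il:q5} is Theorem~\ref{thm:bphgs} (ascent) followed by Simpson's theory on $\wtilde X$, and your \ref{il:q4}~$\Rightarrow$~\ref{il:q3} via pulling back a destabilising subsheaf and perturbing $π^*H$ to $π^*H + δ\wtilde H$ is exactly the content of Lemma~\ref{lem:xxs}. One remark on \ref{il:q1}~$\Rightarrow$~\ref{il:q5}: your preliminary claim that Bogomolov--Gieseker plus Hodge index already force $π^*ℰ$ to be \emph{numerically flat} is not justified as stated (numerical flatness is a condition on all curves, not just on intersections with powers of $H$), and in any case it is unnecessary---Theorem~\ref{thm:bphgs} takes \ref{il:q1} as its hypothesis verbatim, and Chern class vanishing then comes for free on the smooth side.

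The genuine gap is the assertion that the stable and polystable variants ``go through verbatim''. They do not. In your \ref{il:q4}~$\Rightarrow$~\ref{il:q3} argument, a subsheaf $ℱ$ witnessing non-stability of $(ℰ,θ)$ with respect to $H$ may have $μ_H(ℱ) = μ_H(ℰ)$; the saturation $\what{ℱ} ⊆ π^*ℰ$ then satisfies only $μ_{π^*H}(\what{ℱ}) ≥ μ_{π^*H}(π^*ℰ)$, and when equality holds the perturbation to $π^*H + δ\wtilde H$ can move the slope of $\what{ℱ}$ \emph{below} that of $π^*ℰ$, so no contradiction with stability on $\wtilde X$ is obtained. (Non-semistability is open in the polarisation; non-stability is not.) The paper circumvents this by a different route: having already established the nonabelian Hodge correspondence for klt spaces in Section~\ref{sec:pfnh2}, it translates stability into irreducibility of the associated local system and then uses Lefschetz-type surjectivity of $π_1$ along a general complete intersection curve (together with \cite{Takayama2003}) to transfer irreducibility down to $X$. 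For \ref{il:q1}~$\Rightarrow$~\ref{il:q5} in the stable case the paper instead invokes \cite[Prop.~5.19]{GKPT15} to get stability of $π^*(ℰ,θ)$ with respect to the \emph{nef} class $π^*H$, and then openness of stability \cite[Prop.~4.17]{GKPT15} to pass to a nearby ample class. Neither of these steps is present in your outline, and both are needed.
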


Theorem~\ref{thm:iop} is shown in Section~\ref{ssec:gghfj2} below.

\subsection{Harmonic bundles, differential graded categories}
\approvals{Behrouz & yes \\ Daniel & yes \\ Stefan & yes \\ Thomas & yes}

Simpson constructs his nonabelian Hodge correspondence first in the case of
polystable Higgs bundles and semisimple local systems.  In this setup, the
correspondence is a consequence of existence theorems for pluri-harmonic metrics
on the underlying bundles.  Given their central role in the theory, we remark
that the nonabelian Hodge correspondence for klt spaces,
Theorem~\ref{thm:free-naht} also has a description in terms of harmonic
structures, although in our case the harmonic metric exists on the smooth locus
of the underlying space only.  The proof of the following proposition is simple
and therefore omitted: Item~\ref{il:HC3} of Theorem~\ref{thm:free-naht} allows
to relate the correspondence on $X$ to that on a resolution.

\begin{prop}[Hodge correspondence for klt spaces via harmonic bundles]\label{ex:naht2a}
  Let $X$ be a projective, klt space and $\aE ∈ \LSys_X$ a semisimple local
  system on $X$, with underlying $\cC^{∞}$-bundle $E$.  We claim that there
  exists a tame and purely imaginary harmonic bundle
  $(E°, \bar{∂}_{E°}, θ°, h°)$ on $X_{\reg}$ with the following two properties.
  \begin{enumerate}
  \item\label{il:by1} The induced flat bundle $(E°,∇_{h°})$ corresponds to the
    local system $\aE|_{X_{\reg}}$.

  \item\label{il:by2} Writing $ℰ°$ for the sheaf of holomorphic sections in
    $(E°, \bar{∂}_{E°})$, the functor $η_X$ of the nonabelian Hodge
    correspondence satisfies $(ℰ°,θ°) ≅ η_X(\aE)|_{X_{\reg}}$.  \qed
  \end{enumerate}
\end{prop}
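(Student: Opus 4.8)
The plan is to push the whole problem up to the canonical resolution $π : \wtilde{X} → X$, where Simpson's theory and the existence of pluri-harmonic metrics are at our disposal, and then to restrict the resulting objects back down to the smooth locus, exploiting the fact that the canonical resolution is an isomorphism over $X_{\reg}$. Concretely, $π$ restricts to an isomorphism $π^{-1}(X_{\reg}) \xrightarrow{\,\sim\,} X_{\reg}$; after replacing $\wtilde{X}$ by a further blow-up with centres inside $\Exc(π)$ — which leaves $π^{-1}(X_{\reg})$ unchanged — we may assume in addition that $D := \wtilde{X} ∖ π^{-1}(X_{\reg})$ is a simple normal crossing divisor, and we use the pair $(\wtilde{X}, D)$ as a compactification of $X_{\reg}$. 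Moreover the local system $π^* \aE$ is again semisimple: by \cite{Takayama2003} the homomorphism $π_* : π_1(\wtilde{X}) → π_1(X)$ is surjective, so $π^* \aE$ has the same monodromy image as $\aE$, and semisimplicity of a linear representation depends only on that image.

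Next I would apply Simpson's nonabelian Hodge correspondence on the smooth projective variety $\wtilde{X}$ to the semisimple local system $π^* \aE$, obtaining a harmonic bundle $(\wtilde{E}, \bar{∂}_{\wtilde{E}}, \wtilde{θ}, \wtilde{h})$ on $\wtilde{X}$ whose associated flat bundle is the one underlying $π^* \aE$, and whose associated Higgs bundle is $η_{\wtilde{X}}(π^*\aE)$; recall that the coherent sheaf underlying $η_{\wtilde{X}}(π^*\aE)$ is, by construction, the sheaf of $\bar{∂}_{\wtilde{E}}$-holomorphic sections of $\wtilde{E}$, equipped with the Higgs field $\wtilde{θ}$. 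I then \emph{define} $(E°, \bar{∂}_{E°}, θ°, h°)$ to be the restriction of this datum to $π^{-1}(X_{\reg})$, transported to $X_{\reg}$ along the isomorphism $π|_{π^{-1}(X_{\reg})}$, the Higgs field being moved by $d_{\refl}π$, which over the smooth locus is the usual pull-back of differentials. Because $\wtilde{E}$, $\wtilde{θ}$ and $\wtilde{h}$ are globally defined and smooth on the \emph{compact} manifold $\wtilde{X}$, the restricted harmonic bundle extends smoothly across $D$; in particular all its parabolic weights vanish and the residues of its Higgs field and of its flat connection are zero, so $(E°, \bar{∂}_{E°}, θ°, h°)$ is tame and purely imaginary with respect to $(\wtilde{X}, D)$.

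Properties \ref{il:by1} and \ref{il:by2} then follow purely by restriction. For \ref{il:by1}, the flat bundle attached to $(\wtilde{E}, \bar{∂}_{\wtilde{E}}, \wtilde{θ}, \wtilde{h})$ corresponds to $π^*\aE$ by Simpson's construction on $\wtilde{X}$; restricting over $π^{-1}(X_{\reg}) ≅ X_{\reg}$, and using that $π$ is an isomorphism there together with $(π^*\aE)|_{π^{-1}(X_{\reg})} = π^*\bigl(\aE|_{X_{\reg}}\bigr)$, shows that $(E°, ∇_{h°})$ corresponds to $\aE|_{X_{\reg}}$. For \ref{il:by2}, I would restrict the canonical isomorphism $N_{π,\aE} : π^* η_X(\aE) → η_{\wtilde{X}}\bigl(π^*\aE\bigr)$ supplied by Item~\ref{il:HC3} of Theorem~\ref{thm:free-naht} to $π^{-1}(X_{\reg}) ≅ X_{\reg}$: the source becomes $η_X(\aE)|_{X_{\reg}}$, and the target becomes $η_{\wtilde{X}}(π^*\aE)|_{π^{-1}(X_{\reg})} = (ℰ°, θ°)$, which is exactly the asserted isomorphism.

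The only part of this argument that is not bookkeeping is the verification that the restricted object is \emph{tame} and \emph{purely imaginary}; as noted above, here this is immediate, precisely because the harmonic bundle we restrict lives on the compact manifold $\wtilde{X}$ and extends holomorphically across the boundary, so there is no asymptotic growth or residue data to estimate. I expect no other obstacle: everything else is a formal consequence of Simpson's correspondence on $\wtilde{X}$ combined with the functoriality isomorphism of Item~\ref{il:HC3} of Theorem~\ref{thm:free-naht}.
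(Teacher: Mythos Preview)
Your argument is correct and follows precisely the route the paper indicates: the paper omits the proof, noting only that Item~\ref{il:HC3} of Theorem~\ref{thm:free-naht} allows one to relate the correspondence on $X$ to that on a resolution, which is exactly what you carry out in detail. The only cosmetic point is that after further blowing up to make $D$ simple normal crossing you should cite Theorem~\ref{thm:finm} rather than Item~\ref{il:HC3} directly, since $π$ is then no longer the \emph{canonical} resolution.
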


\begin{rem}
  We refer to \cite[Sect.~1]{MR3087348} for a discussion of harmonic bundles,
  and to \cite[p.~723]{MR1040197} and \cite[Sect.~22.1]{MR2283665} for the
  notions of ``tame'' and ``purely imaginary''.
\end{rem}

As a second point, we note that Theorem~\ref{thm:free-naht} includes
equivalences of differential graded categories (=DGCs) that appear in Simpson's
nonabelian Hodge theory.  For detailed discussion of DGCs and their relation to
this theory we refer the reader to~\cite[Sect.~3]{MR1179076} and the references
therein.  Given a resolution $π: \wtilde{X} → X$, Theorem~\ref{thm:free-naht}
implies that there is an equivalence of DGCs between the category of extensions
of stable Higgs bundles with vanishing Chern classes on $X$ and the category of
flat connections on $\wtilde X$.  The same conclusions as in the smooth
projective setting then follow.

%
%
\svnid{$Id: 04.tex 1175 2018-08-06 07:05:24Z taji $}

\section{Descent of vector bundles from klt spaces}\label{sec:03}
\subversionInfo
\approvals{Behrouz & yes \\ Daniel & yes \\ Stefan & yes \\ Thomas & yes}

The proof of Theorem~\ref{thm:desc2}, our main result concerning descent of
vector bundles to klt spaces, relies on the following auxiliary statement, which
we prove in this section.

\begin{thm}[Descent of vector bundles from klt spaces]\label{thm:desc1}
  Let $φ : X → Y$ be a projective, birational morphism of normal,
  quasi-projective varieties.  Assume that there exists a Weil $ℚ$-divisor $Δ_X$
  on $X$ such that the pair $(X,Δ_X)$ is klt and $-(K_X+Δ_X)$ is $φ$-nef.  If
  $ℱ_X$ is any locally free, $φ$-numerically flat sheaf on $X$, then there
  exists a locally free sheaf $ℱ_Y$ on $Y$ such that $ℱ_X ≅ φ^* ℱ_Y$.
\end{thm}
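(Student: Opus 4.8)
The plan is to reduce the general rank-$r$ statement to the rank-one case already handled by Corollary~\ref{cor:descprep}, by exploiting the projectivised bundle $\bP(\sF_X) \to X$ and applying the Basepoint-free Theorem there. Concretely, set $P := \bP(\sF_X)$ with tautological bundle $\sO_P(1)$ and structure map $p : P \to X$, and let $\delta = \varphi \circ p : P \to Y$. Since $\sF_X$ is $\varphi$-numerically flat, $\sO_P(1)$ is $\delta$-nef; I would like to apply Theorem~\ref{thm:bpf} to the Cartier divisor $L = \sO_P(1)$ (or a suitable twist) on $P$ relative to $\delta$. For this I need $P$ to carry a klt boundary making it a klt pair with $mL - (K_P + \Delta_P)$ relatively nef and big. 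The standard relative-canonical formula $K_P = p^*(K_X + \det \sF_X) - r\,\sO_P(1) + \cdots$ together with $\varphi$-numerical flatness (so $\det \sF_X$ is $\varphi$-numerically trivial by Remark~\ref{rem:nflat}) should make $-K_P$ itself $\delta$-nef up to pullbacks, and one pulls $\Delta_X$ back to $P$; here $P \to X$ is a $\bP^{r-1}$-bundle, hence smooth, so $(P, p^*\Delta_P)$ remains klt. The bigness hypothesis is void because $\delta$ restricted to fibres of $p$ is finite onto its image in $Y$ — wait, more carefully: $\delta$ is not birational, so I should instead run Theorem~\ref{thm:bpf} over $X$ first, not over $Y$.

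Here is the cleaner route. Apply the Basepoint-free Theorem to $p : P \to X$ (which is birational? no — it has positive-dimensional fibres). So that does not work either; the honest approach is the following. First descend the determinant: $\det \sF_X$ is a $\varphi$-numerically trivial Cartier divisor on $X$, and $-(K_X + \Delta_X)$ is $\varphi$-nef by hypothesis, so Corollary~\ref{cor:descprep} gives a line bundle $\sL_Y$ on $Y$ with $\det \sF_X \cong \varphi^*\sL_Y$. Next, I would argue that $\varphi$-numerical flatness forces $\sF_X$ to be trivial on every positive-dimensional fibre of $\varphi$: on a fibre $F$, $\sF_X|_F$ is numerically flat in the absolute sense, but fibres of a birational contraction from a klt-related variety are rationally chain connected (by the relative cone theorem / Hacon–McKernan), and a numerically flat bundle on a rationally connected variety is trivial. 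Then by a standard rigidity/descent argument — $\sF_X$ is trivial along fibres, $\varphi$ has connected fibres with $\varphi_*\sO_X = \sO_Y$ since $Y$ is normal — the sheaf $\varphi_*\sF_X$ is locally free of rank $r$ and the natural map $\varphi^*\varphi_*\sF_X \to \sF_X$ is an isomorphism. Set $\sF_Y := \varphi_*\sF_X$.

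The key technical inputs are therefore: (i) triviality of $\sF_X$ on the (positive-dimensional, rationally chain connected) fibres of $\varphi$, which uses that $\varphi$ is a birational contraction with $-(K_X+\Delta_X)$ $\varphi$-nef so its fibres are covered by rational curves on which a $\varphi$-nef bundle with $\varphi$-nef dual and numerically trivial determinant must split as $\sO^{\oplus r}$ (Grothendieck's theorem on $\bP^1$ forces each summand to be degree zero); and (ii) the descent step, for which I would invoke the classical criterion that a locally free sheaf trivial on the fibres of a proper morphism with connected fibres and $R^0 = \sO$ descends — checking that $\varphi_*\sF_X$ is locally free via cohomology and base change, since $h^0$ of the restriction to each fibre is constant equal to $r$. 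I expect step (i), establishing fibrewise triviality, to be the main obstacle: one must control the geometry of the fibres of an arbitrary birational contraction in the klt category (not an MMP-step, so the fibre need not be a single extremal contraction), and push through the argument that $\varphi$-numerical flatness localises to absolute numerical flatness — hence triviality — on each such fibre. The remaining bookkeeping (normality of $Y$, $S_2$-ness to extend across small sets, compatibility of $\det \sF_Y$ with $\sL_Y$) is routine.
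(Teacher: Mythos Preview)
Your first instinct---projectivise $\sF_X$ and run the Basepoint-free Theorem over $Y$---is exactly what the paper does, and you abandoned it too quickly. The map $\delta : P = \bP(\sF_X) \to Y$ is indeed not birational, but Theorem~\ref{thm:bpf} never asks it to be: one only needs $L - (K_P + p^*\Delta_X)$ to be $\delta$-nef and $\delta$-big, where $L = \sO_P(1)$. With $\sF_X$ of rank $r$ the canonical-bundle formula gives
\[
L - (K_P + p^*\Delta_X) \;\sim_{\bQ}\; (r+1)\,L \;-\; p^*\bigl(K_X + \Delta_X + \det\sF_X\bigr),
\]
which is $\delta$-nef because $L$ is $\delta$-nef, $-(K_X+\Delta_X)$ is $\varphi$-nef, and $\det\sF_X$ is $\varphi$-numerically trivial; and it is $\delta$-big simply because $L$ is $p$-ample and hence ample on the generic $\delta$-fibre. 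So BPF produces a factorisation $P \xrightarrow{\Phi} \bP_Y \xrightarrow{\rho_Y} Y$ with $L \cong \Phi^*\sL_Y$ for some $\rho_Y$-ample $\sL_Y$. The substantive step you did not anticipate is proving that $\rho_Y$ is \emph{equidimensional}: the paper argues by contradiction, pulling back to a resolution of a component $F$ of a $\varphi$-fibre, where $\sF_X|_F$ is numerically flat and hence has vanishing Chern classes (Theorem~\ref{thm:numflat}), which forces $c_1(\sO_{\bP(\sF_X|_F)}(1))^{r} = 0$ and is incompatible with any $r$-dimensional subvariety in a fibre of $\rho_Y$. Once $\rho_Y$ is equidimensional with generic fibre $(\bP^{r-1},\sO(1))$, results of H\"oring--Novelli and Koll\'ar (simultaneous normalisation) upgrade it to a genuine $\bP^{r-1}$-bundle, and $\sF_Y := (\rho_Y)_*\sL_Y$ works.

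Your alternative route through fibrewise triviality has a real gap, beyond the one you already flag. You pass from ``rationally chain connected'' (which Hacon--McKernan does give for the fibres) to ``rationally connected'', and then invoke the principle that numerically flat bundles on such varieties are trivial. That principle holds for \emph{smooth} projective rationally connected varieties, where $\pi_1$ is trivial; but the fibres of $\varphi$ are typically singular, reducible, and non-reduced, and there the implication fails already in rank one: a nodal or banana configuration of rational curves carries a $\bC^{*}$ of numerically trivial but non-trivial line bundles. Your per-$\bP^1$ Grothendieck argument shows triviality on each component, but says nothing about the gluing. Even granting triviality on the reduced fibre, the cohomology-and-base-change step needs constant $h^0$ on the \emph{scheme-theoretic} fibres, which you have not addressed. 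So the direct-descent approach does not close as written; the projectivisation approach you started with does.
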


\subsection{Proof of Theorem~\ref{thm:desc1}: Setup and notation}
\approvals{Behrouz & yes \\ Daniel & yes \\ Stefan & yes \\ Thomas & yes}

We maintain notation and assumptions of Theorem~\ref{thm:desc1} throughout the
present Section~\ref{sec:03}.  To avoid trivial cases, we may assume throughout
the proof that the rank of $ℱ_X$ is positive.  Set $r := (\rank ℱ_X)-1$.  Using
Grothendieck's terminology, we consider the associated $ℙ^r$-bundle
$ℙ_X := ℙ_X(ℱ_X)$.  The following diagram summarises the situation
\begin{equation}\label{eq:gkbn1}
  \begin{gathered}
    \xymatrix{ %
      ℙ_X \ar[d]_{ρ_X} \ar@/^2mm/[rrd]^{δ} \\
      X \ar[rr]_{φ\text{, birational}} && Y.
    }
  \end{gathered}
\end{equation}
Since $ρ_X$ is a locally trivial $ℙ^r$-bundle, the variety $ℙ_X$ is normal, and
the pair $(ℙ_X, Δ_{ℙ_X})$ is klt, where $Δ_{ℙ_X} := ρ_X^* Δ_X$.  Let $Y° ⊆ Y$ be
the maximal open set over which $φ$ is isomorphic, and write $X° := φ^{-1}(X°)$.
As $Y$ is normal, the subset $Y°$ is big.

We will also consider the invertible sheaves $ℒ_X := 𝒪_{ℙ_X(ℱ_X)}(1)$ and
$\sM_X := \det ℱ_X$.  The assumption that $ℱ_X$ is $φ$-numerically flat has
immediate consequences for $ℒ_X$ and $\sM_X$, which we state for later
reference.

\begin{obs}
  ---
  \begin{enumerate}
  \item\label{obs:1} The sheaf $ℒ_X$ is $δ$-nef.
  \item\label{obs:2} The sheaf $\sM_X$ is $φ$-numerically trivial.  In
    particular, Corollary~\ref{cor:descprep} implies the existence of an
    invertible sheaf $\sM_Y ∈ \Pic(Y)$ such that $\sM_X ≅ φ^* \sM_Y$.  \qed
  \end{enumerate}
\end{obs}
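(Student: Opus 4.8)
The plan is to read off both assertions directly from the standing hypothesis of Theorem~\ref{thm:desc1} that $\sF_X$ is $\varphi$-numerically flat, by unwinding Definition~\ref{def:numflat} and invoking Remark~\ref{rem:nflat} together with Corollary~\ref{cor:descprep}; no further geometric input is needed.

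For Item~\ref{obs:1} I would argue straight from Definition~\ref{def:numflat}. Being $\varphi$-numerically flat, the sheaf $\sF_X$ is in particular $\varphi$-nef, and by the very definition of $\varphi$-nefness this says that $[\sO_{\bP_X(\sF_X)}(1)] = [\sL_X]$ is $\delta$-nef. Here $\delta = \varphi \circ \rho_X$ is exactly the composite morphism appearing in Definition~\ref{def:numflat}, as recorded in diagram~\eqref{eq:gkbn1}, so this is precisely the assertion.

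For Item~\ref{obs:2} I would use the equivalence of conditions in Remark~\ref{rem:nflat}. By hypothesis $\sF_X$ and its dual $\sF_X^*$ are both $\varphi$-nef, which is the first of the three equivalent conditions in that remark; hence the third condition holds as well, namely that $\det \sF_X = \sM_X$ is $\varphi$-numerically trivial. This gives the first sentence of Item~\ref{obs:2}. For the remaining ``in particular'' clause I would apply Corollary~\ref{cor:descprep} to the Cartier divisor class associated with the invertible sheaf $\sM_X$: its hypotheses — that $\varphi$ is a birational, projective morphism of normal, quasi-projective varieties, that $(X,\Delta_X)$ is klt, and that $-(K_X+\Delta_X)$ is $\varphi$-nef — are precisely the standing assumptions of Theorem~\ref{thm:desc1}. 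Combined with the $\varphi$-numerical triviality just established, the corollary yields that $\sM_X$ is linearly equivalent to the pull-back of a Cartier divisor on $Y$, that is, there exists $\sM_Y \in \Pic(Y)$ with $\sM_X \cong \varphi^* \sM_Y$.

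Since every step is a direct appeal to a definition or to an already-proved statement, there is no genuine obstacle here; the only mild care needed is the bookkeeping of identifying $\delta$ with the composite of~\eqref{eq:gkbn1}, and of passing freely between the invertible sheaf $\sM_X$ and its associated Cartier divisor class so that Corollary~\ref{cor:descprep} applies verbatim.
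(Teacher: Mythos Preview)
Your proposal is correct and matches the paper's intent exactly: the observation is stated with a \qed and no argument, so the paper regards both items as immediate consequences of Definition~\ref{def:numflat}, Remark~\ref{rem:nflat}, and Corollary~\ref{cor:descprep} under the standing hypotheses of Theorem~\ref{thm:desc1}. Your unpacking of these references is precisely what is meant.
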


For convenience of notation, choose Cartier divisors $L_X$ and $M_Y$
representing the bundles $ℒ_X$ and $\sM_Y$.  The Cartier divisor
$M_X := φ^* M_Y$ will then represent $\sM_X$.

\subsection{Proof of Theorem~\ref{thm:desc1}: Factorisation of $δ$}
\approvals{Behrouz & yes \\ Daniel & yes \\ Stefan & yes \\ Thomas & yes}

We aim to construct a locally free sheaf $ℱ_Y$ on $Y$.  Rather than doing so
directly, we will first construct a factorisation of $δ$ via a morphism
$ρ_Y : ℙ_Y → Y$ that agrees with $ℙ_X$ over the big open set where $Y°$.  Later,
we will show that $ρ_Y$ is equidimensional, and has in fact the structure of a
linear $ℙ^r$-bundle.  The sheaf $ℱ_Y$ will then be constructed as the
push-forward of the relative hyperplane bundle on $ℙ_Y$.

\begin{claim}[Construction of $ℙ_Y$]\label{claim:desc1}
  There exists a commutative diagram of surjective projective morphisms with
  connected fibres extending Diagram~\eqref{eq:gkbn1} as follows,
  \begin{equation}\label{eq:bcd2}
    \begin{gathered}
      \xymatrix{ %
        ℙ_X \ar[rr]^{Φ} \ar[d]_{ρ_X} \ar[drr]_{δ} && ℙ_Y \ar[d]^{ρ_Y} \\
        X \ar[rr]_{φ\text{, birational}} && Y, %
      }
    \end{gathered}
  \end{equation}
  as well as a $ρ_Y$-ample sheaf $ℒ_Y ∈ \Pic(ℙ_Y)$ such that $ℒ_X ≅ Φ^* ℒ_Y$.
  The variety $ℙ_Y$ and the morphisms of Diagram~\eqref{eq:bcd2} are unique up
  to isomorphism, and the following holds in addition.
  \begin{enumerate}
  \item\label{il:C} The restricted morphism
    $Φ|_{ρ_X^{-1}(X°)}: ρ_X^{-1}(X°) → ρ_Y^{-1}(Y°)$ is isomorphic.  In
    particular, $Φ$ is birational.
  \item\label{il:remus} We have an isomorphism of sheaves
    $\sM_Y ≅ \det \bigl( (ρ_Y)_*\, ℒ_Y \bigr)$.
  \end{enumerate}
\end{claim}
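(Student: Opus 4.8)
The plan is to obtain Diagram~\eqref{eq:bcd2} from a single application of the Basepoint-free Theorem, Theorem~\ref{thm:bpf}, to the projective surjection $δ : ℙ_X → Y$, to the klt pair $(ℙ_X, Δ_{ℙ_X})$, and to the Cartier divisor $L_X$. The first ingredient is the standard relative canonical bundle formula for the projective bundle $ρ_X$ which, after substituting the isomorphism $\sM_X ≅ φ^* \sM_Y$ of Observation~\ref{obs:2}, reads
\[
  K_{ℙ_X} + Δ_{ℙ_X} = ρ_X^*(K_X + Δ_X) + δ^* M_Y - (r+1)· L_X .
\]

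Next one verifies the two hypotheses of Theorem~\ref{thm:bpf}, say with $m = 1$. Relative nefness of $L_X$ over $δ$ is Observation~\ref{obs:1}, while the formula above gives
\[
  m· L_X - (K_{ℙ_X} + Δ_{ℙ_X}) = (m+r+1)· L_X + ρ_X^*\bigl(-(K_X+Δ_X)\bigr) - δ^* M_Y .
\]
Here $ρ_X^*(-(K_X+Δ_X))$ is $δ$-nef, because $-(K_X+Δ_X)$ is $φ$-nef and any curve contracted by $δ$ maps under $ρ_X$ to a point or to a curve contracted by $φ$; hence the right-hand side is $δ$-nef, being a positive combination of $δ$-nef divisors modified by the $δ$-numerically trivial divisor $-δ^* M_Y$. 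It is moreover $δ$-big: over the dense open set $Y°$, where $φ$ is an isomorphism, the fibres of $δ$ are copies of $ℙ^r$ on which $L_X$ restricts to $𝒪_{ℙ^r}(1)$ while the remaining summands restrict trivially.

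With both hypotheses established, Theorem~\ref{thm:bpf} provides a unique factorisation $δ = β ◦ α$ through a normal variety $Z$, together with a $β$-ample Cartier divisor $L_Z$ on $Z$ such that $L_X \sim α^* L_Z$. One then sets $ℙ_Y := Z$, $ρ_Y := β$, $Φ := α$ and $ℒ_Y := 𝒪_Z(L_Z)$: commutativity of Diagram~\eqref{eq:bcd2}, surjectivity of all the maps, the isomorphism $ℒ_X ≅ Φ^* ℒ_Y$, and $ρ_Y$-ampleness of $ℒ_Y$ are then immediate. The fibres of $Φ$ are connected by Item~\ref{il:t1} of Theorem~\ref{thm:bpf}, and those of $ρ_Y$ are connected because $δ = φ ◦ ρ_X$ has connected fibres ($ρ_X$ is a projective bundle and $φ$ is birational onto the normal variety $Y$) and $Φ$ is surjective. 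Uniqueness of the whole package is inherited from the uniqueness statement of Theorem~\ref{thm:bpf}; to recover $ℒ_Y$ itself one combines $Φ_* 𝒪_{ℙ_X} = 𝒪_{ℙ_Y}$ — valid since $Φ$ is birational (see below) and $ℙ_Y$ is normal — with the projection formula, which yields $ℒ_Y ≅ Φ_* ℒ_X$.

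It remains to prove Items~\ref{il:C} and \ref{il:remus}. For the first, one observes that $ρ_X^{-1}(X°) = δ^{-1}(Y°)$ and that over $Y° ≅ X°$ the divisor $L_X$ restricts to the relative hyperplane class of the bundle $ρ_X|_{X°}$, hence is ample relative to $δ$ there; Item~\ref{il:t3} of Theorem~\ref{thm:bpf} then shows that $Φ$ restricts to an isomorphism $ρ_X^{-1}(X°) → ρ_Y^{-1}(Y°)$, and birationality of $Φ$ follows since this open subset is dense. For Item~\ref{il:remus}, the same isomorphism identifies $(ρ_Y)_* ℒ_Y|_{Y°}$ with $(ρ_X)_* ℒ_X|_{X°} ≅ ℱ_X|_{X°}$, so the reflexive sheaf $\det\bigl((ρ_Y)_* ℒ_Y\bigr)$ agrees with $\sM_Y$ over the big open set $Y°$; as $Y$ is normal, a reflexive sheaf equals the push-forward of its restriction to any big open subset, whence $\det\bigl((ρ_Y)_* ℒ_Y\bigr) ≅ \sM_Y$. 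The step I expect to cost the most effort is the verification of the hypotheses of the Basepoint-free Theorem — getting the relative canonical bundle formula right and confirming the $δ$-bigness of $m· L_X - (K_{ℙ_X} + Δ_{ℙ_X})$, together with the relative ampleness over $Y°$ needed to invoke Item~\ref{il:t3}; everything downstream of the application of Theorem~\ref{thm:bpf} is routine bookkeeping.
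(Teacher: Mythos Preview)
Your proof is correct and follows essentially the same approach as the paper: apply the Basepoint-free Theorem~\ref{thm:bpf} to $δ$, $(ℙ_X, Δ_{ℙ_X})$, and $L_X$, verifying its hypotheses via the relative canonical bundle formula for $ρ_X$, Observations~\ref{obs:1} and \ref{obs:2}, and the $φ$-nefness of $-(K_X+Δ_X)$; then deduce \ref{il:C} from Item~\ref{il:t3} and \ref{il:remus} from reflexivity over the big open set $Y°$. Your write-up is in fact slightly more thorough than the paper's in that you explicitly justify connectedness of the fibres of $ρ_Y$ and the recovery of $ℒ_Y$ as $Φ_* ℒ_X$.
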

\begin{proof}[Proof of Claim~\ref*{claim:desc1}]
  The factorisation of $δ$ via an intermediate variety $ℙ_Y$ will be constructed
  using Theorem~\ref{thm:bpf} (``Basepoint-free Theorem'').  To apply the
  theorem, it suffices to show that $L_X$ is $δ$-nef and that
  $$
  D := L_X - \bigl( K_{ℙ_X} + Δ_{ℙ_X} \bigr)
  $$
  is $δ$-nef and $δ$-big.  Relative nefness of $L_X$ is clear from
  Observation~\ref{obs:1}.  To analyse $D$, we use the standard
  formula\Preprint{\footnote{$-K_{ℙ_X} \sim (r+1)·L_X - ρ_X^*(K_X + M_X)$}} for
  the canonical bundle of a projectivised vector bundle to obtain a $ℚ$-linear
  equivalence of $ℚ$-divisors,
  \begin{equation}\label{eq:sdsmdf}
    D \sim_{ℚ} (r+2)·L_X - ρ_X^*(K_X+Δ_X+M_X).
  \end{equation}
  The divisor $L_X$ is $ρ_X$-ample and therefore ample on the general fibre of
  $δ$.  Since $φ$ is birational, Equation~\eqref{eq:sdsmdf} implies that $D$ is
  $δ$-big.  Relative nefness of $D$ also follows from
  Equation~\eqref{eq:sdsmdf}, using Observations~\ref{obs:1} and \ref{obs:2}, as
  well as the assumption that $-(K_X+Δ_X)$ is $φ$-nef.  Theorem~\ref{thm:bpf}
  thus applies and yields a unique factorisation as in Diagram~\eqref{eq:bcd2},
  as well as a $ρ_Y$-ample sheaf $ℒ_Y$ such that $ℒ_X ≅ Φ^* ℒ_Y$.

  Item~\ref{il:C} follows from Item~\ref{il:t3} of Theorem~\ref{thm:bpf}, using
  the fact that $D$ is $ρ_X$-ample over $X°$, and therefore $δ$-ample over $Y°$.
  It remains to show that $\sM_Y ≅ \det \bigl( (ρ_Y)_*\, ℒ_Y \bigr)$.  By
  Item~\ref{il:C}, an isomorphism exists at least over the big open set $Y°$.
  But since both sides of the equation are reflexive, the isomorphism exists
  globally.  This ends the proof of Claim~\ref{claim:desc1}.
\end{proof}

\subsection{Proof of Theorem~\ref{thm:desc1}: Equidimensionality}
\approvals{Behrouz & yes \\ Daniel & yes \\ Stefan & yes \\ Thomas & yes }

As indicated above, we will now show that $ρ_Y$ is equidimensional.  This is the
point where the assumption that $ℱ_X$ is $φ$-numerically flat is used in a
crucial way.

\begin{claim}[Equidimensionality of $ρ_Y$]\label{claim:desc2}
  The morphism $ρ_Y$ is equidimensional, of relative dimension $r$.
\end{claim}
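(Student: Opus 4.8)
To prove Claim~\ref{claim:desc2}, I want to show that every fibre of $\rho_Y$ has dimension exactly $r$. Upper semicontinuity of fibre dimension gives $\dim \rho_Y^{-1}(y) \geq r$ for every $y \in Y$ automatically (the generic fibre, over $Y^\circ$, is a $\mathbb{P}^r$), so the real content is the bound $\dim \rho_Y^{-1}(y) \leq r$. Suppose for contradiction that some fibre $F := \rho_Y^{-1}(y_0)$ has dimension $\geq r+1$. Since $\Phi$ is birational and surjective onto $\mathbb{P}_Y$, and $\rho_X$ is an honest $\mathbb{P}^r$-bundle, one analyzes the preimage $\Phi^{-1}(F) \subseteq \mathbb{P}_X$, which lies over the fibre $\varphi^{-1}(y_0)$ in $X$; here $\varphi^{-1}(y_0)$ can have dimension up to $\dim X - 1$ since $\varphi$ is merely birational, so this by itself does not give the contradiction — one must feed in numerical flatness.

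**Where numerical flatness enters.** The key mechanism: $L_X = \mathcal{O}_{\mathbb{P}_X(\mathcal{F}_X)}(1)$ pulls back from the $\rho_Y$-ample divisor $L_Y$, i.e.\ $L_X \sim \Phi^* L_Y$. I would argue by looking at a curve. If $\dim F \geq r+1$, then inside $F$ one can find a curve $C_Y$ such that $[L_Y] \cdot C_Y > 0$ (as $L_Y$ is $\rho_Y$-ample, its restriction to the positive-dimensional fibre $F$ is ample, so positive on curves in $F$). Lift $C_Y$ to a curve $C_X \subseteq \mathbb{P}_X$ via the birational $\Phi$ (after possibly moving $C_Y$ to avoid the locus where $\Phi^{-1}$ is not defined, or using that $\Phi$ is surjective to produce \emph{some} curve mapping onto $C_Y$). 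Then $[L_X] \cdot C_X = (\deg \Phi|_{C_X})\,[L_Y]\cdot C_Y > 0$, while $\delta(C_X) = \beta(y_0)$ is a point, so $\rho_X(C_X)$ is a point in a $\varphi$-fibre; thus $C_X$ lies in a fibre of $\rho_X$, where $\mathcal{O}_{\mathbb{P}_X(\mathcal{F}_X)}(1)$ restricts to $\mathcal{O}_{\mathbb{P}^r}(1)$ — consistent with positivity, no contradiction yet. So the contradiction must instead come from the \emph{other} direction of numerical flatness, namely that $\mathcal{F}_X^*$ is also $\varphi$-nef, equivalently $\det \mathcal{F}_X = \mathcal{M}_X$ is $\varphi$-numerically trivial (Remark~\ref{rem:nflat}); combined with $L_X$ being $\delta$-nef and $\det(\rho_X)_* L_X = \mathcal{M}_X$, a dimension count on $\Phi^{-1}(F)$ must be forced. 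Concretely: $\Phi^{-1}(F)$ surjects onto $F$ with $\mathbb{P}^{\leq r}$ fibres (being a subvariety of $\mathbb{P}_X$), so $\dim \Phi^{-1}(F) \leq \dim F + r$; but $\Phi$ is birational, hence generically finite, so $\dim \Phi^{-1}(F) = \dim F$, forcing the generic fibre of $\Phi^{-1}(F) \to F$ to be $0$-dimensional. On the other hand $\Phi^{-1}(F)$ is contained in $\rho_X^{-1}(\varphi^{-1}(y_0))$, which is a $\mathbb{P}^r$-bundle over $\varphi^{-1}(y_0)$; since $L_X$ is $\delta$-ample over $Y^\circ$ and $\Phi$ contracts \emph{exactly} the curves with $[L_X]\cdot C = 0$ lying in $\delta$-fibres (the Note after Theorem~\ref{thm:bpf}), a curve in a $\rho_X$-fibre over $\varphi^{-1}(y_0)$ has $[L_X]\cdot C > 0$ and so is \emph{not} contracted — meaning $\Phi$ is finite on each such $\mathbb{P}^r$. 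This pins down that the image $\Phi(\rho_X^{-1}(x))$ for $x \in \varphi^{-1}(y_0)$ is an $r$-dimensional subvariety of $F$ mapped finitely from $\mathbb{P}^r$; running this for all $x$ exhibits $F$ as a union of $r$-dimensional subvarieties, and the numerically-trivial constraint on $\mathcal{M}_X = \det(\rho_X)_* L_X$ rigidifies these to prevent $F$ from being any larger than $r$-dimensional.

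**Main obstacle.** The hard part is making the last sentence precise: ruling out that the various $r$-dimensional images $\Phi(\rho_X^{-1}(x))$, as $x$ ranges over the possibly $(\dim X-1)$-dimensional fibre $\varphi^{-1}(y_0)$, sweep out something of dimension $>r$ in $F$. I expect the clean way is not the curve-counting sketched above but rather: show that $\rho_Y$ has no multisections contracted by... — more robustly, use that $(\rho_Y)_* \mathcal{L}_Y$ is a reflexive (indeed, I will want to show locally free) sheaf of rank $r+1$ agreeing with $\mathcal{F}_Y$-to-be over $Y^\circ$, and that $L_Y$ being $\rho_Y$-ample with $h^0$ of the fibre restriction jumping would contradict the constancy of $\chi$ forced by $\mathcal{M}_Y = \det(\rho_Y)_*\mathcal{L}_Y$ being a genuine line bundle via Observation~\ref{obs:2}; equivalently, if $\dim F > r$ then $h^0(F, L_Y|_F)$ or the degree of $L_Y$ on $F$ would be incompatible with $\mathbb{P}_Y$ carrying a relatively-ample line bundle whose pushforward determinant is the honest line bundle $\varphi^*\mathcal{M}_Y$. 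I would therefore structure the proof as: (i) $\Phi$ birational $\Rightarrow$ $\dim \Phi^{-1}(F) = \dim F$; (ii) $\Phi^{-1}(F) \subseteq \rho_X^{-1}(\varphi^{-1}(y_0))$ and $L_X$ is $\delta$-ample over $Y^\circ$, hence $\Phi$ does not contract fibre-curves of $\rho_X$, so $\Phi$ maps each $\mathbb{P}^r = \rho_X^{-1}(x)$ finitely into $F$; (iii) conclude $\dim F \geq r$ trivially and, using that $\Phi^{-1}(F) \to F$ is generically finite while each $\rho_X$-fibre meeting $\Phi^{-1}(F)$ contributes an $r$-dimensional piece, deduce $\varphi^{-1}(y_0) \cap \rho_X(\Phi^{-1}(F))$ is finite, i.e.\ $\Phi^{-1}(F)$ meets only finitely many $\rho_X$-fibres, whence $\dim F = \dim \Phi^{-1}(F) \leq r$. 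Step (iii) is where I anticipate the genuine difficulty and where the $\varphi$-numerical flatness of $\mathcal{F}_X$ (not just $\delta$-nefness of $L_X$) must be invoked in full.
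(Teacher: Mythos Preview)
Your proposal has a genuine gap at the step you yourself flag as ``the genuine difficulty'': step~(iii). The deduction ``$\Phi^{-1}(F) \to F$ is generically finite while each $\rho_X$-fibre contributes an $r$-dimensional piece, hence $\Phi^{-1}(F)$ meets only finitely many $\rho_X$-fibres'' does not follow. Nothing prevents $\Phi^{-1}(F)$ from being, say, a $\mathbb{P}^r$-bundle over a curve in $\varphi^{-1}(y_0)$, mapping generically finitely onto an $(r+1)$-dimensional $F$: different $\rho_X$-fibres would then land on different $r$-dimensional subvarieties of $F$, and your dimension count gives no obstruction. Your alternative suggestions (jumps in $h^0$, constancy of $\chi$, the determinant constraint on $(\rho_Y)_*\mathcal{L}_Y$) are not developed and it is not clear how to make any of them rigorous at this stage, since you do not yet know $\rho_Y$ is flat or that its fibres are reduced.

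The missing idea is a direct Chern-class computation, and it uses numerical flatness through Theorem~\ref{thm:numflat} rather than through curve or dimension arguments. Argue by contradiction: suppose $\rho_Y^{-1}(y)$ contains an $(r+1)$-dimensional subvariety $Z$; then $[\mathcal{L}_Y]^{r+1}\cdot [Z] > 0$ by $\rho_Y$-ampleness. Now pick an irreducible component $F \subseteq \varphi^{-1}(y)$, pass to a resolution $\widetilde{F} \to F$, and pull everything back: the bundle $\mathcal{F}_{\widetilde{F}} := \mathcal{F}_X|_{\widetilde{F}}$ is numerically flat on the smooth variety $\widetilde{F}$, so by Theorem~\ref{thm:numflat} all its Chern classes vanish. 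The Grothendieck relation on $\mathbb{P}_{\widetilde{F}}(\mathcal{F}_{\widetilde{F}})$ then reads
\[
c_1\bigl(\mathcal{O}_{\mathbb{P}(\mathcal{F}_{\widetilde{F}})}(1)\bigr)^{r+1} \;=\; -\sum_{i=1}^{r} c_i(\rho_{\widetilde{F}}^*\mathcal{F}_{\widetilde{F}})\cdot c_1\bigl(\mathcal{O}(1)\bigr)^{r+1-i} \;=\; 0,
\]
which forces $[\mathcal{L}_{\widetilde{F}}]^{r+1}\cdot [Z_{\widetilde{F}}] = 0$ for any lift $Z_{\widetilde{F}}$ of $Z$, contradicting the positivity above. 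This is how the paper proceeds; the argument is short and uses numerical flatness exactly once, via Theorem~\ref{thm:numflat}, rather than via the curve-by-curve nefness conditions you were trying to exploit.
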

\begin{proof}[Proof of Claim~\ref{claim:desc2}]
  We argue by contradiction and assume that there exists a point $y ∈ Y$ whose
  fibre $ρ_Y^{-1}(y)$ contains a subvariety $Z$ of dimension $r+1$.  Recalling
  that $ℒ_Y$ is $ρ_Y$-ample, we have a positive intersection number
  \begin{equation}\label{eq:hjkk}
    [ℒ_Y]^{r+1}·[Z] > 0.
  \end{equation}
  We will see that this is absurd.  To this end, let $F ⊆ φ^{-1}(y)$ be any
  irreducible component.  Choose a desingularisation $π: \wtilde{F} → F$ and
  extend Diagram~\eqref{eq:bcd2} to the left by taking fibre products as
  follows,
  $$
  \xymatrix{ %
    ℙ_{\wtilde F} \ar@{->>}[r]^{Π} \ar[d]_{ρ_{\wtilde F}} & ℙ_F \ar@{^(->}[r] \ar[d]_{ρ_F} & ℙ_X \ar[d]_{ρ_X} \ar[r]^{Φ} & ℙ_Y \ar[d]_{ρ_Y} \\
    \wtilde F \ar@{->>}[r]_{π} & F \ar@{^(->}[r] & X \ar[r]_{φ} & Y %
  } \qquad %
  \xymatrix{ %
    ℙ_{\wtilde F} \ar@{->>}[r]^{Π} \ar[d]^{ρ_{\wtilde F}} & ℙ_F \ar@{->>}[r]^(.4){Φ|_{ℙ_F}} \ar[d]^{ρ_F} & ρ_Y^{-1}(y) \ar[d] \ar@{^(->}[r] & ℙ_Y \ar[d]^{ρ_Y} \\
    \wtilde F \ar@{->>}[r]_{π} & F \ar@{->>}[r]_{φ|_F} & \{y\} \ar@{^(->}[r] & Y %
  }
  $$
  Setting $ℱ_{\wtilde F} := π^* ℱ_X$ and $ℒ_{\wtilde F} := Π^* ℒ_X$, we
  obtain identifications
  $$
  ℙ_{\wtilde F} ≅ ℙ_{\wtilde F}\bigl(ℱ_{\wtilde F}\bigr) \quad \text{and}
  \quad ℒ_{\wtilde F} ≅ 𝒪_{ℙ_{\wtilde F}(ℱ_{\wtilde F})}(1).
  $$
  If $Z_{\wtilde F} ⊆ ℙ_{\wtilde F}$ is any $r+1$-dimensional subvariety that
  dominates $Z$, Inequality~\eqref{eq:hjkk} immediately implies that
  \begin{equation}\label{eq:hjkk1}
    [ℒ_{\wtilde F}]^{r+1}·[Z_{\wtilde F}] = [Π^* Φ^*\,ℒ_Y]^{r+1}·[Z_{\wtilde F}] > 0.
  \end{equation}
  On that other hand, as the pullback of a numerically flat bundle,
  $ℱ_{\wtilde F}$ is numerically flat.  Theorem~\ref{thm:numflat} hence implies
  that all Chern classes $c_i\bigl(ℱ_{\wtilde F}\bigr)$ of $ℱ_{\wtilde F}$
  vanish.  A standard Chern class computation on projectivised vector bundles,
  \cite[Rem.~3.2.4 on p.~55]{Fulton98}, thus gives
  \begin{equation}\label{eq:hjkk2}
    c_1 \bigl( ℒ_{\wtilde F} \bigr)^{r+1} = %
    - \sum_{i=1}^{r} \underbrace{c_i \bigl( ρ_{\wtilde F}^* \, ℱ_{\wtilde F} \bigr)}_{= 0}·c_1
    \bigl( ℒ_{\wtilde F} \bigr)^{r+1-i} = 0.
  \end{equation}
  Items~\eqref{eq:hjkk1} and \eqref{eq:hjkk2} are obviously in contradiction.
  The assumption that $ρ_Y^{-1}(y)$ contains an $(r+1)$-dimensional subvariety
  is thus absurd.  In summary, we obtain that $ρ_Y$ is equidimensional, thus
  finishing the proof of Claim~\ref{claim:desc2}.
\end{proof}

Building on work of Kollár and Höring-Novelli, it has been shown by Araujo and
Druel \cite[Prop.~4.10]{MR3273645} that equidimensionality and the existence of
the relatively ample sheaf $ℒ_Y$ whose restriction to general $ρ_Y$-fibres is
the hyperplane bundle implies that $ρ_Y$ has the structure of a linear bundle.
We briefly recall the argument.

\begin{claim}[Linear bundle structure of $ρ_Y$]
  The sheaf $ℱ_Y := (ρ_Y)_* ℒ_Y$ is locally free on $Y$.  The morphism
  $ρ_Y : ℙ_Y → Y$ can be identified as the projection $ℙ_Y(ℱ_Y) → Y$.  The
  invertible sheaf $ℒ_Y$ becomes $𝒪_{ℙ_Y(ℱ_Y)}(1)$ under this identification.
\end{claim}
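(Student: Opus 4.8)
The plan is to recall the argument of Araujo--Druel \cite[Prop.~4.10]{MR3273645}, which builds on the rigidity of equidimensional families of projective spaces due to Kollár and Höring--Novelli and is completed by a standard application of cohomology and base change. All hypotheses of that proposition are now in place. By the construction in Claim~\ref{claim:desc1} (via the Basepoint-free Theorem), the variety $\bP_Y$ is normal, the morphism $\rho_Y$ is projective and surjective with connected fibres, $\sL_Y$ is $\rho_Y$-ample, and over the big open set $Y^{\circ}$ each fibre of $\rho_Y$ together with $\sL_Y$ is isomorphic to $(\bP^r, \sO_{\bP^r}(1))$ by Item~\ref{il:C}; by Claim~\ref{claim:desc2}, $\rho_Y$ is moreover equidimensional of relative dimension $r$. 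Finally, $\bP_Y$ is klt: a curve $C \subset \bP_X$ contracted by $\Phi$ satisfies $\sL_X \cdot C = 0$ by Item~\ref{il:t2} of Theorem~\ref{thm:bpf}, and $D \cdot C \geq 0$ since $D := L_X - (K_{\bP_X}+\Delta_{\bP_X})$ is $\delta$-nef and $C$ is $\delta$-contracted; hence $(K_{\bP_X}+\Delta_{\bP_X}) \cdot C = L_X \cdot C - D \cdot C \leq 0$, so $-(K_{\bP_X}+\Delta_{\bP_X})$ is $\Phi$-nef and $\bP_Y$ inherits klt singularities.

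The first and decisive step is to upgrade the description of the \emph{general} fibre to one valid for \emph{every} fibre: every scheme-theoretic fibre $\rho_Y^{-1}(y)$ is reduced and isomorphic to $\bP^r$, and $\sL_Y$ restricts to the hyperplane bundle $\sO_{\bP^r}(1)$ on it. This is exactly the Kollár--Höring--Novelli rigidity statement for equidimensional families of projective spaces carrying a relatively ample divisor that restricts to $\sO(1)$ on the general fibre, applied to $\rho_Y$; it is here that \cite[Prop.~4.10]{MR3273645} enters, with the input data just assembled.

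Granting this, the conclusion is routine. The Hilbert polynomial of the fibres of $\rho_Y$ with respect to $\sL_Y$ is now constant, hence $\rho_Y$ is flat; the functions $y \mapsto h^i\bigl(\rho_Y^{-1}(y),\, \sL_Y\bigr)$ are constant, equal to $r+1$ for $i = 0$ and to $0$ for $i > 0$, so cohomology and base change \cite[Ch.~III]{Ha77} shows that $\sF_Y := (\rho_Y)_* \sL_Y$ is locally free of rank $r+1$ with formation commuting with base change. Since $\sO_{\bP^r}(1)$ is globally generated on each fibre, the evaluation morphism $\rho_Y^{*}\sF_Y \to \sL_Y$ is surjective and therefore defines a $Y$-morphism $g : \bP_Y \to \bP_Y(\sF_Y)$ with $g^{*}\sO_{\bP_Y(\sF_Y)}(1) \cong \sL_Y$. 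Restricted to a fibre, $g$ is the morphism attached to the complete linear system $|\sO_{\bP^r}(1)|$, which is an isomorphism; as $\bP_Y$ and $\bP_Y(\sF_Y)$ are both flat over $Y$, it follows that $g$ is an isomorphism. This identifies $\rho_Y : \bP_Y \to Y$ with the projection $\bP_Y(\sF_Y) \to Y$ and $\sL_Y$ with $\sO_{\bP_Y(\sF_Y)}(1)$, as asserted.

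The only genuine obstacle is the first step, the passage from the general fibre to all fibres. Without the equidimensionality of Claim~\ref{claim:desc2} --- which is precisely where the $\varphi$-numerical flatness of $\sF_X$ is used --- the special fibres of $\rho_Y$ could degenerate, becoming non-reduced, acquiring embedded components, or failing to be projective space altogether; ruling this out, and at the same time pinning down the restriction of $\sL_Y$ to every fibre, is exactly what the Kollár--Höring--Novelli rigidity accomplishes, and this is why the equidimensionality statement was needed.
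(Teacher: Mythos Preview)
Your approach is essentially the paper's: both invoke the Araujo--Druel rigidity argument \cite[Prop.~4.10]{MR3273645} --- which the paper unpacks explicitly via H\"oring--Novelli \cite[Prop.~3.1]{MR3030002} and Koll\'ar's simultaneous normalisation \cite[Thm.~12]{MR2838215}, while you cite it as a package --- and then finish with cohomology-and-base-change plus the universal property of projectivisation. One remark: your verification that $\bP_Y$ is klt is unnecessary, since the argument only uses normality of $\bP_Y$, already granted by the Basepoint-free Theorem; moreover, your deduction of kltness from the $\Phi$-nefness of $-(K_{\bP_X}+\Delta_{\bP_X})$ is incomplete as stated, because it tacitly presupposes that $K_{\bP_Y}+\Phi_*\Delta_{\bP_X}$ is $\bQ$-Cartier. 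You can simply drop that sentence.
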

\begin{proof}
  By~\ref{il:C}, we know that the general fibre $ℙ_{Y,y}$ of $ρ_Y$ is isomorphic
  to $ℙ^r$, with $ℒ_Y|_{ℙ_{Y,y}} ≅ 𝒪_{ℙ^r}(1)$.  Since $ℒ_Y$ is $ρ_Y$-ample,
  \cite[Prop.~3.1]{MR3030002} applies to guarantee that in fact all fibres of
  $ρ_Y$ are irreducible and generically reduced, and that the normalisation of
  any fibre is isomorphic to $ℙ^r$.  In particular, the Hilbert polynomial of
  the normalisation of the fibres is constant, and \cite[Thm.~12]{MR2838215}
  applies to show that the variety $ℙ_Y$ admits a simultaneous normalisation,
  which is a finite, birational morphism $η : \what{ℙ}_Y → ℙ_Y$ with the
  property that the morphism $ρ_Y ◦ η$ is flat and that all its fibres are
  normal.  But since $ℙ_Y$ already is normal, Zariski's main theorem,
  \cite[V~Thm.~5.2]{Ha77}, applies to show that $η$ is isomorphic, and all
  fibres of $ρ_Y$ are therefore smooth, and isomorphic to $ℙ^r$.  Grauert's
  theorem, \cite[III~Cor.~12.9]{Ha77} thus shows that $ℱ_Y$ is locally free, and
  that the natural morphism $(ρ_Y)^*ℱ_Y = (ρ_Y)^* (ρ_Y)_* ℒ_Y → ℒ_Y$ is
  surjective.  The universal property of projectivisation,
  \cite[II~Prop.~7.12]{Ha77}, thus gives a morphism $α : ℙ_Y → ℙ_Y(ℱ_Y)$ that
  identifies $ℒ_Y$ with the pull-back of $𝒪_{ℙ_Y(ℱ_Y)}(1)$.  But since the
  identification $ℒ_Y|_{(ρ_Y)^{-1}(y)} ≅ 𝒪_{ℙ^r}(1)$ now holds for every
  $y ∈ Y$, the morphism $α$ is clearly bijective, hence isomorphic by Zariski's
  main theorem.
\end{proof}

\subsection{Proof of Theorem~\ref{thm:desc1}: End of proof}
\approvals{Behrouz & yes \\ Daniel & yes \\ Stefan & yes \\ Thomas & yes}

It remains to show that $ℱ_X ≅ φ^* ℱ_Y$.  Recalling that
$$
ℱ_Y = (ρ_Y)_* ℒ_Y \quad\text{and}\quad ℱ_X = (ρ_X)_* ℒ_X = (ρ_X)_* Φ^*
ℒ_Y,
$$
there exists a natural morphism $α :φ^* ℱ_Y → ℱ_X$,
cf.~\cite[III~Rem.~9.3.1]{Ha77}.  The restriction of the morphism $α$ to the
open set $X°$ is clearly isomorphic.  We claim that it is isomorphic everywhere.
To this end, consider its determinant $\det α : φ^* \sM_Y → \sM_X$.  Using that
$φ^* \sM_Y ≅ \sM_X$, the determinant can be seen as a section in
$\sHom(\sM_X, \sM_X) ≅ 𝒪_X$, and hence as a function on $X$ that does not
vanish on $X°$.  But since the $φ$-exceptional set $E := X ∖ X°$ is
contracted to the small subvariety $Y∖ Y°$ of $Y$, the function in fact
cannot vanish anywhere.  It follows that the morphism $\det α$ is isomorphic,
and hence so is $α$.  This ends the proof of Theorem~\ref{thm:desc1}.  \qed

%
%
\svnid{$Id: 05.tex 1145 2018-04-25 11:46:56Z kebekus $}

\section{Descent of vector bundles to klt spaces}\label{sec:4}
\approvals{Behrouz & yes \\ Daniel & yes \\ Stefan & yes \\ Thomas & yes}
\subversionInfo

In the present section, we will prove the following theorem, a simplified form
of which appeared as Theorem~\ref{thm:desc2simplified} in the introduction.

\begin{thm}[Descent of vector bundles to klt spaces]\label{thm:desc2}
  Let $f : X → Y$ be a birational, projective morphism of normal,
  quasi-projective varieties.  Assume $Y$ to be a klt space.  Then, the
  following holds.
  \begin{enumerate}
  \item\label{il:desc1} If $ℱ_X$ is any locally free, $f$-numerically flat sheaf
    on $X$, then there exists a locally free sheaf $ℱ_Y$ on $Y$ such that
    $ℱ_X ≅ f^* ℱ_Y$.

  \item\label{il:desc2} If $\bigl(ℱ_X,\, θ_X\bigr)$ is any locally free Higgs
    sheaf on $X$, where $ℱ_X$ is $f$-numerically flat, then there exists a
    locally free Higgs sheaf $\bigl(ℱ_Y,\, θ_Y\bigr)$ on $Y$ such that
    $\bigl(ℱ_X,\, θ_X\bigr) ≅ f^* \bigl(ℱ_Y,\, θ_Y\bigr)$.
  \end{enumerate}
\end{thm}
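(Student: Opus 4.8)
The plan is to reduce the statement about $Y$ being klt to the situation of Theorem~\ref{thm:desc1}, where $X$ was klt and $-(K_X+\Delta_X)$ was $\varphi$-nef. The obstruction is that we cannot immediately apply Theorem~\ref{thm:desc1} to $f\colon X\to Y$: although $Y$ is klt, $X$ need not be, and there is no anti-nefness hypothesis on $X$. The standard device is to pass to a common resolution. So first I would choose a resolution of singularities $\pi\colon \wtilde{X}\to X$ that also resolves the map to $Y$, so that $g:=f\circ\pi\colon\wtilde X\to Y$ is birational and projective, and $\wtilde X$ is smooth; then $\wtilde X$ is in particular klt, and since it is smooth we may take $\Delta_{\wtilde X}=0$. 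However, to invoke Theorem~\ref{thm:desc1} for $g$ we would need $-K_{\wtilde X}$ to be $g$-nef, which is false in general. The correct fix is to run a relative minimal model program: by~\cite{MR2601039} (Birkar-Cascini-Hacon-M\textsuperscript{c}Kernan), since $(\wtilde X,0)$ has a $g$-nef and $g$-big divisor available (an exceptional effective divisor), one obtains a birational contraction $\wtilde X\dashrightarrow X'$ over $Y$ with $X'$ having $\mathbb Q$-factorial klt singularities and $-(K_{X'})$ being $h$-nef for the induced morphism $h\colon X'\to Y$ — in fact $X'$ is a relative canonical (or minimal) model over $Y$; alternatively, a cleaner route is to take $X'\to Y$ a $\mathbb Q$-factorialisation or a small modification provided by~\cite{MR2601039} so that the hypotheses of Theorem~\ref{thm:desc1} are met.

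Granting such an $X'$, the argument proceeds in three steps. \textbf{Step 1:} Transport the bundle $\mathcal F_X$ to $X'$. The pulled-back sheaf $\pi^*\mathcal F_X$ on $\wtilde X$ is $g$-numerically flat (numerical flatness is preserved under pull-back, as already used in the proof of Claim~\ref{claim:desc2}), and since the MMP steps are isomorphisms in codimension one, one transfers it to a reflexive — hence, after checking, locally free — sheaf $\mathcal F_{X'}$ on $X'$ that is $h$-numerically flat and restricts correctly; here one uses that $X'$ is $\mathbb Q$-factorial and that a reflexive sheaf which is locally free in codimension two on a smooth-in-codimension-two space behaves well, together with the fact that $h$-numerical flatness is a numerical condition insensitive to small modifications. \textbf{Step 2:} Apply Theorem~\ref{thm:desc1} to $h\colon X'\to Y$: since $(X',\Delta_{X'})$ is klt with $-(K_{X'}+\Delta_{X'})$ being $h$-nef, and $\mathcal F_{X'}$ is locally free and $h$-numerically flat, there is a locally free sheaf $\mathcal F_Y$ on $Y$ with $\mathcal F_{X'}\cong h^*\mathcal F_Y$. \textbf{Step 3:} Descend from $X'$ back to the original $X$. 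Both $f^*\mathcal F_Y$ and $\mathcal F_X$ are locally free sheaves on $X$ that agree over the big open set where $X\to Y$ is an isomorphism (indeed over the locus where all the maps in the roof diagram are isomorphisms); to conclude they agree everywhere one repeats the determinant trick from the end of the proof of Theorem~\ref{thm:desc1}: the natural comparison morphism $f^*\mathcal F_Y\to\mathcal F_X$ has determinant a section of $\sHom(\det f^*\mathcal F_Y,\det\mathcal F_X)$ which, via $\det\mathcal F_X\cong f^*\det\mathcal F_Y$ (Corollary~\ref{cor:descprep} applied to the $f$-numerically trivial divisor $\det\mathcal F_X$), is a regular function on $X$ nonvanishing on a big open set, hence nonvanishing everywhere since the exceptional locus maps into the small set $Y\setminus Y^\circ$; therefore the morphism is an isomorphism.

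The main obstacle is Step~1 together with the MMP input: one must ensure that the birational contraction $\wtilde X\dashrightarrow X'$ over $Y$ exists with $X'$ klt and the relative anti-canonical divisor $h$-nef, and that the bundle genuinely transfers to a \emph{locally free} sheaf on $X'$ rather than merely a reflexive one. The cleanest way around the local-freeness issue is to note that $X'$ can be taken with only terminal (even smooth in codimension two) singularities after possibly a further small step, and that an $h$-numerically flat reflexive sheaf which is locally free in codimension two on such $X'$ is automatically locally free — or, more robustly, to carry the $\mathbb P^r$-bundle $\mathbb P_X(\mathcal F_X)$ through the modifications rather than the sheaf itself, mimicking the bundle-theoretic argument of Section~\ref{sec:03}. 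For part~\eqref{il:desc2}, once $\mathcal F_X\cong f^*\mathcal F_Y$ is established, the Higgs field descends formally: by functoriality of the reflexive pull-back $d_{\refl}f$ on klt pairs and the reflexivity of $\sHom(\mathcal F_Y,\mathcal F_Y\otimes\Omega^{[1]}_Y)$, the operator $\theta_X$ on $f^*\mathcal F_Y$, being $f^{[*]}$ of a section defined over the big open set $Y^\circ$, extends uniquely to a Higgs field $\theta_Y$ on $\mathcal F_Y$ over all of $Y$, and the integrability condition $\theta_Y\wedge\theta_Y=0$ holds because it holds after restriction to a big open set; one then checks $(\mathcal F_X,\theta_X)\cong f^*(\mathcal F_Y,\theta_Y)$ using~\eqref{eq:pbhgs}.
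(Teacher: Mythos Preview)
Your overall architecture—pass to a resolution, run a relative MMP over $Y$, and eventually invoke Theorem~\ref{thm:desc1}—matches the paper's. However, there is a genuine gap in your Step~1, and it is exactly the gap you flag as ``the main obstacle'' without resolving. The MMP from $\wtilde X$ to a minimal model $X'$ over $Y$ consists of divisorial contractions and flips. Divisorial contractions are \emph{not} isomorphisms in codimension one, so you cannot transport the bundle by reflexive extension as you propose; and even across a flip, the reflexive push-forward of a locally free sheaf need not be locally free. Your suggested fix (``reflexive and locally free in codimension two implies locally free'') is false in general, and ``carry the $\bP^r$-bundle through the modifications'' is not a proof. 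The paper's key device, which you are missing, is to apply Theorem~\ref{thm:desc1} \emph{at every individual step of the MMP}: each contraction $X_i\to X_{i-1}$ (or $X_i\to Z$ in the flip case) is the contraction of a $(K_{X_i}+Δ_{X_i})$-negative extremal ray, so $-(K_{X_i}+Δ_{X_i})$ is relatively ample and Theorem~\ref{thm:desc1} descends the locally free sheaf one step (for a flip one then pulls back from $Z$ to $X_{i-1}$). Iterating yields a locally free $ℱ_{X_0}$ on the minimal model, and the construction records along the way that if $ℱ_{X_{i-1}}\cong f_{i-1}^*\,𝒢$ then $ℱ_{X_i}\cong f_i^*\,𝒢$; this bookkeeping replaces your Step~3 entirely.

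Your Step~3 has a separate flaw as written: the locus where $f\colon X\to Y$ is an isomorphism is big in $Y$ but typically \emph{not} big in $X$ (the $f$-exceptional set is usually a divisor), so a Hartogs/reflexivity argument for extending a comparison morphism $f^*ℱ_Y\to ℱ_X$ from that locus to all of $X$ does not apply. Your appeal to Corollary~\ref{cor:descprep} for the determinant also requires $(X,Δ)$ klt with $-(K_X+Δ)$ $f$-nef, precisely the hypothesis you are trying to circumvent. Your argument for part~\ref{il:desc2}, by contrast, is essentially correct and is what the paper does: once $ℱ_X\cong f^*ℱ_Y$, the Higgs field is determined on the big open subset of $Y$ where $f^{-1}$ is defined, and extends uniquely by reflexivity of $\sHom(ℱ_Y,\,ℱ_Y\otimes Ω^{[1]}_Y)$.
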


\subsection{Proof of Theorem~\ref{thm:desc2}: Setup and Notation}
\approvals{Behrouz & yes \\ Daniel & yes \\ Stefan & yes \\ Thomas & yes}

We maintain notation and assumptions of Theorem~\ref{thm:desc2} throughout the
present Section~\ref{sec:4}.  Choose an effective Weil $ℚ$-divisor $Δ_Y$ such
that $(Y, Δ_Y)$ is klt.  We denote by $Δ_X := f_*^{-1} Δ_Y$ the strict transform
of $Δ_Y$, and by $E_X := \Exc(f)$ the divisorial part of the $f$-exceptional
locus $\Exc(f)$.

\subsection{Proof of Statement~\ref*{il:desc1}}
\approvals{Behrouz & yes \\ Daniel & yes \\ Stefan & yes \\ Thomas & yes}

Consider a resolution of singularities, $π : \wtilde{X} → X$.  If we can show
that $π^* ℱ_X$ is of the form $(f ◦ π)^* ℱ_Y$ for a suitable sheaf $ℱ_Y$ on $Y$,
then $ℱ_X$ will be isomorphic to $f^* ℱ_Y$ by the projection formula.  We are
therefore free to replace $X$ by $\wtilde X$ and assume without loss of
generality that the following holds.

\begin{asswlog}
  The variety $X$ is smooth, the $f$-exceptional set equals $E_X$, and
  $\supp(Δ_X + E_X)$ is an snc divisor in $X$.
\end{asswlog}

\subsubsection*{Step 1: Factorisation via an $f$-relative MMP}
\approvals{Behrouz & yes \\ Daniel & yes \\ Stefan & yes \\ Thomas & yes}

We will factor the resolution $f: X → Y$ via a relative minimal model program of
$X$ over $Y$, cf.~the discussion in \cite[Sect.~23]{GKKP11}, whose organisation
we will follow closely.  By the definition of ``klt pair'' there exist effective
$f$-exceptional divisors $F$ and $G$ without common components such that
$\left⌊ F \right⌋ = 0$ and such that the following $ℚ$-linear
equivalence holds:
\begin{equation*}
  K_X + Δ_X + F \,\,\sim_ℚ\,\, f^*(K_Y+Δ_Y) + G.
\end{equation*}
For $ε ∈ (0,1) ∩ ℚ$ we let $Δ_ε := Δ_X+F+ε·E$.  For $0 < ε ≪ 1$ small enough,
the pair $(X,Δ)$ is klt.  Fix one such $ε$ and let $H ∈ ℚ\Div(X)$ be an
$f$-ample divisor such that $(X,Δ_{ε}+H)$ is still klt and $K_{X}+Δ_{ε}+H$ is
$f$-nef.  We may then run the $f$-relative $(X,Δ_\epsilon)$ minimal model
program with scaling of $H$, cf.\ \cite[Cor.~1.4.2]{BCHM10} to obtain a diagram
$$
\xymatrix{ %
  X \ar[d]_{f} \ar@{=}[r] & X_n \ar@{-->}[r]^{φ_n} \ar[d]_{f_n} & X_{n-1} \ar@{-->}[r]^{φ_{n-1}} \ar[d]_{f_{n-1}} & ⋯ \ar@{-->}[r]^{φ_1} & X_0 \ar[d]_{f_0} \\
  Y \ar@{=}[r] & Y \ar@{=}[r] & Y \ar@{=}[r] & ⋯ \ar@{=}[r] & Y %
}
$$
with the following properties.
\begin{enumerate}
\item\label{il:alpha} The spaces $X_i$ are $ℚ$-factorial.  Writing $Δ_{X_i}$ for
  the cycle-theoretic image of $Δ_ε$, the pairs $\bigl( X_i,\, Δ_{X_i}\bigr)$
  are klt.
\item\label{il:beta} The maps $φ_i$ are either divisorial contractions of
  $(K_{X_i}+Δ_{X_i})$-negative extremal rays in $R_i ⊆ \overline{NE}(X_i/Y)$ or
  flips associated to small contractions of such rays.
\item\label{il:gamma} The log-canonical divisor $K_{X_0} + Δ_{X_0}$ is
  $f_0$-nef, and it hence follows from the negativity lemma,
  \cite[Lem.~2.16.2]{GKKP11}, that the morphism $f_0$ is small and crepant,
  cf.~\cite[Claim 23.4]{GKKP11}.  In other words, $Δ_{X_0} = (f_0)^{-1}_* (Δ_Y)$
  and $K_{X_0}+Δ_{X_0} \; \sim_{ℚ} \; (f_0)^* \bigl( K_Y+Δ_Y \bigr)$.  As a
  consequence, also $-(K_{X_0}+Δ_{X_0})$ is $f_0$-nef.
\end{enumerate}

\subsubsection*{Step 2: Construction of bundles}
\approvals{Behrouz & yes \\ Daniel & yes \\ Stefan & yes \\ Thomas & yes}

Next, we construct vector bundles on the $X_i$.

\begin{claim}\label{claim:x1}
  There exist locally free sheaves $ℱ_{X_i}$ on the varieties $X_i$ such that
  the following holds.
  \begin{enumerate}
  \item\label{il:1} The sheaf $ℱ_{X_n}$ equals $ℱ_X$.
  \item\label{il:2} Given any index $i$, the sheaf $ℱ_{X_i}$ is
    $f_i$-numerically flat.
  \item\label{il:3} If $i>0$ is any index such that $ℱ_{X_{i-1}}$ is isomorphic
    to $(f_{i-1})^* 𝒢$ for a locally free sheaf $𝒢$ on $Y$, then
    $ℱ_{X_i} ≅ (f_i)^* 𝒢$.
  \end{enumerate}
\end{claim}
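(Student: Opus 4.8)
The strategy is induction on $i$, starting from $i=n$ and descending: we set $ℱ_{X_n} := ℱ_X$, which satisfies Item~\ref{il:1} by fiat and Item~\ref{il:2} because $f_n = f$ and $ℱ_X$ is $f$-numerically flat by hypothesis. Given $ℱ_{X_i}$ on $X_i$ that is $f_i$-numerically flat, we must produce $ℱ_{X_{i-1}}$ on $X_{i-1}$ across the birational map $φ_i : X_i \dashrightarrow X_{i-1}$, which by Item~\ref{il:beta} is either a divisorial contraction or a flip of a $(K_{X_i}+Δ_{X_i})$-negative extremal ray in $\overline{NE}(X_i/Y)$. The key point is that over a suitable open set $U_i \subseteq Y$ the map $φ_i$ is an isomorphism, so $ℱ_{X_{i-1}}$ is at any rate determined on a big open subset of $X_{i-1}$; the work is to extend it as a locally free sheaf across the exceptional/flipping locus.

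\textbf{The two cases.} If $φ_i$ is a divisorial contraction $g : X_i \to X_{i-1}$, then $g$ itself is a birational, projective morphism and $-(K_{X_i}+Δ_{X_i})$ is $g$-nef (the contracted ray is $(K_{X_i}+Δ_{X_i})$-negative, so the log-canonical divisor is relatively anti-nef for $g$, in fact $g$-ample on the ray); moreover $(X_i, Δ_{X_i})$ is klt. So Theorem~\ref{thm:desc1} applies directly to $g$ and $ℱ_{X_i}$, yielding a locally free $ℱ_{X_{i-1}}$ on $X_{i-1}$ with $ℱ_{X_i} \cong g^* ℱ_{X_{i-1}}$; and $f_i = f_{i-1} \circ g$ together with Definition~\ref{def:numflat} (nefness is preserved under composition of the structural projections) shows $ℱ_{X_{i-1}}$ is $f_{i-1}$-numerically flat, giving Item~\ref{il:2} for $i-1$. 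If instead $φ_i$ is a flip, one has a small contraction $ψ_i : X_i \to W_i$ of the ray (with $-(K_{X_i}+Δ_{X_i})$ $ψ_i$-ample) and a small contraction $ψ_i^+ : X_{i-1} \to W_i$ on the other side (with $K_{X_{i-1}}+Δ_{X_{i-1}}$ $ψ_i^+$-ample). Apply Theorem~\ref{thm:desc1} to $ψ_i$ and $ℱ_{X_i}$ to descend $ℱ_{X_i} \cong ψ_i^* 𝒢_{W_i}$ to a locally free sheaf $𝒢_{W_i}$ on $W_i$ (the hypotheses hold since $-(K_{X_i}+Δ_{X_i})$ is $ψ_i$-nef), and then set $ℱ_{X_{i-1}} := (ψ_i^+)^* 𝒢_{W_i}$, which is locally free and agrees with $ℱ_{X_i}$ over the big open set where both flipping loci are avoided; $f_i$-numerical flatness of $ℱ_{X_i}$ and the fact that $𝒢_{W_i}$ pulls back from $W_i$ give $f_{i-1}$-numerical flatness of $ℱ_{X_{i-1}}$ via the chain $f_{i-1} = (\text{structure map } W_i \to Y) \circ ψ_i^+$ once one checks $𝒢_{W_i}$ is numerically flat relative to $W_i \to Y$, which follows by descending the relevant nefness of $ℒ_{X_i}$ and $ℒ_{X_i}^*$ on $ℙ(ℱ_{X_i})$ down through the small map $ψ_i$.

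\textbf{Compatibility with descent to $Y$ (Item~\ref{il:3}).} Suppose $ℱ_{X_{i-1}} \cong (f_{i-1})^* 𝒢$ for some locally free $𝒢$ on $Y$. In the divisorial case $ℱ_{X_i} \cong g^* ℱ_{X_{i-1}} \cong g^* (f_{i-1})^* 𝒢 = (f_i)^* 𝒢$ immediately. In the flip case, $ψ_i^* 𝒢_{W_i} \cong ℱ_{X_i}$ and $(ψ_i^+)^* 𝒢_{W_i} \cong ℱ_{X_{i-1}} \cong (f_{i-1})^* 𝒢 = (ψ_i^+)^*(\text{pullback of }𝒢\text{ to }W_i)$; since $ψ_i^+$ is birational with $X_{i-1}$ normal, $(ψ_i^+)^*$ is injective on reflexive sheaves up to the usual reflexive-hull argument (the two locally free sheaves $𝒢_{W_i}$ and the pullback of $𝒢$ agree on a big open set and are both reflexive, hence isomorphic), so $𝒢_{W_i}$ is itself a pullback from $Y$, and then $ℱ_{X_i} \cong ψ_i^* 𝒢_{W_i} \cong (f_i)^* 𝒢$. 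This closes the induction.

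\textbf{Main obstacle.} The delicate point is the flip case: Theorem~\ref{thm:desc1} requires a genuine \emph{morphism} with the anti-nef log-canonical hypothesis, and $φ_i$ itself is only a birational \emph{map}, so one is forced to route the descent through the intermediate variety $W_i$ of the flip and then pull back along $ψ_i^+$. One must verify carefully that the sheaf obtained this way is still locally free (clear, as a pullback of a locally free sheaf under a morphism), that it genuinely restricts to $ℱ_{X_i}$ on the locus where $φ_i$ is an isomorphism (both are pulled back from $𝒢_{W_i}$ over the locus where $ψ_i$ and $ψ_i^+$ are isomorphisms), and — the real subtlety — that $f$-numerical flatness is transported correctly, which comes down to checking that $δ$-nefness of $𝒪_{ℙ(ℱ)}(1)$ and of the dual bundle's hyperplane class descends and ascends through the small contractions $ψ_i, ψ_i^+$; this uses that small maps do not affect curves generically and the projection/pushforward behaviour of nef classes. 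Everything else is a routine chase through the relative MMP bookkeeping already set up in Step~1.
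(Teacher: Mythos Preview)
Your proposal is correct and follows exactly the paper's approach: induct downward from $i=n$, apply Theorem~\ref{thm:desc1} directly to the contraction $φ_i$ in the divisorial case, and in the flip case route through the intermediate variety of the flipping diagram (your $W_i$, the paper's $Z$), descending $ℱ_{X_i}$ to $W_i$ via Theorem~\ref{thm:desc1} along the small contraction and then pulling back along the other side. You supply more detail than the paper does in verifying Items~\ref{il:2} and~\ref{il:3}---the paper simply asserts that the pullback relation $ℱ_{X_i} \cong α^* ℱ_Z$, $ℱ_{X_{i-1}} := β^* ℱ_Z$ makes both properties hold---but the substance is identical.
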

\begin{proof}[Proof of Claim~\ref{claim:x1}]
  We construct the vector bundles inductively.  Start by setting
  $ℱ_{X_n} := ℱ_X$.  Next, assume that we are given an index $i > 0$ for which
  vector bundles $ℱ_{X_n}$, …, $ℱ_{X_i}$ have already been constructed.  We
  consider the cases where $φ_i$ is a divisorial contraction and where it is a
  flip separately.

  \subsubsection*{Divisorial contraction}

  If $φ_i: X_i → X_{i-1}$ is a divisorial contraction, then
  $-(K_{X_i}+ Δ_{X_i})$ is $φ_i$-nef.  Theorem~\ref{thm:desc1} (``Descent of
  vector bundles from klt spaces'') hence proves the existence of a locally free
  sheaf $ℱ_{X_{i-1}}$ on $X_{i-1}$ such that $ℱ_{X_i} ≅ φ_i^* ℱ_{X_{i-1}}$.
  This isomorphism guarantees that Properties~\ref{il:2} and \ref{il:3} both
  hold.

  \subsubsection*{Flip}

  If $φ_i: X_i \dasharrow X_{i-1}$ is a flip, consider the associated ``flipping
  diagram'',
  $$
  \xymatrix{ %
    X_i \ar@{-->}@/^4mm/[rrrr]^{φ_i} \ar[rr]_{α} \ar@/_2mm/[rrd]_{f_i} && Z \ar[d]_{γ} && X_{i-1} \ar[ll]^{β} \ar@/^2mm/[lld]^{f_{i-1}} \\
    && Y,
  }
  $$
  where $α$ is obtained by contracting a $(K_{X_i} + Δ_{X_i})$-extremal ray,
  which implies as above that $-(K_{X_i}+ Δ_{X_i})$ is $α$-nef.  In this
  setting, Theorem~\ref{thm:desc1} again proves the existence of a locally free
  sheaf $ℱ_Z$ on $Z$ such that $ℱ_{X_i} ≅ α^* ℱ_Z$.  Properties~\ref{il:2}
  and \ref{il:3} will hold once we set $ℱ_{X_{i-1}} := β^* ℱ_Z$.  This finishes
  the proof of Claim~\ref{claim:x1}.
\end{proof}

\subsubsection*{Step 3: End of proof}
\approvals{Behrouz & yes \\ Daniel & yes \\ Stefan & yes \\ Thomas & yes}

To end the proof of Statement~\ref{il:desc1}, recall from \ref{il:gamma} that
$(X_0,\, Δ_{X_0})$ is klt and that $-(K_{X_0} + Δ_{X_0})$ is $f_0$-nef.  Since
$ℱ_{X_0}$ is $f_0$-numerically flat by \ref{il:2}, we may therefore apply
Theorem~\ref{thm:desc1} to obtain a locally free sheaf $ℱ_Y$ on $Y$ such that
$ℱ_{X_0} ≅ (f_0)^* ℱ_Y$.  A repeated application of Property~\ref{il:3} then
shows that the sheaves $ℱ_{X_1}$, $ℱ_{X_2}$, …, $ℱ_{X_n} = ℱ$ are all pull-backs
of $ℱ_Y$.  Statement~\ref{il:desc1} follows.

\subsection{Proof of Statement~\ref*{il:desc2}}
\approvals{Behrouz & yes \\ Daniel & yes \\ Stefan & yes \\ Thomas & yes}

For $• ∈ \{ X, Y \}$, set
$$
𝒜ⁱ_• := \sHom \bigl( ℱ_•,\, ℱ_• ⊗ Ω^{[i]}_• \bigr).
$$
A Higgs fields on $ℱ_Y$ is, by definition, a section of the sheaf $𝒜¹_Y$ such
that the induced section in $𝒜²_Y$ vanishes.  In our case, the sheaf $ℱ_Y$ is
locally free, which implies that both $𝒜¹_Y$ and $𝒜²_Y$ are reflexive.  To give
a Higgs field on $ℱ_Y$ it is therefore equivalent to give a Higgs field on the
restriction of $ℱ_Y$ to any big open subset of $Y$.  The Higgs field $θ_X$,
however clearly induces a Higgs field on the restriction of $ℱ_Y$ to the big
open set where $f^{-1}$ is well-defined and isomorphic.  The existence of $θ_Y$
follows.

It remains to show that
$f^* \bigl( ℱ_Y,\, θ_Y \bigr) ≅ \bigl( ℱ_X,\, θ_X \bigr)$.  In other words,
we need to show that the two sections $θ_X$ and
$f^* θ_Y ∈ H⁰ \bigl( X,\, 𝒜¹_X \bigr)$ agree.  They will clearly agree over the
open set $f^{-1}(Y°)$.  Since $ℱ_X$ is locally free and $𝒜¹_X$ therefore
reflexive, this suffices to show that they are the same.  This finishes the
proof of Theorem~\ref{thm:desc2}.  \qed

%
%
\svnid{$Id: 06.tex 1153 2018-07-25 14:29:29Z kebekus $}

\section{The restriction theorem for semistable Higgs sheaves}

\approvals{Behrouz & yes \\ Daniel & yes \\ Stefan & yes \\ Thomas & yes}

\label{sect:app-oper}
\subversionInfo

The proof of the nonabelian Hodge correspondence uses the following restriction
theorem for (semi)stable Higgs sheaves, which generalises a number of earlier
results including \cite[Thm.~5.22]{GKPT15}.

\begin{thm}[Restriction of (semi)stable Higgs sheaves]\label{thm:restrictionA}
  Let $X$ be a normal, projective variety, $\dim X ≥ 2$, and let $H ∈ \Div(X)$
  be big and semiample.  Given any torsion free Higgs sheaf $(ℰ°, θ°)$ on
  $X_{\reg}$ that is semistable (resp.\ stable) with respect to $H$ in the sense
  of Definition~\ref{def:sHsl2}, there exists an integer $M ∈ ℕ^+$ satisfying
  the following conditions: If $\aB ⊆ |m·H|$ is any basepoint free linear system
  with $m > M$, then there exists a dense, open subset $\aB° ⊆ \aB$ such that
  the following properties hold for all $D ∈ \aB°$.
  \begin{enumerate}
  \item\label{il:r1a} The hypersurface $D$ is irreducible and normal, and
    $D_{\reg} = D ∩ X_{\reg}$.

  \item\label{il:r3a} The Higgs sheaf $(ℰ°, θ°)|_{D_{\reg}}$ is torsion free and
    semistable (resp.\ stable) with respect to $H|_D$.
  \end{enumerate}
\end{thm}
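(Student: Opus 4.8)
The plan is to handle assertion~\ref{il:r1a} by Bertini-type general-position arguments, to reduce the semistability part of \ref{il:r3a} to a statement about sheaves with operators on $X$ itself by means of Lemma~\ref{lem:owe1}, and then to run the Mehta--Ramanathan restriction argument with all destabilizing subsheaves taken to be $θ$-invariant.

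\emph{General position.} Since $H$ is big and semiample, for $m ≫ 0$ the linear system $|m·H|$ is basepoint free and the associated morphism is birational onto its image; in particular $|m·H|$ is not composed with a pencil. Fix such an $m$ and any basepoint free $\aB ⊆ |m·H|$. Bertini's theorem on the smooth variety $X_{\reg}$ makes $D ∩ X_{\reg}$ smooth for $D$ in a dense open subset of $\aB$, Bertini irreducibility together with Seidenberg's normality theorem (applicable because $\dim X ≥ 2$, $X$ is normal and $|m·H|$ is not composed with a pencil) make a general $D$ irreducible and normal, and since the intersection of a general $D$ with the singular locus of $X$ has codimension $≥ 2$ in $D$, the singular locus of such a $D$ is exactly $D ∩ X_{\sing}$, whence $D_{\reg} = D ∩ X_{\reg}$. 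Generic flatness then makes $ℰ°|_{D_{\reg}}$ torsion free for general $D$. These conditions cut out the required dense open $\aB° ⊆ \aB$, and the threshold $M$ depends only on $m$, $X$ and $H$, hence is uniform over all basepoint free $\aB ⊆ |m·H|$.

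\emph{Reduction to a statement on $X$.} By Lemma~\ref{lem:owe1}, $(ℰ°, θ°)$ is semistable (resp.\ stable) with respect to $H$ if and only if the torsion-free sheaf with $Ω^{[1]}_X$-valued operator $(ℰ, θ) := ι_*(ℰ°, θ°)$ on $X$ is, where $ι: X_{\reg} ↪ X$; and for $D ∈ \aB°$, using $D_{\reg} = D ∩ X_{\reg}$ and $\codim_D(D ∖ D_{\reg}) ≥ 2$, semistability of $(ℰ°,θ°)|_{D_{\reg}}$ with respect to $H|_D$ is equivalent to semistability of the restriction of $(ℰ, θ)$ to $D$ as a sheaf with $Ω^{[1]}_X|_D$-valued operator. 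For this last equivalence I use that when $m ≫ 0$ the conormal bundle $𝒪_D(-m·H)$ is negative enough that, on a bounded family of subsheaves, an invariant subsheaf for the pulled-back $Ω^1_{D_{\reg}}$-valued Higgs field is already invariant for the $Ω^{[1]}_X|_D$-valued operator; combined with Lemma~\ref{lem:owe2} this identifies the two notions. It therefore suffices to prove: a torsion-free sheaf with $Ω^{[1]}_X$-valued operator on $X$ that is semistable (resp.\ stable) with respect to $H$ restricts to a semistable (resp.\ stable) one on a general $D ∈ \aB$. Note that restriction to $D ∈ |m·H|$ multiplies every slope by $m$ since $[D] = m·[H]$, so strict slope inequalities are preserved and $M$ enters only through Bertini and boundedness, never through the slope arithmetic.

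\emph{The Mehta--Ramanathan argument and the main obstacle.} Assume for contradiction that $(ℰ, θ)|_D$ fails to be semistable (resp.\ stable) for $D$ in a dense subset of $\aB$. If $ℱ_D ⊊ ℰ|_D$ is a maximal destabilizing $θ|_D$-invariant subsheaf and $C ⊆ D$ is the curve cut by $n-2$ further general members of $|m·H|$, then $ℱ_D|_C$ is a $θ|_C$-invariant subsheaf destabilizing $(ℰ,θ)|_C$ (the slope inequality survives, merely rescaled), and $C$ is a general complete intersection curve of $X$ of type $(m·H)^{n-1}$. So it is enough to prove the curve case: for $m ≫ 0$, the restriction of $(ℰ, θ)$ to such a general curve is semistable; this refines \cite[Thm.~5.22]{GKPT15} and is proven by the classical argument — assuming instability, the maximal destabilizing $θ$-invariant subsheaves form a bounded family as the curve moves in its universal family $q : \sC → X$ (Grothendieck's Lemma), so they assemble into a $q^*θ$-invariant subsheaf $\wtilde{ℱ} ⊆ q^*ℰ$; rigidity of the Harder--Narasimhan filtration along the positive-dimensional fibres of $q$ forces $\wtilde{ℱ} = q^*ℱ$ for a subsheaf $ℱ ⊆ ℰ$; faithfully flat descent upgrades $q^*θ$-invariance of $\wtilde{ℱ}$ to $θ$-invariance of $ℱ$; and a slope computation gives $μ_H(ℱ) ≥ μ_H(ℰ)$, strict whenever $ℱ$ is proper, contradicting semistability (resp.\ stability) of $(ℰ, θ)$. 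The stable case is identical with strict inequalities throughout. I expect the patching of the fibrewise destabilizers into a single subsheaf of $ℰ$ — the point at which $m$ must be taken large — to be the main obstacle; the only genuinely Higgs-theoretic content is the bookkeeping that $θ$-invariance is preserved under restriction, specialisation and faithfully flat descent, and the minor secondary point of matching the $Ω^{[1]}_X|_D$-valued and pulled-back $Ω^1_{D_{\reg}}$-valued operators via conormal negativity and Lemma~\ref{lem:owe2}.
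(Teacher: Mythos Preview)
Your ingredients are correct --- the conormal sequence, a restriction theorem for sheaves with operators, and Mehta--Ramanathan-type spreading out --- but you have inverted the order of the argument relative to the paper, and the inversion hides the one non-formal step.

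The paper first proves a separate restriction theorem for sheaves with $\sW$-valued operators, Theorem~\ref{thm:nrestrSWOA}: extend the reflexive target $\sW$ into a locally free $\sV$, pass to a resolution where $X$ is smooth and $ℰ$ locally free, and cite Langer \cite{MR3314517} and \cite[Thm.~A.3]{GKPT15}; the semistable case follows by Jordan--H\"older. This packages exactly the Mehta--Ramanathan argument you sketch at the end. With Theorem~\ref{thm:nrestrSWOA} in hand, the Higgs case is argued by contradiction: assume a Higgs-invariant destabiliser $ℱ_{D_{\reg}} ⊂ ℰ°|_{D_{\reg}}$, cut further to a complete-intersection curve $C ⊂ D ⊂ X_{\reg}$, and observe that by Theorem~\ref{thm:nrestrSWOA} the pair $(ℰ|_C, θ°|_C)$ is semistable as a sheaf with an $Ω¹_X|_C$-valued operator. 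Hence $ℱ_C := ℱ_{D_{\reg}}|_C$ is \emph{not} invariant for that operator, and the conormal sequence produces a nonzero map $τ : ℱ_C → 𝒬 ⊗ N^*_{C/X}$. An explicit degree computation --- using $μ^{\max}_{H|_C}(ℰ|_C) = μ^{\max}_H(ℰ)$ (again from Theorem~\ref{thm:nrestrSWOA}, applied to $ℰ$ with zero operator) together with the numerical choice $[M·H]^n > nr·μ^{\max}_H(ℰ)$ --- then contradicts the existence of $τ$.

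Your claim ``for $m ≫ 0$, an invariant subsheaf for the $Ω¹_{D_{\reg}}$-valued Higgs field is already invariant for the $Ω^{[1]}_X|_D$-valued operator'' is precisely this conormal step, but you assert it rather than prove it, and you place it \emph{before} any restriction theorem, so the control $μ^{\max}(ℰ|_D) = μ^{\max}(ℰ)$ that the degree bound consumes is not yet available. Your parenthetical ``on a bounded family of subsheaves'' does not close the gap: for an arbitrary Higgs-invariant destabiliser $ℱ$ there is no a priori lower bound on $μ^{\min}(ℱ)$, so negativity of $N^*_{D/X}$ alone does not force $\Hom(ℱ, 𝒬 ⊗ N^*_{D/X}) = 0$. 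What is actually needed is the computation the paper carries out in Steps~3--4 of Section~\ref{ssec:potrA}, and that computation already uses the operator-restriction theorem as input. So this is not ``a minor secondary point'': it is the heart of the proof, and Lemma~\ref{lem:owe2} (which concerns enlarging the target of an operator along an \emph{inclusion} $\sW ↪ \sV$) does not apply here, since $Ω¹_{D_{\reg}}$ is a quotient of $Ω¹_X|_{D_{\reg}}$, not a subsheaf.
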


\begin{rem}[Algebraicity assumption]
  We stress that the Higgs sheaf $(ℰ°, θ°)$ of Theorem~\ref{thm:restrictionA} is
  assumed to be algebraic.
\end{rem}

\begin{rem}[Restriction theorem for sheaves on $X$]\label{rem:restaX}
  Recalling from Lemma~\ref{lem:owe1} that a Higgs sheaf on $X$ is semistable
  (resp.\ stable) if and only if its restriction to $X_{\reg}$ is semistable
  (resp.\ stable), Theorem~\ref{thm:restrictionA} immediately implies a
  restriction theorem for torsion free Higgs sheaves on $X$ that is more general
  than the results found in the literature.  In practical applications, the
  variety $X$ might admit a finite group action, and the linear system
  $\aB ⊆ |m·H|$ might be chosen to contain invariant divisors only.
\end{rem}

Theorem~\ref{thm:restrictionA} is shown in Section~\ref{ssec:potrA} below.  The
following corollary discusses the behaviour of semistability under pull-back.
It complements \cite[Sect.~5.6]{GKPT15}, where the ($G$-)stable case was
discussed.  \Publication{Its proof, spelled out in the arXiv version of this
  paper, \href{http://arxiv.org/abs/1711.08159}{arXiv:1711.08159}, applies
  Theorem~\ref{thm:restrictionA} repeatedly to cut down to a curve, where the
  result is classically known.}

\begin{cor}[Semistability under generically finite morphisms]\label{cor:suqema}
  Let $X$ and $Y$ be two projective, klt spaces.  Let $H ∈ \Div(X)$ be big and
  semiample, and let $f : Y → X$ be a surjective and generically finite
  morphism.  Let $(ℰ,θ)$ be a reflexive Higgs sheaf on $X$.
  \begin{itemize}
  \item If $ℰ$ is locally free, then the following are equivalent.
    \begin{enumerate}
    \item\label{il:Ap1} The Higgs bundle $(ℰ,θ)$ is semistable with respect to
      $H$.
    \item\label{il:Ap2} The Higgs bundle $f^*(ℰ,θ)$ is semistable with respect
      to $f^*H$.
    \end{enumerate}
    
  \item If $Y$ is smooth, then the following are equivalent.
    \begin{enumerate}
    \item\label{il:Ap3} The Higgs sheaf $(ℰ,θ)$ is semistable with respect to
      $H$.
    \item\label{il:Ap4} The Higgs sheaf $f^{[*]}(ℰ,θ)$ is semistable with
      respect to $f^*H$.  \Publication{\qed}
    \end{enumerate}
  \end{itemize}
\end{cor}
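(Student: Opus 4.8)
The plan is to reduce both equivalences to a one-dimensional statement via repeated application of the restriction theorem, Theorem~\ref{thm:restrictionA}. In each of the two situations the implication from semistability upstairs to semistability downstairs is the easy direction: if $f^*(\mathscr E,\theta)$ (resp.\ $f^{[*]}(\mathscr E,\theta)$) is semistable and $\mathscr F\subseteq\mathscr E$ is a generically $\theta$-invariant destabilising subsheaf, then its pull-back is generically invariant for the pulled-back Higgs field and destabilises, because both slope and rank scale by the degree of $f$ in the appropriate sense; one has to be slightly careful to pass to reflexive hulls and to restrict to the smooth locus so that the numerics of Definition~\ref{def:sHsl1} apply, but this is routine using Lemma~\ref{lem:owe1}. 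So the content is the converse: semistability of $(\mathscr E,\theta)$ implies semistability of its pull-back.

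For the converse I would first reduce to the case $\dim X=1$. Suppose $\dim X=n\ge 2$ and $(\mathscr E,\theta)$ is semistable with respect to the big and semiample divisor $H$. By Theorem~\ref{thm:restrictionA} applied on $X$ (via Remark~\ref{rem:restaX}), for $m\gg 0$ a general member $D\in|m\cdot H|$ is irreducible and normal with $D_{\mathrm{reg}}=D\cap X_{\mathrm{reg}}$, and $(\mathscr E,\theta)|_{D_{\mathrm{reg}}}$ is again semistable with respect to $H|_D$. One then needs the corresponding statement on $Y$: choosing $D$ general enough that its preimage $f^{-1}(D)$ is again normal (or passing to a resolution of $Y$ in the smooth case, using the functoriality of pull-back of Higgs sheaves from Section~\ref{sssec:pb}), one wants $f^{-1}(D)$ to carry $f^*(\mathscr E,\theta)|_{f^{-1}(D)_{\mathrm{reg}}}$, and the induced map $f^{-1}(D)\to D$ is again surjective and generically finite. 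Since $D$ is again a klt space (adjunction for klt pairs, cut by a general ample member), the inductive hypothesis applies on $D$. Iterating $n-1$ times we arrive at a generically finite morphism between klt \emph{curves}, i.e.\ between smooth projective curves after normalisation, where a Higgs sheaf is just a vector bundle (the Higgs field lands in $\mathscr E\otimes\Omega^1_C$ but plays no role in the argument beyond bookkeeping of invariant subsheaves), and the statement ``semistability is preserved and reflected under pull-back along a finite morphism of curves'' is classical, e.g.\ via the fact that a finite map of curves multiplies both degree and slope by the same factor and pulls generically-invariant subsheaves to generically-invariant subsheaves, and conversely one uses the norm/trace construction to descend a destabilising subbundle.

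The main obstacle I anticipate is controlling the singular loci along the induction: for a general $D\in|m\cdot H|$ on $X$ one has $D_{\mathrm{reg}}=D\cap X_{\mathrm{reg}}$ by Theorem~\ref{thm:restrictionA}~\eqref{il:r1a}, but one must simultaneously arrange that $f^{-1}(D)$ is normal with $f^{-1}(D)_{\mathrm{reg}}=f^{-1}(D)\cap Y_{\mathrm{reg}}$ and that the restriction of $f$ to $f^{-1}(D)$ is still surjective and generically finite onto $D$ — this requires a Bertini-type argument for the finite-part of $f$ over the generic point together with a careful choice of the dense open subset $\aB^\circ\subseteq\aB$ inside the linear system, and in the second bullet the extra step of resolving $Y$ and invoking functoriality of the Higgs pull-back (Theorem~\ref{thm:finm}/Section~\ref{sssec:pb}) to compare $f^{[*]}$ with $f^*$ after resolution. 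Once these normality and transversality statements are secured, the induction runs and the curve case closes the argument. \qed
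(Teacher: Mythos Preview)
Your approach is correct in spirit and matches the paper's strategy of reducing to the curve case via Theorem~\ref{thm:restrictionA}, but you make the reduction harder than necessary by proceeding inductively, one hypersurface at a time. The paper instead cuts down to a curve in a single step: choose $0 \ll m_1 \ll \cdots \ll m_{n-1}$ and a general tuple $(D_1,\dots,D_{n-1}) \in |m_1 H| \times \cdots \times |m_{n-1} H|$, and set $C_X := D_1 \cap \cdots \cap D_{n-1}$. For a general such tuple, $C_X$ is smooth and contained in $X_{\reg}$, the sheaf $ℰ$ is locally free near $C_X$, and the preimage $C_Y := f^{-1}(C_X)$ is likewise smooth and irreducible. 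One then runs the chain of implications
\[
(ℰ,θ)\text{ s.s.\ w.r.t.\ }H \;\Rightarrow\; (ℰ,θ)|_{C_X}\text{ s.s.} \;\Rightarrow\; (f|_{C_Y})^*\bigl((ℰ,θ)|_{C_X}\bigr)\text{ s.s.} \;\Rightarrow\; f^{[*]}(ℰ,θ)|_{C_Y}\text{ s.s.} \;\Rightarrow\; f^{[*]}(ℰ,θ)\text{ s.s.\ w.r.t.\ }f^*H,
\]
where the last implication holds because $f^{[*]}ℰ$ is locally free near $C_Y$ and $C_Y$ is itself a general complete intersection curve for the basepoint-free system $f^*|m_\bullet H|$ on $Y$.

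This direct-to-curve strategy sidesteps precisely the difficulties you flag: there is no need to verify that intermediate hypersurface sections $f^{-1}(D)$ are normal or klt, no need to compare $(f^{[*]}ℰ)|_{f^{-1}(D)}$ with $(f|_{f^{-1}(D)})^{[*]}(ℰ|_D)$ in intermediate dimensions, and no appeal to Theorem~\ref{thm:finm} (which in any case is proved only later and relies on the present corollary). Your easy direction and the curve case are handled exactly as in the paper; the only simplification you are missing is to skip the intermediate dimensions entirely.
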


\begin{rem}
  One might wonder why the assumptions in Corollary~\ref{cor:suqema} are so much
  more restrictive compared to Theorem~\ref{thm:restrictionA}.  If $X$ is not
  klt, then pull-back of Higgs sheaves does not exist in general.  If the sheaf
  $ℰ$ is not locally free and $Y$ is not smooth, its pull-back is generally
  neither reflexive nor torsion free, and no good notion of ``stability'' is
  defined in this case.  Also, we do not know whether the reflexive pull-back
  $f^{[*]}ℰ$ carries a natural Higgs field in this case.
\end{rem}

\Preprint{ %
  \begin{proof}[Proof of Corollary~\ref{cor:suqema}]
    We show the equivalence ``\ref{il:Ap3} $⇔$ \ref{il:Ap4}''
    only.  The proof of ``\ref{il:Ap1} $⇔$ \ref{il:Ap2}'' is
    nearly identical and therefore omitted.

    \subsubsection*{\ref{il:Ap3} $⇒$ \ref{il:Ap4}}

    If $\dim X = 1$, then $X$ and $Y$ are smooth and the result is classically
    known, cf.~\cite[Lem.~3.3]{MR2231055}.  If $n := \dim X > 1$, choose a
    sufficiently increasing sequence of numbers $0 ≪ m_1 ≪ ⋯ ≪ m_{n-1}$, a
    general tuple of hyperplanes
    $(D_1, …, D_{n-1}) ∈ |m_1·H| ⨯ ⋯ ⨯ |m_{n-1}·H|$, and consider the curve
    $C_X := D_1 ∩ ⋯ ∩ D_{n-1}$.  Observe that $C_X$ is irreducible and smooth,
    and that so is its preimage $C_Y := f^{-1}(C)$.  The sheaf $ℰ$ is locally
    free near $C$.  We obtain a sequence of implications as follows.
    \begin{align*}
      & (ℰ,θ) \text{ is semistable w.r.t.\ } H && \text{Assumption} \\
      ⇒\:\: & (ℰ,θ)|_{C_X} \text{ is semistable w.r.t.\ } H|_{C_X} && \text{Theorem~\ref{thm:restrictionA} for $X$} \\
      ⇒\:\: & (f|_{C_Y})^* \bigl( (ℰ,θ)|_{C_Y} \bigr) \text{ is semistable w.r.t.\ } (f|_{C_Y})^*(H|_{C_Y}) && \text{the case of dimension one} \\
      ⇒\:\: & f^{[*]}(ℰ,θ)|_{C_Y} \text{ is semistable w.r.t.\ } (f^*H)|_{C_Y} && ℰ \text{ is loc.\ free near } C_X \\
      ⇒\:\: & f^{[*]}(ℰ,θ) \text{ is semistable w.r.t.\ } f^*H && f^{[*]}ℰ \text{ is loc.\ free near } C_Y
    \end{align*}
    The implication follows.

    \subsubsection*{$\neg$\ref{il:Ap3} $⇒$ $\neg$\ref{il:Ap4}}
    
    Assume that $(ℰ,θ)$ is not semistable with respect to $H$ and let
    $ℱ ⊊ ℰ$ be a subsheaf that is generically $θ$-invariant and
    satisfies $μ_H(ℱ) > μ_H(ℰ)$.  Let
    $$
    ℱ_Y := \text{saturation of } \Image\bigl( f^{[*]}ℱ → f^{[*]}ℰ \bigr) \text{
      in } f^{[*]}ℰ.
    $$
    Recall from \cite[Lem.~4.12]{GKPT15} that $ℱ_Y$ is a generically
    $f^{[*]}θ$-invariant subsheaf of the Higgs sheaf $f^{[*]}(ℰ,θ)$, and observe
    that $μ_{f^*H}(ℱ_Y) > μ_{f^*H}(f^*ℰ)$.  We obtain that $f^{[*]}(ℰ,θ)$ is not
    semistable with respect to $f^*H$ as desired.
\end{proof} %
}

\subsection{Restriction theorem for sheaves with operators}
\approvals{Behrouz & yes \\ Daniel & yes \\ Stefan & yes \\ Thomas & yes}
\label{ssec:a1aA}

The following restriction theorem for sheaves with operators is a generalisation
of \cite[Thm.~A.3 in the arXiv version,
\href{http://arxiv.org/abs/1511.08822}{arXiv:1511.08822}]{GKPT15}.  It serves as
the main technical tool used in the proof of the restriction theorem for Higgs
sheaves, Theorem~\ref{thm:restrictionA}.

\begin{thm}[Restriction theorem for sheaves with operators]\label{thm:nrestrSWOA}
  Let $X$ be a normal, projective variety, $\dim X ≥ 2$, let $H$ be a big and
  semiample divisor on $X$, and $𝒲°$ be a reflexive sheaf on $X_{\reg}$.  Let
  $(ℰ°, θ°)$ be a torsion free sheaf on $X_{\reg}$ with a $𝒲°$-valued operator,
  and assume that $(ℰ°, θ°)$ is semistable (resp.\ stable) with respect to $H$.
  Then, there exists $M ∈ ℕ^+$ satisfying the following conditions: If
  $\aB ⊆ |m·H|$ is any basepoint free linear system with $m > M$, then there
  exists a dense, open subset $\aB° ⊆ \aB$ such that the following properties
  hold for all $D ∈ \aB°$.
  \begin{enumerate}
  \item\label{il:r1xb} The hypersurface $D$ is irreducible and normal, and
    $D_{\reg} = X_{\reg} ∩ D$.

  \item\label{il:r3xb} The sheaf $ℰ°|_{D_{\reg}}$ is torsion free, and
    $(ℰ°|_{D_{\reg}}, θ°|_{D_{\reg}})$ is semistable (resp.\ stable) with
    respect to $H|_D$, as a sheaf with a $𝒲|_{D_{\reg}}$-valued operator.
  \end{enumerate}
\end{thm}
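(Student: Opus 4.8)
The plan is to run the Mehta--Ramanathan-type argument behind \cite[Thm.~A.3]{GKPT15} and to modify it in the two places where the present statement is more general: here $H$ is only big and semiample rather than ample, and $\aB$ is an \emph{arbitrary} basepoint-free subsystem of $|m·H|$ rather than the complete linear system. Throughout I would work on $X$, using the algebraicity hypothesis to replace $\sE^\circ$ by the coherent sheaf $ι_*\sE^\circ$, where $ι\colon X_{\reg}\into X$; slopes are then the ones of Definition~\ref{def:sHsl1}, and Lemma~\ref{lem:owe1} translates (semi)stability and generic $θ$-invariance faithfully between $X$ and $X_{\reg}$. The two ``soft'' assertions come first. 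For Item~\ref{il:r1xb}: since $m·H$ is big, the image of the morphism $ψ_\aB$ attached to $\aB$ has dimension at least two --- a general member of $\aB$ is numerically proportional to $m·H$, hence has self-intersection $(m·H)²·[H]^{\dim X-2}=m²·H^{\dim X}>0$, which is incompatible with its being a fibre of a morphism onto a curve; in particular $\dim\aB\geq 2$. The standard Bertini-type theorems (irreducibility and smoothness away from the empty base locus, connectedness, and Seidenberg's normality statement for general hyperplane sections of normal varieties) then provide a dense open $\aB^\circ_0\subseteq\aB$ over which $D$ is irreducible and normal with $D_{\reg}=X_{\reg}\cap D$. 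The torsion-freeness in Item~\ref{il:r3xb} holds because, for general $D$, the prime Cartier divisor $D_{\reg}$ of the smooth variety $X_{\reg}$ meets the proper closed non-locally-free locus of $\sE^\circ$ properly.

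The real content is the preservation of (semi)stability, and this is what produces $M$. The first ingredient is \emph{boundedness}: the generically $θ$-invariant saturated subsheaves of a semistable sheaf with operator form a bounded family, so only finitely many Hilbert polynomials with respect to $H$ are available to a subsheaf destabilising a restriction $(\sE^\circ,θ^\circ)|_{D_{\reg}}$; I would choose $M$ large enough both to separate these numerical types and to force the vanishing used below. The second ingredient is a \emph{relative construction}: assuming the conclusion fails for some $m>M$ and some basepoint-free $\aB\subseteq|m·H|$, then on a dense open $\aB^\circ\subseteq\aB^\circ_0$ every restriction $(\sE^\circ,θ^\circ)|_{D_{\reg}}$ carries a destabilising generically $θ$-invariant subsheaf of one of the finitely many admissible numerical types. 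Over the $\bP^{\dim\aB-1}$-bundle $p\colon\sD\to X$ --- the universal member of $\aB$, with second projection $q\colon\sD\to\aB^\circ$ --- these fit, after passage to the relative Quot scheme and shrinking $\aB^\circ$ to a flattening stratum, into a $q$-flat quotient $p^*\sE\twoheadrightarrow\sQ$ whose restriction to each $D\in\aB^\circ$ is the destabilising quotient; write $\sF\subseteq p^*\sE$ for its kernel. (In the semistable case one may take $\sF|_D$ to be the canonical maximal $θ$-invariant destabiliser; in the stable case one must also allow $\sF|_D$ to be a Jordan--Hölder factor when the restriction is strictly semistable, which is not canonical but still spreads out over a flattening stratum.)

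The technical heart is the \emph{descent step}: for $m>M$, the subsheaf $\sF\subseteq p^*\sE$ is the pull-back $p^*\sF_X$ of a subsheaf $\sF_X\subseteq\sE$ on $X$; concretely, one shows that the family of subspaces $(\sF|_D)_x\subseteq\sE_x$ parametrised by the fibre $\{D\in\aB : x\in D\}$ of $p$ over a general $x\in X$ is independent of $x$, and it is here that the lower bound on $m$ is consumed, through the vanishing/uniqueness mechanism of the Mehta--Ramanathan argument reproduced in \cite{GKPT15}. Granting this, the projection formula gives $c_1(\sF_X|_D)·(H|_D)^{\dim X-2}=m·c_1(\sF_X)·[H]^{\dim X-1}$ for general $D$, so the destabilising inequality on $D$ upgrades to the genuine destabilising inequality $μ_H(\sF_X)>μ_H(\sE)$ (resp.\ $μ_H(\sF_X)\geq μ_H(\sE)$ with $\sF_X\subsetneq\sE$ proper, in the stable case), while $\sF_X$ is generically $θ$-invariant because $\sF_X|_D=\sF|_D$ is, for the divisors $D$ sweeping out a dense open of $X_{\reg}$. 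This contradicts the (semi)stability of $(\sE^\circ,θ^\circ)$ assumed in the hypothesis, and Theorem~\ref{thm:nrestrSWOA} follows.

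I expect the descent step to be the main obstacle, and it is precisely where the two generalisations over \cite[Thm.~A.3]{GKPT15} must be absorbed. First, ampleness of $H$ is weakened to bigness and semiampleness, so every cohomological vanishing entering the descent (and the boundedness bound) has to be re-established; the natural device is to factor through the semiample fibration $g\colon X\to\overline X$ with $H=g^*A$ for an ample class $A$ on the normal projective variety $\overline X$, and to push the computations down to $A$. Second, for the complete system $|m·H|$ the fibres of $\sD\to X$ are all of $|m·H|$, whereas for a general basepoint-free $\aB$ the fibre over $x$ is only the hyperplane $\{D\in\aB : x\in D\}$, of dimension $\dim\aB-1\geq 1$; the descent argument must be organised so that this restricted freedom still suffices, and it is the need to make this work uniformly in $\aB$ that ultimately fixes the size of $M$.
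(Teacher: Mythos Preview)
Your approach is genuinely different from the paper's, and it is worth seeing the contrast. You propose to rerun the full Mehta--Ramanathan machinery (boundedness, relative Quot construction over the universal divisor, descent of the destabilising subsheaf) directly on $X$, absorbing the two generalisations---big and semiample $H$, arbitrary basepoint-free $\aB$---into the descent step itself. The paper does none of this. Instead it performs a short reduction to a case already in the literature: first, since $𝒲 := ι_*𝒲°$ is reflexive on $X$, it embeds into some \emph{locally free} sheaf $\sV$, and Lemma~\ref{lem:owe2} says that (semi)stability of $(ℰ°,θ°)$ as a $𝒲°$-valued operator is equivalent to (semi)stability of the induced $\sV|_{X_{\reg}}$-valued operator $(ℰ°,τ°)$; pushing forward then gives a genuine $\sV$-valued operator $(ℰ,τ)$ on all of $X$. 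Second, because $\sV$ is locally free, the operator pulls back cleanly to a resolution $π:\wtilde{X}\to X$ via $π^*ℰ/\tor \to (π^*ℰ/\tor)\otimes π^*\sV$, so one may assume $X$ smooth and $ℰ$ locally free. At that point the stable case is exactly \cite[Thm.~9]{MR3314517} and \cite[Thm.~A.3]{GKPT15}, which already handle big semiample polarisations and arbitrary basepoint-free subsystems; the semistable case follows by a Jordan--Hölder filtration, whose existence is guaranteed precisely because $\sV$ is locally free.

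The paper's route is much shorter and sidesteps every difficulty you flag: there is no need to re-establish vanishing for big semiample $H$, no need to worry about the smaller fibres of $\sD\to X$ for subsystems, and no descent argument to redo---all of that is encapsulated in the cited results on smooth varieties. The embedding $𝒲\hookrightarrow\sV$ is the trick you are missing; without it you cannot push the operator forward to $X$ (since $ι_*(ℰ°\otimes 𝒲°)$ need not equal $ι_*ℰ°\otimes ι_*𝒲°$) nor pull it back to a resolution (since torsion-free pullback does not commute with tensoring by a non-locally-free sheaf), which is why your plan stays on $X$ and has to redo everything by hand. Your approach is plausible and could be made to work, but it is substantially more laborious, and the descent step you rightly identify as the obstacle would need a full write-up rather than a sketch.
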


\begin{rem}[Algebraicity and restriction theorem for sheaves on $X$]\label{rem:HN}
  As before, we underline that $(ℰ°, θ°)$ is assumed to be algebraic.  Also as
  before, Theorem~\ref{thm:nrestrSWOA} implies a restriction theorem for sheaves
  $ℰ$ with operators that are defined on all of $X$.  In particular, if $ℰ$ is a
  torsion free sheaf on $X$ (=sheaf equipped with the zero operator) that is not
  necessarily semistable, then $μ^{\max}_H(ℰ) = μ^{\max}_{H|_D}(ℰ|_D)$ for all
  $D ∈ \aB°$, and the Harder-Narasimhan filtration of $ℰ|_D$ equals the
  restriction of the Harder-Narasimhan filtration of $ℰ$, in symbols:
  $\HN^{•}_H(ℰ)|_D = \HN^{•}_H(ℰ|_D)$.
\end{rem}

\begin{proof}[Proof of Theorem~\ref{thm:nrestrSWOA}]
  Let $ι: X° → X$ be the inclusion map.  Set $ℰ = ι_*(ℰ°) $ and $𝒲 = ι_*(𝒲°)$.
  The sheaf $ℰ $ is torsion free; furthermore, $𝒲$ is reflexive and therefore
  embeds into a locally free sheaf $\sV$.  Composing morphisms as in
  Lemma~\ref{lem:owe2}, the operator $θ°$ induces an operator
  $τ° : ℰ° → ℰ° ⊗ \sV|_{X_{\reg}}$, and Lemma~\ref{lem:owe2} shows that
  $(ℰ°, τ°)$, considered as a sheaf with a $\sV|_{X_{\reg}}$-valued operator, is
  again semistable (resp.\ stable) with respect to $H$.  Pushing forward, we can
  extend the operator $τ°$, which a priori is only defined on $X_{\reg}$, to an
  operator $τ : ℰ → ℰ ⊗ \sV$.  Lemma~\ref{lem:owe1} then shows that $(ℰ, τ)$ is
  again semistable (resp.\ stable) with respect to $H$, when considered as a
  sheaf with a $\sV$-valued operator.  We claim the following.\CounterStep
  \begin{enumerate}
  \item\label{il:r4xb} There exists a number $M ∈ ℕ^+$ and for every $m > M$ and
    for every basepoint free linear system $\aB ⊆ |m·B|$ an open, dense
    subsystem $\aB°$ such that \ref{il:r1xb} holds, such that $ℰ|_D$ is torsion
    free, and such that $(ℰ|_D, θ|_D)$ is semistable (resp.\ stable) with
    respect to $H|_D$, as a sheaf with a $\sV|_D$-valued operator.
  \end{enumerate}
  As soon as \ref{il:r4xb} is true, Lemma~\ref{lem:owe1} asserts that
  $(ℰ|_{D_{\reg}}, τ|_{D_{\reg}}) = (ℰ°|_{D_{\reg}}, τ°|_{D_{\reg}})$ is
  semistable (resp.\ stable) with respect to $H$ as a sheaf with an
  $\sV|_{D_{\reg}}$-valued operator.  A final application of
  Lemma~\ref{lem:owe2} then shows that $(ℰ°|_{D_{\reg}}, θ°|_{D_{\reg}})$ is
  semistable (resp.\ stable) with respect to $H$, which would end the proof.

  It remains to show \ref{il:r4xb}.  To this end, observe that for any
  resolution of singularities, $π : \wtilde{X} → X$, the operator $τ$ induces an
  operator on the torsion free pullback of $ℰ$ to $\wtilde{X}$,
  $$
  \wtilde{τ} : \factor{π^*ℰ}{\tor} → \factor{π^*(ℰ⊗\sV)}{\tor} = \factor{π^*
    ℰ}{\tor} ⊗ π^*\sV.
  $$
  Replacing $X$ by a suitable resolution and replacing $(ℰ, τ)$ by
  $(π^* ℰ/\tor, \wtilde{τ})$, we may assume without loss of generality that $X$
  is smooth and that $ℰ$ is locally free.  Now, if $(ℰ, τ)$ is stable with
  respect to $H$, the stability claim of Theorem~\ref{thm:nrestrSWOA} has been
  shown already for every $D ∈ \aB$ fulfilling \ref{il:r1xb} and having the
  property that $ℰ|_D$ is torsion free, see \cite[Thm.~9]{MR3314517} and
  \cite[Thm.~A.3 in the arXiv version]{GKPT15}.  These two assumptions can be
  guaranteed for $D$ belonging to an open subsystem $\aB°$ by the classical
  Bertini theorem and by \cite[Thm.~12.2.1]{EGA4-3}, respectively.  In case
  where $(ℰ, τ)$ only semistable, the assumption that $\sV$ is a locally free
  guarantees the existence of a Jordan-Hölder filtration, which presents
  $(ℰ, τ)$ as a repeated extension of stable sheaves with $\sV$-valued operator
  of equal slope.  The claim then follows by induction on the length of the
  filtration.
\end{proof}

\subsection{Proof of Theorem~\ref{thm:restrictionA}}
\approvals{Behrouz & yes \\ Daniel & yes \\ Stefan & yes \\ Thomas & yes}
\label{ssec:potrA}

For convenience of notation, write
$n := \dim X$, let $ι : X_{\reg} → X$ be the inclusion map, and let
$ℰ := ι_* ℰ°$ be the torsion free extension of $ℰ°$ to $X$.  Twisting the Higgs
sheaf $(ℰ°, θ°)$ with a sufficiently positive line bundle and noticing that
semistability considerations are unaffected by this operation, we may also
assume that the following holds.

\begin{asswlog}\label{ass:0}
  The slope of $ℰ$ is positive, $μ_H (ℰ) > 0$.
\end{asswlog}

\begin{choice}
  Choose $M ≫ 0$ large enough so that the following holds.
  \begin{enumerate}
  \item\label{ilx:0aa} If $m > M$ and $\aB ⊂ |m H|$ is base point free, then
    Theorem~\ref{thm:nrestrSWOA} and Remark~\ref{rem:HN} apply to yield a dense,
    open set $\aB' ⊂ \aB$ such that for every $D ∈ \aB'$, we have
    $μ^{\max}_H(ℰ) = μ^{\max}_{H|_D}(ℰ|_D)$.
   
  \item\label{ilx:0b} The number $M$ is larger than the number given by
    Theorem~\ref{thm:nrestrSWOA} for $(ℰ°, θ°)$ as a sheaf with an
    $Ω¹_{X_{\reg}}$-valued operator.
    
  \item\label{ilx:1c} We have $[M·H]^n > nr·μ^{\max}_H(ℰ)$.
  \end{enumerate}
\end{choice}

\subsubsection*{Step 1: Argument by contradiction}
\approvals{Behrouz & yes \\ Daniel & yes \\ Stefan & yes \\ Thomas & yes}

Assume for the remainder of the proof that we are given a number $m > M$ and a
basepoint free linear system $\aB ⊆ |m·H|$.  Let $\aB°$ be the intersection of
the open subsets given by the two applications of \ref{ilx:0aa} and \ref{ilx:0b}
above, and let $D ∈ \aB°$ be any element.  We aim to show that
$(ℰ°, θ°)|_{D_{\reg}}$ is semistable (resp.\ stable) with respect to $H$.  We
argue by contradiction and assume that this is not the case.

\begin{assumption}\label{ass:1}
  There exists a generically Higgs-invariant, saturated subsheaf
  $0 ⊊ ℱ_{D_{\reg}} ⊊ ℰ°|_{D_{\reg}}$ with $μ_{H|_D}(ℱ_{D_{\reg}}) > μ_H(ℰ)$
  (resp.~$≥$ instead of $>$).
\end{assumption}

\subsubsection*{Step 2: Cutting down}
\approvals{Behrouz & yes \\ Daniel & yes \\ Stefan & yes \\ Thomas & yes}

Repeated applications of Theorem~\ref{thm:nrestrSWOA} (and Remark~\ref{rem:HN})
allow to find an increasing sequence of numbers $M < m ≤ m_2 ≤ ⋯ ≤ m_{n-1}$ and
hyperplanes $D_i ∈ |m_i·H|$ such that the associated intersection
$C := D ∩ D_2 ∩ ⋯ ∩ D_{n-1}$ has the following properties.
\begin{enumerate}\CounterStep
\item The scheme $C$ is a smooth curve, and entirely contained in $X_{\reg}$.
  The sheaves $ℰ|_C$ and $ℱ_{D_{\reg}}|_C$ are torsion free and hence locally
  free.  The natural morphism $ℱ_{D_{\reg}}|_C → ℰ|_C$ is an injection.
  
\item\label{ilx:0ab} We have $μ^{\max}_H(ℰ) = μ^{\max}_{H|_C}(ℰ|_C)$.
  
\item\label{ilx:1b} Because of \ref{ilx:0b}, the restriction $(ℰ|_C, θ°|_C)$ is
  semistable (resp.\ stable) as a sheaf with an $Ω¹_X|_C$-valued operator.
\end{enumerate}

\subsubsection*{Step 3: Computation}
\approvals{Behrouz & yes \\ Daniel & yes \\ Stefan & yes \\ Thomas & yes}

In the following, write $ℱ_C := ℱ_{D_{\reg}}|_C$ and consider the associated
sequence\CounterStep
\begin{equation}\label{eq:qwen}
  \xymatrix{
    0 \ar[r] & ℱ_C \ar[r] & ℰ|_C \ar[r]^(.4){q} & 𝒬 \ar[r] & 0.
  }
\end{equation}
Let $N^*_{C/X}$ denote the conormal bundle of $C$ in $X$.  If
$ℬ ⊆ 𝒬 ⊗ N^*_{C/X}$ is any coherent subsheaf of positive rank, then we will show
in this step that
\begin{equation}\label{eq:hklhi}
  \deg_C(ℬ) ≤ (n-1)r·μ^{\max}_H(ℰ) - [m·H]^n.
\end{equation}
To prove \eqref{eq:hklhi}, consider first any coherent subsheaf $𝒜 ⊆ 𝒬$ of
positive rank.  We obtain from \eqref{eq:qwen} an exact sequence
$0 → ℱ_C → q^{-1}𝒜 \overset{q}{→} 𝒜 → 0$, which allows to estimate the degree of
$𝒜$ as follows,
\begin{align*}
  \deg_C (𝒜) & = \deg_C (q^{-1} 𝒜) - \deg_C (ℱ_C) ≤ \deg_C (q^{-1}𝒜) && \text{since } μ_{H|_C}(ℱ_C) ≥ μ_H(ℰ) > 0 \\
  & ≤ \rank (q^{-1} 𝒜) · μ_{H|_C} (q^{-1} 𝒜) && \text{definition of slope}\\
  & ≤ \rank (ℰ) · μ^{\max}_H (ℰ) && \text{Item~\ref{ilx:0ab}}
\end{align*}
In order to apply this inequality to the problem at hand, recall that $C$ is
constructed as a complete intersection.  The normal bundle of $C$ in $X$ is
hence described as
$$
N_{C/X} = ⊕_i N_{D_i/X}|_C, \qquad \text{where } N_{D_i/X}|_C ≅ 𝒪_C(m·H|_C) \text{ for all } i.
$$
We can therefore view $𝒜 := ℬ(m·H|_C)$ as a subsheaf of $𝒬^{⊕ n-1}$.  An
induction using the inequality obtained above then shows the following, which
immediately implies~\eqref{eq:hklhi},
$$
\deg_C(ℬ) + \rank (ℬ)·m·\deg_C (H|_C) = \deg_C ℬ(m·H|_C)
\overset{\text{Induction}}{≤} (n-1)r·μ^{\max}_H (ℰ).
$$

\subsubsection*{Step 4: End of proof}
\approvals{Behrouz & yes \\ Daniel & yes \\ Stefan & yes \\ Thomas & yes}

Consider the operator $θ°|_C$ and its restriction
$θ_{ℱ} : ℱ_C → ℰ|_C ⊗ Ω¹_X|_C$.  The target of $θ_{ℱ}$ appears in the following
commutative square,
$$
\xymatrix{ %
           & & ℰ|_C ⊗ Ω¹_X|_C \ar@{->>}[r] \ar@{->>}[d]_{q ⊗ \Id} & ℰ|_C ⊗ Ω¹_C \ar@{->>}[d] \\
  0 \ar[r] & 𝒬 ⊗ N^*_{C/X} \ar[r]_{α} & 𝒬 ⊗ Ω¹_X|_C \ar[r]_{β} & 𝒬 ⊗ Ω¹_C \ar[r] & 0.
}
$$
Assumption~\ref{ass:1} and Item~\ref{ilx:1b} guarantees that $ℱ_C$ is not
invariant with respect to $θ°|_C$.  The composed map $(q⊗ \Id) ◦ θ_{ℱ}$ is
therefore \emph{not} the zero morphism.  But since $ℱ_C$ is a Higgs-invariant
subsheaf of $(ℰ°, θ°)|_C$ by assumption, the morphism $β ◦ (q⊗\Id) ◦ θ_{ℱ}$
\emph{is} zero.  In summary, we obtain a non-trivial morphism
$τ : ℱ_C → 𝒬 ⊗ N^*_{C/X}$.  But then, we compute degrees as follows,
\begin{align*}
  (n-1)r·μ^{\max}_H(ℰ) - [m·H]^n & ≥ \deg (\img τ) && \text{Inequality~\eqref{eq:hklhi}}\\
                                  & = \deg_C (ℱ_C) - \deg_C (\ker τ) \\
                                  & ≥ - \deg_C(\ker τ) && \text{Assumptions~\ref{ass:0}, \ref{ass:1}} \\
                                  & ≥ - \rank (\ker τ) · μ^{\max}_H (ℰ) && \text{Item~\ref{ilx:0ab}} \\
                                  & ≥ -r·μ^{\max}_H (ℰ) && \text{Assumption~\ref{ass:0}.}
\end{align*}
We obtain a contradiction to \ref{ilx:1c}, which finishes the proof of
Theorem~\ref{thm:restrictionA}.  \qed

%
%
\svnid{$Id: 07.tex 1151 2018-07-24 11:30:18Z kebekus $}

\section{Ascent of semistable Higgs bundles}
\label{ssec:ascend}
\subversionInfo
\approvals{Behrouz & yes \\ Daniel & yes \\ Stefan & yes \\ Thomas & yes}

The proof of the nonabelian Hodge correspondence on klt spaces relies on the
following result, which can be seen as an inverse to the descent results
obtained in the first part of this paper.  It asserts that the pull-back of a
Higgs sheaf in $\Higgs_X$ to any resolution of singularities is again in
$\Higgs_•$.
 
\begin{thm}[Ascent of semistable Higgs bundles]\label{thm:bphgs}
  Let $X$ be a projective, klt space and let $π : \wtilde{X} → X$ be a
  resolution of singularities.  If $(ℰ,θ) ∈ \Higgs_X$, then
  $π^*(ℰ,θ) ∈ \Higgs_{\wtilde{X}}$.
\end{thm}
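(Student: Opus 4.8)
The plan is to verify directly that $π^*(ℰ,θ)$ meets the three conditions defining the category $\Higgs_{\wtilde X}$: that it is a locally free Higgs sheaf, that there is an \emph{ample} divisor $\wtilde H$ on $\wtilde X$ with respect to which it is semistable, and that $ch_1(π^*ℰ)·[\wtilde H]^{n-1}=ch_2(π^*ℰ)·[\wtilde H]^{n-2}=0$ for that $\wtilde H$, where $n:=\dim X$. The first condition is formal: $\wtilde X$ is smooth, hence a klt space, so $π$ is a morphism of klt spaces and the functorial pull-back recalled in Section~\ref{sssec:pb} equips $π^*ℰ$ with a Higgs field; since $ℰ$ is locally free, so is $π^*ℰ$. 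For the third condition, I would use that the hypotheses on $(ℰ,θ)$ — semistability together with the two vanishing intersection numbers — force $c_1(ℰ)$ and $c_2(ℰ)$ to be numerically trivial on $X$, by the Bogomolov--Gieseker inequality for semistable Higgs sheaves on klt spaces and the analysis of its equality case, cf.~\cite{GKPT15}; pulling back, $c_1(π^*ℰ)$ and $c_2(π^*ℰ)$ are numerically trivial on $\wtilde X$, so the required intersection numbers vanish for \emph{every} polarisation. Everything thus reduces to producing one ample divisor with respect to which $π^*(ℰ,θ)$ is semistable.

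The first step is to note that $π^*(ℰ,θ)$ is semistable with respect to the nef, big and semiample divisor $π^*H$. Since $π$ is birational — in particular surjective and generically finite — and both $X$ and $\wtilde X$ are klt, and $ℰ$ is locally free, Corollary~\ref{cor:suqema} applies and turns semistability of $(ℰ,θ)$ with respect to $H$, which is part of the hypothesis, into semistability of $π^*(ℰ,θ)$ with respect to $π^*H$. This is the point where the restriction theorem of Section~\ref{sect:app-oper} enters, through the proof of Corollary~\ref{cor:suqema}.

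It remains to pass from the degenerate polarisation $π^*H$ to an ample one. I would fix an ample divisor $A$ on $\wtilde X$ and set $\wtilde H_m:=π^*(m·H)+A$, which is ample for every $m∈ℕ^+$, and show that $π^*(ℰ,θ)$ is semistable with respect to $\wtilde H_m$ once $m≫0$. Let $ℱ⊊π^*ℰ$ be a saturated, generically $(π^*θ)$-invariant subsheaf and let $ℱ_X⊆ℰ$ be the saturation in $ℰ$ of the image of $π_*ℱ$; it is generically $θ$-invariant because $π$ is an isomorphism over a big open subset of $X$. The quantity $μ_{\wtilde H_m}(ℱ)-μ_{\wtilde H_m}(π^*ℰ)$ is a polynomial in $m$ of degree $n-1$ whose leading coefficient equals $μ_{π^*H}(ℱ)-μ_{π^*H}(π^*ℰ)=μ_H(ℱ_X)-μ_H(ℰ)≤0$ by the previous step. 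If this leading coefficient is strictly negative it is bounded away from zero uniformly in $ℱ$ — its numerator is a negative integer, its denominator at most $\rank ℰ$ — and, since the potential destabilising subsheaves of $π^*ℰ$ for polarisations in a bounded region of the ample cone form a bounded family, the lower-order coefficients are uniformly bounded too, so for $m$ large the inequality $μ_{\wtilde H_m}(ℱ)>μ_{\wtilde H_m}(π^*ℰ)$ cannot hold. If the leading coefficient vanishes, then $μ_H(ℱ_X)=0$, and I would use the Jordan--Hölder theory for semistable Higgs sheaves on $X$ together with the numerical triviality of the first Chern classes of stable Higgs sheaves of this type, cf.~\cite{GKPT15}, to conclude that $c_1(ℱ_X)$ is numerically trivial on $X$; then the line bundle $\det ℱ$ maps into $\wedge^{\rank ℱ}(π^*ℰ)=π^*\bigl(\wedge^{\rank ℱ}ℰ\bigr)$, which restricts trivially to every fibre of $π$ and has numerically trivial first Chern class, and the negativity lemma forces $c_1(ℱ)$ to be represented by an anti-effective, $π$-exceptional divisor, so that $c_1(ℱ)·[\wtilde H_m]^{n-1}≤0=c_1(π^*ℰ)·[\wtilde H_m]^{n-1}$ and $ℱ$ does not destabilise either. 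Fixing one such $\wtilde H_m$ completes the verification.

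The step I expect to be the main obstacle is precisely this last one — the promotion from $π^*H$ to an ample polarisation — and within it the equal-slope case together with the boundedness of the relevant family of potential destabilising subsheaves; by contrast the pull-back functor is formal, and the remaining inputs (Corollary~\ref{cor:suqema}, the restriction theorem, and the Bogomolov--Gieseker inequality with its equality case) are already available from earlier in the paper or from \cite{GKPT15}.
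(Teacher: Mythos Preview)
Your strategy is quite different from the paper's, and the step you correctly flag as the obstacle --- promoting semistability from the nef class $π^*H$ to an ample class --- does not go through as sketched.

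The paper does not attempt a direct polynomial-in-$m$ argument. It first settles the surface case (Proposition~\ref{prop:sttc}), where $ch_2$ is a number and Bogomolov--Gieseker together with the Hodge index theorem cleanly forces $c_1=c_2=0$ and semistability for every ample class. For $\dim X>2$ it restricts $(ℰ,θ)$ to a general complete-intersection surface $S⊂X$, applies the surface case on the resolution $\wtilde S⊂\wtilde X$, uses Simpson's correspondence on $\wtilde S$ to obtain a local system, \emph{extends that local system from $\wtilde S$ to all of $\wtilde X$} via an isomorphism of étale fundamental groups --- this is why one first reduces, by a quasi-étale cover, to the case where $X$ is maximally quasi-étale --- applies Simpson again on $\wtilde X$ to get some $(\wtilde ℱ,\wtilde Θ)∈\Higgs_{\wtilde X}$, descends it to $X$ via Theorem~\ref{thm:desc2}, and identifies the result with $(ℰ,θ)$ using the iterated Bertini-for-bounded-families statement (Proposition~\ref{prop:restr}). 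In particular, the numerical triviality of $c_2(ℰ)$ that you invoke as an input is never proved independently; it is a \emph{consequence} once $π^*(ℰ,θ)∈\Higgs_{\wtilde X}$ is known (Lemma~\ref{lem:xxs}).

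Your equal-slope case has two genuine gaps. First, to conclude that $c_1(ℱ_X)$ is numerically trivial you appeal to ``numerical triviality of the first Chern classes of stable Higgs sheaves of this type''; but the Jordan--Hölder factors of $(ℰ,θ)$ are only known to be stable of $H$-slope zero, and nothing available on a klt space \emph{before} a nonabelian Hodge correspondence forces their $c_1$ to vanish against every nef class --- this is precisely what Lemma~\ref{lem:xxs} delivers \emph{after} Theorem~\ref{thm:bphgs}. Second, even granting that, the negativity-lemma step is incomplete: the inclusion $\det ℱ ↪ π^*(\wedge^{\rank ℱ}ℰ)$ need not remain injective upon restriction to a curve $C$ in a $π$-fibre (the section cutting it out may vanish along $C$), so you have not shown that $-c_1(ℱ)$ is $π$-nef, which is what the negativity lemma requires. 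Separately, in the strictly-negative-leading-coefficient case the polarisations $\wtilde H_m$ leave every bounded region of the ample cone as $m→∞$, so a single Grothendieck-type boundedness statement does not give uniform control of the lower-order coefficients. These are exactly the difficulties the paper's surface-plus-local-system route is designed to avoid.
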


The following is an almost immediate consequence of Theorem~\ref{thm:bphgs} and
Theorem~\ref{thm:desc2}, Item~\ref{il:desc2}; see also Fact~\ref{fact:dfsj}
below.

\begin{cor}[Boundedness of Higgs sheaves]\label{cor:dfsj1}
  Let $X$ be a projective, klt space and let $r ∈ ℕ^+$ be any number.  Let $\aF$
  be the family of locally free Higgs sheaves $(ℱ,Θ) ∈ \Higgs_X$ with
  $\rank ℱ = r$.  Then, the family $\aF$ is bounded.  \qed
\end{cor}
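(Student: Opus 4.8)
The plan is to reduce the assertion to the corresponding, classical boundedness statement on a resolution, using ascent of semistability in one direction and descent of bundles in the other. Fix once and for all the canonical resolution $π : \wtilde X → X$. Since $X$ is klt it has rational singularities, so $π_* 𝒪_{\wtilde X} = 𝒪_X$ and $R^i π_* 𝒪_{\wtilde X} = 0$ for all $i > 0$; this vanishing is what makes the descent step behave well in families.

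First I would show that pull-back along $π$ maps $\aF$ into a bounded family on $\wtilde X$. For $(ℱ, Θ) ∈ \aF$, Theorem~\ref{thm:bphgs} gives $π^*(ℱ, Θ) ∈ \Higgs_{\wtilde X}$, of rank $r$; moreover $π^* ℱ$ is $π$-numerically flat, because any curve in $ℙ(π^* ℱ)$ contracted by $δ$ lies over a single point of $X$, along which $π^*ℱ$ — and likewise its dual — restricts to a trivial bundle. By the classical boundedness of the family of rank-$r$ objects of $\Higgs_{\wtilde X}$ on the smooth projective variety $\wtilde X$ (this is Fact~\ref{fact:dfsj}: the underlying bundles form a bounded family, and the integrable Higgs fields add only finite-type data, cf.~Simpson), there exist a scheme $S$ of finite type over $ℂ$ and a family $(\widetilde{𝒢}, \widetilde{Θ})$ of Higgs bundles on $\wtilde X × S$, flat over $S$, in which every $π^*(ℱ, Θ)$ with $(ℱ, Θ) ∈ \aF$ occurs as a fibre.

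Next I would push this family forward along $π × \mathrm{id}_S : \wtilde X × S → X × S$. By Theorem~\ref{thm:desc2}, Item~\ref{il:desc2}, each $(π^* ℱ, π^* Θ)$ descends to a locally free Higgs sheaf on $X$, which by the projection formula $π_* π^* ℱ ≅ ℱ$ is $(ℱ, Θ)$ itself; so push-forward is an inverse to pull-back on $\aF$. Using $R^{>0} π_* 𝒪_{\wtilde X} = 0$, cohomology and base change show that $𝒢 := (π × \mathrm{id}_S)_* \widetilde{𝒢}$ is coherent and flat over $S$ with locally free fibres $𝒢_s ≅ π_*(\widetilde{𝒢}_s)$; and since $π$ is an isomorphism over the big open set $X_{\reg}$, where $Ω^1_{\wtilde X}$ agrees with $Ω^{[1]}_X$, the push-forward of $\widetilde{Θ}$ extends uniquely (by reflexivity of the relevant $\sHom$-sheaf) to a relative Higgs field $Θ_𝒢$ on $𝒢$ restricting fibrewise to the descended Higgs fields. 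Then $(𝒢, Θ_𝒢)$ is a family over the finite-type base $S$ containing every member of $\aF$, so $\aF$ is bounded.

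I expect the main difficulty to be precisely the last paragraph: checking that push-forward along $π$ really does turn the bounded family $(\widetilde{𝒢}, \widetilde{Θ})$ on the smooth variety $\wtilde X$ into a legitimate bounded family of locally free Higgs \emph{sheaves} on the singular space $X$. Concretely, one must verify that $𝒢$ is $S$-flat with the expected fibres — this is exactly where $R^{>0} π_* 𝒪_{\wtilde X} = 0$, hence the klt hypothesis, is used — and that the $Ω^{[1]}_X$-valued Higgs field is correctly recovered across the small set $X ∖ X_{\reg}$. Everything else is a formal consequence of ascent (Theorem~\ref{thm:bphgs}), descent (Theorem~\ref{thm:desc2}), and the smooth-variety boundedness recorded in Fact~\ref{fact:dfsj}.
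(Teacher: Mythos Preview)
You have misread Fact~\ref{fact:dfsj}. It is \emph{not} the classical boundedness statement for $\Higgs_{\wtilde X}$ on the smooth variety $\wtilde X$; it is already a boundedness statement on the singular space $X$, for the family of locally free Higgs sheaves $(ℱ,Θ)$ on $X$ of rank $r$ whose pull-back $π^*(ℱ,Θ)$ lies in $\Higgs_{\wtilde X}$. With the correct reading, the paper's argument is a one-liner: Theorem~\ref{thm:bphgs} says every $(ℱ,Θ) ∈ \aF$ satisfies $π^*(ℱ,Θ) ∈ \Higgs_{\wtilde X}$, so $\aF$ is contained in the family of Fact~\ref{fact:dfsj}, which is bounded. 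No push-forward of families is needed.

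Your longer argument is, in effect, an attempt to re-prove Fact~\ref{fact:dfsj} from scratch (boundedness upstairs, then descent in families). The outline is reasonable and is indeed close to how the omitted proof of Fact~\ref{fact:dfsj} goes, but the step you flag as the main difficulty is genuinely incomplete as written: the vanishing $R^{>0}π_* 𝒪_{\wtilde X}=0$ does not by itself give base change for $(π⨯\Id_S)_* \wtilde{𝒢}$; you would need $R^{>0}π_*\, \wtilde{𝒢}_s = 0$ for the relevant fibres. That does hold for the fibres you actually care about---by Proposition~\ref{prop:Higgs_are_num_flat} and Theorem~\ref{thm:desc2} each such $\wtilde{𝒢}_s$ is of the form $π^*(\text{locally free})$, so the projection formula applies---but not obviously for all $s ∈ S$, and you would then need a flattening stratification or a restriction to a suitable constructible locus to finish. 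None of this is wrong, but it is work the paper has already packaged into Fact~\ref{fact:dfsj}.
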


Theorem~\ref{thm:bphgs} will be shown in
Sections~\ref{subsect:higherx}--\ref{ssec:gydydg} below.  The main difficulty
here is that it is not clear from the outset that a pull-back of semistable
Higgs bundle via the resolution map is again semistable with respect to an
\emph{ample} divisor.  It turns out that the assumption of vanishing Chern
classes is sufficient to resolve this problem in dimension two.  The
higher-dimensional case will be reduced to the surface case by restriction
techniques.

\subsection{Preparation for the proof of Theorem~\ref{thm:bphgs}: Boundedness}
\label{subsect:higherx}
\approvals{Behrouz & yes \\ Daniel & yes \\ Stefan & yes \\ Thomas & yes}

We will use the following iterated Bertini-type theorem for bounded families of
Higgs sheaves, generalising \cite[Cor.~5.3]{GKP13}, where the same result was
shown for reflexive sheaves without Higgs field that are defined on a projective
variety, and not just on some big open subset.

\begin{prop}[Iterated Bertini-type theorem for bounded families on $X_{\reg}$]\label{prop:restr}
  Let $X$ be a normal, projective variety of dimension $n ≥ 2$, let
  $H ∈ \Div(X)$ be ample and let $X° ⊆ X_{\reg}$ be a big, open subset.  Let
  $(ℰ_{X°}, θ_{ℰ_{X°}})$ be a locally free Higgs sheaf on $X°$, and let $\aF°$
  be a bounded family of locally free Higgs sheaves on $X°$.  If $m ≫ 0$ is
  large enough and if $(D_1, D_2, …, D_{n-1}) ∈ |m·H|^{⨯ n-1}$ is a general
  tuple with associated complete intersection curve $C := D_1 ∩ ⋯ ∩ D_{n-1}$,
  then $C ⊂ X°$ is smooth and the following holds for all Higgs sheaves
  $(ℱ_{X°}, θ_{ℱ_{X°}})$ in $\aF°$:
  $$
  (ℱ_{X°}, θ_{ℱ_{X°}}) ≅ (ℰ_{X°}, θ_{ℰ_{X°}}) \quad ⇔ \quad
  (ℱ_{X°}, θ_{ℱ_{X°}})|_C ≅ (ℰ_{X°}, θ_{ℰ_{X°}})|_C.
  $$
\end{prop}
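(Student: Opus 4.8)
The implication ``$⇐$'' is the only one that requires an argument, and the plan is to upgrade \cite[Cor.~5.3]{GKP13} in two essentially independent directions: from coherent sheaves to Higgs sheaves, and from sheaves on a projective variety to sheaves on the big open set $X°$. I would dispose of the second point first, by pushing forward. Let $ι : X° ↪ X$ be the inclusion. Since $X∖ X°$ has codimension $≥ 2$ and all sheaves in sight are locally free on $X°$, the push-forwards $ι_*ℰ_{X°}$ and $ι_*ℱ_{X°}$ (for members $(ℱ_{X°},θ_{ℱ_{X°}})∈\aF°$) are reflexive on $X$, the Higgs fields extend to $Ω^{[1]}_X$-valued operators, and one obtains reflexive Higgs sheaves in the sense of Definition~\ref{def:Higgs}. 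Because restriction to the big open subset $X°$ is faithful on reflexive sheaves and the Higgs condition is tested on $X°$, isomorphisms of Higgs sheaves on $X°$ agree with isomorphisms of the push-forwards on $X$; moreover $\{ι_*ℱ_{X°}\}$ is again a bounded family of reflexive sheaves on $X$. As a general complete intersection curve of members of $|m·H|$ avoids the codimension-$≥ 2$ set $X∖ X°$, we may from now on work on $X$ with the push-forwards, which I continue to denote $(ℱ,θ_ℱ)$ and $(ℰ,θ_ℰ)$.

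Next I would encode Higgs morphisms homologically. For reflexive Higgs sheaves $(𝒜,θ_𝒜)$, $(ℬ,θ_ℬ)$ on $X$ put $𝒢 := \sHom(𝒜,ℬ)^{**}$ and $𝒢' := \sHom(𝒜,ℬ⊗Ω^{[1]}_X)^{**}$, and let $Ψ : 𝒢 → 𝒢'$ be the $𝒪_X$-linear map $φ ↦ (φ⊗\Id)◦θ_𝒜 - θ_ℬ◦φ$, defined on $X°$ and extended by reflexivity. Then the morphisms of Higgs sheaves from $(𝒜,θ_𝒜)$ to $(ℬ,θ_ℬ)$ are exactly the kernel of $H⁰(Ψ) : H⁰(X,𝒢) → H⁰(X,𝒢')$, and the analogous description holds after restriction to any smooth curve $C ⊂ X°$, where $𝒢,𝒢'$ are locally free and $Ψ|_C$ is the operator attached to $(𝒜,θ_𝒜)|_C$, $(ℬ,θ_ℬ)|_C$. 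Letting $(𝒜,ℬ)$ run over the four pairs built from $ℰ$ and members of the bounded family, the corresponding families of sheaves $𝒢$ and $𝒢'$ are again bounded families of reflexive sheaves on $X$, since Hom-sheaves and tensor products with the fixed sheaves $ℰ$, $ℰ⊗Ω^{[1]}_X$ preserve boundedness. Invoking \cite[Cor.~5.3]{GKP13}, and more precisely the uniform restriction statement for global sections underlying its proof — Serre-type vanishing over a bounded family, followed by cutting down with $n-1$ general members of $|m·H|$ — I obtain a single $M∈ℕ^+$ such that for all $m > M$ and every general tuple $(D_1,…,D_{n-1})∈|m·H|^{⨯ n-1}$ the curve $C := D_1∩⋯∩D_{n-1}$ is smooth, lies in $X°$, and the restriction maps $H⁰(X,𝒢) → H⁰(C,𝒢|_C)$ and $H⁰(X,𝒢') → H⁰(C,𝒢'|_C)$ are isomorphisms for every member of the finitely many families in play.

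To conclude, note that restriction commutes with $Ψ$, so these isomorphisms fit into a commutative square and hence restrict to an isomorphism on kernels; thus for each of the relevant pairs the restriction map
$$
\Hom\bigl((𝒜,θ_𝒜),(ℬ,θ_ℬ)\bigr) → \Hom\bigl((𝒜,θ_𝒜)|_C,(ℬ,θ_ℬ)|_C\bigr)
$$
is bijective. Given a Higgs isomorphism between $(ℱ,θ_ℱ)|_C$ and $(ℰ,θ_ℰ)|_C$, choose mutually inverse Higgs isomorphisms $ψ$, $ψ'$ over $C$ and lift them to Higgs morphisms $φ : (ℱ,θ_ℱ) → (ℰ,θ_ℰ)$ and $φ' : (ℰ,θ_ℰ) → (ℱ,θ_ℱ)$ on $X$. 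Then $φ'◦φ$ and $\Id_ℱ$ are Higgs endomorphisms of $(ℱ,θ_ℱ)$ restricting to the same morphism $ψ'◦ψ = \Id$ on $C$, hence coincide by injectivity for the pair $(ℱ,ℱ)$; symmetrically $φ◦φ' = \Id_ℰ$. Therefore $φ$ is an isomorphism of Higgs sheaves on $X$, and restricting to $X°$ yields $(ℱ_{X°},θ_{ℱ_{X°}}) ≅ (ℰ_{X°},θ_{ℰ_{X°}})$, which is what was wanted.

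The genuinely substantial input is the uniform, bounded-family version of the restriction theorem for global sections that powers \cite[Cor.~5.3]{GKP13}, together with the bookkeeping needed to make the passage from $X°$ to $X$ completely clean — in particular that boundedness survives reflexive push-forward and the formation of Hom-sheaves, and that a general complete intersection curve lies inside $X°$ and meets none of the non-locally-free loci. By contrast, the Higgs-theoretic part — the kernel-of-$Ψ$ description of Higgs morphisms and the transition to kernels in the commutative square — is purely formal.
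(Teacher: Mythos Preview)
There is a genuine gap where you assert that ``$Ψ|_C$ is the operator attached to $(𝒜,θ_𝒜)|_C$, $(ℬ,θ_ℬ)|_C$.'' By definition of pull-back of Higgs sheaves (Section~\ref{sssec:pb}), the Higgs field on the restriction $(𝒜,θ_𝒜)|_C$ is valued in $Ω^1_C$: it is $θ_𝒜|_C$ post-composed with $\Id ⊗ dι_C$. The restricted operator $Ψ|_C$, however, lands in $𝒢'|_C = \sHom\bigl(𝒜|_C,\, ℬ|_C ⊗ Ω^1_X|_C\bigr)$, not in $\sHom\bigl(𝒜|_C,\, ℬ|_C ⊗ Ω^1_C\bigr)$. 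Thus an element of $\ker H^0(Ψ|_C)$ commutes with the $Ω^1_X|_C$-valued restrictions of the Higgs fields, which is strictly stronger than being a morphism of Higgs sheaves on $C$. Your commutative square therefore gives a bijection between Higgs morphisms on $X$ and $\ker H^0(Ψ|_C)$, but the latter only \emph{injects} into Higgs morphisms on $C$. Given a Higgs isomorphism $ψ$ on $C$, you cannot lift it to $X$ without first checking $Ψ|_C(ψ) = 0$; via the conormal sequence $0 → N^*_{C/X} → Ω^1_X|_C → Ω^1_C → 0$, the obstruction lives in $\Hom_C\bigl(𝒜|_C,\, ℬ|_C ⊗ N^*_{C/X}\bigr)$.

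This is precisely what the paper handles in its Claim~\ref{claim:2}: since $N_{C/X} ≅ 𝒪_X(m·H)^{⊕ n-1}|_C$ is very positive for $m ≫ 0$, one has $\Hom_C\bigl(ℱ_{X°}|_C,\, ℱ_{X°}|_C ⊗ N^*_{C/X}\bigr) = 0$ uniformly over the bounded family, via semicontinuity of Harder--Narasimhan filtrations and Mehta--Ramanathan. Once you insert this vanishing, your kernel-of-$Ψ$ bookkeeping goes through and is essentially a repackaging of the paper's argument; without it, the surjectivity half of ``restriction induces a bijection on Higgs morphisms'' is unjustified.
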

\begin{proof}
  As a first step in the proof, we extend all relevant sheaves from $X°$ to $X$.
  To this end, let $ℰ_X$ be the unique reflexive sheaf on $X$ whose restriction
  to $X°$ is $ℰ_{X°}$.  Doing the same with the sheaves that appear in $ℱ°$,
  observe that the family
  \begin{multline*}
    \aF := \bigl\{\text{isomorphism classes of reflexive $ℱ_X$ on $X$ such that there exists}\\
    \text{a member $(ℱ_{X°}, θ_{ℱ_{X°}})$ in $\aF°$ with $ℱ_X|_{X°} ≅ ℱ_{X°}$} \bigr\}
  \end{multline*}
  is likewise bounded.  This fact is crucial in the proof of the following two
  claims.

  \begin{claim}\label{claim:1}
    Setting $D°_1 := D_1 ∩ X°$, the restriction maps
    \begin{align}
      \label{il:cl1a} \Hom_{X°} \bigl(ℱ_{X°},\, ℰ_{X°}\bigr) & → \Hom_{D_1°} \bigl(ℱ_{X°}|_{D_1°},\, ℰ_{X°}|_{D_1°}\bigr) \\
      \label{il:cl1b} \Hom_{X°} \bigl(ℱ_{X°},\, ℰ_{X°}\bigr) & → \Hom_C \bigl(ℱ_{X°}|_C,\, ℰ_{X°}|_C \bigr) \\
      \label{il:cl1c} \Hom_{X°} \bigl(ℱ_{X°},\, ℱ_{X°} ⊗ Ω¹_{X°} \bigr) & → \Hom_C \bigl(ℱ_{X°}|_C,\, ℱ_{X°}|_C ⊗ Ω¹_{X°}|_C \bigr)
    \end{align}
    are isomorphic, for all $(ℱ_{X°}, θ_{ℱ_{X°}})$ in $\aF°$.
  \end{claim}
  \begin{proof}[Proof of Claim~\ref{claim:1}]
    For brevity, we consider \eqref{il:cl1a} only and leave the rest to the
    reader.  Since $m ≫ 0$ is assumed to be large, we have vanishing
    $$
    h⁰ \bigl( X,\, \sHom(ℱ_X, ℰ_X) ⊗ 𝒥_D \bigr) = h¹ \bigl( X,\, \sHom(ℱ_X, ℰ_X)
    ⊗ 𝒥_D \bigr) = 0
    $$
    for all members $ℱ_X$ of the bounded family $\aF$.  In fact, vanishing for
    $h¹$ follows from \cite[Exp.~XII, Prop.~1.5]{SGA2} since $X$ is normal,
    which implies by \cite[Prop.~1.3]{MR597077} that the sheaves
    $\sHom(ℱ_X, ℰ_X) ⊗ 𝒥_D$ have $\operatorname{depth} ≥ 2$ at every point of
    $X$.  As a consequence, we obtain that the natural restriction maps,
    \begin{equation}\label{eq:xx}
      \Hom_X \bigl(ℱ_X,\, ℰ_X \bigr) → H⁰ \bigl( D_1,\, \sHom(ℱ_X, ℰ_X)|_{D_1}
      \bigr)
    \end{equation}
    are isomorphic for all $ℱ_X$ in $\aF$.  In order to relate \eqref{eq:xx} to
    the problem at hand, use boundedness of $\aF$ again and recall from
    \cite[Cor.~1.1.14]{MR2665168} that the sheaves $ℱ_X|_{D_1}$, $ℰ_X|_{D_1}$
    and $\sHom(ℱ_X, ℰ_X)|_{D_1}$ are reflexive on the normal variety $D_1$, for
    all $ℱ_X$ in $\aF$.  As a consequence, we find that the natural restriction
    morphisms,
    \begin{align*}
      \Hom_X \bigl(ℱ_X,\, ℰ_X \bigr) & → \Hom_{X°} \bigl(ℱ_{X°},\, ℰ_{X°} \bigr) \\
      H⁰ \bigl( D_1,\, \sHom(ℱ_X, ℰ_X)|_{D_1} \bigr) & → \Hom_{D°_1} \bigl(ℱ_{X°}|_{D°_1},\, ℰ_{X°}|_{D°_1} \bigr)
    \end{align*}
    are all isomorphic.  \qedhere \quad (Claim~\ref{claim:1})
  \end{proof}

  \begin{claim}\label{claim:2}
    If $ι_{C} : C → X°$ denotes the inclusion, then the composed morphisms
    $δ_{ℱ_{X°}}$ of the following diagrams,
    \begin{equation}\label{eq:iop}
      \begin{aligned}
        \xymatrix{ %
          && \Hom_{C} \bigl(ℱ_{X°}|_C,\, ℱ_{X°}|_C ⊗ N^*_{C/X} \bigr) \ar@{^(->}[d] \\
          \Hom_{X°} \bigl(ℱ_{X°},\, ℱ_{X°} ⊗ Ω¹_{X°} \bigr) \ar[rr]^(.45){\text{restriction}}_(.45){\text{isom.~by \eqref{il:cl1c}}} \ar@/_3mm/[rrd]_(.4){δ_{ℱ_{X°}}} && \Hom_{C} \bigl(ℱ_{X°}|_C,\, ℱ_{X°}|_C ⊗ Ω¹_{X°}|_C \bigr) \ar[d]^{\Id ⊗ dι_C} \\
          && \Hom_C \bigl(ℱ_{X°}|_C,\, ℱ_{X°}|_C ⊗ Ω¹_C \bigr), %
        }
      \end{aligned}
\end{equation}
    are injective, for all $(ℱ_{X°}, θ_{ℱ_{X°}})$ in $\aF°$.
  \end{claim}
  \begin{proof}[Proof of Claim~\ref{claim:2}]
    By exactness of the vertical sequence, it suffices to show that
    \begin{equation}\label{eq:jhh}
      \Hom_{C} \bigl(ℱ_{X°}|_C,\, ℱ_{X°}|_C ⊗ N^*_{C/X} \bigr) = \{ 0 \} \text{, for all $ℱ_{X°}$ in $\aF°$}.
    \end{equation}
    To this end, observe that the normal bundle of $C$ in $X$ is very
    positive.  In fact, we have $N_{C/X} ≅ 𝒪_X(m·H)^{⊕ n-1}|_C$ by
    construction.  Since $m ≫ 0$ is assumed to be large, it follows from
    semicontinuity of the Harder-Narasimhan filtration,
    \cite[Thm.~2.3.2]{MR2665168}, that
    $$
    μ^{\max}_X\bigl( \bigl(ℱ_X^* ⊗ ℱ_X ⊗ 𝒪_X(-m·H)\bigr)^{**}
    \bigr) < 0 \text{, for all $ℱ_X$ in $\aF$}.
    $$
    In particular, Flenner's version of the Mehta-Ramanathan theorem,
    \cite[Thm.~7.1.1]{MR2665168}, implies that the sheaf
    $$
    \sHom_{C} \bigl(ℱ_{X°}|_C,\, ℱ_{X°}|_C ⊗ N^*_{C/X} \bigr)
    $$
    has no section.  \qedhere \quad (Claim~\ref{claim:2})
  \end{proof}
  \CounterStep

  Coming back to the proof of Proposition~\ref{prop:restr}, we need to show the
  implication ``$\Leftarrow$'' only.  Assume that an element
  $(ℱ_{X°}, θ_{ℱ_{X°}})$ in $\aF°$ and an isomorphism of Higgs sheaves,
  $$
  φ_C : (ℱ_{X°}, θ_{ℱ_{X°}})|_C → (ℰ_{X°}, θ_{ℰ_{X°}})|_C
  $$
  are given.  In other words, denoting the obvious inclusion map by
  $ι_C: C → X°$, we are given an isomorphism of sheaves,
  $φ_C : ℱ_{X°}|_C → ℰ_{X°}|_C$ and a commutative diagram
  \begin{equation}\label{eq:hmmn}
    \begin{aligned}
      \xymatrix{ %
        ℱ_C \ar[rrr]^(.4){(\Id_{ℱ_C} ⊗ dι_C) \,◦\, (θ_{ℱ_{X°}}|_C)} \ar[d]_{φ_C}^{≅} &&& ℱ_C ⊗ Ω¹_C \ar[d]^{φ_C ⊗ \Id_{Ω¹_C}} \\
        ℰ_C \ar[rrr]_(.4){(\Id_{ℰ_C} ⊗ dι_C) \,◦\, (θ_{ℰ_{X°}}|_C)} &&& ℰ_C ⊗ Ω¹_C \\
      }
    \end{aligned}
    \quad\text{where}\quad
    \begin{matrix}
      ℱ_C := ℱ_{X°}|_C \\
      ℰ_C := ℰ_{X°}|_C.
    \end{matrix}
  \end{equation}
  Item~\eqref{il:cl1b} guarantees that the sheaf morphism $φ_C$ extends in a
  unique manner to an isomorphism $φ_{X°} : ℱ_{X°} → ℰ_{X°}$.  This way, we
  obtain two Higgs fields on $ℱ_{X°}$, namely
  $$
  θ_{ℱ_{X°}} \qquad\text{and}\qquad θ'_{ℱ_{X°}} := (φ_{X°} ⊗ \Id_{Ω¹_{X°}})^{-1}
  ◦ θ_{ℰ_{X°}} ◦ φ_{X°}.
  $$
  Diagram~\eqref{eq:hmmn} implies that both Higgs fields $θ_{ℱ_{X°}}$ and
  $θ'_{ℱ_{X°}}$ restrict to the same Higgs field on $ℱ_{X°}|_C$.  But then
  Claim~\ref{claim:2} guarantees that the two Higgs fields $θ_{ℱ_{X°}}$ and
  $θ'_{ℱ_{X°}}$ are in fact equal.  In other words, the sheaf isomorphism
  $φ_{X°}$ is an isomorphism of Higgs sheaves.
\end{proof}

We use the following boundedness result for families of Higgs sheaves.  Its
proof is very similar to \cite[Claim~8.5 and proof]{GKPT15} and therefore
omitted.  We emphasise that unlike Corollary~\ref{cor:dfsj1}, the result
formulated here does not depend on Theorem~\ref{thm:bphgs} and can therefore be
used in the proof of that theorem.

\begin{fact}[Boundedness of Higgs sheaves]\label{fact:dfsj}
  Let $X$ be a klt space, let $r ∈ ℕ^+$ be any number, and let
  $π: \wtilde{X} → X$ be a resolution of singularities.  Let $\aF$ be the family
  of locally free Higgs sheaves $(ℱ,Θ)$ on $X$, that satisfy $\rank ℱ = r$ and
  have the additional property that $π^*(ℱ, Θ) ∈ \Higgs_{\wtilde{X}}$.  Then,
  the family $\aF$ is bounded.  \qed
\end{fact}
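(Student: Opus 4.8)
The plan is to follow the argument of \cite[Claim~8.5]{GKPT15}: first bound the underlying locally free sheaves $\sF$ on $X$, and only then bound the Higgs fields $Θ$ over the resulting finite-type parameter space. Recall that a family of (Higgs) sheaves is bounded precisely when its members occur as the fibres of a coherent (Higgs) sheaf on $X ⨯ S$, flat over $S$, for some scheme $S$ of finite type over $\bC$.

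For the first step, fix any ample divisor $\wtilde{H}$ on $\wtilde{X}$. By hypothesis $π^*(\sF, Θ) ∈ \Higgs_{\wtilde{X}}$, and since $\wtilde{X}$ is smooth and projective, Theorem~\ref{thm:iop} applies and shows that $π^*(\sF, Θ)$ is semistable with respect to $\wtilde{H}$ and that all rational Chern classes of $π^* \sF$ vanish. This is exactly the point where a result like Theorem~\ref{thm:iop} is indispensable: a priori the ample class witnessing membership in $\Higgs_{\wtilde X}$ could vary with the member, whereas for boundedness one needs a fixed polarisation and fixed numerical data. Thus $\{π^* \sF\}$ is a family of $\wtilde{H}$-semistable Higgs bundles of rank $r$ on $\wtilde{X}$, all with the same (vanishing) numerical invariants and hence the same Hilbert polynomial; such a family is bounded by the standard boundedness theorem for semistable Higgs sheaves in characteristic zero, as invoked in \cite[Claim~8.5]{GKPT15}. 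Forgetting the Higgs fields, $\{π^* \sF\}$ is in particular a bounded family of coherent sheaves on $\wtilde{X}$.

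Next I would descend boundedness from $\wtilde{X}$ to $X$. Since $X$ is klt it has rational singularities, so $π_* \sO_{\wtilde{X}} = \sO_X$ and $R^{>0} π_* \sO_{\wtilde{X}} = 0$; because each $\sF$ is locally free, the projection formula yields $π_* π^* \sF ≅ \sF$. Writing $(\wtilde{S}, \sH)$ for a finite-type parametrisation of $\{π^* \sF\}$, one pushes $\sH$ forward along $π ⨯ \Id_{\wtilde{S}}$ and stratifies $\wtilde{S}$ so that on each stratum the pushforward is flat and commutes with base change, using the vanishing of the higher direct images of $\sO_{\wtilde{X}}$; the resulting sheaf on $X ⨯ \wtilde{S}$ then exhibits $\{\sF\}$ as a bounded family of locally free sheaves on $X$.

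Finally, let $S$ be a scheme of finite type and $\sF_S$ a sheaf on $X ⨯ S$ whose fibres realise all underlying bundles of members of $\aF$. The Higgs fields are sections of the coherent sheaf $\sHom(\sF_S, \sF_S ⊗ p^* Ω^{[1]}_X)$ on $X ⨯ S$, where $p : X ⨯ S → X$; after further stratifying $S$ we may assume the relative $H^0$ of this sheaf is locally free, so that its relative space of sections is a scheme of finite type over $S$ carrying a tautological $Ω^{[1]}_X$-valued operator. The integrability condition of Definition~\ref{def:Higgs} cuts out a closed subscheme $V$, still of finite type, and every member of $\aF$ is isomorphic to a fibre of the tautological Higgs sheaf over $V$; hence $\aF$ is bounded. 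I expect the genuine obstacle to be the first step — invoking boundedness for semistable Higgs sheaves and using Theorem~\ref{thm:iop} to fix the numerical data — together with the book-keeping needed to transport boundedness across the resolution; the passage to Higgs fields is routine, which is why \cite{GKPT15} can afford to omit the details.
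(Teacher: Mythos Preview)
Your overall strategy matches the paper's: the paper omits the proof but points to \cite[Claim~8.5 and proof]{GKPT15}, which is exactly the template you follow. The three steps --- bound the pull-backs on $\wtilde{X}$, push down to bound the underlying bundles on $X$, then bound the Higgs fields over a finite-type base --- are all correct in substance.

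There is, however, a genuine logical circularity in your first step. You invoke Theorem~\ref{thm:iop} to conclude that $\pi^*(\sF,\Theta)$ is semistable with respect to the fixed ample $\wtilde{H}$ and has vanishing Chern classes. But in this paper the proof of Theorem~\ref{thm:iop} (Section~\ref{ssec:gghfj2}) relies on Theorem~\ref{thm:bphgs}, and Theorem~\ref{thm:bphgs} in turn uses Fact~\ref{fact:dfsj} (in Section~\ref{ssec:potixmq}). The paper flags this explicitly: it stresses that Fact~\ref{fact:dfsj}, unlike Corollary~\ref{cor:dfsj1}, ``does not depend on Theorem~\ref{thm:bphgs} and can therefore be used in the proof of that theorem.'' So citing Theorem~\ref{thm:iop} here is not permissible.

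The fix is easy, because you are only applying the statement on the \emph{smooth} variety $\wtilde{X}$: there the independence of polarisation and the vanishing of Chern classes are classical, due to Simpson \cite[Cor.~3.10 and Remark on p.~36]{MR1179076} and \cite[Thm.~1.3]{MR2231055}. Replace the appeal to Theorem~\ref{thm:iop} by a direct citation of these results, and your argument goes through without circularity. Everything else --- the descent via $\pi_* \pi^* \sF \cong \sF$ using rational singularities and the projection formula, and the routine passage to Higgs fields --- is fine.
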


\subsection{Preparation for the proof of Theorem~\ref{thm:bphgs}: Vanishing of Chern classes}
\approvals{Behrouz & yes \\ Daniel & yes \\ Stefan & yes \\ Thomas & yes}

The following observation is rather standard, but used several times in this
section.

\begin{lem}[Description of bundles with pull-back in $\Higgs_{\wtilde{X}}$]\label{lem:xxs}
  Let $X$ be a projective, klt space and let $π : \wtilde{X} → X$ be a
  resolution of singularities.  If $(ℰ,θ)$ is any locally free Higgs sheaf on
  $X$ such that $π^*(ℰ,θ) ∈ \Higgs_{\wtilde{X}}$, then all Chern classes
  $c_i(ℰ) ∈ H^{2i}\bigl( X,\, ℚ \bigr)$ vanish and $(ℰ,θ)$ is semistable with
  respect to any nef divisor $N ∈ \Div(X)$.
\end{lem}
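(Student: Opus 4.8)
The plan is to pull $(\sE,\theta)$ back to the resolution, harvest the two desired properties for $\pi^*(\sE,\theta)$ there from Simpson's theory on the smooth projective variety $\wtilde X$, and then transport both down to $X$ by elementary manipulations with Chern classes and the projection formula. By Notation~\ref{not:category} the hypothesis $\pi^*(\sE,\theta)\in\Higgs_{\wtilde X}$ means that there is an ample $\wtilde H\in\Div(\wtilde X)$ with respect to which $\pi^*(\sE,\theta)$ is semistable and which satisfies $ch_1(\pi^*\sE)\cdot[\wtilde H]^{n-1}=ch_2(\pi^*\sE)\cdot[\wtilde H]^{n-2}=0$, where $n=\dim X$; we may assume $n\ge 2$, since for $n=1$ the klt space $X$ is already smooth. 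Standard results from Simpson's nonabelian Hodge theory now apply: passing to a Jordan--Hölder filtration of $\pi^*(\sE,\theta)$ in the category of $\wtilde H$-semistable Higgs sheaves of slope zero and invoking the Bogomolov--Gieseker inequality for semistable Higgs sheaves, one sees that $\pi^*(\sE,\theta)$ is a successive extension of Higgs-stable Higgs bundles of slope zero with $ch_2\cdot[\wtilde H]^{n-2}=0$, each of which has vanishing rational Chern classes and corresponds to an irreducible local system. Two consequences will be used: first, by additivity of the Chern character over the filtration, $c_i(\pi^*\sE)=0$ in $H^{2i}(\wtilde X,\bQ)$ for all $i$; second, since the harmonic metric of an irreducible local system is independent of any polarisation, every graded piece — and hence $\pi^*(\sE,\theta)$ itself, being a slope-zero extension of them — is semistable with respect to \emph{every} nef divisor on $\wtilde X$.

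Next I would descend the vanishing of Chern classes. Because $X$ is klt, it has rational singularities, and consequently the pull-back map $\pi^*\colon H^{2i}(X,\bQ)\to H^{2i}(\wtilde X,\bQ)$ is injective (degeneration of the Leray spectral sequence of $\pi$, whose relevant $R^0$ is $\bQ_X$ as $\pi$ has connected fibres over the normal variety $X$). From $\pi^*c_i(\sE)=c_i(\pi^*\sE)=0$ one therefore obtains $c_i(\sE)=0$ in $H^{2i}(X,\bQ)$ for all $i$; in particular $c_1(\sE)=0$, so $\mu_N(\sE)=0$ for every $N\in\Div(X)$.

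For the semistability statement I would argue by contradiction. If $(\sE,\theta)$ were not semistable with respect to some nef $N\in\Div(X)$, choose a generically $\theta$-invariant, saturated subsheaf $\sF\subsetneq\sE$ with $0<\rank\sF<\rank\sE$ and $\mu_N(\sF)>\mu_N(\sE)=0$. Let $\sF_{\wtilde X}$ be the saturation in $\pi^*\sE$ of the image of $\pi^*\sF$; by the pull-back formalism for Higgs sheaves recalled in Section~\ref{sssec:pb} together with \cite[Lem.~4.12]{GKPT15}, this is a generically $(\pi^*\theta)$-invariant subsheaf of $\pi^*(\sE,\theta)$ of rank $\rank\sF$. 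Over the big open set where $\pi$ is an isomorphism and $\sF$ is a subbundle, $\sF_{\wtilde X}$ coincides with $\pi^*\sF$, so $c_1(\sF_{\wtilde X})-\pi^*c_1(\sF)$ is supported on $\pi$-exceptional prime divisors; as each such divisor has zero intersection number with $(\pi^*N)^{n-1}$ (projection formula, its image being of dimension $<n-1$), we conclude $\mu_{\pi^*N}(\sF_{\wtilde X})=\mu_N(\sF)>0=\mu_{\pi^*N}(\pi^*\sE)$, contradicting the semistability of $\pi^*(\sE,\theta)$ with respect to the nef divisor $\pi^*N$ established above. This finishes the proof.

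The only real obstacle is the input from the first paragraph: having on hand the fact that on a smooth projective variety a Higgs bundle which is semistable with respect to one ample class and has vanishing first two Chern characters there is automatically of vanishing Chern classes and semistable with respect to every nef class. This is exactly the polarisation-independence built into Simpson's theory (via Jordan--Hölder reduction, the Bogomolov--Gieseker inequality for Higgs sheaves, and the polarisation-free nature of harmonic metrics on semisimple local systems); everything else is routine bookkeeping with Chern classes, saturations and the projection formula.
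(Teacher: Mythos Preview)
Your semistability argument is correct and close to the paper's, which cites \cite[Prop.~5.20]{GKPT15} (semistability of $(\sE,\theta)$ and of $\pi^*(\sE,\theta)$ with respect to a pulled-back nef class are equivalent) together with \cite[Thm.~1.3]{MR2231055} and an ample-to-nef perturbation to obtain semistability of $\pi^*(\sE,\theta)$ with respect to every nef divisor on $\wtilde X$; your direct saturation argument reproves the first of these and your appeal to harmonic metrics the second.

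The Chern class step has a real gap. You assert that $\pi^*\colon H^{2i}(X,\bQ)\to H^{2i}(\wtilde X,\bQ)$ is injective because $X$ has rational singularities and $R^0\pi_*\bQ=\bQ_X$, invoking ``degeneration of the Leray spectral sequence''. But rational singularities is a statement about $R^i\pi_*\sO_{\wtilde X}$, not about constant coefficients, and the equality $R^0\pi_*\bQ=\bQ_X$ alone does not kill the differentials $d_r\colon E_r^{2i-r,\,r-1}\to E_r^{2i,0}$ landing on the edge term. Klt spaces are not rational homology manifolds in general---already an ordinary double point on a threefold is not---so one cannot simply invoke the decomposition theorem to split $\bQ_X$ off from $R\pi_*\bQ_{\wtilde X}$ either. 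The paper circumvents this issue by an entirely different route: Takayama's result \cite[Prop.~2.1]{Takayama2003} gives a cover of $X^{an}$ by contractible open sets with simply connected $\pi$-preimages, so the local system underlying $\pi^*\sE$ on $\wtilde X$ descends to one on $X$ and the $\cC^{\infty}$-bundle underlying $\sE$ is itself induced by a local system. Deligne--Sullivan \cite{MR0397729} then trivialises this flat bundle on a finite \'etale cover $\gamma\colon\what X\to X$, and vanishing of $c_i(\sE)$ follows from the splitting of $\bQ_X\to\gamma_*\bQ_{\what X}$ given by the trace.
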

\begin{proof}
  To prove that $(ℰ,θ)$ is semistable with respect to any nef divisor, it is
  equivalent to show that $π^*(ℰ,θ)$ is semistable with respect to any nef
  divisor on $\wtilde{X}$, \cite[Prop.~5.20]{GKPT15}.  Argue by contradiction
  and assume that $π^*(ℰ,θ)$ is \emph{not} semistable with respect some nef
  $N ∈ \Div(\wtilde{X})$.  As non-semistability is an open condition, if
  $A ∈ \Div(\wtilde{X})$ is ample and $δ ∈ ℚ^+$ sufficiently small, then
  $π^*(ℰ,θ)$ fails to be semistable with respect to the ample divisor $N + δ·A$.
  This contradicts the fact that $π^*(ℰ,θ)$ is semistable with respect to
  \emph{any} ample divisor on $\wtilde{X}$, cf.~\cite[Thm.~1.3]{MR2231055}.
  Semistability of $(ℰ,θ)$ with respect to any nef divisor follows as desired.

  Concerning Chern classes, recall from \cite[Rem.~on p.~36]{MR1179076} that the
  $\cC^{∞}$-bundle $\wtilde{E}$ on $\wtilde{X}^{an}$ underlying $\wtilde{ℰ}$ is
  induced by a local system.  Takayama has shown in
  \cite[Prop.~2.1]{Takayama2003} that $X^{an}$ can be covered by contractible,
  open subsets $U_i$ with simply connected inverse images $(π^{an})^{-1}(U_i)$.
  We infer that the $\cC^{∞}$-bundle $E$ on $X^{an}$ underlying
  $ℰ ≅ π_*\, \wtilde{ℰ}$ is again induced by a local system.  But then, a
  classical result of Deligne-Sullivan, \cite{MR0397729}, implies that there
  exists a finite, étale cover $γ: \what{X} → X$ where $(γ^{an})^* \cE$ is
  trivial.  Vanishing of Chern classes downstairs then follows from the Leray
  spectral sequence and from the splitting of the natural map
  $ℚ_X → (γ^{an})_*\, ℚ_{\what{X}}$.
\end{proof}

\subsection{Preparation for the proof of Theorem~\ref{thm:bphgs}: Numerical flatness of Higgs sheaves}
\approvals{Behrouz & yes \\ Daniel & yes \\ Stefan & yes \\ Thomas & yes }

The following observation allows to apply the results obtained in
Section~\ref{sec:4} above to Higgs sheaves on resolutions of klt spaces.

\begin{prop}[Numerical flatness of Higgs sheaves]\label{prop:Higgs_are_num_flat}
  Let $π: \wtilde{X} → X$ be a resolution of a klt space, and let $\aE$ be any
  local system on $\wtilde X$.  Then, the corresponding locally free Higgs sheaf
  $(ℰ, θ)$ is $π$-numerically flat.
\end{prop}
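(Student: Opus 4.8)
The plan is to translate $π$-numerical flatness into a statement about curves contracted by $π$, and then to settle that statement by combining Takayama's local simple-connectedness theorem with the functoriality of Simpson's correspondence under pull-back. Note first that $X$, and hence the resolution $\wtilde X$, is projective (as in Theorem~\ref{thm:bphgs}), so the Higgs sheaf attached to $\aE$ is $(ℰ, θ) = η_{\wtilde X}(\aE)$, the image of the local system under the functor of Simpson's classical nonabelian Hodge correspondence. Writing $ρ : ℙ(ℰ) → \wtilde X$ for the projection and $δ := π ◦ ρ$, the definition of $π$-numerical flatness (Definition~\ref{def:numflat}) asks that $[\sO_{ℙ(ℰ)}(1)]·Γ ≥ 0$ and $[\sO_{ℙ(ℰ^*)}(1)]·Γ ≥ 0$ for every irreducible curve $Γ$ with $δ(Γ)$ a point. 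If $ρ(Γ)$ is a point, then $Γ$ is contained in a fibre of $ρ$, which is a projective space on which $\sO_{ℙ(ℰ)}(1)$ restricts to the hyperplane bundle, so the intersection numbers are positive and there is nothing to prove. Otherwise $C := ρ(Γ)$ is an irreducible curve, necessarily contracted by $π$ since $π(C) = δ(Γ)$ is a point. As $\sO_{ℙ(ℰ)}(1)$ restricts to $\sO_{ℙ(ℰ|_C)}(1)$ on $ℙ(ℰ|_C) = ρ^{-1}(C)$, the whole proposition reduces to the claim that $ℰ|_C$ and $ℰ^*|_C$ are nef for every irreducible curve $C ⊆ \wtilde X$ contracted by $π$.

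To prove this reduced claim I would fix such a $C$, let $ν : \what C → C$ be its normalisation, and treat the composite $g : \what C → \wtilde X$ as a morphism of smooth projective varieties. Since $π(C)$ is a single point $x$, the curve $C$ is contained in the set-theoretic fibre $π^{-1}(x)_{\red}$, and Takayama's theorem \cite[Prop.~2.1]{Takayama2003} --- used here exactly as in the proof of Lemma~\ref{lem:xxs} --- provides an analytically open, contractible neighbourhood $U ∋ x$ with $(π^{an})^{-1}(U)$ simply connected. Because $C^{an} ⊆ (π^{an})^{-1}(U)$, the homomorphism $π_1(C^{an}) → π_1(\wtilde X^{an})$ factors through the trivial group, and hence $g^* \aE$ is the trivial local system of rank $N := \rank ℰ$ on $\what C$.

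The last step is to invoke functoriality of Simpson's nonabelian Hodge correspondence for the morphism $g$ of smooth projective varieties, \cite{MR1179076}: it gives $g^*(ℰ, θ) ≅ η_{\what C}(g^* \aE)$, and since the trivial local system corresponds to the trivial Higgs bundle this forces $g^* ℰ ≅ \sO_{\what C}^{⊕ N}$. In particular $g^* ℰ$ and $(g^* ℰ)^* = g^*(ℰ^*)$ are numerically flat on the curve $\what C$, so $\sO_{ℙ(g^*ℰ)}(1)$ and $\sO_{ℙ(g^*ℰ^*)}(1)$ are nef. The morphism $ℙ(g^*ℰ) = ℙ(ℰ|_C) ⨯_C \what C → ℙ(ℰ|_C)$ obtained by base change from $ν$ is finite and surjective and carries $\sO_{ℙ(ℰ|_C)}(1)$ to $\sO_{ℙ(g^*ℰ)}(1)$; as nefness descends along finite surjective morphisms, $\sO_{ℙ(ℰ|_C)}(1)$ is nef, i.e.\ $ℰ|_C$ is nef, and likewise $ℰ^*|_C$ is nef. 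This establishes the reduced claim from the first paragraph, which finishes the proof.

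The step I expect to be the most delicate is the appeal to functoriality of Simpson's correspondence under pull-back: one has to be sure that it is available for \emph{arbitrary} (not necessarily semisimple) local systems on smooth projective varieties. This is so because the correspondence of \cite[Cor.~3.10]{MR1179076} is an equivalence of categories that is compatible with pull-back along morphisms of smooth projective varieties; should one prefer to avoid relying on this in the non-semisimple case, one can instead present a general local system as an iterated extension of irreducible ones, treat the irreducible --- hence semisimple --- pieces by pulling back the harmonic metric, and use that an extension of $π$-numerically flat bundles is again $π$-numerically flat. Everything else --- Takayama's theorem and the translation of $π$-nefness into a condition on contracted curves --- is routine.
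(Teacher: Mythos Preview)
Your proof is correct and follows essentially the same route as the paper: both reduce to curves contracted by $π$, invoke Takayama's simple-connectedness result to trivialise the pulled-back local system, and then use functoriality of Simpson's correspondence to conclude that the pulled-back Higgs bundle is trivial, hence numerically flat. The paper compresses the reduction step by directly testing nefness on morphisms $γ : C → \wtilde X$ from smooth projective curves with $π∘γ$ constant, whereas you unpack the definition via $ℙ(ℰ)$ and pass through the normalisation; these are equivalent formulations of the same argument.
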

\begin{proof}
  Given any smooth, projective curve $C$ and any morphism $γ : C → \wtilde{X}$
  such that $π◦γ$ is constant, we need to show that both $γ^* ℰ$ and its dual
  are nef.  To this end, recall from \cite[p.~827]{Takayama2003} that every
  fibre of $π$ admits a small, simply connected neighbourhood $U$, open in the
  Euclidean topology.  The local system $γ^* (π^* \aE)$ is therefore trivial.
  As Simpson's nonabelian Hodge correspondence is functorial in morphisms
  between manifolds, \cite[Rem.\ on p.~36]{MR1179076}, it follows that the
  pull-back $γ^* (ℰ, θ)$ corresponds to the trivial local system, and is
  therefore trivial itself; in particular, the pullback $γ^* \wtilde{ℰ}$ and its
  dual are both nef, as desired.
\end{proof}

\subsection{Proof of Theorem~\ref*{thm:bphgs} if $X$ is a surface}
\approvals{Behrouz & yes \\ Daniel & yes \\ Stefan & yes \\ Thomas & yes }
\label{ssec:ptfdfcv}

The following proposition immediately implies Theorem~\ref{thm:bphgs} (``Ascent
of semistable Higgs bundles'') in case where $X$ is a surface.

\begin{prop}[Independence of the polarisation for surfaces]\label{prop:sttc}
  Let $X$ be a smooth projective surface.  Given a locally free Higgs sheaf
  $(ℰ,θ)$ on $X$, the following statements are equivalent.
  \begin{enumerate}
  \item\label{il:y1} There exists a big and nef divisor $N$ such that
    $ch_1(ℰ)·N = ch_2(ℰ) = 0$ and such that $(ℰ,θ)$ is semistable with respect
    to $N$.
  \item\label{il:y2} All Chern classes of $c_i(ℰ) ∈ H^{2i}\bigl( X,\, ℚ \bigr)$
    vanish and $(ℰ,θ)$ is semistable with respect any ample divisor on $X$.
  \end{enumerate}
\end{prop}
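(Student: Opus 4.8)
The plan is to establish the nontrivial implication ``\ref{il:y1} $\Rightarrow$ \ref{il:y2}'' in two stages: first the vanishing of all Chern classes, then semistability with respect to an arbitrary ample divisor. The converse ``\ref{il:y2} $\Rightarrow$ \ref{il:y1}'' is trivial, since every ample divisor $N$ is big and nef, the vanishing of all Chern classes makes $ch_1(\sE)\cdot N = ch_2(\sE) = 0$ automatic, and semistability with respect to $N$ is part of~\ref{il:y2}.

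\emph{Vanishing of the Chern classes.} Assume~\ref{il:y1} with its big and nef divisor $N$, and set $r := \rank\sE$. Since $ch_1(\sE) = c_1(\sE)$ and $ch_2(\sE) = \frac12\bigl(c_1(\sE)^2 - 2c_2(\sE)\bigr) = 0$, the Bogomolov discriminant $\Delta(\sE) := 2r\,c_2(\sE) - (r-1)\,c_1(\sE)^2$ equals $c_1(\sE)^2$. The Bogomolov inequality for semistable Higgs sheaves — Simpson's inequality \cite{MR1179076} for ample polarisations, extended to the big and nef $N$ by a limiting argument across the finitely many walls lying near $N$ — gives $\Delta(\sE)\geq 0$, so $c_1(\sE)^2\geq 0$. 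On the other hand $N$ is big and nef on the surface $X$, hence $N^2 > 0$, while $c_1(\sE)\cdot N = 0$ by hypothesis; the Hodge index theorem then forces $c_1(\sE)^2\leq 0$, with equality only if $c_1(\sE)$ is numerically trivial. Thus $c_1(\sE)^2 = 0$ and $c_1(\sE)$ is numerically trivial, so — as $NS(X)_{\bQ}$ embeds into $H^2(X,\bQ)$ with non-degenerate intersection form — $c_1(\sE) = 0$ in $H^2(X,\bQ)$; then $c_2(\sE) = \frac12 c_1(\sE)^2 = 0$, and there are no higher Chern classes on a surface.

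\emph{Semistability for every ample divisor.} Fix an ample divisor $A$ and suppose, for a contradiction, that $(\sE,\theta)$ is not semistable with respect to $A$. For $t\in[0,1]$ put $N_t := (1-t)N + tA$; each $N_t$ is nef, and ample for $t>0$. Let $t_*$ be the supremum of those $t$ for which $(\sE,\theta)$ is $N_t$-semistable; since $(\sE,\theta)$ is $N_0$-semistable but not $N_1$-semistable, $t_*\in[0,1)$, and as semistability is a closed condition in the polarisation — only finitely many saturated, generically $\theta$-invariant subsheaves of the fixed sheaf $\sE$ being relevant near $N_{t_*}$, by boundedness, cf.\ Section~\ref{ssec:ascend} — the pair $(\sE,\theta)$ is $N_{t_*}$-semistable. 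The same boundedness lets me fix, along a sequence $t_j\downarrow t_*$, a single saturated, generically $\theta$-invariant subsheaf $\sF\subsetneq\sE$ of rank $s\in\{1,\dots,r-1\}$ whose class satisfies $c_1(\sF)\cdot N_{t_j}>0$ for all $j$; letting $j\to\infty$ and using $N_{t_*}$-semistability (note $\mu_{N_{t_*}}(\sE)=0$, as $c_1(\sE)=0$) yields $c_1(\sF)\cdot N_{t_*}=0$, i.e.\ $\sF$ has the same $N_{t_*}$-slope, namely $0$, as $\sE$. Consequently $\sF$ — locally free, being a saturated subsheaf of the locally free $\sE$ on the smooth surface $X$ — and $\sQ := \sE/\sF$, with the induced Higgs fields, are both $N_{t_*}$-semistable of $N_{t_*}$-slope zero. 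The Bogomolov inequality (for the ample divisor $N_{t_*}$ if $t_*>0$, for the big and nef $N_{t_*}=N$ if $t_*=0$), together with $\Delta(\sQ)\geq\Delta(\sQ^{**})\geq 0$ and the standard discriminant identity $r\,c_1(\sF)^2 = (r-s)\,\Delta(\sF) + s\,\Delta(\sQ)$ — valid here because $c_1(\sE)=0$ and $\Delta(\sE)=0$ — gives $c_1(\sF)^2\geq 0$. But $c_1(\sF)\cdot N_{t_*}=0$ and $N_{t_*}^2>0$, so the Hodge index theorem forces $c_1(\sF)^2\leq 0$, with equality only if $c_1(\sF)$ is numerically trivial; hence $c_1(\sF)$ is numerically trivial, contradicting $c_1(\sF)\cdot N_{t_j}>0$. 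This contradiction establishes~\ref{il:y2}.

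\emph{Main obstacle.} The only step that is not purely formal is the Bogomolov inequality for Higgs sheaves that are semistable with respect to a merely big and nef — rather than ample — divisor, needed to launch the first stage and to treat the case $t_*=0$ in the second. I would derive it from Simpson's inequality for ample polarisations: near a big and nef class the wall-and-chamber structure for subsheaves of bounded discriminant is locally finite, so $N$-semistability of $\sE$ propagates to semistability with respect to nearby ample classes, up to crossing a single wall on which the destabilising graded pieces have equal $N$-slope; on that wall the correction terms in the discriminant formula are negatives of self-intersections of classes orthogonal to $N$, hence nonnegative by the Hodge index theorem, which forces $\Delta(\sE)\geq 0$. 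Everything else reduces to elementary surface geometry — the Hodge index theorem, additivity of Chern characters in short exact sequences, and the behaviour of discriminants under reflexive hull.
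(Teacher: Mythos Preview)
Your argument is correct, and it takes a genuinely different route from the paper's.

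The paper proceeds by first treating the \emph{stable} case: Bogomolov--Gieseker for big and nef polarisations (cited from \cite{MR1954067} and \cite{GKPT15}) plus the Hodge index theorem give vanishing Chern classes, and then openness of stability pushes stability to a nearby ample class, after which Lemma~\ref{lem:xxs}---which invokes Simpson's nonabelian Hodge correspondence on the smooth surface---yields (semi)stability for every ample divisor.  The semistable case is then handled by taking a Jordan--Hölder filtration with respect to $N$, showing each graded piece $𝒬_i^{**}$ again has $ch_2 = 0$, applying the stable case to each piece, and observing that an iterated extension of Higgs bundles of slope zero, each semistable for every ample class, is itself semistable for every ample class.

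Your approach bypasses both the Jordan--Hölder decomposition and the appeal to Lemma~\ref{lem:xxs}.  After the same Bogomolov/Hodge-index combination kills the Chern classes, you run a direct wall-crossing argument along the segment from $N$ to $A$: at the critical parameter $t_*$ a destabilising $ℱ$ has $c_1(ℱ)·N_{t_*}=0$, the discriminant identity $r\,c_1(ℱ)^2 = (r-s)\,Δ(ℱ) + s\,Δ(𝒬)$ together with Bogomolov for the $N_{t_*}$-semistable pieces forces $c_1(ℱ)^2 \ge 0$, and Hodge index then makes $c_1(ℱ)$ numerically trivial, a contradiction.  This is more elementary in that it never calls the Simpson correspondence directly; both proofs, of course, still rely on it implicitly through the Bogomolov inequality for Higgs bundles.

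One remark on your ``main obstacle'': you need not derive the Bogomolov inequality for big and nef polarisations yourself---the paper simply cites it as \cite[Thm.~2.1]{MR1954067} or \cite[Thm.~6.1]{GKPT15}, and you may do the same.  Your sketch of the limiting argument is essentially Langer's proof, so nothing is lost, but it is not a gap you need to fill.
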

\begin{proof}
  Using the fact that big and nef divisors are limits of ample divisors, the
  direction \ref{il:y2} $⇒$ \ref{il:y1} is immediate.  We will therefore
  consider the converse direction \ref{il:y1} $⇒$ \ref{il:y2} for the remainder
  of the proof.  Set $r := \rank ℰ$.

  \subsubsection*{Step 1.  Proof in case where $(ℰ,θ)$ is stable with respect to $N$}
  \approvals{Behrouz & yes \\ Daniel & yes \\ Stefan & yes \\ Thomas & yes}

  Using the assumption on stability with respect to a big and nef class, recall
  from \cite[Thm.~2.1]{MR1954067} or \cite[Thm.~6.1]{GKPT15} that $ℰ$ satisfies
  the Bogomolov-Gieseker inequality,
  \begin{equation}\label{eq:bginef}
    2r · c_2(ℰ) - (r-1)· c_1(ℰ)² ≥ 0.
  \end{equation}
  Substituting the assumption $ch_2(ℰ)=0$ into \eqref{eq:bginef}, we obtain that
  $c_1(ℰ)² ≥ 0$.  On the other hand, the assumption $ch_1(ℰ)·N = 0$, together
  with the Hodge index theorem implies that $c_1(ℰ)² ≤ 0$, with equality if and
  only if $c_1(ℰ)=0$.  We obtain that both $c_1(ℰ)$ and $c_2(ℰ)$ vanish, as
  desired.
  
  Next, let $H ⊂ S$ be any ample divisor.  By openness of stability,
  \cite[Prop.~4.17]{GKPT15}, there exists $ε ∈ ℚ^+$ such that $(ℰ,θ)$ is stable
  with respect to the ample class $(N+εH)$.  Lemma~\ref{lem:xxs} then asserts
  that $(ℰ,θ)$ is stable with respect to any ample divisor.  This finishes the
  proof in the stable case.

  \subsubsection*{Step 2.  Proof in general -- setup}
  \approvals{Behrouz & yes \\ Daniel & yes \\ Stefan & yes \\ Thomas & yes}

  Since $X$ is smooth, there exists a Jordan-Hölder filtration for the Higgs
  bundle $(ℰ,θ)$ and the nef polarisation $N$.  More precisely, we obtain a
  filtration
  $$
  0 = ℰ_0 ⊊ ℰ_1 ⊊ ⋯ ⊊ ℰ_{t-1} ⊊ ℰ_t = ℰ,
  $$
  with the following properties.
  \begin{enumerate}
  \item Each of the sheaves $ℰ_i$ is saturated in $ℰ_{i+1}$, hence reflexive,
    hence locally free since $X$ is a surface.
  \item The torsion free quotients $𝒬_i := ℰ_{i}/ℰ_{i-1}$ satisfy
    $c_1(𝒬_i)·N = 0$, and inherit Higgs fields $τ_i$ making $(𝒬_i, τ_i)$ stable
    with respect to $N$.
  \end{enumerate}
  Moreover, $Ω¹_X$ being locally free, the reflexive hulls $𝒬_i^{**}$, which are
  automatically locally free, also inherit Higgs fields, say $τ_i^{**}$, which
  make $(𝒬_i^{**}, τ_i^{**})$ stable with respect to $N$.  Since $𝒬_i$ and
  $𝒬_i^{**}$ agree in codimension one, $c_1(𝒬_i) = c_1(𝒬_i^{**})$, and
  $c_1(𝒬_i^{**})·N = 0$.

  \subsubsection*{Step 3.  Proof in general -- the Chern character of $𝒬_i^{**}$}
  \approvals{Behrouz & yes \\ Daniel & yes \\ Stefan & yes \\ Thomas & yes}

  We will prove in this step that
  \begin{equation}\label{eq:vdb}
    ch_2(𝒬_i^{**}) = ch_2(𝒬_i) = 0 \quad\text{for all }i,
  \end{equation}
  following \cite[proof of Thm.~2]{MR1179076} closely.  In fact, since
  $c_2(𝒬_i^{**}) ≤ c_2(𝒬_i)$, cf.~\cite[p.~80]{MR2665168}, we have the inverse
  inequality for the second Chern character, $ch_2(𝒬_i) ≤ ch_2(𝒬_i^{**})$.
  Additivity of Chern characters then implies that
  \begin{equation}\label{eq:ch-2x}
    0 = ch_2(ℰ)= \sum ch_2(𝒬_i) ≤ \sum ch_2(𝒬_i^{**}).
  \end{equation}
  On the other hand, the Bogomolov-Gieseker inequality,
  \cite[Thm.~2.1]{MR1954067} or \cite[Thm.~6.1]{GKPT15}, give that
  \begin{equation}\label{eq:ch-3x}
    2·ch_2(𝒬_i^{**}) ≤ \frac{1}{\rank 𝒬_i}·c_1(𝒬_i^{**})².
  \end{equation}
  As before, equality $c_1(𝒬_i^{**})·N = 0$ together with Hodge index theorem
  implies that $c_1(𝒬_i^{**})² ≤ 0$, so Inequality~\eqref{eq:ch-3x} reduces to
  $ch_2(𝒬_i^{**})≤ 0$.  Hence \eqref{eq:ch-2x} implies \eqref{eq:vdb}.

  \subsubsection*{Step 4.  Proof in general -- end of proof}
  \approvals{Behrouz & yes \\ Daniel & yes \\ Stefan & yes \\ Thomas & yes}

  Equation \eqref{eq:vdb} allows us to apply the results of Step~1 to the Higgs
  bundles $(𝒬_i^{**}, τ_i^{**})$.  This implies in particular that all Chern
  classes of $𝒬_i^{**}$ vanish, so that the sheaves $𝒬_i$ and $𝒬_i^{**}$ agree
  to start with.  The Higgs sheaf $(ℰ,θ)$ is thus (an iterated) extension of
  Higgs bundles with vanishing Chern classes that are stable with respect to any
  ample polarisation.  Item~\ref{il:y2} follows, which ends the proof of
  Proposition~\ref{prop:sttc} and therefore the proof of
  Theorem~\ref*{thm:bphgs} in dimension $2$.
\end{proof}

\subsection{Proof of Theorem~\ref*{thm:bphgs} if $X$ is maximally quasi-étale}
\approvals{Behrouz & yes \\ Daniel & yes \\ Stefan & yes \\ Thomas & yes}
\label{ssec:potixmq}

We will now prove Theorem~\ref{thm:bphgs} under the additional assumption that
the natural push-forward morphism of algebraic fundamental groups,
$\what{π}_1(X_{\reg}) → \what{π}_1(X)$, is isomorphic.  Following \cite{GKP13},
we say shortly that $X$ is maximally quasi-étale.  This assumption will be
maintained throughout the present Section~\ref{ssec:potixmq}.  Choose a divisor
$Δ$ such that $(X,Δ)$ is klt.

\begin{notation}
  Choose an ample divisor $H ∈ \Div(X)$ such that $(ℰ,θ)$ is semistable with
  respect to $H$ and satisfies $ch_1(ℰ)·[H]^{n-1} = ch_2(ℰ)·[H]^{n-2} = 0$.  Set
  $(\wtilde{ℰ},\wtilde{θ}) := π^*(ℰ,θ)$ and $\wtilde{H} := π^*H$.  If $A$ is a
  subvariety of $B$, we denote the obvious inclusion morphism by $ι_A$.
\end{notation}

Finally, let $\aF$ be the family of locally free Higgs sheaves $(ℱ,Θ)$ on $X$,
that satisfy $\rank ℱ = \rank ℰ$ and have the additional property that
$π^*(ℱ, Θ) ∈ \Higgs_{\wtilde{X}}$.  Recall from Fact~\ref{fact:dfsj} that this
family is bounded.

\subsubsection*{Step 1: Choice of a complete intersection surface}
\CounterStep
\approvals{Behrouz & yes \\ Daniel & yes \\ Stefan & yes \\ Thomas & yes}

Our proof relies on the choice of a sufficiently general complete intersection
surface $S ⊂ X$ to which we can restrict.  To be precise, choosing a
sufficiently increasing sequence of numbers $0 ≪ m_1 ≪ m_2 ≪ ⋯ ≪ m_{n-2}$ as
well as a sufficiently general tuple of hyperplanes,
$$
(D_1, …, D_{n-2}) ∈ |m_1·H|⨯ ⋯ ⨯ |m_{n-2}·H|,
$$
the following will hold.
\begin{enumerate}
\item\label{item:Bertini} The intersection $S := D_1 ∩ ⋯ ∩ D_{n-2}$ is an
  irreducible, normal surface, not contained in any component of $Δ$, and the
  pair $(S,Δ|_S)$ is klt ---Seidenberg's theorem \cite[Thm.~1.7.1]{BS95} and
  \cite[Lem.~5.17]{KM98}.
  
\item\label{item:stable} The restriction $(ℰ,θ)|_{S}$ is semistable with respect
  to the ample divisor $H|_S$ ---Restriction theorem,
  Theorem~\ref{thm:restrictionA}.
  
\item\label{item:pi1} The natural morphism $i_*: π_1(S_{\reg}) → π_1(X_{\reg})$,
  induced by the inclusion $i: S_{\reg} → X_{\reg}$, is isomorphic ---Lefschetz
  hyperplane theorem for fundamental groups, \cite[Thm.\ in
  Sect.~II.1.2]{GoreskyMacPherson}.
  
\item\label{item:sheafisom1} Given any Higgs sheaf $(ℱ, Θ) ∈ \aF$, then
  $(ℰ,θ) ≅ (ℱ, Θ)$ if and only if $(ℰ,θ)|_{S} ≅ (ℱ, Θ)|_{S}$ ---Iterated
  Bertini theorem, Proposition~\ref{prop:restr}.
\end{enumerate}

Let $\wtilde{S} ⊂ \wtilde{X}$ be the strict transform of $S$ in $\wtilde{X}$.
Then $\wtilde{S}$ is a smooth surface and $π_{{S}}: \wtilde{S} → S$ is a
resolution.  The following diagram summarises the situation.
\begin{equation}\label{eq:resolutiondiagram}
  \begin{aligned}
    \xymatrix{ %
      \wtilde{S} \ar@{^(->}[rr]^{ι_{\wtilde{S}}} \ar[d]_{π_S} && \wtilde{X} \ar[d]^{π} \\
      S \ar[rr]_{ι_S} && X.  %
    }
  \end{aligned}
\end{equation}

\begin{claim}\label{claim:ghd}
  The natural morphism of étale fundamental groups,
  $\what{π}_1(\wtilde{S}) → \what{π}_1(\wtilde{X})$ is isomorphic.  In
  particular every local system on $\wtilde{S}$ is the restriction of a local
  system on $\wtilde{X}$.
\end{claim}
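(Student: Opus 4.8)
The plan is to relate the étale fundamental groups of the six spaces $\wtilde S$, $\wtilde X$, $S$, $X$, $S_{\reg}$ and $X_{\reg}$ through commutative squares and to chase isomorphisms. Four of the relevant maps are isomorphisms for reasons already available: since $X$ and the complete-intersection surface $S$ are klt (Property~\ref{item:Bertini}) and $\pi$, $\pi_S$ are resolutions of singularities, Takayama's theorem on resolutions of log-terminal singularities~\cite{Takayama2003} gives $\what{\pi}_1(\wtilde S)\cong\what{\pi}_1(S)$ and $\what{\pi}_1(\wtilde X)\cong\what{\pi}_1(X)$, both via the natural maps; Property~\ref{item:pi1} gives $\what{\pi}_1(S_{\reg})\cong\what{\pi}_1(X_{\reg})$; and the standing assumption that $X$ is maximally quasi-étale gives $\what{\pi}_1(X_{\reg})\cong\what{\pi}_1(X)$. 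The one remaining ingredient is that $\what{\pi}_1(S_{\reg})\to\what{\pi}_1(S)$ is an isomorphism, i.e.\ that $S$ too is maximally quasi-étale. I would deduce this from the commutative square on the four groups $\what{\pi}_1(S_{\reg})$, $\what{\pi}_1(X_{\reg})$, $\what{\pi}_1(S)$ and $\what{\pi}_1(X)$: the map $\what{\pi}_1(S_{\reg})\to\what{\pi}_1(S)$ is surjective because $S_{\reg}$ is a big open subset of the normal variety $S$, and it is injective because its composition with $\what{\pi}_1(S)\to\what{\pi}_1(X)$ factors as the composition of the two isomorphisms $\what{\pi}_1(S_{\reg})\cong\what{\pi}_1(X_{\reg})\cong\what{\pi}_1(X)$; hence it is an isomorphism, and then so is $\what{\pi}_1(S)\to\what{\pi}_1(X)$. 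Finally, the commutative diagram~\eqref{eq:resolutiondiagram} yields a square on $\what{\pi}_1(\wtilde S)$, $\what{\pi}_1(\wtilde X)$, $\what{\pi}_1(S)$ and $\what{\pi}_1(X)$ whose two vertical maps (Takayama) and whose lower horizontal map are isomorphisms, so the upper horizontal map $\what{\pi}_1(\wtilde S)\to\what{\pi}_1(\wtilde X)$ is an isomorphism, which is the first assertion.

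For the second assertion it suffices to prove the stronger statement that the ordinary (topological) map $\pi_1(\wtilde S)\to\pi_1(\wtilde X)$ is an isomorphism, because then restriction of local systems is an equivalence $\LSys_{\wtilde X}\to\LSys_{\wtilde S}$, in particular essentially surjective. To this end I would run the same diagram with ordinary fundamental groups: Takayama's theorem again gives $\pi_1(\wtilde S)\cong\pi_1(S)$ and $\pi_1(\wtilde X)\cong\pi_1(X)$, Property~\ref{item:pi1} gives $\pi_1(S_{\reg})\cong\pi_1(X_{\reg})$, and so the whole statement reduces to showing that the natural surjection $\pi_1(S)\to\pi_1(X)$ is injective. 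Writing $\pi_1(X)=\pi_1(X_{\reg})/N_X$ and $\pi_1(S)=\pi_1(S_{\reg})/N_S$, where $N_X$ and $N_S$ are the normal subgroups generated by the images of the local fundamental groups along the singular loci of $X$ and of $S$, it is enough to check that the isomorphism $\pi_1(S_{\reg})\cong\pi_1(X_{\reg})$ of Property~\ref{item:pi1} carries $N_S$ onto $N_X$. The inclusion $N_S\subseteq N_X$ is straightforward: a small loop in $S_{\reg}$ encircling a singular point of $S$ becomes, under $S_{\reg}\hookrightarrow X_{\reg}$, a small loop encircling a point of $\sing X$, and by Takayama's local simple connectedness such a loop bounds in $\wtilde X$ and hence is trivial in $\pi_1(\wtilde X)\cong\pi_1(X)$. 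For the reverse inclusion $N_X\subseteq N_S$ I would use that $S$ is a sufficiently general complete-intersection surface, so that $\sing S=S\cap\sing X$ and $S$ meets in general points every component of $\sing X$ that it can meet at all, together with a local Lefschetz theorem identifying the local fundamental groups of $S$ at its singular points with those of $X$; the possible contributions to $N_X$ from components of $\sing X$ of codimension larger than two — which the surface $S$ misses — have to be ruled out using the finiteness of local fundamental groups of klt singularities and the maximal-quasi-étaleness of $S$ proved above.

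The fundamental-group isomorphisms coming from Takayama's theorem, from the Lefschetz statement~\ref{item:pi1}, and the elementary square chases are routine. The substantive point — and the step I expect to require the most care — is the reverse inclusion $N_X\subseteq N_S$, equivalently the injectivity of $\pi_1(S)\to\pi_1(X)$: one must show that the Lefschetz isomorphism between the fundamental groups of the two smooth loci is compatible with filling in the two singular loci, with no spurious contributions surviving along high-codimension strata of $\sing X$. This is exactly where the genericity of $S$ in its linear system, and the maximal-quasi-étale hypothesis, are genuinely needed.
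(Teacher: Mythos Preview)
Your argument for the first assertion (the isomorphism of étale fundamental groups) is correct and matches the paper's proof essentially verbatim: the same six groups, the same diagram, the same chase using Takayama's theorem, the Lefschetz isomorphism~\ref{item:pi1}, the maximally-quasi-étale hypothesis, and the surjectivity of $\what{\pi}_1(S_{\reg})\to\what{\pi}_1(S)$.

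For the second assertion, however, you take a different route from the paper, and there is a genuine gap. You try to upgrade the étale isomorphism to an isomorphism of \emph{topological} fundamental groups $\pi_1(\wtilde{S})\to\pi_1(\wtilde{X})$, by showing that the kernels $N_S$ and $N_X$ of $\pi_1(S_{\reg})\twoheadrightarrow\pi_1(S)$ and $\pi_1(X_{\reg})\twoheadrightarrow\pi_1(X)$ coincide under the Lefschetz isomorphism. The inclusion $N_S\subseteq N_X$ is fine, but your sketch for $N_X\subseteq N_S$ is not a proof: you invoke a ``local Lefschetz theorem'' for local fundamental groups along the singular locus, and then appeal to finiteness of local $\pi_1$ of klt singularities together with the maximal-quasi-étaleness of $S$ to deal with high-codimension strata that $S$ misses. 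But the maximal-quasi-étaleness you established for $S$ is an \emph{étale} statement, not a topological one, so it cannot by itself control kernels in the topological $\pi_1$; and no local Lefschetz identification of the relevant topological kernels is supplied. In short, you are attempting to prove a stronger topological statement than is needed, and the hard direction of that stronger statement is left open.

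The paper avoids this entirely. Having established that $\what{\pi}_1(\wtilde{S})\to\what{\pi}_1(\wtilde{X})$ is an isomorphism, it invokes Grothendieck's theorem on linear representations and profinite completions, \cite[Thm.~1.2b]{MR0262386} (see also \cite[Sect.~8.1]{GKP13}): if a homomorphism of finitely generated groups induces an isomorphism on profinite completions, then it induces a bijection on $\mathrm{GL}_n(\mathbb{C})$-representation sets. Since $\pi_1(\wtilde{S})$ and $\pi_1(\wtilde{X})$ are finitely generated (fundamental groups of smooth projective varieties), every representation of $\pi_1(\wtilde{S})$---hence every local system on $\wtilde{S}$---extends to $\wtilde{X}$. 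This is exactly the conclusion required, obtained directly from the étale isomorphism you already proved, with no need to control topological fundamental groups.
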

\begin{proof}
  Consider the following diagram of push-forward morphisms of étale fundamental
  groups,
  $$
  \xymatrix{ %
    \what{π}_1(\wtilde{S}) \ar[rr]^{(ι_{\wtilde{S}})_*} \ar[d]_{α_S} && \what{π}_1(\wtilde{X}) \ar[d]^{α_X} \\
    \what{π}_1(S) \ar[rr]_{(ι_S)_*} && \what{π}_1(X) \\
    \what{π}_1(S_{\reg}) \ar[rr]_{(ι_{S_{\reg}})_*} \ar[u]^{β_S} && \what{π}_1(X_{\reg}) \ar[u]_{β_X}
  }
  $$
  The group morphisms $α_S$ and $α_X$ are isomorphisms by Takayama,
  \cite{Takayama2003}.  The morphism $β_S$ is surjective by e.g.\ \cite[0.7.B on
  p.~33]{FL81}, and by the fact the profinite completion is right exact,
  \cite[Lem.~3.2.3 and Prop.~3.2.5]{RB10}.  The morphism $β_X$ is isomorphic by
  assumption, and $(ι_{S_{\reg}})_*$ is isomorphic by Item~\ref{item:pi1}.  The
  remaining arrows $(ι_{\wtilde{S}})_*$ and $(ι_S)_*$ must then also be
  isomorphic.  The asserted extension of local systems comes from the fact that
  representation of $π_1(S)$ comes from a representation of $π_1(X)$,
  cf.~\cite[Thm.~1.2b]{MR0262386}, or \cite[Sect.~8.1]{GKP13} for a detailed
  pedestrian proof
\end{proof}

\subsubsection*{Step 2: End of proof}
\approvals{Behrouz & yes \\ Daniel & yes \\ Stefan & yes \\ Thomas & yes}

We consider the restriction of $(\wtilde{ℰ},\wtilde{θ})$ to $\wtilde{S}$.
Commutativity of Diagram~\eqref{eq:resolutiondiagram} and the fact that all
spaces involved are klt allow us to apply \cite[Lem.~5.9]{GKPT15} to conclude
that
\begin{equation}\label{eq:restriction_commutes}
  (\wtilde{ℰ},\wtilde{θ})|_{\wtilde S} ≅ π_{S}^* \bigl( (ℰ, θ)|_S \bigr),
\end{equation}
which is hence semistable with respect to $π_{S}^*(H|_S)$, thanks to
Item~\ref{item:stable} above and \cite[Prop.~5.19]{GKPT15}.
Proposition~\ref{prop:sttc} therefore implies that
$(\wtilde{ℰ},\wtilde{θ})|_{\wtilde{S}} ∈ \Higgs_{\wtilde{S}}$.  The classical
nonabelian Hodge correspondence applies, and gives a local system
$\aE_{\wtilde{S}} ∈ \LSys_{\wtilde{S}}$.  According to Claim~\ref{claim:ghd}, we
find a local system $\aE_{\wtilde{X}} ∈ \LSys_{\wtilde{X}}$ whose restriction
$\aE_{\wtilde{X}}|_{\wtilde{S}}$ is isomorphic to $\aE_{\wtilde{S}}$.  The
classical nonabelian Hodge correspondence on $\wtilde{X}$ thus yields a Higgs
sheaf $(\wtilde{ℱ}, \wtilde{Θ}) ∈ \Higgs_{\wtilde{X}}$ with vanishing Chern
classes, whose restriction $(\wtilde{ℱ}, \wtilde{Θ})|_{\wtilde{S}}$ is
isomorphic to $(\wtilde{ℰ},\wtilde{θ})|_{\wtilde{S}}$ by functoriality.  It
follows from Proposition~\ref{prop:Higgs_are_num_flat} that $\wtilde{ℱ}$ is
$π$-numerically flat.  Item~\ref{il:desc2} of Theorem~\ref{thm:desc2} therefore
yields a locally free Higgs sheaf $(ℱ, Θ) ∈ \aF$ whose restriction $(ℱ, Θ)|_S$
is isomorphic to $(ℰ,θ)|_S$ owing to \eqref{eq:restriction_commutes}.  In
particular, Item~\ref{item:sheafisom1} applies to show that
$(ℰ,θ) ≅ (ℱ, Θ)$.  By construction,
$π^*(ℰ,θ) ≅ (\wtilde{ℱ}, \wtilde{Θ}) ∈ \Higgs_{\wtilde{X}}$, which concludes
the proof of Theorem~\ref{thm:bphgs} in case where $X$ is maximally quasi-étale.
\qed

\subsection{Proof of Theorem~\ref*{thm:bphgs} in the general setting}
\label{ssec:gydydg}
\approvals{Behrouz & yes \\ Daniel & yes \\ Stefan & yes \\ Thomas & yes}

Finally, we prove Theorem~\ref{thm:bphgs} (``Ascent of semistable Higgs
bundles'') without any additional assumptions.  Choose an ample divisor
$H ∈ \Div(X)$ such that $(ℰ,θ)$ is semistable with respect to $H$ and satisfies
$ch_1(ℰ)·[H]^{n-1} = ch_2(ℰ)·[H]^{n-2} = 0$ and write
$(\wtilde{ℰ},\wtilde{θ}) := π^*(ℰ,θ)$.

Recall from \cite[Thm.~1.5]{GKP13} that there exists a quasi-étale cover
$f: Y → X$ such that $\what{π}_1(Y_{\reg}) ≅ \what{π}_1(Y)$.  Choose one
such $f$, note that the corresponding $Y$ is again klt, and let $\wtilde{Y}$ be
a desingularisation of the (unique) irreducible component of the fibre product
$Y ⨯_X \wtilde{X}$ that dominates both $Y$ and $\wtilde{X}$.  We obtain a
diagram of surjections as follows,
$$
\xymatrix{ %
  \wtilde{Y} \ar[rrr]^{\wtilde{f}\text{, gen.\ finite}} \ar[d]_{\wtilde{π}\text{, resolution}} &&& \wtilde{X} \ar[d]^{π\text{, resolution}} \\
  Y \ar[rrr]_{f\text{, quasi-étale}} &&& X.
}
$$

Recall from Corollary~\ref{cor:suqema} that the Higgs sheaf
$(ℰ_Y,θ_Y) := f^*(ℰ,θ)$ is semistable with respect to the ample divisor
$H_Y:= f^*H$.  It clearly satisfies
$ch_1(ℰ_Y)·[H_Y]^{n-1} = ch_2(ℰ_Y)·[H_Y]^{n-2}=0$.  In particular,
$(ℰ_Y,θ_Y) ∈ \Higgs_Y$.  We have seen in Section~\ref{ssec:potixmq} that this
implies that $\wtilde{π}^*(ℰ_Y,θ_Y) ∈ \Higgs_{\wtilde{Y}}$.  Lemma~\ref{lem:xxs}
thus yields the following.
\begin{enumerate}
\item\label{il:b1} All Chern classes
  $c_i \bigl( \wtilde{π}^*\, ℰ_Y \bigr) = c_i \bigl( \wtilde{f}^* \wtilde{ℰ}
  \bigr) ∈ H^{2i}\bigl( \wtilde{Y},\, ℚ \bigr)$ vanish.

\item\label{il:b2} The Higgs sheaf $\wtilde{π}^*(ℰ_Y,θ_Y)$ is semistable with
  respect to \emph{any} nef divisor on $\wtilde{Y}$.
\end{enumerate}

Let $\wtilde{H} ∈ \Div(\wtilde{X})$ be any ample.  Item~\ref{il:b1} immediately
implies that
$$
ch_1(\wtilde{ℰ}) · [\wtilde{H}]^{n-1} = ch_2(\wtilde{ℰ})· [\wtilde{H}]^{n-2}=0.
$$
In a similar vein, Item~\ref{il:b2} and Corollary~\ref{cor:suqema} imply that
$(\wtilde{ℰ},\wtilde{θ})$ is semistable with respect to $\wtilde{H}$.  This
concludes the proof of Theorem~\ref{thm:bphgs}.  \qed

%
%
\svnid{$Id: 08.tex 1145 2018-04-25 11:46:56Z kebekus $}

\section{Proof of the nonabelian Hodge correspondence for klt spaces}\label{sect:07}
\subversionInfo

\subsection{Proof of Theorem~\ref*{thm:free-naht} (``Nonabelian Hodge correspondence for klt spaces'')}
\approvals{Behrouz & yes \\ Daniel & yes \\ Stefan & yes \\ Thomas & yes}
\label{sec:pfnh2}

Using the results on descent and ascent for Higgs bundles,
Theorems~\ref{thm:desc2} and \ref{thm:bphgs}, we construct the relevant functors
between $\Higgs_X$ and $\LSys_X$.  Once the functors are constructed, we show
that they indeed satisfy all the claims made in Theorem~\ref{thm:free-naht}.  We
maintain assumptions and notation of Theorem~\ref{thm:free-naht} throughout
Section~\ref{sec:pfnh2}.  The canonical resolution of singularities is denoted
by $π : \wtilde{X} → X$.

\subsubsection*{Step 1: from local systems to bundles}
\approvals{Behrouz & yes \\ Daniel & yes \\ Stefan & yes \\ Thomas & yes}

We establish the first half of Theorem~\ref{thm:free-naht}, constructing a
functor that maps local systems to Higgs bundles.  Given a local system
$\aE ∈ \LSys_X$, consider the Higgs bundle
$(\wtilde{ℰ}, \wtilde{θ}) := η_{\wtilde{X}}(π^* \aE)$ associated to $π^* \aE$
via the nonabelian Hodge correspondence on the manifold $\wtilde{X}$.  Recall
that $(\wtilde{ℰ}, \wtilde{θ}) ∈ \Higgs_{\wtilde{X}}$ and set
$ℰ := π_* \wtilde{ℰ}$.  Proposition~\ref{prop:Higgs_are_num_flat} and
Item~\ref{il:desc2} of Theorem~\ref{thm:desc2} imply that $ℰ$ is locally free
and that it carries a unique Higgs field $θ$ such that
$(\wtilde{ℰ},\wtilde{θ}) ≅ π^*(ℰ,θ)$.  Lemma~\ref{lem:xxs} applies to show
that $(ℰ,θ) ∈ \Higgs_X$.  In summary, we have constructed a mapping
$η_X : \LSys_X → \Higgs_X$.

If $e : \aE_1 → \aE_2$ is a morphism of local systems, we obtain a morphism
$\wtilde{e} : π^*(\aE_1) → π^*(\aE_2)$ and hence a morphism between the
associated Higgs sheaves,
$η_{\wtilde{X}}(\wtilde{e}) : (\wtilde{ℰ}_1,\wtilde{θ}_1) →
(\wtilde{ℰ}_2,\wtilde{θ}_2)$.  Denoting the associated, locally free Higgs
sheaves on $X$ by $(ℰ_1,θ_1)$ and $(ℰ_2,θ_2)$, an elementary computation shows
that $η_{\wtilde{X}}(\wtilde{e})$ descends to a morphism
$η_X(e) : (ℰ_1,θ_1) → (ℰ_2,θ_2)$.  In other words, we constructed a functor
$η_X : \LSys_X → \Higgs_X$.

\begin{obs}\label{obs:1f}
  If $X$ is smooth, then the canonical resolution of singularities is the
  identity, and $η_X$ equals the functor given by the nonabelian Hodge
  correspondence.
\end{obs}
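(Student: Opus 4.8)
The plan is to reduce the statement to a single fact about the \emph{canonical} resolution used here: the canonical resolution of Bierstone--Milman is functorial and restricts to an isomorphism over the smooth locus, so that when $X$ is already smooth the canonical resolution $\pi : \wtilde{X} \to X$ is the identity. I would invoke \cite[Rem.~1.16ff and Sect.~13]{BM96} for this. It is the only ingredient that is not entirely formal, and even it is immediate from the way the canonical resolution is set up, so I do not anticipate any genuine obstacle.

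Granting $\pi = \Id_X$, I would then read the claim off the construction of $\eta_X$ carried out in Step~1 above. For $\aE \in \LSys_X$ one forms $(\wtilde{\sE}, \wtilde{\theta}) := \eta_{\wtilde{X}}(\pi^* \aE)$; since $\wtilde{X} = X$, since $\pi^* \aE = \aE$, and since $\eta_{\wtilde{X}}$ is Simpson's functor, this is literally $\eta_X(\aE)$ in Simpson's sense. One then sets $\sE := \pi_* \wtilde{\sE} = \wtilde{\sE}$ and takes the unique Higgs field $\theta$ with $\pi^*(\sE, \theta) \cong (\wtilde{\sE}, \wtilde{\theta})$; for $\pi = \Id_X$ this forces $\theta = \wtilde{\theta}$. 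Hence the object assigned by the newly constructed functor $\eta_X$ coincides with the one assigned by Simpson's correspondence. On morphisms the check is identical: for $e : \aE_1 \to \aE_2$ one has $\wtilde{e} = e$, the descent of $\eta_{\wtilde{X}}(\wtilde{e})$ along the identity is again $\eta_{\wtilde{X}}(e)$, and therefore $\eta_X(e)$ agrees with Simpson's $\eta_X(e)$.

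I would close by noting that the matching statement for $\mu_X$, needed for the full strength of Item~\ref{il:HC1} of Theorem~\ref{thm:free-naht}, will be recorded in exactly the same way once $\mu_X$ has been constructed, with $\pi = \Id_X$ again collapsing the construction to Simpson's functor; the present observation, however, only concerns $\eta_X$, for which the computation above already suffices.
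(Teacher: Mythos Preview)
Your argument is correct and matches the paper's treatment: the observation is stated there without a separate proof, as an immediate consequence of the construction in Step~1 together with the fact that the canonical resolution of a smooth variety is the identity. You have simply made explicit the verification that the paper leaves to the reader.
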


\begin{obs}\label{obs:2f}
  The natural map $π^* ℰ = π^* π_*\, \wtilde{ℰ} → \wtilde{ℰ}$ induces an
  isomorphism of Higgs sheaves,
  $N_{π,\aE} : π^* η_X(\aE) → η_{\wtilde{X}}\bigl( π^* \aE \bigr)$.
\end{obs}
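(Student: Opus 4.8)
The plan is to take for $N_{π,\aE}$ the natural morphism $c \colon π^*ℰ = π^*π_*\, \wtilde{ℰ} → \wtilde{ℰ}$ itself, i.e.\ the counit of the adjunction $(π^*, π_*)$, and to prove that it is an isomorphism of Higgs sheaves. Since $ℰ = π_*\wtilde{ℰ}$ by construction and, by Theorem~\ref{thm:desc2}, $π^*ℰ$ and $\wtilde{ℰ}$ are abstractly isomorphic locally free sheaves of the same rank, only two points remain: that the \emph{particular} morphism $c$ is an isomorphism of the underlying sheaves, and that it respects the pulled-back Higgs field $π^*θ$ on $π^*ℰ$ (defined as in \eqref{eq:pbhgs}) and the Higgs field $\wtilde{θ}$.

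For the first point I would argue as follows. Write $X° ⊆ X$ for the big open subset over which $π$ is an isomorphism. Over $π^{-1}(X°)$ the morphism $c$ is nothing but the tautological identification of $π_*\wtilde{ℰ}$ with $\wtilde{ℰ}$ through $π$, hence an isomorphism there. To propagate this, observe that $\det c$ is a section of $\sHom\bigl(\det π^*ℰ,\, \det \wtilde{ℰ}\bigr) ≅ 𝒪_{\wtilde X}$, the last isomorphism using $\det \wtilde{ℰ} ≅ π^*\det ℰ$, which follows from Theorem~\ref{thm:desc2}. Working over an affine open $U ⊆ X$ and using $π_*𝒪_{\wtilde X} = 𝒪_X$, this section is a regular function $g$ on $U$ that does not vanish on $U ∩ X°$; as $U$ is normal and $U ∖ X°$ has codimension at least two, the zero set of $g$ — being empty or of pure codimension one — must be empty. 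Hence $\det c$ is nowhere zero and $c$ is an isomorphism of locally free sheaves. (Alternatively, one checks that the isomorphism produced by the proof of Theorem~\ref{thm:desc1} is, by construction, exactly this natural map.)

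For the second point I would appeal to the construction of $θ$ carried out in the proof of Item~\ref{il:desc2} of Theorem~\ref{thm:desc2}: the Higgs field on $ℰ = π_*\wtilde{ℰ}$ is defined there as the unique extension of the Higgs field which $\wtilde{θ}$ induces, via $c$, on the restriction of $ℰ$ to $X°$, uniqueness coming from the reflexivity of $\sHom\bigl(ℰ,\, ℰ ⊗ Ω^{[1]}_X\bigr)$. By that very description the two morphisms $(c ⊗ \Id_{Ω^1_{\wtilde X}}) ◦ π^*θ$ and $\wtilde{θ} ◦ c$, regarded as sections of the locally free — hence torsion free — sheaf $\sHom\bigl(π^*ℰ,\, \wtilde{ℰ} ⊗ Ω^1_{\wtilde X}\bigr)$, agree on the dense open set $π^{-1}(X°)$, and therefore agree everywhere. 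Thus $c$ is an isomorphism of Higgs sheaves, and one sets $N_{π,\aE} := c$.

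I expect the only real obstacle to be bookkeeping rather than geometry: one must make sure that the a priori abstract isomorphism handed over by the descent theorem is canonically the counit $c$, both at the level of sheaves and compatibly with the Higgs fields. Once the construction of $θ$ in the proof of Theorem~\ref{thm:desc2} is unwound (or the determinant computation above is performed), the statement is immediate.
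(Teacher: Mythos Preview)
Your proof is correct and follows the natural approach; the paper itself states Observation~\ref{obs:2f} without proof, treating it as immediate from the construction, so you are simply filling in the details. Your determinant argument is exactly the one used at the end of the proof of Theorem~\ref{thm:desc1}, and your verification of Higgs-field compatibility correctly unwinds the construction in the proof of Item~\ref{il:desc2} of Theorem~\ref{thm:desc2}. One minor caveat: your parenthetical remark that the isomorphism from Theorem~\ref{thm:desc1} is ``by construction exactly this natural map'' is not quite applicable here, since Theorem~\ref{thm:desc2} proceeds through an MMP and several applications of Theorem~\ref{thm:desc1} rather than a single one; but your main determinant argument is self-contained and does not rely on this.
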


\subsubsection*{Step 2: from bundles to local systems}\label{ssec:bls}
\approvals{Behrouz & yes \\ Daniel & yes \\ Stefan & yes \\ Thomas & yes}

Given a Higgs bundle $(ℰ,θ) ∈ \Higgs_X$, consider the Higgs bundle
$(\wtilde{ℰ}, \wtilde{θ}) := π^*(ℰ,θ)$.  Theorem~\ref{thm:bphgs} asserts that
$(\wtilde{ℰ}, \wtilde{θ})$ is in $\Higgs_{\wtilde{X}}$.  Apply the classical
nonabelian Hodge correspondence on the manifold $\wtilde{X}$, in order to obtain
a local system
$\aE_{\wtilde{X}} := μ_{\wtilde{X}}\bigl( \wtilde{ℰ}, \wtilde{θ}) \bigr)$.  As
before, Takayama's result \cite[p.~827]{Takayama2003} shows that
$\aE := π_* \aE_{\wtilde{X}}$ is a local system on $X$.  We leave it to the
reader to show that this construction does indeed give a functor
$μ_X : \Higgs_X → \LSys_X$ and that the following observations hold.

\begin{obs}\label{obs:3f}
  If $X$ is smooth, then the canonical resolution of singularities is the
  identity, and $μ_X$ equals the functor given by the classical nonabelian Hodge
  correspondence.
\end{obs}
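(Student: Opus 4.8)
The plan is to observe that this is a direct unwinding of the construction of $μ_X$ given in Step~2 of Section~\ref{sec:pfnh2}, together with one standard property of the canonical resolution. First I would recall that, by construction, the canonical resolution of singularities in the sense of Bierstone--Milman \cite{BM96} is functorial and restricts to an isomorphism over the smooth locus; consequently, when applied to a \emph{smooth} variety $X$ there is nothing to resolve, so the canonical resolution $π : \wtilde{X} → X$ is literally the identity morphism $\Id_X$. In particular $\wtilde{X} = X$, and the functor $μ_{\wtilde{X}}$ appearing in Section~\ref{sec:pfnh2} is, by the Notation fixed in Section~\ref{sect:NAHT}, nothing but the functor $μ_X$ supplied by Simpson's classical nonabelian Hodge correspondence on the projective manifold $X$.

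With this in place I would simply trace through the definition of $μ_X$. Given $(ℰ,θ) ∈ \Higgs_X$, one forms $(\wtilde{ℰ},\wtilde{θ}) := π^*(ℰ,θ) = \Id_X^*(ℰ,θ) = (ℰ,θ)$ (so the appeal to Theorem~\ref{thm:bphgs} is vacuous here), applies the classical correspondence on $\wtilde{X} = X$ to obtain $\aE_{\wtilde{X}} = μ_X(ℰ,θ)$ in the classical sense, and then sets $\aE := π_*\aE_{\wtilde{X}} = (\Id_X)_* μ_X(ℰ,θ) = μ_X(ℰ,θ)$; likewise Takayama's push-forward result is not needed since $π_*$ is the identity. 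Hence the functor $μ_X : \Higgs_X → \LSys_X$ constructed in Section~\ref{sec:pfnh2} agrees, on objects, with the classical functor; the same bookkeeping on morphisms shows agreement there as well, since the functoriality used in Step~2 to extend the construction to morphisms becomes trivial under $\Id_X$. This is exactly parallel to the (omitted) verification of the analogous Observations~\ref{obs:1f} and \ref{obs:2f} for the functor $η_X$.

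There is essentially no obstacle: the only non-formal ingredient is that the canonical (functorial) resolution of a smooth variety is the identity, which is a built-in feature of the resolution algorithm cited above. The one point worth stating carefully is whether Step~2 is set up so that the comparison with the classical functor is an honest equality or only a canonical isomorphism; if only the latter, the statement (and its proof) should be read up to that canonical identification, in the same way as Observations~\ref{obs:1f} and \ref{obs:2f}. In either reading the argument is immediate and requires no further input.
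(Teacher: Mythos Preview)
Your proposal is correct and matches the paper's intent: the paper leaves Observations~\ref{obs:3f} and \ref{obs:4f} to the reader as immediate consequences of the construction, and your unwinding is precisely the trivial verification that is meant. There is nothing to add.
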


\begin{obs}\label{obs:4f}
  The natural map $π^* \aE = π^* π_*\, \aE_{\wtilde{X}} → \aE_{\wtilde{X}}$
  induces an isomorphism of local systems,
  $M_{π,(ℰ,θ)} : π^* μ_X(\aE) → μ_{\wtilde{X}}\bigl( π^* \aE \bigr)$.
\end{obs}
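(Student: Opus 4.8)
The plan is to peel back the definitions of Step~2 and reduce the assertion to the statement that a single adjunction map is an isomorphism. In the notation there, $μ_X(ℰ,θ) = \aE = π_*\aE_{\wtilde{X}}$ and $μ_{\wtilde{X}}\bigl(π^*(ℰ,θ)\bigr) = μ_{\wtilde{X}}(\wtilde{ℰ},\wtilde{θ}) = \aE_{\wtilde{X}}$, so that, after these identifications, the isomorphism $M_{π,(ℰ,θ)}$ to be produced is precisely the adjunction counit $ε : π^*π_*\aE_{\wtilde{X}} → \aE_{\wtilde{X}}$, and it remains to see that $ε$ is an isomorphism of local systems. This simultaneously re-proves the claim (left to the reader in Step~2) that $\aE = π_*\aE_{\wtilde{X}}$ is itself a local system, and the argument will use nothing about $\aE_{\wtilde{X}}$ beyond the fact that it is a local system on $\wtilde{X}$.

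First I would invoke Takayama's local simple connectedness theorem, \cite[Prop.~2.1]{Takayama2003}: since $X$ is klt, $X^{an}$ has a cover by contractible open subsets $U$ whose preimages $\wtilde{U} := (π^{an})^{-1}(U)$ are simply connected, and I would work locally over one such $U$. The set $\wtilde{U}$ is connected, being the preimage of the connected set $U$ under the proper map $π$, whose fibres are connected because $X$ is normal; as it is moreover simply connected, $\aE_{\wtilde{X}}|_{\wtilde{U}}$ is a constant sheaf $\underline{W}$ with $W$ a finite-dimensional complex vector space. Since $(π^{an})^{-1}(V)$ is connected for every connected open $V ⊆ U$, pushing forward gives $π_*\underline{W} = \underline{W}$ on $U$, whence $(π_*\aE_{\wtilde{X}})|_U$ is constant (this is the local-system property), and pulling back gives $(π^*π_*\aE_{\wtilde{X}})|_{\wtilde{U}} = \underline{W}$ as well. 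Under these identifications $ε|_{\wtilde{U}}$ is the identity of $\underline{W}$, in particular an isomorphism over $\wtilde{U}$.

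Since the sets $U$ cover $X^{an}$, the sheaf morphism $ε$ is an isomorphism; its source is a local system, being the pull-back of $\aE$, and its target $\aE_{\wtilde{X}}$ is one by Simpson's correspondence applied to $(\wtilde{ℰ},\wtilde{θ}) ∈ \Higgs_{\wtilde{X}}$, which is legitimate via Theorem~\ref{thm:bphgs}. Hence $ε$ is an isomorphism of local systems, and we set $M_{π,(ℰ,θ)} := ε$. I do not expect a genuine obstacle: the whole content is Takayama's theorem, already in constant use throughout this section, and the only step asking for a careful word is the identification $π_*\underline{W} = \underline{W}$ over the connected base $U$, which reduces to connectedness of the fibres of the resolution $π$, i.e.\ to $π_*𝒪_{\wtilde{X}} = 𝒪_X$ for the normal variety $X$.
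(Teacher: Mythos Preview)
Your proof is correct and is precisely the argument the paper has in mind: the observation is explicitly left to the reader, with the only hint being the sentence ``As before, Takayama's result \cite[p.~827]{Takayama2003} shows that $\aE := π_* \aE_{\wtilde{X}}$ is a local system on $X$''. You unpack exactly this, reducing the claim to the counit $π^*π_*\aE_{\wtilde{X}} → \aE_{\wtilde{X}}$ being an isomorphism and verifying it locally over Takayama's simply connected neighbourhoods, where everything becomes the constant sheaf $\underline{W}$.
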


\subsubsection*{Step 3: equivalence of categories}
\approvals{Behrouz & yes \\ Daniel & yes \\ Stefan & yes \\ Thomas & yes}

Let $X$ be a projective klt space, and let $π : \wtilde{X} → X$ be the canonical
resolution.  The functors $η_{\wtilde{X}}$ and $μ_{\wtilde{X}}$ associated with
the classical nonabelian Hodge correspondence on the manifold $\wtilde{X}$ form
an equivalence of categories: the compositions $η_{\wtilde{X}} ◦ μ_{\wtilde{X}}$
and $μ_{\wtilde{X}} ◦ η_{\wtilde{X}}$ is naturally isomorphic to the identities
on $\Higgs_{\wtilde{X}}$ and $\LSys_{\wtilde{X}}$, respectively.  One checks
immediately that these isomorphisms descend to $X$, showing that the functors
$η_X$ and $μ_X$ constructed above do indeed give an equivalence of categories.

\subsubsection*{Step 4: end of proof}
\approvals{Behrouz & yes \\ Daniel & yes \\ Stefan & yes \\ Thomas & yes}

It remains to show Items~\ref{il:HC1}--\ref{il:HC3} of
Theorem~\ref{thm:free-naht}.  These, however, follow immediately from
Observations~\ref{obs:1f}--\ref{obs:4f}.  The proof of
Theorem~\ref{thm:free-naht} is thus complete.  \qed

\subsection{Proof of Theorem~\ref*{thm:finm} (``Functoriality in morphisms'')}\label{ssec:gghfj1}
\approvals{Behrouz & yes \\ Daniel & yes \\ Stefan & yes \\ Thomas & yes}

To keep the paper reasonably short, we will only consider functoriality of the
functors $η_•$.  Functoriality of $μ_•$ follows along the same lines of
argument.  We construct the isomorphisms $N_{•,•}$ in some detail, but leave the
tedious and lengthy verification of the construction's properties to the reader,
as none of the required arguments is in any way surprising or holds a promise of
new insight.

\subsubsection*{Step 1: lifting morphisms}
\approvals{Behrouz & yes \\ Daniel & yes \\ Stefan & yes \\ Thomas & yes}

For morphism between manifolds, functoriality of the nonabelian Hodge
correspondence is classically known, \cite[Rem.\ on p.~36]{MR1179076}.  If
$f : X → Y$ is a morphism between klt spaces that are not smooth, the following
claim allows to lift $f$ to a morphism between spaces that are smooth, though
possibly of higher dimension.

\begin{claim}\label{claim:i1}
  Given a morphism $f: X → Y$ of projective klt spaces, there exists a smooth,
  projective variety $\wtilde{W}$, and a commutative diagram as follows,
  \begin{equation}\label{eq:jvdf}
    \begin{gathered}
      \xymatrix{ %
        \wtilde{W} \ar[rrrrr]^{\wtilde{f}} \ar[d]_{Π\text{, conn.\ fibres}} &&&&& W ⊆ \wtilde{X} \ar@<3.5mm>[d]^{π\text{, canon.\ resolution}} \ar@<-3.5mm>[d]_{π_W\text{, conn.\ fibres}} \\
        \wtilde{Y} \ar[rrr]_{π_Y\text{, canon.\ resolution}} &&& Y \ar[rr]_{f} && \:Z\: ⊆ X,
      }
    \end{gathered}
  \end{equation}
  where $Z = f(Y)$ and $W = \wtilde{f}(\wtilde{W})$.
\end{claim}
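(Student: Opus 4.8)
The plan is to realise $\wtilde{W}$ as a resolution of a suitable irreducible component of a fibre product, and then to upgrade the maps to morphisms with connected fibres by Stein factorisation, using that the spaces involved are normal. Set $g := f ◦ π_Y : \wtilde{Y} → X$; its image equals $Z = f(Y)$ because $π_Y$ is surjective. Form the fibre product $\wtilde{Y} ×_X \wtilde{X}$ (taken along $g$ and $π$); it is projective, and its first projection to $\wtilde{Y}$ is proper and surjective, being the base change of $π$. Pick an irreducible component $V$ whose image in $\wtilde{Y}$ is dense — hence, by properness, all of $\wtilde{Y}$ — and let $μ : \wtilde{W} → V$ be a resolution of singularities of (the reduced scheme) $V$. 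Putting $Π := \mathrm{pr}_{\wtilde{Y}} ◦ μ$, $\wtilde{f} := \mathrm{pr}_{\wtilde{X}} ◦ μ$, and $W := \wtilde{f}(\wtilde{W})$, the space $\wtilde{W}$ is smooth projective, the outer rectangle of \eqref{eq:jvdf} commutes by the universal property of the fibre product, and $π(W) = g(\wtilde{Y}) = Z$ by commutativity; thus $π|_W$ factors through a morphism $π_W : W → Z$, and $\wtilde{f}(\wtilde{W}) = W$ by construction.

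The remaining point is that $Π$ and $π_W$ have connected fibres. Here I would invoke the standard fact that, since $π : \wtilde{X} → X$ is a proper birational morphism onto a normal variety, $π_* 𝒪_{\wtilde{X}} = 𝒪_X$, so every scheme-theoretic fibre of $π$ — in particular the fibre $π^{-1}(η_Z)$ over the generic point of $Z$ — is geometrically connected. Consequently the generic fibre of $\mathrm{pr}_{\wtilde{Y}}$ is connected; choosing $V$ appropriately (it is the unique dominating component when that generic fibre is moreover irreducible; in general one descends along a harmless finite cover of $Z$ to reduce to that situation) we may assume the generic fibre of $V → \wtilde{Y}$, hence of $Π$, is connected. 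Stein-factorising $Π = β ◦ α$ with $α : \wtilde{W} → \wtilde{Y}'$ having connected fibres and $β : \wtilde{Y}' → \wtilde{Y}$ finite, the generic degree of $β$ equals the number of connected components of the generic fibre of $Π$, namely $1$; as $\wtilde{Y}$ is smooth, Zariski's main theorem forces $β$ to be an isomorphism, so $Π$ itself has connected fibres. An entirely parallel Stein-factorisation argument — now comparing against the normalisation of $Z$ and using that the fibres of $π|_{π^{-1}(Z)_\red}$ over $Z$ are geometrically connected — shows that $π_W$ has connected fibres.

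The main obstacle is precisely this connectedness bookkeeping. The subtlety is that the fibre product $\wtilde{Y} ×_X \wtilde{X}$ is typically reducible — most acutely when $Z$ lies inside $\sing X$, so that the $π$-fibres over $Z$ are positive-dimensional and possibly reducible — so one cannot simply "resolve the fibre product"; one must extract an irreducible component and afterwards recover the connectedness of fibres by hand, which is where geometric connectedness of resolution fibres (equivalently $π_* 𝒪 = 𝒪$) and the normality of $\wtilde{Y}$ (and of $Z^{\nu}$) enter. Everything else — smoothness and projectivity of $\wtilde{W}$, commutativity of \eqref{eq:jvdf}, and the identifications $Z = f(Y)$ and $W = \wtilde{f}(\wtilde{W})$ — is immediate from the construction, and no hypothesis beyond normality is used (the klt assumption plays no role in this claim, only in the surrounding functoriality argument).
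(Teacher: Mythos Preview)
Your approach is correct and genuinely more elementary than the paper's.  The paper invokes Hacon--McKernan's theorem on rational connectedness of fibres of resolutions of klt spaces, \cite[Cor.~1.7(1)]{HMcK07}: it first selects an irreducible $W ⊆ π^{-1}(Z)$ whose general fibre over $Z$ is rationally connected (hence irreducible), and only then forms the fibre product $W ⨯_Z \wtilde{Y}$, takes the unique component dominating both factors, and resolves it to obtain $\wtilde{W}$; connectedness of the general fibre of $Π$ is again deduced from rational connectedness.  You bypass Hacon--McKernan entirely by forming $\wtilde{Y} ⨯_X \wtilde{X}$ directly and using only $π_* 𝒪_{\wtilde{X}} = 𝒪_X$ together with Stein factorisation over the smooth base $\wtilde{Y}$.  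This confirms your closing remark that the klt hypothesis is not actually needed for the claim itself; the paper's route uses it essentially.

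Two minor points.  First, your parenthetical hedge about ``descending along a harmless finite cover of $Z$'' is unnecessary: once $V$ is integral and dominant over the integral scheme $\wtilde{Y}$, the generic fibre $V_η$ is automatically irreducible (affine-locally it is a localisation of a domain), so no further manoeuvre is required before Stein-factorising $Π$.  Second, for $π_W$ your Stein-factorisation argument only yields connectedness of all fibres if $Z$ is normal, which it need not be; you really only obtain connectedness of the \emph{general} fibre.  The paper's argument has the identical feature --- Hacon--McKernan controls the general fibre only --- and in the subsequent application (descending the isomorphism $α_{\aD}$) one only needs that $π_Y ◦ Π$ has connected fibres, which your Stein-factorisation argument over the smooth $\wtilde{Y}$ establishes without issue.
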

\begin{proof}
  Recall from \cite[Cor.~1.7(1)]{HMcK07} that there exists an irreducible
  variety $W ⊆ π^{-1}(Z) ⊆ \wtilde{X}$ such that the induced morphism
  $π_W : W → Z$ is surjective, and such that its general fibre is rationally
  connected, and in particular irreducible.  Choose one such $W$.  Consider the
  fibre product $W ⨯_Z \wtilde{Y}$, choose the unique component that dominates
  both $W$ and $Y$, and let $\wtilde{W}$ its canonical resolution.  The general
  fibre of $Π$ is again rationally connected and irreducible, in particular
  connected.
\end{proof}

\subsubsection*{Step 2: Construction of $N_{•,•}$}
\approvals{Behrouz & yes \\ Daniel & yes \\ Stefan & yes \\ Thomas & yes}

With the help of Claim~\ref{claim:i1}, we construct the isomorphisms $N_{•,•}$
in this section.

\begin{construction}\label{cons:kl1}
  Assume we are given a morphism $f: X → Y$ of projective klt spaces and a local
  system $\aE ∈ \LSys_X$.  Choose a diagram $(\aD)$ as in Claim~\ref{claim:i1}
  and construct an isomorphism
  $$
  α_{\aD} : Π^* π_Y^* f^* \, η_X(\aE) → Π^* π_Y^* \, η_Y \bigl( f^* \aE \bigr)
  $$
  as the composition of the following canonical isomorphisms,
  \begin{align*}
    Π^* π_Y^* f^*\, η_X(\aE) & ≅ \wtilde{f}^* π^* \, η_X(\aE) && \text{Commutativity of (\aD)} \\
                       & ≅ \wtilde{f}^* \, η_{\wtilde{X}} \bigl(π^*\, \aE \bigr) && \text{Item~\ref{il:HC3} of Theorem~\ref{thm:free-naht}} \\
                       & ≅ η_{\wtilde{W}} \bigl(\wtilde{f}^* π^*\, \aE \bigr) && \text{Simpson's functoriality for }\wtilde{f}: \wtilde{W} → \wtilde{X} \\
                       & ≅ η_{\wtilde{W}} \bigl(Π^* π_Y^* f^*\, \aE \bigr) && \text{Commutativity of \eqref{eq:jvdf}} \\
                       & ≅ Π^*\, η_{\wtilde{Y}} \bigl(π_Y^* f^*\, \aE \bigr) && \text{Simpson's functoriality for } Π: \wtilde{W} → \wtilde{Y} \\
                       & ≅ Π^*π_Y^*\, η_{Y} \bigl(f^*\, \aE \bigr) && \text{Item~\ref{il:HC3} of Theorem~\ref{thm:free-naht}.}
  \end{align*}
  Since $π_Y◦Π$ has connected fibres, the isomorphism $α_{\aD}$ descends to an
  isomorphism
  $$
  β_{\aD} : f^*η_X(\aE) → η_Y \bigl(f^* \aE \bigr).
  $$
\end{construction}

\begin{claim}
  In the setting of Construction~\ref{cons:kl1}, the morphism $β_{\aD}$ is
  independent of the choices made.  More precisely, given two diagrams $\aD_1$
  and $\aD_2$ as in Claim~\ref{claim:i1}, then $β_{\aD_1} = β_{\aD_2}$.  We can
  therefore choose $\aD$ arbitrarily and set $N_{f,\aE} := β_{\aD}$.
\end{claim}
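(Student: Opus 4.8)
The plan is to reduce the equality $\beta_{\aD_1} = \beta_{\aD_2}$ to a statement on a single variety dominating both diagrams, and then to unwind Construction~\ref{cons:kl1}, using the pseudo-functoriality (the cocycle identities) of Simpson's correspondence for morphisms of manifolds to cancel every choice-dependent piece. First, given two diagrams $\aD_1$ and $\aD_2$ with data $(\wtilde{W}_i, \Pi_i, \wtilde{f}_i)$ as in Claim~\ref{claim:i1}, I would choose a resolution of singularities $\wtilde{W}_3$ of a component of the fibre product $\wtilde{W}_1 \times_{\wtilde{Y}} \wtilde{W}_2$ that dominates $\wtilde{Y}$. The two projections induce morphisms $g_i : \wtilde{W}_3 \to \wtilde{W}_i$ satisfying $\Pi_1 \circ g_1 = \Pi_2 \circ g_2 =: \Pi_3$, and the composite $\pi_Y \circ \Pi_3$ is dominant. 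Since $\Pi_i^* \pi_Y^* \beta_{\aD_i} = \alpha_{\aD_i}$ holds by construction, pulling $\beta_{\aD_i}$ back along $\pi_Y \circ \Pi_3$ gives $g_i^* \alpha_{\aD_i}$; as $f^*\eta_X(\aE)$ and $\eta_Y(f^*\aE)$ are locally free, the sheaf $\sHom\bigl(f^*\eta_X(\aE),\, \eta_Y(f^*\aE)\bigr)$ is torsion free, so pull-back along a dominant morphism is injective on its global sections. It therefore suffices to show $g_1^*\alpha_{\aD_1} = g_2^*\alpha_{\aD_2}$.

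Next I would pull the six isomorphisms composing $\alpha_{\aD_i}$ back along $g_i$ and rewrite the result. Set $h_i := \wtilde{f}_i \circ g_i : \wtilde{W}_3 \to \wtilde{X}$ and $\sigma := f \circ \pi_Y \circ \Pi_3 : \wtilde{W}_3 \to X$, so that $\pi \circ h_i = \sigma$ for both $i$. The two ``commutativity'' steps, and the step invoking Item~\ref{il:HC3} of Theorem~\ref{thm:free-naht} for $\pi_Y$, pull back to canonical isomorphisms whose source and target no longer involve $i$. Inserting the identity $(\mathrm{SF}_{g_i})^{-1} \circ \mathrm{SF}_{g_i}$, where $\mathrm{SF}_{g_i}$ denotes Simpson's functoriality isomorphism for $g_i$ applied to the local system $\wtilde{f}_i^*\pi^*\aE = \Pi_i^*\pi_Y^*f^*\aE$ on $\wtilde{W}_i$, the cocycle identities absorb the step of Simpson functoriality for $\wtilde{f}_i$ into $\mathrm{SF}_{h_i}$ and turn the step for $\Pi_i$ into $(\mathrm{SF}_{\Pi_3})^{-1}$. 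The net effect is that $g_i^*\alpha_{\aD_i}$ is a composite of $i$-independent isomorphisms sandwiching the single factor
$$
\Phi_i \;:=\; \mathrm{SF}_{h_i} \circ h_i^* N_{\pi,\aE} \;:\; \sigma^*\eta_X(\aE) = h_i^*\pi^*\eta_X(\aE) \longrightarrow \eta_{\wtilde{W}_3}\bigl(h_i^*\pi^*\aE\bigr) = \eta_{\wtilde{W}_3}\bigl(\sigma^*\aE\bigr).
$$

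Finally I would prove $\Phi_1 = \Phi_2$. This is where the hypothesis that $\pi^*\aE$ is pulled back from $X$ enters: although $h_1 \ne h_2$, one has $h_1^*\pi^*\aE = h_2^*\pi^*\aE = \sigma^*\aE$ canonically, so $\Phi_1$ and $\Phi_2$ are isomorphisms of Higgs bundles on the smooth projective variety $\wtilde{W}_3$ with the same source and target. Applying $\mu_{\wtilde{W}_3}$ and using that Simpson's functoriality isomorphisms are the identity on underlying local systems, together with Observations~\ref{obs:2f} and \ref{obs:4f} and the equality $\pi_*\pi^*\aE = \aE$ (connectedness and simple-connectedness of the fibres of the canonical resolution, \cite{Takayama2003}), one checks that $\mu_{\wtilde{W}_3}(\Phi_i)$ is the identity of $\sigma^*\aE$ for $i = 1, 2$. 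Since $\mu_{\wtilde{W}_3}$ is fully faithful by Theorem~\ref{thm:free-naht} ($\wtilde{W}_3$ being smooth), $\Phi_1 = \Phi_2$, hence $g_1^*\alpha_{\aD_1} = g_2^*\alpha_{\aD_2}$, hence $\beta_{\aD_1} = \beta_{\aD_2}$, and we may set $N_{f,\aE} := \beta_{\aD}$ for any choice of $\aD$. I expect the main obstacle to be the second step: tracking the canonical identifications and cocycle relations carefully enough to see that all $h_i$- and $g_i$-dependence cancels, and, related to this, pinning down the precise compatibility between the isomorphisms $N_{\pi,\bullet}$ of Theorem~\ref{thm:free-naht} and Simpson's functoriality on manifolds.
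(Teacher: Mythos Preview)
The paper's own proof of this claim is the single line ``Left to the reader.'' Your proposal therefore does considerably more than the paper, and the approach you outline is the natural one: dominate both diagrams by a common smooth variety $\wtilde{W}_3$, reduce the question to an equality of pulled-back isomorphisms there, and use the cocycle identities for Simpson's functoriality to absorb the $g_i$-pullbacks and isolate the single choice-dependent factor $\Phi_i$. The reduction in your first paragraph is clean and correct; dominance of $\pi_Y\circ\Pi_3$ together with local freeness of the relevant $\sHom$-sheaf indeed gives injectivity of pullback on global sections.

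Your final step, showing $\Phi_1=\Phi_2$ by applying $\mu_{\wtilde{W}_3}$ and invoking full faithfulness, is a nice device and is valid. The point you are using is that every isomorphism entering Construction~\ref{cons:kl1} is, by its very construction, compatible with the tautological identifications on the local-system side: Simpson's functoriality isomorphisms are \emph{defined} so as to correspond to the identity on pulled-back local systems, and $N_{\pi,\aE}$ is the counit $\pi^*\pi_*\to\mathrm{id}$, which on local systems becomes the identity because $\pi_*\pi^*\aE=\aE$ by Takayama's simple-connectedness of resolution fibres. So $\mu_{\wtilde{W}_3}(\Phi_i)$ is indeed the canonical identification $\sigma^*\aE=\sigma^*\aE$ for both $i$, and full faithfulness finishes. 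Your own caveat about ``pinning down the precise compatibility between $N_{\pi,\bullet}$ and Simpson's functoriality'' is well placed: this is exactly the content of Observations~\ref{obs:2f} and \ref{obs:4f} together with the fact that the equivalence $\eta_X\leftrightarrow\mu_X$ on a klt space was \emph{constructed} by push-forward from the canonical resolution, so all counit maps become identities on the local-system side. Once this is said, the verification is mechanical, which is presumably why the authors omitted it.
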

\begin{proof}
  Left to the reader.
\end{proof}

\subsubsection*{Step 3: end of proof}
\approvals{Behrouz & yes \\ Daniel & yes \\ Stefan & yes \\ Thomas & yes}

The isomorphisms $N_{•,•}$ constructed in Step~2 clearly have the expected
\emph{behaviour under canonical resolution} and satisfy the \emph{compatibility
  conditions} spelled out in Theorem~\ref{thm:finm}.  We leave it to the reader
to verify \emph{functoriality} and to write down the analogous construction of
the morphisms $M_{•,•}$.  \qed

\subsection{Proof of Theorem~\ref*{thm:iop} (``Independence of polarisation'')}
\label{ssec:gghfj2}
\approvals{Behrouz & yes \\ Daniel & yes \\ Stefan & yes \\ Thomas & yes}

We establish the following sequence of implications.

\medskip 

$$
\xymatrix{ %
  \text{\ref{il:q5}} \ar@{=>}[r]_{\text{trivial}} & \text{\ref{il:q4}} \ar@{=>}[r] & \text{\ref{il:q3}} \ar@{=>}[r]_{\text{trivial}} & \text{\ref{il:q2}} \ar@{=>}[r]_{\text{trivial}} & \text{\ref{il:q1}} \ar@/_.5cm/@{=>}[llll]
}
$$

\subsubsection*{Implication \ref{il:q4} $⇒$ \ref{il:q3} in the semistable case}
\approvals{Behrouz & yes \\ Daniel & yes \\ Stefan & yes \\ Thomas & yes}

This is given by Lemma~\ref{lem:xxs} (``Description of bundles whose pull-back
is in $\Higgs_{\wtilde{X}}$'').

\subsubsection*{Implication \ref{il:q4} $⇒$ \ref{il:q3} in the stable case}
\approvals{Behrouz & yes \\ Daniel & yes \\ Stefan & yes \\ Thomas & yes }

Choose $π: \wtilde{X} → X$ and $\wtilde{H} ∈ \Div(\wtilde{X})$ as in
\ref{il:q4}.  Vanishing of Chern classes has been shown in the semistable case.
As for stability, let $H ∈ \Div(X)$ be any ample divisor.  Choose a sufficiently
large number $m ≫ 0$ such that $|m·H|$ is basepoint free, choose sufficiently
general hyperplanes $H_1, …, H_{\dim X-1} ∈ |m·H|$ and consider the associated
complete intersection curve $C := H_1 ∩ ⋯ ∩ H_{\dim X-1} ⊂ X$.  Observe that $C$
is entirely contained in $X_{\reg}$, and that $π$ is isomorphic near $C$.
Denote the preimage by $\wtilde{C} := π^{-1}C$.  We obtain the following obvious
morphisms between fundamental groups,
$$
\xymatrix{ %
  π_1(\wtilde{C}) \ar[rrrrrr]^{\wtilde{i}_*} \ar[d]_{\text{isomorphic}} &&&&&& π_1(\wtilde{X}) \ar[d]^{\txt{\scriptsize isomorphic\\\scriptsize \cite[Thm.~1.1]{Takayama2003}}} \\
  π_1(C) \ar@{->>}[rrr]^{\text{surjective}}_{\text{\cite[Thm.\ in Sect.\ II.1.2]{GoreskyMacPherson}}} &&& π_1(X_{\reg}) \ar@{->>}[rrr]^{\text{surjective}}_{\text{\cite[0.7.B on p.~33]{FL81}}} &&& π_1(X).
}
$$
In particular, we see that the morphism $\wtilde{i}_*$ is surjective.  Apply the
nonabelian Hodge correspondence for klt spaces, Theorem~\ref{thm:free-naht} to
obtain local systems $\aE$ on $X$ and
$π^*\, \aE ≅ μ_{\wtilde{X}}\bigl(π^*\,(ℰ,θ)\bigr)$ on $\wtilde{X}$.  The
following will then end the proof.
\begin{align*}
  π^*\,(ℰ,θ) \text{ is stable w.r.t.\ } \wtilde{H} & ⇒ π^*\, \aE \text{ is irreducible } && \text{\cite[p.~18ff]{MR1179076}} \\
                                                   & ⇒ (π^* \aE)|_{\wtilde{C}} \text{ is irreducible } && \text{surjectivity of }\wtilde{i}_*\\
                                                   & ⇒ \aE|_C \text{ is irreducible} && \text{isomorphic} \\
                                                   & ⇒ (ℰ,θ)|_C \text{ is stable} && \text{\cite[p.~18ff]{MR1179076}}\\
                                                   & ⇒ \text{$(ℰ,θ)$ is stable w.r.t.\ $H$} && \text{genl.\ choice}
\end{align*}

\subsubsection*{Implication \ref{il:q4} $⇒$ \ref{il:q3} in the polystable case}
\approvals{Behrouz & yes \\ Daniel & yes \\ Stefan & yes \\ Thomas & yes}

Analogous to the above, with ``irreducible'' replaced by ``semisimple''.

\subsubsection*{Implication \ref{il:q1} $⇒$ \ref{il:q5} in the semistable case}
\approvals{Behrouz & yes \\ Daniel & yes \\ Stefan & yes \\ Thomas & yes}

This is Theorem~\ref{thm:bphgs} (``Ascent of semistable Higgs bundles'')
together with \cite[Thm.~1.3]{MR2231055}.

\subsubsection*{Implication \ref{il:q1} $⇒$ \ref{il:q5} in the stable case}
\approvals{Behrouz & yes \\ Daniel & yes \\ Stefan & yes \\ Thomas & yes}

The pull-back bundle $π^*\,(ℰ,θ)$ is clearly stable with respect to the nef
bundle $π^*H$, \cite[Prop.~5.19]{GKPT15}.  But then openness of stability,
\cite[Prop.~4.17]{GKPT15} asserts that $π^*\,(ℰ,θ)$ is stable with respect to
one ample, hence any ample bundle.

\subsubsection*{Implication \ref{il:q1} $⇒$ \ref{il:q5} in the polystable case}
\approvals{Behrouz & yes \\ Daniel & yes \\ Stefan & yes \\ Thomas & yes}

Same as the stable case.  \qed


\begin{thebibliography}{BCHM10}

\bibitem[AD14]{MR3273645}
Carolina Araujo and Stéphane Druel.
\newblock On codimension 1 del {P}ezzo foliations on varieties with mild
  singularities.
\newblock {\em Math. Ann.}, 360(3-4):769--798, 2014.
\newblock
  \href{http://dx.doi.org/10.1007/s00208-014-1053-3}{DOI:10.1007/s00208-014-1053-3}.
  Preprint \href{http://arxiv.org/abs/1210.4013}{arXiv:1210.4013}.

\bibitem[BCHM10]{BCHM10}
Caucher Birkar, Paolo Cascini, Christopher~D. Hacon, and James McKernan.
\newblock Existence of minimal models for varieties of log general type.
\newblock {\em J. Amer. Math. Soc.}, 23:405--468, 2010.
\newblock
  \href{http://dx.doi.org/10.1090/S0894-0347-09-00649-3}{DOI:10.1090/S0894-0347-09-00649-3}.

\bibitem[BHR06]{MR2231055}
Ugo Bruzzo and Daniel Hernández~Ruipérez.
\newblock Semistability vs.\ nefness for ({H}iggs) vector bundles.
\newblock {\em Differential Geom. Appl.}, 24(4):403--416, 2006.
\newblock
  \href{http://dx.doi.org/10.1016/j.difgeo.2005.12.007}{DOI:10.1016/j.difgeo.2005.12.007}.

\bibitem[BM97]{BM96}
Edward Bierstone and Pierre~D. Milman.
\newblock Canonical desingularization in characteristic zero by blowing up the
  maximum strata of a local invariant.
\newblock {\em Invent. Math.}, 128(2):207--302, 1997.
\newblock
  \href{http://dx.doi.org/10.1007/s002220050141}{DOI:10.1007/s002220050141}.

\bibitem[BM14]{MR3307241}
Daniel Barlet and Jón Magnússon.
\newblock {\em Cycles analytiques complexes. {I}. {T}héorèmes de préparation
  des cycles}, volume~22 of {\em Cours Spécialisés [Specialized Courses]}.
\newblock Société Mathématique de France, Paris, 2014.

\bibitem[BS95]{BS95}
Mauro~C. Beltrametti and Andrew~J. Sommese.
\newblock {\em The adjunction theory of complex projective varieties},
  volume~16 of {\em De Gruyter Expositions in Mathematics}.
\newblock Walter de Gruyter \& Co., Berlin, 1995.
\newblock
  \href{http://dx.doi.org/10.1515/9783110871746}{DOI:10.1515/9783110871746}.

\bibitem[CKT16]{CKT16}
Beno{\^\i}t Claudon, Stefan Kebekus, and Behrouz Taji.
\newblock Generic positivity and applications to hyperbolicity of moduli
  spaces.
\newblock Preprint \href{http://arxiv.org/abs/1610.09832}{arXiv:1610.09832}. To
  appear as a chapter in a forthcoming book on hyperbolicity, October 2016.

\bibitem[Del70]{Deligne70}
Pierre Deligne.
\newblock {\em Équations différentielles {\`a} points singuliers réguliers}.
\newblock Springer-Verlag, Berlin, 1970.
\newblock Lecture Notes in Mathematics, Vol. 163.
  \href{http://dx.doi.org/10.1007/BFb0061194}{DOI:10.1007/BFb0061194}.

\bibitem[DPS94]{DPS94}
Jean-Pierre Demailly, Thomas Peternell, and Michael Schneider.
\newblock Compact complex manifolds with numerically effective tangent bundles.
\newblock {\em J. Algebraic Geom.}, 3(2):295--345, 1994.

\bibitem[DS75]{MR0397729}
Pierre Deligne and Dennis Sullivan.
\newblock Fibrés vectoriels complexes {\`a} groupe structural discret.
\newblock {\em C. R. Acad. Sci. Paris Sér. A-B}, 281(24):Ai, A1081--A1083,
  1975.

\bibitem[FL81]{FL81}
William Fulton and Robert Lazarsfeld.
\newblock Connectivity and its applications in algebraic geometry.
\newblock In {\em Algebraic geometry ({C}hicago, {I}ll., 1980)}, volume 862 of
  {\em Lecture Notes in Math.}, pages 26--92. Springer, Berlin, 1981.
\newblock \href{http://dx.doi.org/10.1007/BFb0090889}{DOI:10.1007/BFb0090889}.

\bibitem[Ful98]{Fulton98}
William Fulton.
\newblock {\em Intersection {T}heory}, volume~2 of {\em Ergebnisse der
  Mathematik und ihrer Grenzgebiete. 3. Folge. A Series of Modern Surveys in
  Mathematics [Results in Mathematics and Related Areas. 3rd Series. A Series
  of Modern Surveys in Mathematics]}.
\newblock Springer-Verlag, Berlin, second edition, 1998.
\newblock
  \href{http://dx.doi.org/10.1007/978-1-4612-1700-8}{DOI:10.1007/978-1-4612-1700-8}.

\bibitem[GK14]{MR3247804}
Patrick Graf and Sándor~J. Kovács.
\newblock An optimal extension theorem for 1-forms and the {L}ipman-{Z}ariski
  conjecture.
\newblock {\em Doc. Math.}, 19:815--830, 2014.
\newblock Preprint \href{http://arxiv.org/abs/1301.7315}{arXiv:1301.7315}.

\bibitem[GKKP11]{GKKP11}
Daniel Greb, Stefan Kebekus, Sándor~J. Kovács, and Thomas Peternell.
\newblock Differential forms on log canonical spaces.
\newblock {\em Inst. {H}autes {É}tudes Sci.~{P}ubl.~{M}ath.}, 114(1):87--169,
  November 2011.
\newblock
  \href{http://dx.doi.org/10.1007/s10240-011-0036-0}{DOI:10.1007/s10240-011-0036-0}
  An extended version with additional graphics is available as
  \href{http://arxiv.org/abs/1003.2913}{arXiv:1003.2913}.

\bibitem[GKP16]{GKP13}
Daniel Greb, Stefan Kebekus, and Thomas Peternell.
\newblock Étale fundamental groups of {K}awamata log terminal spaces, flat
  sheaves, and quotients of abelian varieties.
\newblock {\em Duke Math. J.}, 165(10):1965--2004, 2016.
\newblock
  \href{http://dx.doi.org/10.1215/00127094-3450859}{DOI:10.1215/00127094-3450859}.
  Preprint \href{http://arxiv.org/abs/1307.5718}{arXiv:1307.5718}.

\bibitem[GKPT15]{GKPT15}
Daniel Greb, Stefan Kebekus, Thomas Peternell, and Behrouz Taji.
\newblock The {M}iyaoka-{Y}au inequality and uniformisation of canonical
  models.
\newblock Preprint \href{http://arxiv.org/abs/1511.08822}{arXiv:1511.08822}. To
  appear in the Annales scientifiques de l'École normale supérieure, November
  2015.

\bibitem[GKPT18]{GKPT18}
Daniel Greb, Stefan Kebekus, Thomas Peternell, and Behrouz Taji.
\newblock Harmonic metrics on higgs sheaves and uniformisation of varieties of
  general type.
\newblock Preprint \href{https://arxiv.org/abs/1804.01266}{arXiv:1804.01266},
  April 2018.

\bibitem[GKT18]{GKT16}
Daniel Greb, Stefan Kebekus, and Behrouz Taji.
\newblock Uniformisation of higher-dimensional varieties.
\newblock In Tommaso de~Fernex, Brendan Hassett, Mircea Mustaţă, Martin
  Olsson, Mihnea Popa, and Richard Thomas, editors, {\em Algebraic Geometry:
  Salt Lake City 2015}, volume~1 of {\em Proceedings of Symposia in Pure
  Mathematics}, pages 277--308. American Mathematical Society, American
  Mathematical Society, 2018.
\newblock Preprint \href{http://arxiv.org/abs/1608.06644}{arXiv:1608.06644}.

\bibitem[GM88]{GoreskyMacPherson}
Mark Goresky and Robert~D. MacPherson.
\newblock {\em Stratified {M}orse theory}, volume~14 of {\em Ergebnisse der
  Mathematik und ihrer Grenzgebiete (3) [Results in Mathematics and Related
  Areas (3)]}.
\newblock Springer-Verlag, Berlin, 1988.
\newblock
  \href{http://dx.doi.org/10.1007/978-3-642-71714-7}{DOI:10.1007/978-3-642-71714-7}.

\bibitem[Gro66]{EGA4-3}
Alexandre Grothendieck.
\newblock Éléments de géométrie algébrique. {IV}. Étude locale des
  schémas et des morphismes de schémas. {T}roisiéme partie.
\newblock {\em Inst. Hautes Études Sci. Publ. Math.}, (28):255, 1966.
\newblock Revised in collaboration with Jean Dieudonné.
  \href{http://www.numdam.org/item?id=PMIHES_1966__28__5_0}{numdam.PMIHES-1966-28-5-0}.

\bibitem[Gro68]{SGA2}
Alexander Grothendieck.
\newblock {\em Cohomologie locale des faisceaux cohérents et théorèmes de
  {L}efschetz locaux et globaux (SGA 2)}.
\newblock North-Holland Publishing Co., Amsterdam, 1968.
\newblock Augmenté d'un exposé par Michèle Raynaud, Séminaire de
  Géométrie Algébrique du Bois-Marie, 1962, Advanced Studies in Pure
  Mathematics, Vol. 2. Also available as
  \href{http://arxiv.org/abs/math/0511279}{arXiv:math/0511279}.

\bibitem[Gro70]{MR0262386}
Alexander Grothendieck.
\newblock Représentations linéaires et compactification profinie des groupes
  discrets.
\newblock {\em Manuscripta Math.}, 2:375--396, 1970.
\newblock \href{http://dx.doi.org/10.1007/BF01719593}{DOI:10.1007/BF01719593}.

\bibitem[GT16]{GT16}
Henri Guenancia and Behrouz Taji.
\newblock Orbifold stability and {M}iyaoka-{Y}au inequality for minimal pairs.
\newblock Preprint \href{http://arxiv.org/abs/1611.05981}{arXiv:1611.05981},
  November 2016.

\bibitem[GW10]{MR2675155}
Ulrich Görtz and Torsten Wedhorn.
\newblock {\em Algebraic geometry {I}, Schemes, With Examples and Exercises}.
\newblock Advanced Lectures in Mathematics. Vieweg + Teubner, Wiesbaden, 2010.
\newblock
  \href{http://dx.doi.org/10.1007/978-3-8348-9722-0}{DOI:10.1007/978-3-8348-9722-0}.

\bibitem[Har77]{Ha77}
Robin Hartshorne.
\newblock {\em Algebraic geometry}.
\newblock Springer-Verlag, New York, 1977.
\newblock Graduate Texts in Mathematics, No. 52.
  \href{http://dx.doi.org/10.1007/978-1-4757-3849-0}{DOI:10.1007/978-1-4757-3849-0}.

\bibitem[Har80]{MR597077}
Robin Hartshorne.
\newblock Stable reflexive sheaves.
\newblock {\em Math. Ann.}, 254(2):121--176, 1980.
\newblock \href{http://dx.doi.org/10.1007/BF01467074}{DOI: 10.1007/BF01467074}.

\bibitem[HL10]{MR2665168}
Daniel Huybrechts and Manfred Lehn.
\newblock {\em The geometry of moduli spaces of sheaves}.
\newblock Cambridge Mathematical Library. Cambridge University Press,
  Cambridge, second edition, 2010.
\newblock
  \href{http://dx.doi.org/10.1017/CBO9780511711985}{DOI:10.1017/CBO9780511711985}.

\bibitem[HM07]{HMcK07}
Christopher~D. Hacon and James McKernan.
\newblock On {S}hokurov's rational connectedness conjecture.
\newblock {\em Duke Math. J.}, 138(1):119--136, 2007.
\newblock
  \href{http://projecteuclid.org/euclid.dmj/1178738561}{euclid.dmj/1178738561}.
  Preprint \href{http://arxiv.org/abs/math/0504330}{arXiv:math/0504330}.

\bibitem[HN13]{MR3030002}
Andreas Höring and Carla Novelli.
\newblock Mori contractions of maximal length.
\newblock {\em Publ. Res. Inst. Math. Sci.}, 49(1):215--228, 2013.
\newblock \href{http://dx.doi.org/10.4171/PRIMS/103}{DOI:10.4171/PRIMS/103}.
  Preprint \href{http://arxiv.org/abs/1201.4009}{arXiv:1201.4009}.

\bibitem[Keb13]{MR3084424}
Stefan Kebekus.
\newblock Pull-back morphisms for reflexive differential forms.
\newblock {\em Adv. Math.}, 245:78--112, 2013.
\newblock
  \href{http://dx.doi.org/10.1016/j.aim.2013.06.013}{DOI:10.1016/j.aim.2013.06.013}.
  Preprint \href{http://arxiv.org/abs/1210.3255}{arXiv:1210.3255}.

\bibitem[KM98]{KM98}
János Kollár and Shigefumi Mori.
\newblock {\em Birational geometry of algebraic varieties}, volume 134 of {\em
  Cambridge Tracts in Mathematics}.
\newblock Cambridge University Press, Cambridge, 1998.
\newblock With the collaboration of C.\ H.\ Clemens and A.\ Corti, Translated
  from the 1998 Japanese original.
  \href{http://dx.doi.org/10.1017/CBO9780511662560}{DOI:10.1017/CBO9780511662560}.

\bibitem[KMM87]{KMM87}
Yujiro Kawamata, Katsumi Matsuda, and Kenji Matsuki.
\newblock Introduction to the minimal model problem.
\newblock In {\em Algebraic geometry, Sendai, 1985}, volume~10 of {\em Adv.
  Stud. Pure Math.}, pages 283--360. North-Holland, Amsterdam, 1987.

\bibitem[Kol96]{K96}
János Kollár.
\newblock {\em Rational curves on algebraic varieties}, volume~32 of {\em
  Ergebnisse der Mathematik und ihrer Grenzgebiete. 3. Folge. A Series of
  Modern Surveys in Mathematics [Results in Mathematics and Related Areas. 3rd
  Series. A Series of Modern Surveys in Mathematics]}.
\newblock Springer-Verlag, Berlin, 1996.
\newblock
  \href{http://dx.doi.org/10.1007/978-3-662-03276-3}{DOI:10.1007/978-3-662-03276-3}.

\bibitem[Kol11]{MR2838215}
János Kollár.
\newblock Simultaneous normalization and algebra husks.
\newblock {\em Asian J. Math.}, 15(3):437--449, 2011.
\newblock
  \href{http://dx.doi.org/10.4310/AJM.2011.v15.n3.a6}{DOI:10.4310/AJM.2011.v15.n3.a6}.
  Preprint \href{http://arxiv.org/abs/0910.1076}{arXiv:0910.1076}.

\bibitem[Lan02]{MR1954067}
Adrian Langer.
\newblock A note on {B}ogomolov's instability and {H}iggs sheaves.
\newblock In {\em Algebraic geometry}, pages 237--256. de Gruyter, Berlin,
  2002.

\bibitem[Lan15]{MR3314517}
Adrian Langer.
\newblock Bogomolov's inequality for {H}iggs sheaves in positive
  characteristic.
\newblock {\em Invent. Math.}, 199(3):889--920, 2015.
\newblock
  \href{http://dx.doi.org/10.1007/s00222-014-0534-z}{DOI:10.1007/s00222-014-0534-z}.

\bibitem[Laz04]{Laz04-I}
Robert Lazarsfeld.
\newblock {\em Positivity in algebraic geometry. {I}. Classical setting: line
  bundles and linear series}, volume~48 of {\em Ergebnisse der Mathematik und
  ihrer Grenzgebiete. 3. Folge.}
\newblock Springer-Verlag, Berlin, 2004.
\newblock
  \href{http://dx.doi.org/10.1007/978-3-642-18808-4}{DOI:10.1007/978-3-642-18808-4}.

\bibitem[Moc06]{MR2310103}
Takuro Mochizuki.
\newblock Kobayashi-{H}itchin correspondence for tame harmonic bundles and an
  application.
\newblock {\em Astérisque}, (309):viii+117, 2006.

\bibitem[Moc07a]{MR2281877}
Takuro Mochizuki.
\newblock Asymptotic behaviour of tame harmonic bundles and an application to
  pure twistor {$D$}-modules. {I}.
\newblock {\em Mem. Amer. Math. Soc.}, 185(869):xii+324, 2007.
\newblock \href{https://doi.org/10.1090/memo/0869}{DOI:10.1090/memo/0869}.

\bibitem[Moc07b]{MR2283665}
Takuro Mochizuki.
\newblock Asymptotic behaviour of tame harmonic bundles and an application to
  pure twistor {$D$}-modules. {II}.
\newblock {\em Mem. Amer. Math. Soc.}, 185(870):xii+565, 2007.
\newblock \href{https://doi.org/10.1090/memo/0870}{DOI:10.1090/memo/0870}.

\bibitem[RZ10]{RB10}
Luis Ribes and Pavel Zalesskii.
\newblock {\em Profinite groups}, volume~40 of {\em Ergebnisse der Mathematik
  und ihrer Grenzgebiete. 3. Folge. A Series of Modern Surveys in Mathematics
  [Results in Mathematics and Related Areas. 3rd Series. A Series of Modern
  Surveys in Mathematics]}.
\newblock Springer-Verlag, Berlin, second edition, 2010.
\newblock
  \href{http://dx.doi.org/http://dx.doi.org/10.1007/978-3-642-01642-4}{DOI:10.1007/978-3-642-01642-4}.

\bibitem[Sab13]{MR3087348}
Claude Sabbah.
\newblock Théorie de {H}odge et correspondance de {H}itchin-{K}obayashi
  sauvages (d'après {T}. {M}ochizuki).
\newblock {\em Astérisque}, (352):Exp. No. 1050, viii, 205--241, 2013.
\newblock Séminaire Bourbaki. Vol. 2011/2012. Exposés 1043--1058.

\bibitem[Sim90]{MR1040197}
Carlos~T. Simpson.
\newblock Harmonic bundles on noncompact curves.
\newblock {\em J. Amer. Math. Soc.}, 3(3):713--770, 1990.
\newblock \href{http://dx.doi.org/10.2307/1990935}{DOI:10.2307/1990935}.

\bibitem[Sim92]{MR1179076}
Carlos~T. Simpson.
\newblock Higgs bundles and local systems.
\newblock {\em Inst. Hautes Études Sci. Publ. Math.}, 75:5--95, 1992.
\newblock \href{http://dx.doi.org/10.1007/BF02699491}{DOI:10.1007/BF02699491},
  \href{http://www.numdam.org/item?id=PMIHES_1992__75__5_0}{numdam.PMIHES-1992-75-5-0}.

\bibitem[Tak03]{Takayama2003}
Shigeharu Takayama.
\newblock Local simple connectedness of resolutions of log-terminal
  singularities.
\newblock {\em Internat. J. Math.}, 14(8):825--836, 2003.
\newblock
  \href{http://dx.doi.org/10.1142/S0129167X0300196X}{DOI:10.1142/S0129167X0300196X}.

\end{thebibliography}
\end{document}